\numberwithin{equation}{section}
\newcommand{\nchi}{{\raise.3ex\hbox{\(\chi\)}}}
\newcommand{\N}{\mathbb{N}}
\newcommand{\R}{\mathbb{R}}
\newcommand{\sfd}{{\sf d}}
\renewcommand{\d}{{\mathrm d}}
\newcommand{\X}{{\rm X}}
\newcommand{\Y}{{\rm Y}}
\newcommand{\mm}{\mathfrak{m}}
\newcommand{\1}{\mathbbm 1}
\newcommand{\lims}{\varlimsup}
\newcommand{\Idem}{{\rm Idem}}
\newcommand{\Adm}{{\rm Adm}}
\newcommand{\fr}{\penalty-20\null\hfill\(\blacksquare\)}
\newtheorem{theorem}{Theorem}[section]
\newtheorem{corollary}[theorem]{Corollary}
\newtheorem{lemma}[theorem]{Lemma}
\newtheorem{proposition}[theorem]{Proposition}
\newtheorem{definition}[theorem]{Definition}
\newtheorem{example}[theorem]{Example}
\newtheorem{remark}[theorem]{Remark}
\title{An axiomatic theory of normed modules via Riesz spaces}
\author{Danka Lu\v{c}i\'{c}\footnote{\href{mailto:danka.d.lucic@jyu.fi}{danka.d.lucic@jyu.fi},
Department of Mathematics and Statistics, P.O.\ Box 35 (MaD), FI-40014 University of Jyv\"askyl\"a, Finland.}
\;and Enrico Pasqualetto\footnote{\href{mailto:enrico.e.pasqualetto@jyu.fi}{enrico.e.pasqualetto@jyu.fi},
Department of Mathematics and Statistics, P.O.\ Box 35 (MaD), FI-40014 University of Jyv\"askyl\"a, Finland.}}
\begin{document}
\date{\today}
\maketitle
\begin{abstract}
We introduce and study an axiomatic theory of $V$-normed $U$-modules, where \(V\) is a Riesz space and \(U\) is an \(f\)-algebra; the spaces $U$ and $V$
also have some additional structure and are required to satisfy a compatibility condition. Roughly speaking, a $V$-normed $U$-module is a module over $U$ that is
endowed with a pointwise norm operator taking values in $V$. The aim of our approach is to develop a unified framework, which is tailored to the differential
calculus on metric measure spaces, where as $U$ and $V$ one can take many different spaces of functions.
\end{abstract}
\noindent\textbf{MSC(2020).} 06F20, 46A40, 13C05, 53C23.\\
\textbf{Keywords.} Riesz space, localisable $f$-algebra, metric $f$-structure, normed module, Hahn-Banach.
\tableofcontents
\section{Introduction}
\subsection{General overview and motivations}\label{s:general_overview}
On metric measure spaces \((\X,\sfd,\mm)\), the study of first-order Sobolev spaces \(W^{1,p}(\X)\) for \(p\in(1,\infty)\)
has been a fruitful field of research in the last decades, see e.g.\ \cite{Hajlasz96,Cheeger00,Shanmugalingam00,AmbrosioGigliSavare11}.
In order to develop a differential calculus modelled over \(W^{1,p}(\X)\), several notions of `measurable (co)vector fields'
were studied, for example by \cite{Cheeger00} in the setting of doubling spaces supporting a Poincar\'{e} inequality.
One of the objectives of \cite{Gigli14} was to provide a meaningful notion of `space of measurable \(1\)-forms' for arbitrary
metric measure spaces. This is encoded in the concept of \emph{cotangent module}, which we are going to remind. It is proved in
\cite[Section 2.2.1]{Gigli14} that it is possible to construct a vector space \(L^p(T^*\X)\) and a linear operator \(\d\colon W^{1,p}(\X)\to L^p(T^*\X)\)
having the following features:
\begin{itemize}
\item[\(\rm i)\)] The elements of \(L^p(T^*\X)\) can be multiplied by \(L^\infty(\mm)\)-functions; to be precise, \(L^p(T^*\X)\) is a module
over the commutative ring \(L^\infty(\mm)\).
\item[\(\rm ii)\)] There exists a map \(|\cdot|\colon L^p(T^*\X)\to L^p(\mm)^+\) that vanishes only at \(0\), that satisfies
\[
|\omega+\eta|\leq|\omega|+|\eta|,\quad\text{ for every }\omega,\eta\in L^p(T^*\X),
\]
and that is compatible with the \(L^\infty(\mm)\)-module structure, in the sense that \(|f\cdot\omega|=|f||\omega|\) for every \(f\in L^\infty(\mm)\)
and \(\omega\in L^p(T^*\X)\). The map \(|\cdot|\) is said to be a \emph{pointwise norm} operator. Moreover, the norm on \(L^p(T^*\X)\) induced by the pointwise
norm via integration, i.e.
\[
\|\omega\|_{L^p(T^*\X)}\coloneqq\||\omega|\|_{L^p(\mm)},\quad\text{ for every }\omega\in L^p(T^*\X),
\]
is required to be complete.
\item[\(\rm iii)\)] The operator \(\d\colon W^{1,p}(\X)\to L^p(T^*\X)\), which is called the \emph{differential}, satisfies
\[
\|f\|_{W^{1,p}(\X)}=\big(\|f\|_{L^p(\mm)}^p+\||\d f|\|_{L^p(\mm)}^p\big)^{1/p},\quad\text{ for every }f\in W^{1,p}(\X)
\]
and has the property that the \(L^\infty(\mm)\)-module generated by its image is dense in \(L^p(T^*\X)\).
\end{itemize}
Following \cite[Definition 1.2.10]{Gigli14}, any couple \((\mathscr M,|\cdot|)\) verifying i) and ii) is called an \emph{\(L^p(\mm)\)-Banach \(L^\infty(\mm)\)-module}
(in fact, in \cite{Gigli14} the term `\(L^p(\mm)\)-normed \(L^\infty(\mm)\)-module' is used, but in this paper we need to distinguish between complete and non-complete modules).
Nevertheless, a number of variants of this notion have been (unavoidably) considered in \cite{Gigli14} and in the subsequent literature:
\begin{itemize}
\item It might be convenient (and sometimes necessary) to drop the \(L^p\)-integrability assumption. Technically speaking, this is made precise by the notion
of \emph{\(L^0(\mm)\)-Banach \(L^0(\mm)\)-module}; see \cite[Section 1.3]{Gigli14}. For example, to work with \(L^0\)-Banach \(L^0\)-modules is essential when
constructing tensor products of \(L^2(\mm)\)-Hilbert \(L^\infty(\mm)\)-modules, cf.\ with \cite[Section 1.5]{Gigli14}.
\item The case \(p=\infty\) has been studied as well. Indeed, \(L^\infty(\mm)\)-Banach \(L^\infty(\mm)\)-modules are fundamental in order to apply the
lifting theory by von Neumann in the Banach module setting \cite{DMLP21}, which in turn allows to provide `fiberwise descriptions', i.e.\ to show that any Banach
module is the space of sections of some generalised Banach bundle \cite{GLP22}. At present, using fibers is the only way to provide an explicit characterisation
of duals and/of pullbacks of Banach modules, which are useful objects for the applications in metric measure geometry.
\item Under suitable curvature bounds (e.g.\ in the setting of \({\sf RCD}(K,\infty)\) spaces), one is often interested in extending the differential calculus to codimension-one
measures (e.g.\ to perimeter measures). The functional-analytic framework that allows to achieve this goal is based on the concept of \emph{\(L^0({\rm Cap})\)-Banach
\(L^0({\rm Cap})\)-module}, which was introduced in \cite{debin2019quasicontinuous}. Here, \(\rm Cap\) denotes the Sobolev capacity, which is an outer measure on \(\X\)
that is not Borel regular.
\end{itemize}
The aim of this work is to provide a unified theory of Banach modules, which covers (at least) all the notions of Banach modules discussed above.
Indeed, albeit similar on some aspects, the several variants of Banach module often required different ad hoc definitions and proof strategies.
Our goal is to introduce an `axiomatic framework', where instead of function spaces we consider more general classes of Riesz spaces and \(f\)-algebras,
as well as to obtain rather general existence results, which can be applied in all the specific cases we described above, whenever this is needed.
\subsection{Main definitions}\label{s:main_def}
Let us now discuss the various objects we are going to introduce, also motivating the reasons behind our definitions. First, a key feature of all the `functional'
Banach modules from Section \ref{s:general_overview} is the possibility to multiply by \emph{characteristic functions}. This is fundamental, for example, when
constructing the cotangent module. Observe that in \(L^\infty(\mm)\) the characteristic functions of Borel sets are given exactly by the \emph{idempotent elements},
i.e.\ by those \(f\in L^\infty(\mm)\) satisfying \(f^2=f\). Moreover, two different functional spaces appear in the definition of Banach module: the ring of functions
that can be multiplied by the elements of the Banach module (e.g.\ \(L^\infty(\mm)\)), and the vector space of functions where the pointwise norm takes values
(e.g.\ \(L^p(\mm)\)). These two functional spaces must be related, e.g.\ the compatibility requirement between the pointwise norm and the module structure uses
the fact that \(fg\in L^p(\mm)\) whenever \(f\in L^\infty(\mm)\) and \(g\in L^p(\mm)\).
\medskip

Taking all these features into account, we propose in Definition \ref{def:metric_f-structure} the concept of
\[
\textbf{metric \(f\)-structure}\quad(\mathcal U,U,V).
\]
Let us describe informally what a metric \(f\)-structure \((\mathcal U,U,V)\) is:
\begin{itemize}
\item \(\mathcal U\) is an ambient \emph{localisable} \(f\)-algebra (see Definition \ref{def:localisable_f-alg}),
which means that it is an \(f\)-algebra (i.e.\ a Riesz space together with a compatible multiplication operation,
see Definition \ref{def:f-algebra}) having plenty of \emph{idempotent elements} (see \eqref{eq:idem}).
This corresponds, for example, to the fact that simple functions are order-dense in \(L^0(\mm)\).
\item \((U,\sfd_U)\) is a \emph{metric \(f\)-algebra} (see Definition \ref{def:metric_f-algebra})
that is an \(f\)-subalgebra of \(\mathcal U\). This means that \(U\) is an \(f\)-algebra endowed
with a complete distance \(\sfd_U\) that verifies suitable compatibility conditions. For example, the space
\(L^\infty(\mm)\) is an \(f\)-subalgebra of \(L^0(\mm)\), and together with (the distance induced by)
its norm, \(L^\infty(\mm)\) is a metric \(f\)-algebra.
\item \((V,\sfd_V)\) is a \emph{metric Riesz space} (see Definition \ref{def:metric_Riesz}) that is also
a Riesz subspace of \(\mathcal U\) satisfying \(UV=V\). For example, \(L^p(\mm)\) is a metric Riesz space
and \(L^\infty(\mm)\cdot L^p(\mm)=L^p(\mm)\).
\end{itemize}
Our axiomatisation of a metric \(f\)-structure is tailored to the kind of Banach modules
we are interested to. However, already in the framework of differential calculus on metric measure spaces,
some important objects studied in the literature (e.g.\ Lipschitz derivations \cite{Weaver99}
or local vector measures \cite{BrenaGigli22}) are not covered by our theory, roughly speaking because
the \(f\)-algebra of bounded continuous functions is not localisable (as characteristic functions
are typically not continuous).
\medskip

As we discussed above, an example of metric \(f\)-structure is \((L^0(\mm),L^\infty(\mm),L^p(\mm))\).
Taking into consideration the notion of \(L^p(\mm)\)-Banach \(L^\infty(\mm)\)-module from Section \ref{s:general_overview},
one can think of the elements of \(U\) as those that can be multiplied by the elements of the Banach module,
the role of \(V\) is `the space where the pointwise norm takes values', while \(\mathcal U\) is an ambient
space where both \(U\) and \(V\) can be embedded (which is convenient to formulate the requirement
that \(UV=V\)). Having this discussion in mind, we propose in Definition \ref{def:norm_U-mod} the concept of
\[
\textbf{\(V\)-Banach \(U\)-module}\quad\mathscr M.
\]
The definition of \(V\)-Banach \(U\)-module roughly states the following:
\begin{itemize}
\item \(\mathscr M\) is a module over the commutative ring \(U\) endowed with a pointwise norm
\(|\cdot|\colon\mathscr M\to V^+\), which verifies the pointwise triangle inequality and
is compatible with the module operations.
\item \(\mathscr M\) has the \emph{glueing property}, which means every \emph{admissible} sequence
of \emph{disjoint} elements \((v_n)_{n\in\N}\) of \(\mathscr M\) can be `glued together', thus obtaining
a new element \(\sum_{n\in\N}v_n\in\mathscr M\). The order structure of \((\mathcal U,U,V)\) comes into play here, i.e.\ when declaring
which sequences are admissible, see Definition \ref{def:norm_U-mod} ii). We also point out that, in general, \(\sum_{n\in\N}v_n\) is
just a formal series, which does not necessarily coincide with any kind of limit of finite sums.
\item The distance \(\sfd_{\mathscr M}(v,w)\coloneqq\sfd_V(|v-w|,0)\) on \(\mathscr M\) is complete.
\end{itemize}
In the class of \(L^p(\mm)\)-Banach \(L^\infty(\mm)\)-modules with \(p\in[1,\infty)\) we described in Section \ref{s:general_overview},
we did not mention the glueing property, the reason being that in that specific framework it follows automatically from the other axioms.
On the other hand, this is not always the case. For example, the glueing property has to be required when dealing with \(L^\infty(\mm)\)-Banach
\(L^\infty(\mm)\)-modules (see \cite[Example 1.2.5]{Gigli14} or \cite[Remark 2.22]{DMLP21}). Moreover -- differently from what happens with
\(L^p(\mm)\)-Banach \(L^\infty(\mm)\)-modules, where \(|\sum_{n=1}^k v_n-\sum_{n\in\N}v_n|\to 0\) in \(L^p(\mm)\) -- on \(L^\infty(\mm)\)-Banach
\(L^\infty(\mm)\)-modules it is clear that the expression \(\sum_{n\in\N}v_n\) might be only formal: in the space \(L^\infty(\R)\) itself (which is
an \(L^\infty(\R)\)-Banach \(L^\infty(\R)\)-module), the elements \(f_n\coloneqq\1_{[n,n+1)}\) for \(n\in\mathbb Z\) can be `glued together',
obtaining the function identically equal to \(1\); however, \(1\) is not the limit in the \(L^\infty(\R)\)-norm of the partial sums
\(\sum_{n=-k}^k f_k=\1_{[-k,k+1)}\) as \(k\to\infty\). In this example, it is still true that the partial sums converge in some sense to the
glued object (e.g.\ in the weak\(^*\) topology), but this needs not be the case for arbitrary \(L^\infty(\mm)\)-Banach \(L^\infty(\mm)\)-modules,
which do not always have a predual.
\medskip

We also mention that taking duals is very useful in differential calculus on metric measure spaces. For instance, the so-called \emph{tangent module}
\(L^q(T\X)\), which can be regarded as the space of `\(q\)-integrable vector fields' on a metric measure space \((\X,\sfd,\mm)\), is defined as
the Banach module dual of the cotangent module \(L^p(T^*\X)\), see \cite[Definition 2.3.1]{Gigli14}. An important observation is that, according
to \cite[Proposition 1.2.14 i)]{Gigli14}, the dual of an \(L^p(\mm)\)-Banach \(L^\infty(\mm)\)-module \(\mathscr M\) is an \(L^q(\mm)\)-Banach
\(L^\infty(\mm)\)-module \(\mathscr M^*\), where \(p\) and \(q\) are conjugate exponents. This means that in our axiomatisation when constructing the dual of
a Banach module we have to change also the underlying metric \(f\)-structure. To address this issue, we propose in Definition \ref{def:dual_system} the concept of
\[
\textbf{dual system of metric \(f\)-structures}\quad(\mathcal U,U,V,W,Z).
\]
We omit the details here. However, the definition of dual system is given so that the \emph{module dual} of a \(V\)-Banach \(U\)-module \(\mathscr M\) is a
\(W\)-Banach \(U\)-module, see Definition \ref{def:dual_mod}. More generally, the space \(\textsc{Hom}(\mathscr M,\mathscr N)\) of all \emph{homomorphisms}
(Definition \ref{def:hom}) from a \(V\)-Banach \(U\)-module \(\mathscr M\) to a \(Z\)-Banach \(U\)-module \(\mathscr N\) inherits a natural structure
of \(W\)-Banach \(U\)-module (Theorem \ref{thm:Hom_normed_module}). An example of dual system of metric \(f\)-structures is \((L^0(\mm),L^\infty(\mm),L^p(\mm),L^q(\mm),L^1(\mm))\).
When proving finer results about homomorphisms and dual modules, one often has to require a further regularity
property on the underlying \(f\)-algebras and Riesz spaces, namely that they are \emph{countably representable}, or \emph{CR} for short; see Definition \ref{def:CR}.
This assumption amounts to saying that every set that is bounded from above (resp.\ from below) has an supremum (resp.\ an infimum), i.e.\ the space is \emph{Dedekind complete},
and that such supremum (resp.\ infimum) can be expressed as a countable supremum (resp.\ a countable infimum) of elements of the given set. This property is enjoyed e.g.\ by
\(L^p(\mm)\) whenever \(p\in\{0\}\cup[1,\infty]\) and \(\mm\) is a \(\sigma\)-finite measure (Proposition \ref{prop:L0_CR}), but it fails in \(L^0({\rm Cap})\) (Example \ref{ex:L0Cap_no_CR}).
The CR property is also needed, for instance, to construct \emph{local inverses} (Proposition \ref{prop:local_inverses}) or to define the \emph{support} of a metric \(f\)-structure (Definition \ref{def:spt}).
\subsection{Main results}
Another objective of this work is to provide a rather complete toolbox of results and techniques concerning Banach modules over a metric \(f\)-structure,
which we plan to apply in the future, as a `black box' in many particular cases of interest. Our two main achievements are the following:
\begin{itemize}
\item Theorem \ref{thm:module_generated}: given a metric \(f\)-structure \((\mathcal U,U,V)\), a vector space \(\mathscr V\), and a symmetric sublinear map \(\psi\colon\mathscr V\to V^+\),
there exists a unique couple \((\mathscr M_{\langle\psi\rangle},T_{\langle\psi\rangle})\), where \(\mathscr M_{\langle\psi\rangle}\) is a \(V\)-Banach \(U\)-module, while
\(T_{\langle\psi\rangle}\colon\mathscr V\to\mathscr M_{\langle\psi\rangle}\) is a linear operator with `generating image' (in a suitable sense) such that \(|T_{\langle\psi\rangle}{\sf v}|=\psi({\sf v})\)
for every \({\sf v}\in\mathscr V\). The uniqueness is formulated in categorical terms, i.e.\ via a universal property (see also Corollary \ref{cor:module_generated_cat}). This quite general
existence result incorporates most of the existence results for Banach modules considered so far in the related literature. For example, the cotangent module \(L^p(T^*\X)\) and the differential \(\d\)
are given by \((L^p(T^*\X),\d)\cong(\mathscr M_{\langle\psi_p\rangle},T_{\langle\psi_p\rangle})\), where the map \(\psi_p\colon W^{1,p}(\X)\to L^p(\mm)^+\) is defined as \(\psi_p(f)\coloneqq|Df|\).
See Section \ref{ss:applications_fct_nm} for this example, as well as for other relevant constructions of Banach modules induced by a symmetric sublinear map.
\item Theorem \ref{thm:main_ext_hom}: it is an existence criterion for homomorphisms of Banach modules. Indeed, given that the theory of \(V\)-Banach \(U\)-modules fits well in a categorical framework
(see Definition \ref{def:category_nm}), it is natural to couple Theorem \ref{thm:module_generated} with an existence result for homomorphisms. For simplicity of presentation, let us state here only a
corollary of Theorem \ref{thm:Hom_normed_module}: given a dual system \((\mathcal U,U,V,W,Z)\), a \(V\)-Banach \(U\)-module \(\mathscr M\), a \(Z\)-Banach \(U\)-module \(\mathscr N\), a `generating'
vector subspace \(\mathscr V\) of \(\mathscr M\), and a linear operator \(T\colon\mathscr V\to\mathscr N\) satisfying \(|Tv|\leq b|v|\) for some \(b\in W^+\), there is a unique extension
\(\bar T\in\textsc{Hom}(\mathscr M,\mathscr N)\) of \(T\), which still satisfies \(|\bar T v|\leq b|v|\).
\end{itemize}
Finally, we conclude the introduction by briefly mentioning other results we obtain in the paper:
\begin{itemize}
\item Using Theorems \ref{thm:module_generated} and \ref{thm:Hom_normed_module}, we prove that each homomorphism of metric \(f\)-structures induces a \emph{pushforward functor}
(or, to be more precise, a `direct image functor') in the categories of Banach modules; see Section \ref{ss:pushfrwd_mod}.
\item We prove a version of the \emph{Hahn--Banach extension theorem} for \(V\)-Banach \(U\)-modules, see Theorem \ref{thm:Hahn-Banach}. It is used e.g.\ for studying module duals
and embedding operators into the bidual; see Sections \ref{ss:Hahn-Banach} and \ref{ss:refl_mod}.
\item We study \emph{Hilbert modules}, i.e.\ Banach modules whose pointwise norm verifies a pointwise parallelogram identity, see Definition \ref{def:Hilbert_mod}. Among the several results we obtain,
let us mention a Hilbert projection theorem and a Riesz representation theorem; see Section \ref{ss:Hilbert_mod}.
\item We prove that \(V\)-Banach \(U\)-modules admit a \emph{dimensional decomposition} (under the CR assumption on the metric \(f\)-structure); see Section \ref{ss:dim_decomp}.
\end{itemize}
In Sections \ref{s:local_metric_f-struct} and \ref{s:normed_mod} the whole treatment is at the level of `abstract' Riesz spaces and \(f\)-algebras, without ever mentioning any
kind of functional spaces. The applications of our axiomatic theory to the various classes of Banach modules over spaces of functions are discussed in Section \ref{s:applic_fct}.
\bigskip

\noindent\textbf{Acknowledgements.} The authors thank Nicola Gigli and Simone Di Marino for having suggested Corollary \ref{cor:module_generated_cat} and Theorem \ref{thm:Hahn-Banach}, respectively.
\section{Localisable \texorpdfstring{\(f\)}{f}-algebras and metric \texorpdfstring{\(f\)}{f}-structures}\label{s:local_metric_f-struct}
In Section \ref{ss:rem_Riesz} we recall many useful definitions and results concerning Riesz spaces and \(f\)-algebras, which are quite standard and well-established;
our presentation is essentially taken from \cite{Fremlin74,Fremlin3}. In Section \ref{ss:idempotent} we study the set of idempotent elements, while in Sections \ref{ss:local_f-alg}
and \ref{ss:metric_f-struct} we introduce the language of localisable \(f\)-algebras and of (dual systems of) metric \(f\)-structures, respectively.
\subsection{Reminder on Riesz spaces and \texorpdfstring{\(f\)}{f}-algebras}\label{ss:rem_Riesz}
Let \((P,\leq)\) be a partially ordered set and \(S\neq\varnothing\) a subset of \(P\). We recall the following definitions:
\begin{itemize}
\item[\(\rm i)\)] We say that \(S\) is \textbf{directed upwards} if for every \(p,p'\in S\) there exists \(q\in S\) such that
\(p\leq q\) and \(p'\leq q\). We say that \(S\) is \textbf{directed downwards} if for every \(p,p'\in S\) there exists
\(q\in S\) such that \(q\leq p\) and \(q\leq p'\).
\item[\(\rm ii)\)] A sequence \((p_n)_{n\in\N}\subset P\) is said to be \textbf{non-decreasing} provided \(p_n\leq p_{n+1}\)
for every \(n\in\N\), while it is said to be \textbf{non-increasing} provided \(p_n\geq p_{n+1}\) for every \(n\in\N\).
\item[\(\rm iii)\)] An element \(p\in P\) is said to be an \textbf{upper bound} for \(S\) provided \(s\leq p\) holds
for every \(s\in S\). We say that \(p\) is the \textbf{supremum} of \(S\), and we write \(p=\sup S\), provided
\(p\leq p'\) holds for any other upper bound \(p'\in P\) for \(S\). If \(\sup S\) exists, then it is uniquely determined.
\item[\(\rm iv)\)] An element \(q\in P\) is said to be a \textbf{lower bound} for \(S\) provided \(q\leq s\) holds
for every \(s\in S\). We say that \(q\) is the \textbf{infimum} of \(S\), and we write \(q=\inf S\), provided \(q'\leq q\)
holds for any other lower bound \(q'\in P\) for \(S\). If \(\inf S\) exists, then it is uniquely determined.
\item[\(\rm v)\)] We say that \(S\) is \textbf{order-bounded} provided it has both an upper bound and a lower bound.
\item[\(\rm vi)\)] We say that \(P\) is \textbf{Dedekind \(\sigma\)-complete} provided every countable non-empty subset of
\(P\) with an upper bound has a supremum and every countable non-empty subset of \(P\) with a lower bound has an infimum.
\item[\(\rm vii)\)] \(P\) is \textbf{Dedekind complete} if every non-empty subset of \(P\) with an upper
bound has a supremum, or equivalently every non-empty subset of \(P\) with a lower bound has an infimum.
\end{itemize}
A map \(\phi\colon P\to Q\) between partially ordered sets \(P\) and \(Q\) is said to be \textbf{order-preserving} provided
\[
\phi(p)\leq\phi(q),\quad\text{ for every }p,q\in P.
\]
An order-preserving map \(\phi\colon P\to Q\) is said to be \textbf{order-continuous} provided it holds that
\[\begin{split}
\exists\sup\big\{\phi(p)\;\big|\;p\in R\big\}=\phi(\bar p)&,\quad\text{ whenever }
R\subset P\text{ is directed upwards and }\exists\,\bar p\coloneqq\sup R\in P,\\
\exists\inf\big\{\phi(q)\;\big|\;q\in S\big\}=\phi(\bar q)&,\quad\text{ whenever }
S\subset P\text{ is directed downwards and }\exists\,\bar q\coloneqq\inf S\in P.
\end{split}\]
We say that an order-preserving map \(\phi\colon P\to Q\) is \textbf{\(\sigma\)-order-continuous} provided it holds that
\[\begin{split}
\exists\sup_{n\in\N}\phi(p_n)=\phi\Big(\sup_{n\in\N}p_n\Big)&,\quad\text{ whenever }
(p_n)_{n\in\N}\subset P\text{ is non-decreasing and }\sup_{n\in\N}p_n\text{ exists},\\
\exists\inf_{n\in\N}\phi(q_n)=\phi\Big(\inf_{n\in\N}q_n\Big)&,\quad\text{ whenever }
(q_n)_{n\in\N}\subset P\text{ is non-increasing and }\inf_{n\in\N}q_n\text{ exists},
\end{split}\]
where \(\sup_{n\in\N}p_n\) stands for \(\sup\{p_n\}_{n\in\N}\). Note that order-continuity implies \(\sigma\)-order-continuity.
\medskip

A \textbf{lattice} is a partially ordered set \((P,\leq)\) such that \(p\vee q\coloneqq\sup\{p,q\}\) and \(p\wedge q\coloneqq\inf\{p,q\}\) exist
for all \(p,q\in P\). A set \(S\subset P\) is called a \textbf{sublattice} of \(P\) if it is closed under \(\vee\) and \(\wedge\), \emph{i.e.}
\[
p\vee q,p\wedge q\in S,\qquad\text{for every }p,q\in S.
\]
A map \(\phi\colon P\to Q\) between lattices \(P\) and \(Q\) is said to be a \textbf{lattice homomorphism} provided
\[
\phi(p\vee q)=\phi(p)\vee\phi(q),\qquad\phi(p\wedge q)=\phi(p)\wedge\phi(q),\qquad\text{for every }p,q\in P.
\]
\begin{remark}{\rm
For an arbitrary family \(\{P_i\}_{i\in I}\) of partially ordered sets \(P_i=(P_i,\leq_i)\), the product \(P\coloneqq\prod_{i\in I}P_i\)
can be endowed with the following partial order: for any \((p_i)_{i\in I},(q_i)_{i\in I}\in\prod_{i\in I}P_i\), we declare that
\((p_i)_{i\in I}\leq(q_i)_{i\in I}\) if and only if \(p_i\leq_i q_i\) for every \(i\in I\). Observe that \((P,\leq)\) is a lattice
if and only if \((P_i,\leq_i)\) is a lattice for every \(i\in I\).
\fr}\end{remark}

We introduce the following notion (as we did not find any terminology for it in the literature):
\begin{definition}[Countable representability]\label{def:CR}
Let \((P,\leq)\) be a Dedekind complete partially ordered set. Then we say that \(P\) is \textbf{countably representable}
(or \textbf{CR} for short) provided it holds that:
\[\begin{split}
\forall\varnothing\neq S\subset P\text{ having an upper bound}&,\quad\exists(s_n)_{n\in\N}\subset S:
\quad\sup_{n\in\N}s_n=\sup S,\\
\forall\varnothing\neq\tilde S\subset P\text{ having a lower bound}&,\quad\exists(\tilde s_n)_{n\in\N}\subset\tilde S:
\quad\inf_{n\in\N}\tilde s_n=\inf\tilde S.
\end{split}\]
\end{definition}
\subsubsection{The theory of Riesz spaces}
A \textbf{partially ordered linear space} \((U,\leq)\) is a vector space \(U=(U,+,\cdot)\) over the field of real numbers \(\R\),
together with a partial order \(\leq\) on \(U\) such that the following properties are verified:
\[\begin{split}
u+w\leq v+w,&\quad\text{ for every }u,v,w\in U\text{ with }u\leq v,\\
\lambda u\geq 0,&\quad\text{ for every }\lambda\in\R^+\text{ and }u\in U\text{ with }u\geq 0.
\end{split}\]
A \textbf{Riesz space} is a partially ordered linear space \(U=(U,+,\cdot,\leq)\) that is a lattice. We define
\[
u^+\coloneqq u\vee 0,\qquad u^-\coloneqq(-u)\vee 0,\qquad|u|\coloneqq(-u)\vee u,
\]
for every \(u\in U\). We have that \(|u|\geq 0\) holds for every \(u\in U\), with equality if and only if \(u=0\).
\begin{proposition}[Basic properties of Riesz spaces]\label{prop:basic_prop_Riesz}
Let \(U\) be a Riesz space. Then it holds that:
\begin{subequations}\begin{align}
\label{eq:basic_prop_f-alg_1}\lambda(u\vee v)=\lambda u\vee\lambda v,&\quad\text{ for every }\lambda\in\R\text{ with }\lambda>0\text{ and }u,v\in U,\\
\label{eq:basic_prop_f-alg_1b}|\lambda u|=\lambda|u|,&\quad\text{ for every }\lambda\in\R^+\text{ and }u\in U,\\
\label{eq:basic_prop_f-alg_2}-u\vee v=(-u)\wedge(-v),&\quad\text{ for every }u,v\in U,\\
\label{eq:basic_prop_f-alg_3}u+v\vee w=(u+v)\vee(u+w),&\quad\text{ for every }u,v,w\in U,\\
\label{eq:basic_prop_f-alg_4}u+v\wedge w=(u+v)\wedge(u+w),&\quad\text{ for every }u,v,w\in U,\\
\label{eq:basic_prop_f-alg_4b}u\vee v+u\wedge v=u+v,&\quad\text{ for every }u,v\in U,\\
\label{eq:basic_prop_f-alg_5}u=u^+-u^-,&\quad\text{ for every }u\in U,\\
\label{eq:basic_prop_f-alg_6}|u|=u^+\vee u^-=u^+ +u^-,&\quad\text{ for every }u\in U,\\
\label{eq:basic_prop_f-alg_6b}u^+\wedge u^-=0,&\quad\text{ for every }u\in U,\\
\label{eq:basic_prop_f-alg_6c}(u+v)^+\leq u^+ +v^+,&\quad\text{ for every }u,v\in U,\\
\label{eq:basic_prop_f-alg_6d}|u+v|\leq|u|+|v|,&\quad\text{ for every }u,v\in U,\\
\label{eq:basic_prop_f-alg_6e}u\wedge(v+w)\leq u\wedge v+u\wedge w,&\quad\text{ for every }u,v,w\in U^+.
\end{align}\end{subequations}
\end{proposition}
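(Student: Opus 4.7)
The proof is a catalogue of standard identities in Riesz spaces, and the unifying idea is exploiting the fact that in a partially ordered linear space the translations $u\mapsto u+w$ are order isomorphisms and the dilations $u\mapsto \lambda u$ ($\lambda>0$) are also order isomorphisms, while $u\mapsto-u$ is order-reversing. So my plan is to establish the `structural' identities \eqref{eq:basic_prop_f-alg_1}--\eqref{eq:basic_prop_f-alg_4} first by transporting suprema/infima under these isomorphisms, then derive \eqref{eq:basic_prop_f-alg_4b} (the key combinatorial identity $u\vee v+u\wedge v=u+v$), and use it to unfold all the remaining assertions about $u^+$, $u^-$ and $|u|$.

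More concretely: for \eqref{eq:basic_prop_f-alg_1}, since multiplication by $\lambda>0$ is order-preserving and has order-preserving inverse, it maps suprema to suprema of the image, which gives $\lambda(u\vee v)=\lambda u\vee\lambda v$; then \eqref{eq:basic_prop_f-alg_1b} is the case $v=-u$ together with the definition of $|\cdot|$. For \eqref{eq:basic_prop_f-alg_2} I would apply the same argument to the order-reversing map $u\mapsto -u$, which sends suprema to infima. Identities \eqref{eq:basic_prop_f-alg_3} and \eqref{eq:basic_prop_f-alg_4} are the translation-invariance statements: adding $u$ is an order isomorphism, so it maps $v\vee w$ to $(u+v)\vee(u+w)$, and similarly for the infimum. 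For \eqref{eq:basic_prop_f-alg_4b} my plan is: by \eqref{eq:basic_prop_f-alg_3} applied with $-u-v$, we get $(-u-v)+u\vee v = (-v)\vee(-u)=-(u\wedge v)$ using \eqref{eq:basic_prop_f-alg_2}, which rearranges to $u\vee v+u\wedge v=u+v$.

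With these in hand, the statements about $u^+$, $u^-$, $|u|$ follow cleanly. For \eqref{eq:basic_prop_f-alg_5}, apply \eqref{eq:basic_prop_f-alg_4b} to the pair $(u,0)$ to get $u^+ + u\wedge 0=u$, then observe $u\wedge 0=-((-u)\vee 0)=-u^-$ via \eqref{eq:basic_prop_f-alg_2}. For \eqref{eq:basic_prop_f-alg_6b}, apply \eqref{eq:basic_prop_f-alg_4b} to $(u^+,u^-)$; since $u^+ - u^- = u$ and $u^+ + u^- \geq u^+\vee u^-$, combined with the previous step, one deduces $u^+\wedge u^- = 0$ and then $u^+\vee u^- = u^+ + u^-$ again from \eqref{eq:basic_prop_f-alg_4b}. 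Identity \eqref{eq:basic_prop_f-alg_6} reduces to showing $(-u)\vee u = u^+ + u^-$, which follows from \eqref{eq:basic_prop_f-alg_3} by writing $(-u)\vee u = -u + 0\vee(2u) = -u + 2u^+$ after using \eqref{eq:basic_prop_f-alg_1}, together with \eqref{eq:basic_prop_f-alg_5}.

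Finally, the triangle inequalities. For \eqref{eq:basic_prop_f-alg_6c}, use \eqref{eq:basic_prop_f-alg_3} twice: $(u+v)\vee 0\leq (u^+ + v^+)\vee(u^+ + 0)\vee(0+v^+)\vee 0 = u^+ + v^+$, since $u\leq u^+$ and $v\leq v^+$ and $0\leq u^+ + v^+$. Inequality \eqref{eq:basic_prop_f-alg_6d} then follows from \eqref{eq:basic_prop_f-alg_6c} applied to $(u+v)$ and $-(u+v)$, using \eqref{eq:basic_prop_f-alg_5} and \eqref{eq:basic_prop_f-alg_6}. For \eqref{eq:basic_prop_f-alg_6e}, the cleanest route is \eqref{eq:basic_prop_f-alg_4b}: on positive elements one has $u\wedge(v+w) = u + (v+w) - u\vee(v+w)$, and one needs to bound $u\vee(v+w)\geq u\vee v + u\vee w - u$ after a little manipulation with \eqref{eq:basic_prop_f-alg_3}--\eqref{eq:basic_prop_f-alg_4}. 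The technical point I expect to be the main obstacle is precisely \eqref{eq:basic_prop_f-alg_6e}: unlike the other identities, it requires the positivity hypothesis $u,v,w\in U^+$, and the right choice of a chain of inequalities to combine with \eqref{eq:basic_prop_f-alg_4b} is not entirely mechanical; I would be prepared to instead argue it by verifying the equivalent inequality $u\vee v + u\vee w\leq u + u\vee(v+w)$ directly, noting that $u\vee v\leq u\vee(v+w)$ and $u\vee w\leq u + w\leq u + u\vee(v+w) - u\vee v$ after rearrangement, using $v,w\geq 0$.
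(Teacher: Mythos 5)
Most of your plan runs along the same elementary route as the paper (translations and dilations as order isomorphisms, then \eqref{eq:basic_prop_f-alg_4b} as the workhorse), and items \eqref{eq:basic_prop_f-alg_1}--\eqref{eq:basic_prop_f-alg_6d} are in substance fine. The genuine problem is your treatment of \eqref{eq:basic_prop_f-alg_6e}, which you yourself flag as the main obstacle. Your reduction via \eqref{eq:basic_prop_f-alg_4b} to the equivalent inequality \(u\vee v+u\vee w\leq u+u\vee(v+w)\) is correct, but the chain you offer to verify it is not: the claimed bound \(u+w\leq u+u\vee(v+w)-u\vee v\) rearranges to \(w+u\vee v\leq u\vee(v+w)\), which is false in general -- already in \(U=\R\) with \(u=2\), \(v=0\), \(w=1\) the left-hand side is \(3\) and the right-hand side is \(2\). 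So \eqref{eq:basic_prop_f-alg_6e} is not proved as written. It can be repaired along your own reduction: by \eqref{eq:basic_prop_f-alg_3}, \(u\vee v+u\vee w=(u+u\vee w)\vee(u+v)\vee(v+w)\), and each of the three terms is bounded by \(u+u\vee(v+w)\) (using \(w\leq v+w\), \(v\leq v+w\leq u\vee(v+w)\), and \(u\geq 0\), respectively). The paper avoids the reduction altogether and uses the shorter chain \(u\wedge(v+w)=(u\wedge(v+w))\wedge u\leq((u+w)\wedge(v+w))\wedge u=(w+u\wedge v)\wedge u\leq(w+u\wedge v)\wedge(u+u\wedge v)=u\wedge v+u\wedge w\), which applies \eqref{eq:basic_prop_f-alg_4} twice together with positivity.

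A secondary, recoverable issue is your justification of \eqref{eq:basic_prop_f-alg_6b}: the facts you cite (namely \eqref{eq:basic_prop_f-alg_4b} applied to \((u^+,u^-)\), the identity \(u^+-u^-=u\), and \(u^++u^-\geq u^+\vee u^-\)) only yield \(u^+\wedge u^-\geq 0\), which is trivial, and do not force \(u^+\wedge u^-=0\); note that your subsequent derivation of \eqref{eq:basic_prop_f-alg_6} leans on this. A correct one-line argument with the tools you already have is \(u^+\wedge u^-=(u+u^-)\wedge(0+u^-)=u^-+u\wedge 0=u^--u^-=0\), using \eqref{eq:basic_prop_f-alg_4}, \eqref{eq:basic_prop_f-alg_5} and \(u\wedge 0=-u^-\). (The paper instead proves \eqref{eq:basic_prop_f-alg_6} first and then gets \eqref{eq:basic_prop_f-alg_6b} from \eqref{eq:basic_prop_f-alg_4b}; your reversed order is fine once \eqref{eq:basic_prop_f-alg_6b} is correctly established.)
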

\begin{proof}
Albeit well-known and rather elementary, we report the proof for the reader's usefulness.\\
{\color{blue}\eqref{eq:basic_prop_f-alg_1}} Since \(\lambda u\leq\lambda(u\vee v)\) and \(\lambda v\leq\lambda(u\vee v)\),
we obtain \(\lambda u\vee\lambda v\leq\lambda(u\vee v)\). This estimate implies that \(u\vee v=(\lambda^{-1}\lambda u)\vee(\lambda^{-1}\lambda v)
\leq\lambda^{-1}(\lambda u\vee\lambda v)\). Multiplying by \(\lambda\), we get \(\lambda(u\vee v)\leq\lambda u\vee\lambda v\).\\
{\color{blue}\eqref{eq:basic_prop_f-alg_1b}} Trivial if \(\lambda=0\). If \(\lambda>0\), then \eqref{eq:basic_prop_f-alg_1} yields
\(\lambda|u|=\lambda((-u)\vee u)=(-\lambda u)\vee\lambda u=|\lambda u|\).\\
{\color{blue}\eqref{eq:basic_prop_f-alg_2}} Since \(-u\vee v\leq -u,-v\leq -u\wedge v\), we have
\(-u\vee v\leq(-u)\wedge(-v)\) and \(-u\wedge v\geq(-u)\vee(-v)\). The latter estimate applied to
\(-u\), \(-v\) gives \((-u)\wedge(-v)\leq-u\vee v\), thus \(-u\vee v=(-u)\wedge(-v)\).\\
{\color{blue}\eqref{eq:basic_prop_f-alg_3}} Notice that \(v=(u+v)-u\leq(u+v)\vee(u+w)-u\) and \(w\leq(u+v)\vee(u+w)-u\),
so that one has \(v\vee w\leq(u+v)\vee(u+w)-u\). By applying it to \(u'\coloneqq-u\), \(v'\coloneqq u+v\), and \(w'\coloneqq u+w\),
we obtain also the converse inequality \((u+v)\vee(u+w)\leq v\vee w+u\).\\
{\color{blue}\eqref{eq:basic_prop_f-alg_4}} It follows from \eqref{eq:basic_prop_f-alg_2} and \eqref{eq:basic_prop_f-alg_3} that
\[
-u-v\wedge w=-u+(-v)\vee(-w)=(-u-v)\vee(-u-w)=-(u+v)\wedge(u+w).
\]
{\color{blue}\eqref{eq:basic_prop_f-alg_4b}} Observe that
\[\begin{split}
u\vee v+u\wedge v&\overset{\eqref{eq:basic_prop_f-alg_3}}=u+0\vee(v-u)+u\wedge v\overset{\eqref{eq:basic_prop_f-alg_4}}=
u+0\vee(v-u)+v+(u-v)\wedge 0\\&\overset{\eqref{eq:basic_prop_f-alg_2}}=u-0\wedge(u-v)+v+(u-v)\wedge 0=u+v.
\end{split}\]
{\color{blue}\eqref{eq:basic_prop_f-alg_5}} By using \eqref{eq:basic_prop_f-alg_3}, we obtain \(u^+-u^-=u\vee 0-(-u)\vee 0=u+0\vee(-u)-(-u)\vee 0=u\).\\
{\color{blue}\eqref{eq:basic_prop_f-alg_6}} First, we observe that \(u^+\vee u^-=(u\vee 0)\vee((-u)\vee 0)=(u\vee(-u))\vee 0=|u|\vee 0=|u|\). By using
\eqref{eq:basic_prop_f-alg_5}, \eqref{eq:basic_prop_f-alg_1}, and \eqref{eq:basic_prop_f-alg_3}, we get that \(u^+ +u^-=2u^+ -u=(2u)\vee 0-u=u\vee(-u)=|u|\).\\
{\color{blue}\eqref{eq:basic_prop_f-alg_6b}} Just notice that
\[
u^+\wedge u^-\overset{\eqref{eq:basic_prop_f-alg_4b}}=u^+ +u^- -u^+\vee u^-\overset{\eqref{eq:basic_prop_f-alg_6}}=|u|-|u|=0.
\]
{\color{blue}\eqref{eq:basic_prop_f-alg_6c}} Given that \(u+v\leq u\vee 0+v\vee 0\) and \(0\leq u\vee 0+v\vee 0\), we deduce that
\[
(u+v)^+=(u+v)\vee 0\leq u\vee 0+v\vee 0=u^+ +v^+.
\]
{\color{blue}\eqref{eq:basic_prop_f-alg_6d}} Observe that
\[\begin{split}
|u+v|&\overset{\eqref{eq:basic_prop_f-alg_6}}=(u+v)^+ +(u+v)^-=(u+v)^+ +(-u-v)^+
\overset{\eqref{eq:basic_prop_f-alg_6c}}\leq u^+ +v^+ +(-u)^+ +(-v)^+\\
&\overset{\phantom{\eqref{eq:basic_prop_f-alg_6}}}=u^+ +v^+ +u^- +v^-\overset{\eqref{eq:basic_prop_f-alg_6}}=|u|+|v|.
\end{split}\]
{\color{blue}\eqref{eq:basic_prop_f-alg_6e}} It follows from the estimates
\[\begin{split}
u\wedge(v+w)&=(u\wedge(v+w))\wedge u\leq((u+w)\wedge(v+w))\wedge u\overset{\eqref{eq:basic_prop_f-alg_4}}=(w+u\wedge v)\wedge u\\
&\leq(w+u\wedge v)\wedge(u+u\wedge v)\overset{\eqref{eq:basic_prop_f-alg_4}}=u\wedge v+u\wedge w.
\end{split}\]
Therefore, the proof of the statement is complete.
\end{proof}
A \textbf{Riesz subspace} of \(U\) is a linear subspace which is also a sublattice. A \textbf{homomorphism of Riesz spaces}
\(\phi\colon U\to V\) is a linear operator such that
\[
\phi(u)\wedge\phi(v)=0,\quad\text{ for every }u,v\in U\text{ such that }u\wedge v=0.
\]
By virtue of \cite[352G]{Fremlin3}, each homomorphism of Riesz spaces \(\phi\colon U\to V\) has the following property:
\begin{equation}\label{eq:hom_Riesz_and_norm}
|\phi(u)|=\phi(|u|),\quad\text{ for every }u\in U.
\end{equation}
We denote by \(U^+\) the \textbf{positive cone} of a Riesz space \(U\), namely
\[
U^+\coloneqq\big\{u\in U\;\big|\;u\geq 0\big\}.
\]
A Riesz subspace \(V\) of a Riesz space \(U\) is said to be \textbf{super-order-dense} in \(U\) if for any \(u\in U^+\)
there exists a non-decreasing sequence \((u_n)_{n\in\N}\subset V^+\) such that \(u=\sup_{n\in\N}u_n\).
Moreover, a Riesz subspace \(V\) of a Riesz space \(U\) is said to be \textbf{solid} provided \(v\in V\) holds
whenever \(v\in U\) and there exists \(u\in V\) such that \(|v|\leq|u|\). For the reader's usefulness, we prove the following result:
\begin{proposition}\label{prop:Dedekind_is_Archimedean}
Any Dedekind \(\sigma\)-complete Riesz space \(U\) is \textbf{Archimedean}, \emph{i.e.} for any \(u,v\in U\)
\[
nu\leq v,\,\text{ for every }n\in\N\quad\Longrightarrow\quad u\leq 0.
\]
\end{proposition}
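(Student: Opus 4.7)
The plan is to produce $u \leq 0$ directly by constructing the supremum of the countable set $\{nu : n \in \mathbb{N}\}$ and showing that translating it by $-u$ still yields an upper bound, which forces $u \leq 0$.

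First I would observe that the set $S \coloneqq \{nu \mid n \in \mathbb{N}\}$ is countable and non-empty, and by hypothesis $v$ is an upper bound for $S$. Since $U$ is Dedekind $\sigma$-complete, the supremum $w \coloneqq \sup S$ exists in $U$. Note that I am using the formulation in Definition vi), which applies to arbitrary countable subsets admitting an upper bound (not just monotone sequences), so I do not need to worry about whether $(nu)_{n \in \mathbb{N}}$ is non-decreasing — this is relevant because without knowing the sign of $u$ a priori, the sequence $(nu)$ need not be monotone.

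Next I would exploit translation invariance of $\leq$. For every $n \in \mathbb{N}$, $(n+1)u \in S$, so $(n+1)u \leq w$, and adding $-u$ to both sides (which preserves the order, by the axioms of a partially ordered linear space) gives $nu \leq w - u$. Hence $w-u$ is an upper bound of $S$, and by the defining property of $w = \sup S$ we conclude $w \leq w - u$, i.e.\ $u \leq 0$, as required.

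There is essentially no genuine obstacle here; the only subtle point worth flagging in the write-up is that we are using the full strength of Dedekind $\sigma$-completeness for arbitrary countable sets rather than for monotone sequences — otherwise one would first have to reduce to the case $u \geq 0$ (via $nu^+ = (nu)^+ \leq v^+$, using \eqref{eq:basic_prop_f-alg_1} and the order-preserving character of $(\cdot)^+$) to ensure that $(nu^+)_{n \in \mathbb{N}}$ is non-decreasing before invoking completeness. Either route yields the conclusion in a few lines.
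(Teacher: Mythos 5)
Your proof is correct and follows essentially the same route as the paper: take $w=\sup\{nu:n\in\N\}$, which exists by Dedekind $\sigma$-completeness, and compare $w$ with its translate by $-u$ to force $u\leq 0$. The only (harmless) difference is that you use minimality of the supremum against the upper bound $w-u$, whereas the paper invokes the identity $u+\sup_n nu=\sup_n(u+nu)$; your variant even sidesteps having to justify that translation commutes with suprema.
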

\begin{proof}
Suppose \(nu\leq v\) holds for every \(n\in\N\). This implies that \(\{nu\,:\,n\in\N\}\) has an upper
bound, thus the supremum \(w\coloneqq\sup_{n\in\N}nu\in U\) exists. It is then easy to verify that
\[
u+w=u+\sup_{n\in\N}nu=\sup_{n\in\N}(u+nu)=\sup_{n\in\N}(n+1)u\leq w,
\]
whence (by subtracting \(w\)) it follows that \(u\leq 0\). Therefore, the statement is achieved.
\end{proof}
\begin{definition}[Disjoint set]
Let \(U\) be a Riesz space. Let \(S\) be a non-empty subset of \(U\). Then we say that \(S\) is \textbf{disjoint} provided it holds that
\[
|u|\wedge|v|=0,\quad\text{ for every }u,v\in S\text{ such that }u\neq v.
\]
When \(S\) is a finite disjoint set \(\{u_1,\ldots,u_n\}\subset U\), we say that the elements \(u_1,\ldots,u_n\) are disjoint.
\end{definition}

Observe that if \(\phi\colon U\to V\) is a homomorphism of Riesz spaces, then it holds that
\begin{equation}\label{eq:img_disjoint}
\big\{\phi(u)\;\big|\;u\in S\big\}\subset V\,\text{ is disjoint,}\quad\text{ for every }\varnothing\neq S\subset U\text{ disjoint.}
\end{equation}
Indeed, if \(u,v\in S\) and \(\phi(u)\neq\phi(v)\), then \(u\neq v\) and \(|\phi(u)|\wedge|\phi(v)|=\phi(|u|)\wedge\phi(|v|)=0\) by \eqref{eq:hom_Riesz_and_norm}.
\subsubsection{The theory of \(f\)-algebras}
Next, we recall the definition of \emph{\(f\)-algebra}, which is -- roughly speaking -- a Riesz space endowed with a multiplication operation that verifies suitable compatibility properties.
\begin{definition}[\(f\)-algebra]\label{def:f-algebra}
An \textbf{\(f\)-algebra} \(U=(U,+,\cdot,\leq,\times)\) is a Riesz space \((U,+,\cdot,\leq)\) together with a map
\(\times\colon U\times U\to U\) -- called a \textbf{multiplication} -- such that the following properties hold:
\begin{subequations}\begin{align}
\label{eq:def_f-alg_a}u\times(v\times w)=(u\times v)\times w,&\quad\text{ for every }u,v,w\in U,\\
\label{eq:def_f-alg_b}(u+v)\times w=(u\times w)+(v\times w),&\quad\text{ for every }u,v,w\in U,\\
\label{eq:def_f-alg_c}\lambda(u\times v)=(\lambda u)\times v,&\quad\text{ for every }u,v\in U\text{ and }\lambda\in\R,\\
\label{eq:def_f-alg_d}u\times v=v\times u,&\quad\text{ for every }u,v\in U,\\
\label{eq:def_f-alg_e}u\times v\geq 0,&\quad\text{ for every }u,v\in U^+,\\
\label{eq:def_f-alg_f}(u\times w)\wedge v=0,&\quad\text{ for every }u,v\in U\text{ with }u\wedge v=0\text{ and }w\in U^+,\\
\label{eq:def_f-alg_g}\exists\,{\bf 1}_U\in U:\;\;\;u\times{\bf 1}_U=u,&\quad\text{ for every }u\in U.
\end{align}\end{subequations}
A \textbf{homomorphism of \(f\)-algebras} \(\phi\colon U\to V\) is a homomorphism of Riesz spaces that is uniferent, \emph{i.e.}
\(\phi({\bf 1}_U)={\bf 1}_V\), and preserves the multiplication, \emph{i.e.} \(\phi(u\times v)=\phi(u)\times\phi(v)\) for all \(u,v\in U\).
An \textbf{\(f\)-subalgebra} of \(U\) is a Riesz subspace \(V\) of \(U\) closed under multiplication and with \({\bf 1}_V={\bf 1}_U\).
\end{definition}
\begin{remark}{\rm
Some comments on Definition \ref{def:f-algebra} are in order:
\begin{itemize}
\item[\(\rm i)\)] The structure \((U,+,\cdot,\leq,\times)\) introduced in Definition \ref{def:f-algebra} is usually called a
\emph{commutative \(f\)-algebra with multiplicative identity}. For the sake of brevity, we call it just an \emph{\(f\)-algebra}.
\item[\(\rm ii)\)] It follows from \eqref{eq:def_f-alg_a}, \eqref{eq:def_f-alg_b}, \eqref{eq:def_f-alg_d}, and \eqref{eq:def_f-alg_g}
that the triple \((U,+,\times)\) is a commutative ring with identity \({\bf 1}_U\). The field \(\R\) can be viewed as a subring of \(U\)
via the map \(\R\ni\lambda\mapsto\lambda{\bf 1}_U\in U\).
\item[\(\rm iii)\)] It follows from \eqref{eq:def_f-alg_c} and \eqref{eq:def_f-alg_g} that \(\lambda u=(\lambda{\bf 1}_U)\times u\)
holds for every \(\lambda\in\R\) and \(u\in U\), thus the multiplicative identity \({\bf 1}_U\) can be unambiguously denoted by \(1\).
\end{itemize}
Given any \(u,v\in U\), for the sake of brevity we will typically write \(uv\) instead of \(u\times v\).
\fr}\end{remark}
\begin{example}{\rm
The real line \(\R=(\R,+,\cdot,\leq,\cdot)\) is an \(f\)-algebra.
\fr}\end{example}
\begin{proposition}[Basic properties of \(f\)-algebras]\label{prop:basic_prop_f-alg}
Let \(U\) be an \(f\)-algebra. Then it holds that:
\begin{subequations}\begin{align}
\label{eq:basic_prop_f-alg_7}u^+ u^-=0,&\quad\text{ for every }u\in U,\\
\label{eq:basic_prop_f-alg_8}(uv)^+=u v^+,&\quad\text{ for every }u\in U^+\text{ and }v\in U,\\
\label{eq:prod_preserv_1}|u-v|=|u+v|,&\quad\text{ for every }u,v\in U\text{ with }u\wedge v=0,\\
\label{eq:prod_preserv_2}|u+v|=|u|+|v|,&\quad\text{ for every }u,v\in U\text{ with }|u|\wedge|v|=0,\\
\label{eq:prod_preserv}|uv|=|u||v|,&\quad\text{ for every }u,v\in U,\\
\label{eq:basic_prop_f-alg_9}uv\leq uw,&\quad\text{ for every }u\in U^+\text{ and }v,w\in U\text{ with }v\leq w.
\end{align}\end{subequations}
\end{proposition}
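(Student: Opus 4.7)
The keystone is~\eqref{eq:basic_prop_f-alg_7}: once $u^+u^-=0$ is in hand, the remaining identities follow from iterated applications of the $f$-algebra axiom~\eqref{eq:def_f-alg_f} combined with the uniqueness of the decomposition of a Riesz space element into its positive and negative parts.

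To prove~\eqref{eq:basic_prop_f-alg_7}, I would start from $u^+\wedge u^-=0$ (which is~\eqref{eq:basic_prop_f-alg_6b}) and apply~\eqref{eq:def_f-alg_f} twice, with multipliers $u^-,u^+\in U^+$, to obtain $u^+u^-\wedge u^\pm=0$. Then~\eqref{eq:basic_prop_f-alg_6e} in $U^+$ yields $u^+u^-\wedge(u^++u^-)=u^+u^-\wedge|u|=0$, and one more application of~\eqref{eq:def_f-alg_f}, with multiplier $|u|\in U^+$, promotes this to $u^+u^-\wedge|u|^2=0$. On the other hand, $u^2=(u^+-u^-)^2\geq 0$ by~\eqref{eq:def_f-alg_e}, so ring distributivity forces $|u|^2=(u^++u^-)^2=u^2+4u^+u^-\geq 4u^+u^-$, whence $u^+u^-\leq|u|^2/4\leq|u|^2$. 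Combining,
\[
u^+u^-=u^+u^-\wedge u^+u^-\leq u^+u^-\wedge|u|^2=0,
\]
and together with $u^+u^-\geq 0$ from~\eqref{eq:def_f-alg_e} we conclude $u^+u^-=0$. The main obstacle is precisely this step: passing from the ``first-order'' disjointness $u^+u^-\wedge|u|=0$ to the ``second-order'' disjointness $u^+u^-\wedge|u|^2=0$, paired with the AM-GM-type bound $4u^+u^-\leq|u|^2$ coming from non-negativity of $u^2$, is what turns mere disjointness into outright annihilation without relying on any Archimedean hypothesis.

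As a consequence, for $u,v\in U^+$ with $u\wedge v=0$ the element $s\coloneqq u-v$ satisfies $s^+=u-u\wedge v=u$ (via the general identity $(u-v)^+=u-u\wedge v$, an immediate consequence of~\eqref{eq:basic_prop_f-alg_2} and~\eqref{eq:basic_prop_f-alg_3}) and $s^-=v$, whence $uv=s^+s^-=0$. With this lemma the remaining items are routine bookkeeping. For~\eqref{eq:prod_preserv_1}: $u\wedge v=0$ forces $u,v\geq 0$, so $|u-v|=s^++s^-=u+v=|u+v|$. For~\eqref{eq:basic_prop_f-alg_8}: write $uv=uv^+-uv^-$ with $uv^\pm\geq 0$ by~\eqref{eq:def_f-alg_e}; two applications of~\eqref{eq:def_f-alg_f} starting from $v^+\wedge v^-=0$ give $uv^+\wedge uv^-=0$, so by uniqueness of the disjoint positive/negative decomposition $(uv)^+=uv^+$. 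For~\eqref{eq:prod_preserv_2}: $|u|\wedge|v|=0$ propagates to $u^\pm\wedge v^\pm=0$ (using $u^\pm\leq|u|$ and $v^\pm\leq|v|$), and~\eqref{eq:basic_prop_f-alg_6e} then yields $(u^++v^+)\wedge(u^-+v^-)=0$; applying uniqueness to $u+v=(u^++v^+)-(u^-+v^-)$ delivers $|u+v|=u^++v^++u^-+v^-=|u|+|v|$.

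Finally, for~\eqref{eq:prod_preserv} I would expand $uv=u^+v^++u^-v^--u^+v^--u^-v^+$ and verify that the four products $u^\pm v^\pm$ are pairwise disjoint by iterating~\eqref{eq:def_f-alg_f} from $u^+\wedge u^-=v^+\wedge v^-=0$; another application of~\eqref{eq:basic_prop_f-alg_6e} gives disjointness of $u^+v^++u^-v^-$ and $u^+v^-+u^-v^+$, and uniqueness of the disjoint decomposition then yields $|uv|=(u^++u^-)(v^++v^-)=|u||v|$. The last item~\eqref{eq:basic_prop_f-alg_9} is immediate: $uw-uv=u(w-v)$ is a product of two elements of $U^+$, hence non-negative by~\eqref{eq:def_f-alg_e}.
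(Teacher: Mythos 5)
There is a genuine gap in your keystone step, the proof of \eqref{eq:basic_prop_f-alg_7}. You assert that \(u^2=(u^+-u^-)^2\geq 0\) ``by \eqref{eq:def_f-alg_e}'', but \eqref{eq:def_f-alg_e} only gives positivity of products of two elements of \(U^+\), and \(u=u^+-u^-\) need not be positive; expanding, \((u^+-u^-)^2=(u^+)^2+(u^-)^2-2u^+u^-\), and the negative term \(-2u^+u^-\) is exactly what prevents \eqref{eq:def_f-alg_e} from applying. In the standard development (and in this paper) the fact that squares are positive is itself \emph{derived from} \(u^+u^-=0\) (see the proof of Lemma \ref{lem:prop_idem} iv)), so as written your argument is circular: the bound \(4u^+u^-\leq|u|^2\) that drives your ``second-order disjointness plus AM--GM'' mechanism has no independent justification, and without it the chain \(u^+u^-\leq|u|^2\), \(u^+u^-\wedge|u|^2=0\Rightarrow u^+u^-=0\) collapses. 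Everything else in that step (the passage from \(u^+u^-\wedge|u|=0\) to \(u^+u^-\wedge|u|^2=0\) via \eqref{eq:def_f-alg_f}, and the final squeeze) is correct but moot.

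The fix is much simpler and is the paper's proof: iterate \eqref{eq:def_f-alg_f} on the \emph{newly obtained} disjoint pair rather than always returning to \(u^+\wedge u^-=0\). From \(u^+\wedge u^-=0\) with multiplier \(u^-\in U^+\) you get \((u^+u^-)\wedge u^-=0\); now apply \eqref{eq:def_f-alg_f} to the disjoint pair \(u^-\) and \(u^+u^-\) with multiplier \(u^+\in U^+\) to get \((u^-u^+)\wedge(u^+u^-)=u^+u^-=0\). Your two applications both used the original disjointness and only produced \(u^+u^-\wedge u^\pm=0\), which is why you felt the need for the \(|u|^2\) detour. Once \eqref{eq:basic_prop_f-alg_7} is established this way, the remainder of your proposal is sound and essentially parallels the paper: your auxiliary lemma (\(u\wedge v=0\), \(u,v\geq 0\Rightarrow uv=0\) via \(s=u-v\), \(s^+=u\), \(s^-=v\)), the iterated use of \eqref{eq:def_f-alg_f} for \eqref{eq:basic_prop_f-alg_8} and \eqref{eq:prod_preserv}, the use of \eqref{eq:basic_prop_f-alg_6e} for \eqref{eq:prod_preserv_2}, and the uniqueness of the disjoint positive/negative decomposition (which the paper replaces by direct appeals to \eqref{eq:prod_preserv_1} and \eqref{eq:prod_preserv_2}) are all legitimate, as is the one-line proof of \eqref{eq:basic_prop_f-alg_9}.
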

\begin{proof}\ \\
{\color{blue}\eqref{eq:basic_prop_f-alg_7}} Given that \(u^+\wedge u^-=0\) by \eqref{eq:basic_prop_f-alg_6b} and \(u^-\geq 0\), we obtain that
\(u^+ u^-\wedge u^-=0\) by \eqref{eq:def_f-alg_f}. Since also \(u^+\geq 0\), by using again \eqref{eq:def_f-alg_f} we can conclude
that \(u^+u^-=u^+u^-\wedge u^+u^-=0\).\\
{\color{blue}\eqref{eq:basic_prop_f-alg_8}} Since \(v^+\wedge v^-=0\) by \eqref{eq:basic_prop_f-alg_6b} and \(u\geq 0\),
we deduce from \eqref{eq:def_f-alg_f} that \(uv^+\wedge uv^-=0\), so
\[
(uv)^+\overset{\eqref{eq:basic_prop_f-alg_5}}=(uv^+-uv^-)^+\overset{\eqref{eq:basic_prop_f-alg_3}}=uv^+ +(-uv^-)\vee(-uv^+)
\overset{\eqref{eq:basic_prop_f-alg_2}}=uv^+ -uv^+\wedge uv^-=uv^+.
\]
{\color{blue}\eqref{eq:prod_preserv_1}} Note that \eqref{eq:basic_prop_f-alg_3} yields \((u-v)^+=(u-v)\vee 0=u-u\wedge v=u\) and \((u-v)^-=v\).
Then an application of \eqref{eq:basic_prop_f-alg_6} gives \(|u-v|=(u-v)^+ +(u-v)^-=u+v=|u+v|\), thus getting \eqref{eq:prod_preserv_1}.\\
{\color{blue}\eqref{eq:prod_preserv_2}} Let us start by observing that
\[\begin{split}
(u^+ +v^+)\wedge(u^- +v^-)&\overset{\eqref{eq:basic_prop_f-alg_6e}}\leq u^+\wedge u^- +v^+\wedge u^- +u^+\wedge v^- +v^+\wedge v^-\\
&\overset{\eqref{eq:basic_prop_f-alg_6b}}=v^+\wedge u^- +u^+\wedge v^-\leq 2\,|u|\wedge|v|=0.
\end{split}\]
Hence, \eqref{eq:prod_preserv_1} yields \(|u+v|=|(u^+ +v^+)-(u^- +v^-)|=|u^+ +v^+|+|u^- +v^-|=|u|+|v|\).\\
{\color{blue}\eqref{eq:prod_preserv}} Given that \(u^+\wedge u^-=v^+\wedge v^-=0\) by \eqref{eq:basic_prop_f-alg_6b}, we deduce from
\eqref{eq:def_f-alg_f} that \(w\wedge w'=0\) holds whenever \(w,w'\in\{u^+ v^+, u^+ v^-,u^- v^+,u^- v^-\}\) satisfy \(w\neq w'\).
Then by applying \eqref{eq:prod_preserv_2} we get
\[\begin{split}
|uv|&=|(u^+ -v^-)(v^+ -v^-)|=|u^+ v^+ -u^+ v^- -u^- v^+ +u^- v^-|\\&=u^+ v^+ +u^+ v^- +u^- v^+ +u^- v^-=(u^+ +u^-)(v^+ +v^-)=|u||v|.
\end{split}\]
{\color{blue}\eqref{eq:basic_prop_f-alg_9}} Since \(w-v\geq 0\), we know from \eqref{eq:def_f-alg_e} that \(uw-uv=(w-v)u\geq 0\), as desired.
\end{proof}
\begin{proposition}\label{prop:char_disjoint}
Let \(U\) be an \(f\)-algebra. Let \(S\subset U\) be a given non-empty set. Then it holds
\[
S\text{ is disjoint}\quad\Longrightarrow\quad uv=0,\text{ for every }u,v\in S\text{ such that }u\neq v.
\]
If in addition the \(f\)-algebra \(U\) is Archimedean, then the converse implication is verified as well.
\end{proposition}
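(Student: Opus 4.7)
The plan is to handle the two implications separately. The forward direction will be a purely algebraic manipulation using the multiplicative axioms of an \(f\)-algebra; the converse is where the Archimedean hypothesis enters, through a ``no positive nilpotents'' lemma.

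For the forward implication, pick distinct \(u,v\in S\), so that \(|u|\wedge|v|=0\) by hypothesis. The identity \eqref{eq:prod_preserv} reduces the task of showing \(uv=0\) to showing \(|u||v|=0\). I plan to obtain the latter by applying \eqref{eq:def_f-alg_f} twice: first with the multiplier \(w=|v|\in U^+\) to produce \((|u||v|)\wedge|v|=0\), and then once more with \(w=|u|\in U^+\) to upgrade this into \((|u||v|)\wedge(|u||v|)=|u||v|=0\). Feeding this back through \eqref{eq:prod_preserv} gives \(uv=0\).

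For the converse, assume now \(U\) is Archimedean and \(uv=0\) for every pair of distinct elements of \(S\). By \eqref{eq:prod_preserv}, this rewrites as \(|u||v|=0\). Set \(c\coloneqq|u|\wedge|v|\in U^+\). From \(0\leq c\leq|u|\) and \(0\leq c\leq|v|\), two applications of \eqref{eq:basic_prop_f-alg_9} yield \(c^2\leq|u||v|=0\), while \eqref{eq:def_f-alg_e} forces \(c^2\geq 0\). Thus \(c\in U^+\) with \(c^2=0\), and the whole matter reduces to the claim that in an Archimedean \(f\)-algebra any \(c\in U^+\) with \(c^2=0\) must vanish.

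The plan for this last step is to introduce \(q_n\coloneqq(nc-1)^+\in U^+\) for each \(n\in\N\) and to show \(q_n=0\). On the one hand, using \eqref{eq:basic_prop_f-alg_8} with \(c\in U^+\) together with \(c^2=0\), we get \(cq_n=(c(nc-1))^+=(nc^2-c)^+=(-c)^+=0\). On the other hand, the identity \(q_n(nc-1)=q_n^2\) (which is a direct consequence of \eqref{eq:basic_prop_f-alg_7} applied to \(nc-1\)) combined with \(cq_n=0\) gives \(q_n^2=q_n(nc-1)=nq_n c-q_n=-q_n\), that is, \(q_n^2+q_n=0\). Since both summands are positive by \eqref{eq:def_f-alg_e}, each vanishes; hence \(q_n=0\) and \(nc\leq 1\) for every \(n\in\N\). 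The Archimedean property then forces \(c\leq 0\), and together with \(c\geq 0\) yields \(c=0\), as required.

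The main obstacle is locating the auxiliary element in the nilpotent-killing step: the Archimedean hypothesis is purely order-theoretic, whereas \(c^2=0\) is multiplicative, so a clever choice like \(q_n=(nc-1)^+\) is needed to bridge the two. Once the two identities \(cq_n=0\) and \(q_n(nc-1)=q_n^2\) are in place, the rest of the argument is routine bookkeeping with the basic properties of \(f\)-algebras.
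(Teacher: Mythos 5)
Your proof is correct and follows essentially the same route as the paper: the forward implication is the paper's argument verbatim (two applications of \eqref{eq:def_f-alg_f} followed by \eqref{eq:prod_preserv}), and the converse rests on the same key ingredients, namely the element \((nc-1)^+\) with \(c=|u|\wedge|v|\), the bound \(c^2\leq|u||v|\), and the Archimedean axiom. The only difference is organizational: you argue directly, isolating the lemma that an Archimedean \(f\)-algebra has no positive square-zero elements and deducing \(q_n=0\) for every \(n\) from \(q_n^2+q_n=0\), whereas the paper argues contrapositively, fixing a single \(n\) with \(nw\nleq 1\) and bounding \(0\neq(nw-1)^+\leq n^2|u||v|\); both computations are sound.
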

\begin{proof}
Let us prove the first part of the statement. Fix any \(u,v\in S\) with \(u\neq v\) and \(|u|\wedge|v|=0\). We can argue as in the proof
of \eqref{eq:basic_prop_f-alg_7}: using \eqref{eq:def_f-alg_f} twice, we first obtain that \(|u||v|\wedge|v|=0\) and then that
\(|u||v|=|u||v|\wedge|u||v|=0\). Therefore, \eqref{eq:prod_preserv} yields \(|uv|=0\) and thus accordingly \(uv=0\).

To prove the second part of the statement, assume that \(U\) is Archimedean. We aim to show that if there exist \(u,v\in S\)
such that \(w\coloneqq|u|\wedge|v|\neq 0\), then \(S\) is not disjoint. Notice that there exists \(n\in\N\) such that \(nw\nleq 1\).
Denote \(w_+\coloneqq(nw-1)^+\) and \(w_-\coloneqq(nw-1)^-\), thus \(w_+\neq 0\). Observe also that \(w_-=1-nw\wedge 1\) by
\eqref{eq:basic_prop_f-alg_2},\eqref{eq:basic_prop_f-alg_3} and that \(w_-=(|u|\wedge|v|)^2\leq|u||v|\). Therefore,
\[
w_+=w_+(w_-+nw\wedge 1)=w_+ w_- +w_+(nw\wedge 1)\overset{\eqref{eq:basic_prop_f-alg_7}}=w_+(nw\wedge 1)\leq nw(nw\wedge 1)\leq(nw)^2\leq n^2|u||v|.
\]
Given that \(w_+\neq 0\), we finally deduce that \(|uv|=|u||v|\neq 0\), yielding the sought conclusion.
\end{proof}
\subsection{Idempotent elements}\label{ss:idempotent}
Given an \(f\)-algebra \(U\), we define the family of all \textbf{idempotent elements} of \(U\) as follows:
\begin{equation}\label{eq:idem}
\Idem(U)\coloneqq\big\{u\in U\;\big|\;u^2=u\big\},
\end{equation}
where we adopt the shorthand notation
\[
u^k\coloneqq\underset{k\text{ times}}{\underbrace{u\times\dots\times u}}\in U,\quad\text{ for every }u\in U\text{ and }k\in\N,
\]
with the convention that \(u^0\coloneqq 1\). Note that \(0\in\Idem(U)\) and \(1\in\Idem(U)\) for every \(f\)-algebra \(U\).
\begin{lemma}[Properties of \(\Idem(U)\)]\label{lem:prop_idem}
Let \(U\) be an \(f\)-algebra. Then the following properties hold:
\begin{itemize}
\item[\(\rm i)\)] \(uv\in\Idem(U)\) for every \(u,v\in\Idem(U)\).
\item[\(\rm ii)\)] \(u+v-2uv\in\Idem(U)\) for every \(u,v\in\Idem(U)\).
\item[\(\rm iii)\)] \(1-u\in\Idem(U)\) for every \(u\in\Idem(U)\).
\item[\(\rm iv)\)] \(0\leq u\leq 1\) for every \(u\in\Idem(U)\). In particular, \(\Idem(U)\) is order-bounded in \(U\).
\item[\(\rm v)\)] If \(u,v\in\Idem(U)\) satisfy \(uv=0\), then \(u+v\in\Idem(U)\) and \(u+v=u\vee v\).
\item[\(\rm vi)\)] If \(u\in U\) and \(v\in\Idem(U)\), then \(u-uv\) and \(v\) are disjoint.
\end{itemize}
\end{lemma}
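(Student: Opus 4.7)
My plan is to dispatch (i)--(iv) by direct computation from the $f$-algebra axioms, then isolate a small auxiliary identity on which both (v) and (vi) depend.

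For (i), I compute $(uv)^2 = uvuv = u^2 v^2 = uv$ using associativity \eqref{eq:def_f-alg_a}, commutativity \eqref{eq:def_f-alg_d}, and the idempotency of $u$ and $v$. For (ii), I expand $(u+v-2uv)^2$ via distributivity \eqref{eq:def_f-alg_b} and substitute $u^2=u$, $v^2=v$, $(uv)^2 = uv$ (the last from (i)) to collapse the square back to $u+v-2uv$. For (iii), $(1-u)^2 = 1-2u+u^2 = 1-u$. For (iv), \eqref{eq:def_f-alg_e} gives $u = u^2 \in U^+$; applying the same observation to $1-u \in \Idem(U)$ (justified by (iii)) yields $1-u \geq 0$, so $0 \leq u \leq 1$ and consequently $\Idem(U)$ is order-bounded between $0$ and $1$.

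The technical core that both (v) and (vi) share is the auxiliary identity $v \wedge (1-v) = 0$ for every $v \in \Idem(U)$. I would prove it by setting $s \coloneqq v \wedge (1-v) \in U^+$ and using \eqref{eq:basic_prop_f-alg_9} twice: from $s \leq v$ and $1-v \geq 0$ one has $(1-v)s \leq (1-v)v = 0$, and since $(1-v)s \geq 0$ by \eqref{eq:def_f-alg_e}, this forces $(1-v)s = 0$; symmetrically $vs = 0$. Then $s = s\cdot 1 = sv + s(1-v) = 0$.

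With this identity available, (v) is immediate: $(u+v)^2 = u^2+2uv+v^2 = u+v$ shows $u+v \in \Idem(U)$, so (iv) gives $u+v \leq 1$, i.e.\ $u \leq 1-v$, and monotonicity of $\wedge$ yields $0 \leq u\wedge v \leq (1-v)\wedge v = 0$; then \eqref{eq:basic_prop_f-alg_4b} produces $u \vee v = u+v - u\wedge v = u+v$. For (vi), I rewrite $|u-uv| = |u(1-v)| = |u|(1-v)$ using \eqref{eq:prod_preserv} together with $1-v \geq 0$ (from (iii) and (iv)), and note $|v| = v$; the desired $|u|(1-v) \wedge v = 0$ then follows by applying \eqref{eq:def_f-alg_f} to the auxiliary identity $(1-v)\wedge v = 0$ with the positive factor $|u|$.

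The only step I expect to require actual thought is the auxiliary identity. A naive attempt using only $s \leq v$, $s \leq 1-v$, and $v(1-v) = 0$ yields $s^2 = 0$, which in a general (possibly non-Archimedean) $f$-algebra does \emph{not} force $s = 0$. The trick is the decomposition $s = s(v + (1-v))$ combined with the two one-sided annihilations $sv = 0$ and $s(1-v) = 0$; this converts the partial information into the full conclusion without any appeal to the Archimedean property, which is not hypothesised in Definition \ref{def:f-algebra}.
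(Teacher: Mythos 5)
There is one step that does not hold as written: in (iv) you claim that \eqref{eq:def_f-alg_e} gives \(u=u^2\in U^+\). That axiom only asserts positivity of products of two elements of \(U^+\), so to apply it to \(u\cdot u\) you would already need \(u\geq 0\), which is exactly what (iv) is supposed to establish -- as stated, the step is circular. The fix is the one the paper uses: write \(u=u^+-u^-\) (\eqref{eq:basic_prop_f-alg_5}) and invoke \(u^+u^-=0\) (\eqref{eq:basic_prop_f-alg_7}), so that \(u^2=(u^+)^2+(u^-)^2\geq 0\) by \eqref{eq:def_f-alg_e} applied to the genuinely positive factors \(u^+,u^-\); then \(u=u^2\geq 0\), and \(u\leq 1\) follows from \(1-u\in\Idem(U)\) as you say. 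Since your proofs of (v) and (vi) lean on (iv) (positivity of \(v\), \(1-v\), and of \(u\wedge v\)), this repair is needed before the rest goes through, but once it is in place everything else in your argument is correct.

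Apart from that, your route differs from the paper's in an interesting way for (v). The paper proves \(u\vee v=u+v\) by a direct two-sided comparison, multiplying \(u+v\) by \(u\) and by \(1-u\) and using \(u(u+v)\leq u(u\vee v)\), \((1-u)(u+v)=(1-u)v\leq(1-u)(u\vee v)\); it only establishes \((1-v)\wedge v=0\) later, inside the proof of (vi), by showing every lower bound \(w\) of \(\{v,1-v\}\) satisfies \(w=vw+(1-v)w\leq 2v(1-v)=0\). You instead isolate \(v\wedge(1-v)=0\) as the common kernel of (v) and (vi) -- proved by the same decomposition-through-\(1=v+(1-v)\) trick, just phrased for the infimum \(s\) itself via the two annihilations \(vs=(1-v)s=0\) -- and then get (v) from \(u\leq 1-v\), monotonicity of \(\wedge\), and the lattice identity \eqref{eq:basic_prop_f-alg_4b}. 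Your organisation is slightly more economical (one auxiliary identity serving two items, and no ad hoc multiplication argument for \(u\vee v\)), at the cost of making (v) depend on (iv) and (iii); the paper's version of (v) is self-contained order-algebra manipulation. Both are valid and neither uses the Archimedean property, as you correctly note. Your treatment of (i)--(iii) and (vi) matches the paper's.
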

\begin{proof}\ \\
{\color{blue}\(\rm i)\)} Trivially, it holds that \((uv)^2=uvuv=u^2 v^2=uv\).\\
{\color{blue}\(\rm ii)\)} It follows from the observation that
\[\begin{split}
(u+v-2uv)^2&=u^2+uv-2u^2 v+vu+v^2-2uv^2-2u^2 v-2uv^2+4u^2 v^2\\&=u+uv-2uv+uv+v-2uv-2uv-2uv+4uv=u+v-2uv.
\end{split}\]
{\color{blue}\(\rm iii)\)} Just observe that \((1-u)(1-u)=1-2u+u^2=1-2u+u=1-u\).\\
{\color{blue}\(\rm iv)\)} Given any \(u\in U\), it holds \(u=u^+ -u^-\) and \(u^+ u^-=0\) by \eqref{eq:basic_prop_f-alg_5}
and \eqref{eq:basic_prop_f-alg_7}, respectively. Then
\[
u^2=(u^+ -u^-)(u^+ -u^-)=(u^+)^2-u^+ u^- -u^-u^+ +(u^-)^2=(u^+)^2+(u^-)^2\geq 0,
\]
where the last inequality follows from \eqref{eq:def_f-alg_e}. In particular, \(u=u^2\geq 0\) for every \(u\in\Idem(U)\).
Since \(1-u\in\Idem(U)\) by item ii), we also have that \(1-u\geq 0\), or equivalently that \(u\leq 1\).\\
{\color{blue}\(\rm v)\)} First, we may compute \((u+v)^2=u^2+2uv+v^2=u+v\), which shows that \(u+v\in\Idem(U)\). Moreover,
thanks to the fact that \(u\leq u+v\) and \(v\leq u+v\), we have that \(u\vee v\leq u+v\). Conversely, it holds that
\(u(u+v)=u^2\leq u(u\vee v)\) and \((1-u)(u+v)=u+v-u^2-uv=(1-u)v\leq(1-u)(u\vee v)\), thus accordingly
\(u+v=u(u+v)+(1-u)(u+v)\leq u(u\vee v)+(1-u)(u\vee v)=u\vee v\).\\
{\color{blue}\(\rm vi)\)} First of all, we aim to show that \((1-v)\wedge v=0\). Item iv) ensures that \((1-v)\wedge v\geq 0\).
Conversely, if \(w\in U\) is a lower bound for \(\{v,1-v\}\), then using \eqref{eq:basic_prop_f-alg_9} we can estimate
\[
w=vw+(1-v)w\leq v(1-v)+(1-v)v=v-v^2+v-v^2=0,
\]
which yields \((1-v)\wedge v=0\). Hence, \eqref{eq:def_f-alg_f} ensures that \(|u-uv|\wedge v=\big(|u|(1-v)\big)\wedge v=0\).
\end{proof}

Observe that if \(\phi\colon U\to V\) is a homomorphism of \(f\)-algebras, then it holds that
\begin{equation}\label{eq:img_Idem}
\phi(u)\in\Idem(V),\quad\text{ for every }u\in\Idem(U).
\end{equation}
Indeed, since \(\phi\) preserves the multiplication, we have \(\phi(u)^2=\phi(u^2)=\phi(u)\) for every \(u\in\Idem(U)\).
\begin{remark}\label{rmk:char_idem_ineq}{\rm
Given an \(f\)-algebra \(U\) and two elements \(u,v\in\Idem(U)\), it holds that
\[
u\leq v\qquad\Longleftrightarrow\qquad uv=u.
\]
Indeed, if \(uv=u\), then \(u=uv\leq v\). On the other hand, if \(u\leq v\), then \(u=u^2\leq uv\leq u\).
\fr}\end{remark}
\begin{definition}[Finite partition]
Let \(U\) be an \(f\)-algebra. Then a given set \((u_i)_{i=1}^n\subset\Idem(U)\) is said to be
a \textbf{finite partition} of an element \(u\in\Idem(U)\) provided it is disjoint and it satisfies
\[
u_1+\ldots+u_n=u.
\]
We denote by \(\mathcal P_f(u)\) the family of all finite partitions of \(u\).
\end{definition}

Thanks to Lemma \ref{lem:prop_idem} v), a disjoint family \((u_i)_{i=1}^n\subset\Idem(U)\) belongs to \(\mathcal P_f(u)\) if and only if
\[
\sup\{u_1,\ldots,u_n\}=u.
\]
Notice also that if \(\phi\colon U\to V\) is a homomorphism of \(f\)-algebras, then it holds that
\begin{equation}\label{eq:img_fin_partition}
\big(\phi(u_i)\big)_{i=1}^n\in\mathcal P_f\big(\phi(u)\big),\quad\text{ for every }u\in\Idem(U)\text{ and }(u_i)_{i=1}^n\in\mathcal P_f(u).
\end{equation}
Indeed, one has \(\phi(u_i)\in\Idem\big(\phi(v)\big)\) for every \(i=1,\ldots,n\) by \eqref{eq:img_Idem}, the elements \(\phi(u_1),\ldots,\phi(u_n)\) are disjoint
by \eqref{eq:img_disjoint}, and \(\phi(u_1)+\ldots+\phi(u_n)=\phi(u_1+\ldots+u_n)=\phi(u)\) by the linearity of \(\phi\).
\begin{definition}[Simple elements]
The \textbf{simple elements} of an \(f\)-algebra \(U\) are defined as
\[
\mathcal S(U)\coloneqq\bigg\{\sum_{i=1}^n\lambda_i u_i\;\bigg|\;n\in\N,\,(\lambda_i)_{i=1}^n\subset\R,\,(u_i)_{i=1}^n\in\mathcal P_f(\boldsymbol{1}_U)\bigg\}\subset U.
\]
The family of all \textbf{non-negative simple elements} of \(U\) is defined as \(\mathcal S^+(U)\coloneqq\mathcal S(U)\cap U^+\).
\end{definition}
\begin{lemma}\label{lem:finite_part}
Let \(U\) be an \(f\)-algebra and \(u\in\Idem(U)\). Then it holds that
\begin{equation}\label{eq:inters_P_f}
(u_i v_j)_{i,j}\in\mathcal P_f(u),\quad\text{ for every }(u_i)_{i=1}^n,(v_j)_{j=1}^m\in\mathcal P_f(u).
\end{equation}
In particular, the space \(\mathcal S(U)\) is an \(f\)-subalgebra of \(U\). More precisely, it holds that
\[
u+v=\sum_{i,j}(\lambda_i+\mu_j)u_i v_j,\quad uv=\sum_{i,j}\lambda_i\mu_j u_i v_j,\quad
u\vee v=\sum_{i,j}(\lambda_i\vee\mu_j)u_i v_j,\quad u\wedge v=\sum_{i,j}(\lambda_i\wedge\mu_j)u_i v_j,
\]
for every \(u=\sum_{i=1}^n\lambda_i u_i\in\mathcal S(U)\) and \(v=\sum_{j=1}^m\mu_j v_j\in\mathcal S(U)\).
\end{lemma}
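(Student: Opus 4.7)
The plan is to establish \eqref{eq:inters_P_f} first, which furnishes a common refinement of any two finite partitions of $u$, and then to derive the four explicit identities (and thereby the closure of $\mathcal S(U)$ under the four operations) by expanding $u$ and $v$ over this refinement.

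For \eqref{eq:inters_P_f}, each $u_i v_j$ is idempotent by Lemma \ref{lem:prop_idem} i), and distributivity of $\times$ over $+$ gives
\[
\sum_{i,j} u_i v_j = \Bigl(\sum_i u_i\Bigr)\Bigl(\sum_j v_j\Bigr) = u\cdot u = u^2 = u.
\]
Pairwise disjointness of the doubly-indexed family follows from two successive applications of \eqref{eq:def_f-alg_f}: when $i \neq i'$, from $u_i \wedge u_{i'} = 0$ and $v_j \in U^+$ one obtains $(u_i v_j) \wedge u_{i'} = 0$, and re-applying \eqref{eq:def_f-alg_f} to this new disjointness relation with the non-negative factor $v_{j'}$ yields $(u_i v_j) \wedge (u_{i'} v_{j'}) = 0$; the case $i = i'$, $j \neq j'$ is symmetric in the $v$'s.

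Once \eqref{eq:inters_P_f} is available, I expand $u = \sum_i \lambda_i u_i$ and $v = \sum_j \mu_j v_j$ over the common refinement by inserting $\boldsymbol 1_U = \sum_j v_j = \sum_i u_i$, getting
\[
u = \sum_{i,j} \lambda_i\, u_i v_j, \qquad v = \sum_{i,j} \mu_j\, u_i v_j.
\]
The sum formula then follows by linearity, while the product formula is obtained directly from $\bigl(\sum_i \lambda_i u_i\bigr)\bigl(\sum_j \mu_j v_j\bigr) = \sum_{i,j} \lambda_i \mu_j\, u_i v_j$ by the same distributivity.

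For the lattice identities, the key preliminary fact is that multiplication by any $w \in U^+$ distributes over $\vee$ (and hence, via \eqref{eq:basic_prop_f-alg_4b}, over $\wedge$): from the Riesz identity $a \vee b = \frac{1}{2}(a + b + |a - b|)$ combined with \eqref{eq:prod_preserv}, one computes $w(a \vee b) = \frac{1}{2}(wa + wb + |w||a-b|) = \frac{1}{2}(wa + wb + |wa - wb|) = (wa) \vee (wb)$. Next, I multiply the candidate identity by a single block $u_{i_0} v_{j_0}$: using the first direction of Proposition \ref{prop:char_disjoint} (disjoint idempotents have product zero), the expression $u_{i_0} v_{j_0} \cdot u$ collapses to $\lambda_{i_0} u_{i_0} v_{j_0}$ and similarly $u_{i_0} v_{j_0} \cdot v = \mu_{j_0} u_{i_0} v_{j_0}$; combined with the elementary identity $\alpha e \vee \beta e = (\alpha \vee \beta)\, e$ for $\alpha, \beta \in \R$ and $e \in U^+$ (which holds because $\alpha \geq \beta$ forces $(\alpha-\beta)e \geq 0$, hence $\alpha e \geq \beta e$), this reduces the claim to $u_{i_0} v_{j_0}(u \vee v) = (\lambda_{i_0} \vee \mu_{j_0})\, u_{i_0} v_{j_0}$. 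Summing over $(i_0, j_0)$ and using $\sum_{i,j} u_i v_j = \boldsymbol 1_U$ recovers $u \vee v = \sum_{i,j}(\lambda_i \vee \mu_j) u_i v_j$, and $u \wedge v$ is handled analogously (or derived from $u \wedge v = u + v - u \vee v$). The main obstacle is precisely this distributivity-of-multiplication-over-$\vee$ step, which is where the full $f$-algebra structure -- as opposed to just the ring-plus-lattice structure -- is genuinely exploited; everything else is bookkeeping.
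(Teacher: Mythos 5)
Your proposal is correct and follows essentially the same route as the paper: the paper likewise proves \eqref{eq:inters_P_f} by combining Lemma \ref{lem:prop_idem} i), a disjointness check (done there via the order estimate \((u_iv_j)\wedge(u_{i'}v_{j'})\leq(u_i\wedge u_{i'})\wedge(v_j\wedge v_{j'})\) rather than your double application of \eqref{eq:def_f-alg_f}, both of which work), and the distributive computation \(\sum_{i,j}u_iv_j=u^2=u\), and then dismisses the four identities as ``elementary computations.'' Your block-by-block argument for the lattice identities, resting on \(w(a\vee b)=(wa)\vee(wb)\) for \(w\in U^+\) via \eqref{eq:prod_preserv}, is a valid way of supplying exactly those omitted details.
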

\begin{proof}
Let us only check \eqref{eq:inters_P_f}. Once \eqref{eq:inters_P_f} is established, the remaining part of the statement follows via
elementary computations. Fix any \((u_i)_{i=1}^n,(v_j)_{j=1}^m\in\mathcal P_f(u)\). Lemma \ref{lem:prop_idem} i) ensures that
\(u_i v_j\in\Idem(U)\) for every \(i=1,\ldots,n\) and \(j=1,\ldots,m\). Moreover, whenever \((i,j)\neq(i',j')\) we have that
\((u_i v_j)\wedge(u_{i'}v_{j'})\leq(u_i\wedge u_{i'})\wedge(v_j\wedge v_{j'})=0\), thus \((u_i v_j)_{i,j}\) is a disjoint set.
Finally, it holds that \(\sum_{i,j}u_i v_j=(u_1+\ldots+u_n)(v_1+\ldots+v_m)=u\), which gives \((u_i v_j)_{i,j}\in\mathcal P_f(u)\).
\end{proof}
\subsection{Localisable \texorpdfstring{\(f\)}{f}-algebras}\label{ss:local_f-alg}
Let us now introduce the concept of \emph{localisable \(f\)-algebra}, which is a Dedekind \(\sigma\)-complete \(f\)-algebra `having plenty of idempotent elements'. Namely:

\begin{definition}[Localisable \(f\)-algebra]\label{def:localisable_f-alg}
Let \(U\) be a Dedekind \(\sigma\)-complete \(f\)-algebra whose multiplication map is \(\sigma\)-order-continuous on \(U^+\times U^+\).
Then we say that \(U\) is \textbf{localisable} provided the space of simple elements \(\mathcal S(U)\) is super-order-dense in \(U\).
By a \textbf{homomorphism of localisable \(f\)-algebras} we mean a \(\sigma\)-order-continuous homomorphism of \(f\)-algebras.
\end{definition}
\begin{remark}{\rm
On any Dedekind \(\sigma\)-complete \(f\)-algebra \(U\), the sum operator \(+\colon U\times U\to U\) is \(\sigma\)-order-continuous
on \(U^+\times U^+\). Indeed, if \((u_n)_{n\in\N},(v_m)_{m\in\N}\subset U^+\) are non-decreasing sequences, and we set
\(u\coloneqq\sup_{n\in\N}u_n\in U^+\) and \(v\coloneqq\sup_{m\in\N}v_m\in U^+\), then for any \(n,m\in\N\) we have that
\[
u_n=(u_n+v_m)-v_m\leq(u_{n\vee m}+v_{n\vee m})-v_m\leq\sup_{k\in\N}(u_k+v_k)-v_m.
\]
Thanks to the arbitrariness of \(n\in\N\), we deduce that \(u\leq\sup_{k\in\N}(u_k+v_k)-v_m\). By arbitrariness of \(m\in\N\),
we conclude that \(u+v\leq\sup_{k\in\N}(u_k+v_k)\). The converse inequality is trivial.
\fr}\end{remark}
\begin{lemma}\label{lem:order_idem}
Let \(U\) be a localisable \(f\)-algebra. Then it holds that
\[
\sup_{n\in\N}u_n\in\Idem(U),\qquad\inf_{n\in\N}u_n\in\Idem(U),\quad\text{ for every }(u_n)_{n\in\N}\subset\Idem(U).
\]
\end{lemma}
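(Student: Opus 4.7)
The plan is to reduce both statements to monotone sequences, at which point the $\sigma$-order-continuity of the multiplication built into Definition \ref{def:localisable_f-alg} closes the argument. Existence of $\sup_n u_n$ and $\inf_n u_n$ in $U$ is automatic: Lemma \ref{lem:prop_idem} iv) gives $0\leq u_n\leq 1$, so the sequence is order-bounded, and $U$ is Dedekind $\sigma$-complete by hypothesis.

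First I would show that $\Idem(U)$ is closed under finite joins and meets. Given $u,v\in\Idem(U)$, Lemma \ref{lem:prop_idem} vi) applied with $v$ in the role of ``$u$'' and $u$ in the role of ``$v$'' yields that $v(1-u)$ and $u$ are disjoint, while items i) and iii) give $v(1-u)\in\Idem(U)$. Then Lemma \ref{lem:prop_idem} v) produces
\[
u+v(1-u)=u\vee v(1-u)\in\Idem(U).
\]
A short verification identifies $u\vee v(1-u)$ with $u\vee v$: the inequality `$\leq$' follows from $v(1-u)\leq v$, while `$\geq$' follows since $u+v(1-u)=u+v-uv$ dominates both $u$ and $v$ (using $v(1-u)\geq 0$ and $u(1-v)\geq 0$, both consequences of \eqref{eq:def_f-alg_e}). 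A dual argument handles meets: the estimate $(u\wedge v)(1-u)\leq u(1-u)=0$ (from \eqref{eq:basic_prop_f-alg_9}) forces $u\wedge v=(u\wedge v)u\leq uv$, and combined with the trivial $uv\leq u\wedge v$ one obtains $u\wedge v=uv\in\Idem(U)$ by item i). A straightforward induction then extends both closures to arbitrary finite joins and meets.

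With this in hand, set $v_n\coloneqq u_1\vee\dots\vee u_n$ and $w_n\coloneqq u_1\wedge\dots\wedge u_n$; these are non-decreasing and non-increasing sequences in $\Idem(U)$ with $\sup_n v_n=\sup_n u_n$ and $\inf_n w_n=\inf_n u_n$. I would then invoke the $\sigma$-order-continuity of the multiplication map on $U^+\times U^+$, equipped with the product order: the non-decreasing pair sequence $(v_n,v_n)$ has supremum $(\sup_n u_n,\sup_n u_n)$, so $v_n^2\nearrow(\sup_n u_n)^2$; since $v_n^2=v_n$, this forces $\sup_n u_n=(\sup_n u_n)^2$, i.e., $\sup_n u_n\in\Idem(U)$. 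The analogous argument on the non-increasing $(w_n,w_n)\searrow(\inf_n u_n,\inf_n u_n)$ yields $\inf_n u_n\in\Idem(U)$.

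The principal obstacle is the finite-join step, because Lemma \ref{lem:prop_idem} v) only provides closure under sums of \emph{disjoint} idempotents. The key move is to replace $v$ by its ``$u$-complement'' $v(1-u)$, which Lemma \ref{lem:prop_idem} vi) renders automatically disjoint from $u$; once this rearrangement is performed, everything else is routine bookkeeping, and the $\sigma$-order-continuity axiom of a localisable $f$-algebra bridges from monotone finite approximations to the countable suprema and infima.
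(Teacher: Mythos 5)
Your proof is correct, and its engine is the same as the paper's: reduce to a monotone sequence of idempotents and let the \(\sigma\)-order-continuity of the multiplication on \(U^+\times U^+\) pass idempotency to the limit via \(v^2=\sup_n v_n^2=\sup_n v_n=v\). The differences are in the finite-stage bookkeeping and in the treatment of the infimum. The paper does not prove binary closure directly: it disjointifies the sequence recursively (\(u_1'\coloneqq u_1\), \(u_n'\coloneqq u_n-\sum_{k<n}u_n u_k'\)), checks by induction via items iii), v), vi) of Lemma \ref{lem:prop_idem} that the \(u_n'\) are disjoint idempotents, and takes \(v_n\coloneqq\sum_{k\leq n}u_k'=\sup_{k\leq n}u_k\); you instead verify the Boolean identities \(u\vee v=u+v-uv\) and \(u\wedge v=uv\) for idempotents (via Lemma \ref{lem:prop_idem} v), vi) and \eqref{eq:basic_prop_f-alg_9}), which is arguably more transparent and yields the closure of \(\Idem(U)\) under finite lattice operations as an explicit byproduct. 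For the infimum, the paper avoids the downward half of \(\sigma\)-order-continuity by complementation, \(1-w=\sup_n(1-u_n)\in\Idem(U)\) and then Lemma \ref{lem:prop_idem} iii), whereas you apply the infimum clause to the non-increasing meets \(w_n\); both are legitimate, since the \(\sigma\)-order-continuity assumed in Definition \ref{def:localisable_f-alg} comprises, in the paper's terminology, both the supremum and the infimum clauses. One cosmetic remark: when you invoke Lemma \ref{lem:prop_idem} v) for \(u\) and \(v(1-u)\), its hypothesis is that the product vanishes rather than disjointness; this is immediate here (compute \(u\cdot v(1-u)=uv-u^2v=0\), or pass from disjointness via Proposition \ref{prop:char_disjoint}), but it deserves a word.
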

\begin{proof}
Since the set \(\Idem(U)\) is order-bounded by Lemma \ref{lem:prop_idem} iv), both \(v\coloneqq\sup_{n\in\N}u_n\in U^+\)
and \(w\coloneqq\inf_{n\in\N}u_n\in U^+\) exist thanks to the Dedekind \(\sigma\)-completeness of \(U\). Define \(u'_1\coloneqq u_1\)
and \(u'_n\coloneqq u_n-\sum_{k<n}u_n u'_k\) for every \(n\geq 2\). By using items iii), v), vi) of Lemma \ref{lem:prop_idem} and an
induction argument, one can show that the sequence \((u'_n)_{n\in\N}\) is disjoint and made of idempotent elements. Lemma \ref{lem:prop_idem}
v) also yields \(\sup_{k\leq n}u'_k=\sup_{k\leq n}u_k\) for every \(n\in\N\), so that accordingly
\[
\sup_{n\in\N}u'_n=\sup_{n\in\N}\sup_{k\leq n}u_k=\sup_{n\in\N}u_n=v.
\]
Now define \(v_n\coloneqq\sum_{k=1}^n u'_k\) for every \(n\in\N\). Lemma \ref{lem:prop_idem} v) ensures that
\((v_n)_{n\in\N}\subset\Idem(U)\) and that \(v_n=\sup_{k\leq n}u'_k\) for every \(n\in\N\), thus \(v=\sup_{n\in\N}v_n\).
Given that the sequence \((v_n)_{n\in\N}\) is non-decreasing by construction, the \(\sigma\)-order-continuity of the multiplication
on \(U^+\times U^+\) guarantees that \(v^2=(\sup_{n\in\N}v_n)^2=\sup_{n\in\N}v_n^2=\sup_{n\in\N}v_n=v\), proving that \(v\in\Idem(U)\).
Finally, notice that we have \(1-w=\sup_{n\in\N}(1-u_n)\in\Idem(U)\), so that \(w\in\Idem(U)\) by Lemma \ref{lem:prop_idem} iii).
\end{proof}
\begin{definition}[Countable partition]
Let \(U\) be a Dedekind \(\sigma\)-complete \(f\)-algebra. Then a disjoint family \((u_n)_{n\in\N}\subset\Idem(U)\)
is said to be a \textbf{countable partition} of \(u\in\Idem(U)\) provided
\[
\sup_{n\in\N}u_n=u.
\]
We denote by \(\mathcal P(u)\) the family of all countable partitions of \(u\). Observe that \(\mathcal P_f(u)\subset\mathcal P(u)\).
\end{definition}
\begin{proposition}\label{prop:inters_partitions}
Let \(U\) be a localisable \(f\)-algebra and \(u\in\Idem(U)\). Then it holds that
\[
(u_n v_m)_{n,m\in\N}\in\mathcal P(u),\quad\text{ for every }(u_n)_{n\in\N},(v_m)_{m\in\N}\in\mathcal P(u).
\]
\end{proposition}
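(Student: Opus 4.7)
The plan is to verify the three defining properties of a countable partition of $u$ for the doubly-indexed family $(u_n v_m)_{n,m \in \N}$: namely, that each $u_n v_m$ is idempotent, that the family is disjoint, and that $\sup_{n,m} u_n v_m = u$. The first item is immediate from Lemma \ref{lem:prop_idem} i). For disjointness, I would fix $(n,m) \neq (n',m')$, assume without loss of generality that $n \neq n'$, use Proposition \ref{prop:char_disjoint} (forward direction) to get $u_n u_{n'} = 0$, deduce $(u_n v_m)(u_{n'} v_{m'}) = 0$ by commutativity of the multiplication, and then invoke the converse direction of Proposition \ref{prop:char_disjoint}, which applies because $U$ is Archimedean by Proposition \ref{prop:Dedekind_is_Archimedean}.

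The nontrivial part is the supremum identity. Here the plan is to reduce the computation to the $\sigma$-order-continuity of multiplication by passing from the countable partitions $(u_n)$ and $(v_m)$ to the associated non-decreasing sequences of partial sums. Concretely, I set $s_k \coloneqq \sum_{n \leq k} u_n$ and $t_k \coloneqq \sum_{m \leq k} v_m$; an induction on Lemma \ref{lem:prop_idem} v) shows $s_k = \sup_{n \leq k} u_n \in \Idem(U)$ and $t_k = \sup_{m \leq k} v_m \in \Idem(U)$, so both sequences are non-decreasing with $\sup_k s_k = \sup_k t_k = u$. The double $\sigma$-order-continuity of the multiplication on $U^+ \times U^+$ then yields
\[
u = u \cdot u = \Bigl(\sup_k s_k\Bigr)\Bigl(\sup_k t_k\Bigr) = \sup_k s_k t_k = \sup_k \Bigl(\sum_{n \leq k} u_n\Bigr)\Bigl(\sum_{m \leq k} v_m\Bigr) = \sup_k \sum_{n,m \leq k} u_n v_m.
\]
Since the finite sums $\sum_{n,m \leq k} u_n v_m$ coincide with $\sup_{n,m \leq k} u_n v_m$ (again by iterating Lemma \ref{lem:prop_idem} v), using that the $(u_n v_m)$ are pairwise disjoint idempotents by the previous step), this supremum equals $\sup_{n,m \in \N} u_n v_m$, giving the desired equality.

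The main obstacle I anticipate is bookkeeping around the $\sigma$-order-continuity: it is stated for monotone sequences, so the countable partitions themselves cannot be fed into it directly, which is precisely why the partial-sum reformulation is needed. A subsidiary point is that the supremum $\sup_{n,m} u_n v_m$ of a doubly-indexed countable family must be shown to exist and to agree with the iterated supremum, but this is automatic from Lemma \ref{lem:order_idem} once one reindexes $\N \times \N$ bijectively with $\N$, and the order of the sups does not matter in a Dedekind $\sigma$-complete lattice.
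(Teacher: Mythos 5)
Your proof is correct, but the decisive step -- the identity $\sup_{n,m}u_nv_m=u$ -- is argued along a genuinely different route than in the paper. The paper fixes an arbitrary upper bound $w$ of the family, notes $u_n(w-v_m)=u_n(w-u_nv_m)\ge 0$, and uses \eqref{eq:basic_prop_f-alg_8} together with $\sup_n u_n(w-v_m)^{\pm}=(w-v_m)^{\pm}$ to conclude $w\ge v_m$ for every $m$, hence $w\ge\sup_m v_m$; you instead compute the supremum directly, passing to the partial sums $s_k=\sum_{n\le k}u_n$ and $t_k=\sum_{m\le k}v_m$ and using the $\sigma$-order-continuity of the multiplication on $U^+\times U^+$ applied to the monotone pair $(s_k,t_k)$, so that $u=u\cdot u=(\sup_k s_k)(\sup_k t_k)=\sup_k s_kt_k=\sup_{n,m}u_nv_m$. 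Both arguments rest on the same continuity hypothesis, but yours makes the reduction to monotone sequences explicit (the paper's step $(w-v_m)^+=\sup_n u_n(w-v_m)^+$ leaves it implicit) and it treats a general $u\in\Idem(U)$ seamlessly, whereas the paper's displayed bounds ``$v\le 1$'' and ``$w\ge 1$'' are really written as if $u={\bf 1}_U$; the paper's argument, on the other hand, never needs the distributive expansion of $s_kt_k$ nor the identification of finite sums with finite suprema. For disjointness you take a heavier detour than necessary: invoking both directions of Proposition \ref{prop:char_disjoint} together with Proposition \ref{prop:Dedekind_is_Archimedean} is legitimate (localisable implies Dedekind $\sigma$-complete, hence Archimedean), but for idempotents one can simply estimate $u_nv_m\wedge u_{n'}v_{m'}\le(u_n\wedge u_{n'})\wedge(v_m\wedge v_{m'})=0$, which is what the paper does by referring to Lemma \ref{lem:finite_part}; also, when applying the forward direction of Proposition \ref{prop:char_disjoint} you should note that coinciding elements carried by distinct indices are forced to vanish, a harmless but unstated point.
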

\begin{proof}
Lemma \ref{lem:prop_idem} i) ensures that \(u_n v_m\in\Idem(U)\) for every \(n,m\in\N\). Arguing as in Lemma \ref{lem:finite_part},
we see that \((u_n v_m)_{n\in\N}\) is a disjoint set. It remains to show that \(v\coloneqq\sup_{n,m\in\N}u_n v_m=u\).
Since \(u_n v_m\leq 1\) for all \(n,m\in\N\), we have \(v\leq 1\). Let \(w\in U\) be an upper bound for \((u_n v_m)_{n,m\in\N}\).
Then \(u_n(w-v_m)=u_n(w-u_n v_m)\geq 0\) for every \(n,m\in\N\), so that \(u_n(w-v_m)^+\geq u_n(w-v_m)^-\) holds for every \(n,m\in\N\)
as a consequence of \eqref{eq:basic_prop_f-alg_8}, thus accordingly
\[
(w-v_m)^+=\sup_{n\in\N}u_n(w-v_m)^+\geq\sup_{n\in\N}u_n(w-v_m)^-=(w-v_m)^-,\quad\text{ for every }m\in\N.
\]
This means that \(w\geq v_m\) for every \(m\in\N\), which gives \(w\geq 1\). We conclude that \(v=u\).
\end{proof}

Observe that if \(\phi\colon U\to V\) a homomorphism of localisable \(f\)-algebras, then it holds that
\begin{equation}\label{eq:img_partition}
\big(\phi(u_n)\big)_{n\in\N}\in\mathcal P(\phi(u)),\quad\text{ for every }u\in\Idem(U)\text{ and }(u_n)_{n\in\N}\in\mathcal P(u).
\end{equation}
Indeed, the sequence \(\big(\phi(u_n)\big)_{n\in\N}\subset V\) is made of idempotent elements by \eqref{eq:img_Idem}, is a disjoint set by \eqref{eq:img_disjoint},
and \(\sup_{n\in\N}\phi(u_n)=\phi\big(\sup_{n\in\N}u_n\big)=\phi(u)\) thanks to the \(\sigma\)-order-continuity of \(\phi\).
\subsection{Metric \texorpdfstring{\(f\)}{f}-structures}\label{ss:metric_f-struct}
For our purposes, the algebraic and order properties of a localisable \(f\)-algebra are not sufficient. Rather, we want to consider localisable \(f\)-algebras
(and Riesz spaces) endowed with a well-behaved complete distance. In this regard, the first concept we introduce is that of \emph{metric Riesz space}:
\begin{definition}[Metric Riesz space]\label{def:metric_Riesz}
By a \textbf{metric Riesz space} we mean a couple \((U,\sfd_U)\) -- where \(U\) is a Dedekind \(\sigma\)-complete Riesz space
and \(\sfd_U\) is a complete distance on \(U\) -- such that:
\begin{itemize}
\item[\(\rm i)\)] The identity \(\sfd_U(u,0)=\sfd_U(|u|,0)\) holds for every \(u\in U\).
\item[\(\rm ii)\)] The distance \(\sfd_U\) is \textbf{translation-invariant}, in the sense that
\[
\sfd_U(u,v)=\sfd_U(u+w,v+w),\quad\text{ for every }u,v,w\in U.
\]
\item[\(\rm iii)\)] The distance-from-zero function \(\sfd_U(\cdot,0)\colon U^+\to\R^+\) is order-preserving.
\end{itemize}
A \textbf{homomorphism of metric Riesz spaces} is a Lipschitz homomorphism of Riesz spaces.
\end{definition}
\begin{remark}{\rm
Given a metric Riesz space \((U,\sfd_U)\), it holds that
\begin{equation}\label{eq:sum_subadd}
\sfd_U(u+v,0)\leq\sfd_U(u,0)+\sfd_U(v,0),\quad\text{ for every }u,v\in U.
\end{equation}
Indeed, by using the translation-invariance of \(\sfd_U\), we obtain that
\[
\sfd_U(u+v,0)=\sfd_U(u,-v)\leq\sfd_U(u,0)+\sfd_U(0,-v)=\sfd_U(u,0)+\sfd_U(v,0).
\]
Repeatedly applying \eqref{eq:sum_subadd}, we get \(\sfd_U\big(\sum_{i=1}^n u_i,0\big)\leq\sum_{i=1}^n\sfd_U(u_i,0)\)
for all \(u_1,\ldots,u_n\in U\).
\fr}\end{remark}

In Definitions \ref{def:metric_f-algebra} and \ref{def:metric_f-structure} below, given two metric spaces \((\X,\sfd_\X)\) and \((\Y,\sfd_\Y)\),
we will consider the distance \(\sfd_\X\times\sfd_\Y\) on the Cartesian product \(\X\times\Y\), which is given by
\[
(\sfd_\X\times\sfd_\Y)\big((x,y),(\tilde x,\tilde y)\big)\coloneqq\sfd_\X(x,\tilde x)+\sfd_\Y(y,\tilde y),\quad\text{ for every }(x,y),(\tilde x,\tilde y)\in\X\times\Y.
\]
Next, we introduce the family of \emph{metric \(f\)-algebras}:
\begin{definition}[Metric \(f\)-algebra]\label{def:metric_f-algebra}
By a \textbf{metric \(f\)-algebra} we mean a couple \((U,\sfd_U)\), where:
\begin{itemize}
\item[\(\rm i)\)] \(U\) is a localisable \(f\)-algebra and \((U,\sfd_U)\) is a metric Riesz space.
\item[\(\rm ii)\)] The multiplication \(\times\colon U\times U\to U\) is continuous from \((U\times U,\sfd_U\times\sfd_U)\) to \((U,\sfd_U)\).
\item[\(\rm iii)\)] The family \(\mathcal S(U)\) of all simple elements of \(U\) is dense in \((U,\sfd_U)\).
\item[\(\rm iv)\)] \(\sfd_U(\varepsilon{\bf 1}_U,0)\to 0\) as \(\varepsilon\searrow 0\).
\end{itemize}
A \textbf{homomorphism of metric \(f\)-algebras} is a Lipschitz homomorphism of localisable \(f\)-algebras.
\end{definition}

Having the notions of metric Riesz space and of metric \(f\)-algebra at disposal, we can finally introduce \emph{metric \(f\)-structures}:
\begin{definition}[Metric \(f\)-structure]\label{def:metric_f-structure}
A \textbf{metric \(f\)-structure} is a triple \((\mathcal U,U,V)\), where:
\begin{itemize}
\item[\(\rm i)\)] \(\mathcal U\) is a localisable \(f\)-algebra.
\item[\(\rm ii)\)] \(U=(U,\sfd_U)\) is a metric \(f\)-algebra such that \(U\) is a solid \(f\)-subalgebra of \(\mathcal U\).
\item[\(\rm iii)\)] \(V=(V,\sfd_V)\) is metric Riesz space such that \(V\) is a solid Riesz subspace of \(\mathcal U\).
\item[\(\rm iv)\)] It holds \(UV=V\) and the multiplication is continuous from \((U\times V,\sfd_U\times\sfd_V)\) to \((V,\sfd_V)\).
\item[\(\rm v)\)] Given any \((u_n)_{n\in\N}\in\mathcal P(\boldsymbol{1}_U)\) and \(\varepsilon>0\), there exists \(\delta>0\) such that for any
\((v_n)_{n\in\N}\subset V^+\) with \(\sum_{n\in\N}\sfd_V(u_n v_n,0)\leq\delta\) it holds that
\[
(u_n v_n)_{n\in\N}\,\text{ is order-bounded in }V,\qquad\sfd_V\Big(\sup_{n\in\N}u_n v_n,0\Big)\leq\varepsilon.
\]
\end{itemize}
We say that \((\mathcal U,U,V)\) is a \textbf{CR metric \(f\)-structure} provided the spaces \(\mathcal U\), \(U\), and \(V\) are CR.

A \textbf{homomorphism of metric \(f\)-structures} between two metric \(f\)-structures \((\mathcal U_1,U_1,V_1)\) and \((\mathcal U_2,U_2,V_2)\)
is a homomorphism \(\varphi\colon\mathcal U_1\to\mathcal U_2\) of \(f\)-algebras such that \(\varphi|_{U_1}\colon U_1\to U_2\) is a homomorphism
of metric \(f\)-algebras and \(\varphi|_{V_1}\colon V_1\to V_2\) a homomorphism of metric Riesz spaces.
\end{definition}
\begin{example}\label{ex:U=V}{\rm
If \(U=(U,\sfd_U)\) is a metric \(f\)-algebra, then \((U,U,U)\) is a metric \(f\)-structure.
\fr}\end{example}

As we discussed in the introduction, in order to study dual modules (and, more generally, spaces of homomorphisms) we also have to define the
\emph{dual systems} of metric \(f\)-structures:
\begin{definition}[Dual system of metric \(f\)-structures]\label{def:dual_system}
A quintuplet \((\mathcal U,U,V,W,Z)\) is said to be a \textbf{dual system of metric \(f\)-structures} provided the following conditions are verified:
\begin{itemize}
\item[\(\rm i)\)] \((\mathcal U,U,V)\), \((\mathcal U,U,W)\), and \((\mathcal U,U,Z)\) are metric \(f\)-structures.
\item[\(\rm ii)\)] It holds \(Z=VW\) and the multiplication is continuous from \((V\times W,\sfd_V\times\sfd_W)\) to \((Z,\sfd_Z)\).
\end{itemize}
We say that \((\mathcal U,U,V,W,Z)\) is a \textbf{complete dual system of metric \(f\)-structures} if in addition:
\begin{itemize}
\item[\(\rm iii)\)] \(U\), \(V\), \(W\), \(Z\) are Dedekind complete and the multiplication is order-continuous from \(U^+\times V^+\)
to \(V^+\), from \(U^+\times W^+\) to \(W^+\), from \(U^+\times Z^+\) to \(Z^+\), and from \(V^+\times W^+\) to \(Z^+\).
\end{itemize}
Also, we say that \((\mathcal U,U,V,W,Z)\) is \textbf{CR} if in addition \((\mathcal U,U,V)\), \((\mathcal U,U,W)\), and \((\mathcal U,U,Z)\) are CR.

By a \textbf{homomorphism of dual systems} between two given dual systems of metric \(f\)-structures \((\mathcal U_1,U_1,V_1,W_1,Z_1)\) and
\((\mathcal U_2,U_2,V_2,W_2,Z_2)\) we mean a map \(\varphi\colon\mathcal U_1\to\mathcal U_2\) that is a homomorphism of metric \(f\)-structures
from \((\mathcal U_1,U_1,V_1)\) to \((\mathcal U_2,U_2,V_2)\), from \((\mathcal U_1,U_1,W_1)\) to \((\mathcal U_2,U_2,W_2)\), and
from \((\mathcal U_1,U_1,Z_1)\) to \((\mathcal U_2,U_2,Z_2)\).
\end{definition}
\begin{example}\label{ex:UVUV}{\rm
Let \((\mathcal U,U,V)\) be a metric \(f\)-structure. Then \((\mathcal U,U,V,U,V)\) is a dual system of metric \(f\)-structures.
If \(U\) is Dedekind complete and the multiplication map is order-continuous from \(U^+\times V^+\) to \(V^+\), then
\((\mathcal U,U,V,U,V)\) is a complete dual system of metric \(f\)-structures.
\fr}\end{example}
\begin{remark}\label{rmk:invert_dual_syst}{\rm
If \((\mathcal U,U,V,W,Z)\) is a dual system of metric \(f\)-structures, then \((\mathcal U,U,W,V,Z)\) is a dual system of
metric \(f\)-structures as well. Moreover, if \((\mathcal U,U,V,W,Z)\) is a complete (resp.\ CR) dual system, then
\((\mathcal U,U,W,V,Z)\) is a complete (resp.\ CR) dual system as well.
\fr}\end{remark}
\section{Normed modules over a metric \texorpdfstring{\(f\)}{f}-structure}\label{s:normed_mod}
In Sections \ref{ss:def_norm_mod} and \ref{ss:hom_norm_mod} we introduce the category of Banach modules over a metric \(f\)-structure;
in the former we study the objects, while in the latter we study the morphisms. In Section \ref{ss:constr_norm_mod} we prove some existence results concerning
Banach modules and their homomorphisms, as well as some of their consequences. In Sections \ref{ss:Hahn-Banach}, \ref{ss:Hilbert_mod}, and \ref{ss:dim_decomp}
we study the Hahn--Banach theorem, the class of Hilbert modules, and the dimensional decomposition of a Banach module, respectively.
\subsection{Definitions and basic properties}\label{ss:def_norm_mod}
First of all, let us give the definition of \emph{normed/Banach module} over a metric \(f\)-structure:
\begin{definition}[Normed module]\label{def:norm_U-mod}
Let \((\mathcal U,U,V)\) be a metric \(f\)-structure and \(\mathscr M\) a module over \(U\). Then we say that \(\mathscr M\)
is a \textbf{\(V\)-normed \(U\)-module} provided it is endowed with a map \(|\cdot|\colon\mathscr M\to V^+\) -- called a
\textbf{\(V\)-pointwise norm} operator on \(\mathscr M\) -- such that the following properties are verified:
\begin{itemize}
\item[\(\rm i)\)] Given any \(u\in U\) and \(v,w\in\mathscr M\), it holds that
\begin{subequations}\begin{align}\label{eq:ptwse_norm_1}
&|v|=0\quad\Longleftrightarrow\quad v=0,\\
\label{eq:ptwse_norm_2}&|v+w|\leq|v|+|w|,\\
\label{eq:ptwse_norm_3}&|u\cdot v|=|u||v|.
\end{align}\end{subequations}
\item[\(\rm ii)\)] \textsc{Glueing property.} Let \((u_n)_{n\in\N}\in\mathcal P(\boldsymbol{1}_U)\) and \((v_n)_{n\in\N}\subset\mathscr M\) be chosen
so that the family \((|u_n\cdot v_n|)_{n\in\N}\) is order-bounded in \(V\). Then there exists an element \(v\in\mathscr M\) such that
\begin{equation}\label{eq:glueing}
u_n\cdot v=u_n\cdot v_n,\quad\text{ for every }n\in\N.
\end{equation}
We call \(\Adm(\mathscr M)\) the set of all families \((u_n,v_n)_{n\in\N}\) as above, while \(\sum_{n\in\N}u_n\cdot v_n\) stands
for the element \(v\in\mathscr M\) satisfying \eqref{eq:glueing} -- whose uniqueness follows from Lemma \ref{lem:locality_prop} below.
\end{itemize}
Moreover, we endow the space \(\mathscr M\) with the distance \(\sfd_{\mathscr M}\), which is defined as
\begin{equation}\label{eq:def_dist_NMod}
\sfd_{\mathscr M}(v,w)\coloneqq\sfd_V(|v-w|,0),\quad\text{ for every }v,w\in\mathscr M.
\end{equation}
Whenever \((\mathscr M,\sfd_{\mathscr M})\) is a complete metric space, we say that \(\mathscr M\) is a \textbf{\(V\)-Banach \(U\)-module}.
\end{definition}

\begin{lemma}[Locality property]\label{lem:locality_prop}
Let \((\mathcal U,U,V)\) be a metric \(f\)-structure and let \(\mathscr M\) be a \(V\)-normed \(U\)-module.
Let \((u_n)_{n\in\N}\in\mathcal P({\bf 1}_U)\) and \(v\in\mathscr M\) satisfy \(u_n\cdot v=0\) for every \(n\in\N\). Then \(v=0\).
\end{lemma}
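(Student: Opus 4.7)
The plan is to reduce the statement to showing $|v|=0$ in $V$ and then exploit the $\sigma$-order-continuity of multiplication in the ambient localisable $f$-algebra $\mathcal U$. By \eqref{eq:ptwse_norm_1}, $v=0$ is equivalent to $|v|=0$. Using \eqref{eq:ptwse_norm_3} together with Lemma \ref{lem:prop_idem} iv) (which gives $|u_n|=u_n$ because $u_n\in\Idem(U)\subset U^+$), the hypothesis $u_n\cdot v=0$ translates into the identity $u_n|v|=|u_n\cdot v|=0$ in $V\subset\mathcal U$ for every $n\in\N$. So the task becomes: conclude $|v|=0$ from $u_n|v|=0$ for all $n$, using that $(u_n)$ is a countable partition of $\boldsymbol{1}_U$.

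Next, I would pass from the individual equalities to the partial sums $s_N\coloneqq\sum_{n=1}^N u_n$. Lemma \ref{lem:prop_idem} v) ensures $s_N\in\Idem(U)$ and $s_N=\bigvee_{n=1}^N u_n$, so $(s_N)_{N\in\N}$ is non-decreasing in $U$ with $\sup_N s_N=\sup_n u_n=\boldsymbol{1}_U$ (by definition of $\mathcal P(\boldsymbol{1}_U)$). Distributivity together with $u_n|v|=0$ yields $s_N|v|=0$ in $\mathcal U$ for every $N\in\N$. Applying the $\sigma$-order-continuity of the multiplication on $\mathcal U^+\times\mathcal U^+$ to the non-decreasing pair $((s_N,|v|))_{N\in\N}$ (with $|v|$ constant) then gives
\[
|v|=\boldsymbol{1}_{\mathcal U}\cdot|v|=\Big(\sup_{N\in\N}s_N\Big)|v|=\sup_{N\in\N}(s_N|v|)=0,
\]
whence $v=0$ by \eqref{eq:ptwse_norm_1}.

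The one subtle point, and the main obstacle, is the identity $\sup_{N\in\N}s_N=\boldsymbol{1}_{\mathcal U}$ computed \emph{in the ambient space} $\mathcal U$ and not merely in $U$. The inequality $s_N\leq\boldsymbol{1}_{\mathcal U}$ is automatic since $\boldsymbol{1}_U=\boldsymbol{1}_{\mathcal U}$ (as $U$ is an $f$-subalgebra of $\mathcal U$) and $s_N\in\Idem(U)$. For the reverse direction, I would argue that any upper bound $w\in\mathcal U$ of $\{s_N\}_{N\in\N}$ satisfying $w\leq\boldsymbol{1}_{\mathcal U}$ automatically lies in $U$ by solidity of $U$ in $\mathcal U$ (since $|w|=w\leq\boldsymbol{1}_U$), so $w\geq\sup_U s_N=\boldsymbol{1}_U=\boldsymbol{1}_{\mathcal U}$. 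This confirms that the supremum of $(s_N)$ coincides in $U$ and in $\mathcal U$, and the invocation of $\sigma$-order-continuity above is legitimate.
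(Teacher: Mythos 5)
Your proof is correct and takes essentially the same route as the paper's, which likewise concludes via the \(\sigma\)-order-continuity of the multiplication on \(\mathcal U^+\times\mathcal U^+\) that \(|v|=|v|\sup_{n\in\N}u_n=\sup_{n\in\N}u_n|v|=\sup_{n\in\N}|u_n\cdot v|=0\). The only difference is that you spell out two details the paper leaves implicit -- passing to the non-decreasing partial sums \(s_N\) so that \(\sigma\)-order-continuity applies literally, and using solidity of \(U\) in \(\mathcal U\) (applied, for a general upper bound \(w\), to \(w\wedge\boldsymbol{1}\)) to see that \(\sup_N s_N=\boldsymbol{1}\) holds in \(\mathcal U\) and not only in \(U\) -- which is a legitimate and welcome clarification.
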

\begin{proof}
Given that the multiplication map is \(\sigma\)-order-continuous on \(\mathcal U^+\times\mathcal U^+\), we deduce that
\[
|v|=|v|\sup_{n\in\N}u_n=\sup_{n\in\N}u_n|v|=\sup_{n\in\N}|u_n\cdot v|=0,
\]
whence it follows that \(v=0\), as we claimed in the statement.
\end{proof}

Given any non-empty subset \(S\) of a \(V\)-normed \(U\)-module \(\mathscr M\), we denote by \(\mathscr G(S)\subset\mathscr M\)
the family of those elements that can be obtained by glueing together elements of \(S\). Namely, we set
\[
\mathscr G(S)=\mathscr G_{\mathscr M}(S)\coloneqq\bigg\{\sum_{n\in\N}u_n\cdot v_n\;\bigg|\;(u_n)_{n\in\N}\in\mathcal P(\boldsymbol{1}_U),\,
(v_n)_{n\in\N}\subset S,\,(u_n,v_n)_{n\in\N}\in\Adm(\mathscr M)\bigg\}.
\]
Observe that if \(S\) is a vector subspace of \(\mathscr M\), then \(\mathscr G(S)\) is a vector subspace of \(\mathscr M\) as well.
\begin{proposition}\label{prop:Riesz_is_Banach_module}
Let \((\mathcal U,U,V)\) be a metric \(f\)-structure. Then \(V\) is a \(V\)-Banach \(U\)-module, the scalar multiplication
\(\cdot\colon U\times V\to V\) being given by the multiplication \(\times\) in \(\mathcal U\). Moreover, it holds
\begin{equation}\label{eq:glue_V_as_NMod}
\sum_{n\in\N}u_n v_n=\sup_{n\in\N}u_n v_n^+-\sup_{n\in\N}u_n v_n^-,\quad\text{ for every }(u_n,v_n)_{n\in\N}\in\Adm(V).
\end{equation}
\end{proposition}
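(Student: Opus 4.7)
The plan is to verify the three bundles of axioms in Definition \ref{def:norm_U-mod} plus completeness of \(\sfd_{\mathscr M}\), identifying the glueing step as the only non-trivial one. The module structure on \(V\) is inherited from the ring structure of \(\mathcal U\) because \(V\) is a Riesz subspace of \(\mathcal U\) with \(UV=V\); the associativity and bilinearity of \(\cdot\) then come for free from \eqref{eq:def_f-alg_a}--\eqref{eq:def_f-alg_d} and \eqref{eq:def_f-alg_g}. For the pointwise norm \(|\cdot|\colon V\to V^+\) taken as the Riesz absolute value, \eqref{eq:ptwse_norm_1} is the characterising property of the modulus in a Riesz space, \eqref{eq:ptwse_norm_2} is \eqref{eq:basic_prop_f-alg_6d}, and \eqref{eq:ptwse_norm_3} is \eqref{eq:prod_preserv} (together with \(u\geq 0\Rightarrow|u|=u\)). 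Finally, the induced distance \(\sfd_{\mathscr M}(v,w)=\sfd_V(|v-w|,0)\) coincides with \(\sfd_V(v-w,0)=\sfd_V(v,w)\) by conditions i) and ii) of Definition \ref{def:metric_Riesz}, so completeness of \((V,\sfd_{\mathscr M})\) is the completeness of \((V,\sfd_V)\).

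The bulk of the work is the glueing property together with the formula \eqref{eq:glue_V_as_NMod}; in fact the latter is my candidate construction for the glued element. Given \((u_n,v_n)_{n\in\N}\in\Adm(V)\), let \(M\in V\) be an upper bound of \((|u_n v_n|)_{n\in\N}=(u_n|v_n|)_{n\in\N}\). For the positive part, set
\[
w_k\coloneqq\sum_{n=1}^k u_n v_n^+,\qquad k\in\N.
\]
Since the \(u_n\) are disjoint idempotents, Proposition \ref{prop:char_disjoint} gives \(u_n u_m=0\) for \(n\neq m\), so the terms \(u_n v_n^+\in V^+\) are themselves disjoint, making \((w_k)_k\) non-decreasing and satisfying \(w_k\leq\sum_{n\leq k}u_n|v_n|=\sup_{n\leq k}u_n|v_n|\leq M\). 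Dedekind \(\sigma\)-completeness of \(V\) yields \(v_+\coloneqq\sup_k w_k\in V\); solidity of \(V\) in \(\mathcal U\) ensures this supremum coincides with the one computed in \(\mathcal U\). By the same argument one builds \(v_-\coloneqq\sup_k\sum_{n\leq k}u_n v_n^-\in V\), and defines \(v\coloneqq v_+-v_-\). Equivalently, \(v_+=\sup_n u_n v_n^+\) and \(v_-=\sup_n u_n v_n^-\), which is the right-hand side of \eqref{eq:glue_V_as_NMod}.

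To check the glueing identity \(u_m v=u_m v_m\), the key input is the \(\sigma\)-order-continuity of multiplication on \(\mathcal U^+\times\mathcal U^+\) granted by the localisable \(f\)-algebra structure: applying it to the non-decreasing sequence \((w_k)_k\) gives \(u_m v_+=\sup_k u_m w_k\). Using \(u_m u_n=0\) for \(n\neq m\) and \(u_m^2=u_m\), each term collapses to \(u_m w_k=u_m v_m^+\) for \(k\geq m\), so \(u_m v_+=u_m v_m^+\). The analogous computation gives \(u_m v_-=u_m v_m^-\), and \eqref{eq:basic_prop_f-alg_5} yields
\[
u_m v=u_m v_m^+-u_m v_m^-=u_m(v_m^+-v_m^-)=u_m v_m.
\]
Uniqueness of \(v\) (so that the notation \(\sum_{n\in\N}u_n v_n\) is well-defined) follows from Lemma \ref{lem:locality_prop}.

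The main, if mild, obstacle is the careful bookkeeping at the interface between \(V\) and \(\mathcal U\): ensuring that the suprema used to build \(v_\pm\) can be taken in \(V\) (this is where Dedekind \(\sigma\)-completeness and solidity play their role) but that the identity \(u_m v=u_m v_m\) can nonetheless be verified inside \(\mathcal U\), invoking \(\sigma\)-order-continuity of the ambient multiplication. Once these points are handled, the formula \eqref{eq:glue_V_as_NMod} drops out as a restatement of the construction.
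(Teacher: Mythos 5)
Your proof is correct, and its core is the same as the paper's: reduce everything to the glueing property, split \(v_n=v_n^+-v_n^-\), take \(w_\pm\coloneqq\sup_n u_nv_n^\pm\in V^+\) via Dedekind \(\sigma\)-completeness, and verify \(u_n(w_+-w_-)=u_nv_n\), which simultaneously yields \eqref{eq:glue_V_as_NMod}. The only real divergence is in how the key identity \(u_m w_+=u_m v_m^+\) is justified: you pass through the partial sums, use solidity of \(V\) in \(\mathcal U\) to identify the supremum computed in \(V\) with the one in \(\mathcal U\) (this transfer is indeed valid for solid subspaces, and it is needed, since \(\sigma\)-order-continuity of \(\times\) is an axiom of the ambient localisable \(f\)-algebra), and then invoke \(\sigma\)-order-continuity of the multiplication to get \(u_m w_+=\sup_k u_m w_k=u_m v_m^+\). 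The paper instead proves the nontrivial inequality \(u_m w_+\leq u_m v_m^+\) by the purely lattice-theoretic observation that \((1-u_m)w_++u_m v_m^+\) is an upper bound for \((u_nv_n^+)_n\) inside \(V\), so \(w_+\leq(1-u_m)w_++u_mv_m^+\); this avoids order-continuity of the multiplication and any comparison of suprema between \(V\) and \(\mathcal U\). Both arguments are legitimate; yours leans on more structure (and should make the solidity/sup-transfer step explicit, as you only assert it), while the paper's trick is slightly more economical. The remaining verifications (module axioms, pointwise norm axioms via \eqref{eq:basic_prop_f-alg_6d} and \eqref{eq:prod_preserv}, coincidence of \(\sfd_{\mathscr M}\) with \(\sfd_V\) from Definition \ref{def:metric_Riesz} i)--ii), and uniqueness via Lemma \ref{lem:locality_prop}) match the paper.
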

\begin{proof}
The fact that \(V\) is a \(U\)-module verifying item i) of Definition \ref{def:norm_U-mod} readily follows from the very definition
of a metric \(f\)-algebra. Moreover, the distance on \(V\) defined as in \eqref{eq:def_dist_NMod} coincides with the original distance
\(\sfd_V\) itself, which is complete by assumption. It only remains to check the validity of the glueing property. To this aim,
fix any \((u_n,v_n)_{n\in\N}\in\Adm(V)\). In particular, both sequences \((u_n v_n^+)_{n\in\N}\) and \((u_n v_n^-)_{n\in\N}\)
are order-bounded, thus the Dedekind \(\sigma\)-completeness of \(V\) yields existence of \(w_+\coloneqq\sup_{n\in\N}u_n v_n^+\in V^+\)
and \(w_-\coloneqq\sup_{n\in\N}u_n v_n^-\in V^+\). We claim that
\begin{equation}\label{eq:glue_V_as_NMod_aux}
u_n(w_+ - w_-)=u_n v_n,\quad\text{ for every }n\in\N.
\end{equation}
To prove it, notice that \(u_n w_+=u_n v_n^+\) for every \(n\in\N\): the inequality \(\geq\) is trivial, while to get the converse one
it suffices to observe that \((1-u_n)w_+ +u_n v_n^+\) is an upper bound for \((u_m v_m^+)_{m\in\N}\). Similarly, one can show
that \(u_n w_-=u_n v_n^-\), whence it follows that \(u_n(w_+ -w_-)=u_n v_n^+ -u_n v_n^-\), yielding \eqref{eq:glue_V_as_NMod_aux}.
This proves the validity of the glueing property, as well as the formula \eqref{eq:glue_V_as_NMod}.
\end{proof}
\begin{proposition}[Continuity of normed module operations]\label{prop:cont_norm_mod_ops}
Let \((\mathcal U,U,V)\) be a metric \(f\)-structure and \(\mathscr M\) a \(V\)-normed \(U\)-module.
Then the following properties are verified:
\begin{itemize}
\item[\(\rm i)\)] The map \(|\cdot|\colon\mathscr M\to V^+\) is \(1\)-Lipschitz from \((\mathscr M,\sfd_{\mathscr M})\) to \((V,\sfd_V)\).
\item[\(\rm ii)\)] The map \(+\colon\mathscr M\times\mathscr M\to\mathscr M\) is \(1\)-Lipschitz from
\((\mathscr M\times\mathscr M,\sfd_{\mathscr M}\times\sfd_{\mathscr M})\) to \((\mathscr M,\sfd_{\mathscr M})\).
\item[\(\rm iii)\)] The map \(\cdot\colon U\times\mathscr M\to\mathscr M\) is continuous from \((U\times\mathscr M,\sfd_U\times\sfd_{\mathscr M})\)
to \((\mathscr M,\sfd_{\mathscr M})\).
\end{itemize}
\end{proposition}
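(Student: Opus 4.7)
The plan is to reduce each continuity/Lipschitz statement to the standing properties of the pointwise norm together with the axioms of a metric Riesz space and a metric \(f\)-structure (translation invariance, order-preservation of \(\sfd_V(\cdot,0)\), subadditivity derived from translation invariance, and continuity of the multiplication \(U\times V\to V\)).

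For item i), I would first establish the pointwise reverse triangle inequality \(\bigl||v|-|w|\bigr|\leq|v-w|\) in \(V\). This follows by applying \eqref{eq:ptwse_norm_2} both ways: \(|v|\leq|v-w|+|w|\) and symmetrically. Then, using Definition \ref{def:metric_Riesz} i)--iii),
\[
\sfd_V(|v|,|w|)=\sfd_V(|v|-|w|,0)=\sfd_V\bigl(\bigl||v|-|w|\bigr|,0\bigr)\leq\sfd_V(|v-w|,0)=\sfd_{\mathscr M}(v,w),
\]
which is exactly the 1-Lipschitz claim.

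For item ii), I would rewrite \((v+v')-(w+w')=(v-w)+(v'-w')\), apply \eqref{eq:ptwse_norm_2} pointwise, and then use monotonicity of \(\sfd_V(\cdot,0)\) together with the subadditivity estimate \eqref{eq:sum_subadd} (which is itself a consequence of translation invariance):
\[
\sfd_{\mathscr M}(v+v',w+w')\leq\sfd_V\bigl(|v-w|+|v'-w'|,0\bigr)\leq\sfd_{\mathscr M}(v,w)+\sfd_{\mathscr M}(v',w').
\]

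For item iii), given \((u,v),(u',v')\in U\times\mathscr M\), I would decompose \(u\cdot v-u'\cdot v'=u\cdot(v-v')+(u-u')\cdot v'\), apply \eqref{eq:ptwse_norm_2} and \eqref{eq:ptwse_norm_3} to obtain \(|u\cdot v-u'\cdot v'|\leq|u||v-v'|+|u-u'||v'|\) in \(V\), and then use monotonicity and subadditivity of \(\sfd_V(\cdot,0)\) to reduce everything to bounding \(\sfd_V(|u||v-v'|,0)\) and \(\sfd_V(|u-u'||v'|,0)\). The main (small) obstacle is that convergence in \(\mathscr M\) is by definition convergence of \(|v_n-v|\) to \(0\) in \((V,\sfd_V)\); so if \((u_n,v_n)\to(u',v')\) in \(U\times\mathscr M\), then \(|v_n-v'|\to 0\) in \(V\) and \(|u_n-u'|\to 0\) in \(U\) (the latter because the absolute value is 1-Lipschitz on \(U\) by the same argument as item i), applied to the metric \(f\)-algebra \(U\) viewed via Example \ref{ex:U=V}). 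Continuity of the multiplication \((U\times V,\sfd_U\times\sfd_V)\to(V,\sfd_V)\) from Definition \ref{def:metric_f-structure} iv) then yields \(|u_n||v_n-v'|\to|u'|\cdot 0=0\) and \(|u_n-u'||v'|\to 0\cdot|v'|=0\) in \(V\), which gives \(\sfd_{\mathscr M}(u_n\cdot v_n,u'\cdot v')\to 0\) and closes the argument.
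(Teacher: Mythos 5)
Your proposal is correct and follows essentially the same route as the paper: the reverse triangle inequality plus translation invariance and monotonicity of \(\sfd_V(\cdot,0)\) for i), the pointwise estimate \(|(v+w)-(v'+w')|\leq|v-v'|+|w-w'|\) with \eqref{eq:sum_subadd} for ii), and the decomposition \(|u_n\cdot v_n-u\cdot v|\leq|u_n||v_n-v|+|u_n-u||v|\) combined with the continuity of the multiplication \(U\times V\to V\) for iii). The paper likewise uses the \(1\)-Lipschitzness of the pointwise norm to get \(|u_n|\to|u|\) before invoking Definition \ref{def:metric_f-structure} iv), so there is no substantive difference.
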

\begin{proof}\ \\
{\color{blue}\(\rm i)\)} Given that \(|v|\leq|v-w|+|w|\) and \(|w|\leq|w-v|+|v|\) hold for every \(v,w\in\mathscr M\), we deduce that
\[
\big||v|-|w|\big|\leq|v-w|,\quad\text{ for every }v,w\in\mathscr M.
\]
In particular, for any \(v,w\in\mathscr M\) one has \(\sfd_V(|v|,|w|)=\sfd_V\big(\big||v|-|w|\big|,0)\leq\sfd_V(|v-w|,0)=\sfd_{\mathscr M}(v,w)\),
which shows that \(|\cdot|\colon\mathscr M\to V^+\) is a \(1\)-Lipschitz mapping from \((\mathscr M,\sfd_{\mathscr M})\) to \((V,\sfd_V)\), as required.\\
{\color{blue}\(\rm ii)\)} For any \(v,v',w,w'\in\mathscr M\) we have \(|(v+w)-(v'+w')|\leq|v-v'|+|w-w'|\), thus accordingly
\[\begin{split}
\sfd_{\mathscr M}(v+w,v'+w')&=\sfd_V\big(\big|(v+w)-(v'+w')\big|,0\big)\leq\sfd_V\big(|v-v'|+|w-w'|,0\big)\\
&\leq\sfd_V(|v-v'|,0)+\sfd_V(|w-w'|,0)=\sfd_{\mathscr M}(v,v')+\sfd_{\mathscr M}(w,w')\\
&=(\sfd_{\mathscr M}\times\sfd_{\mathscr M})\big((v,v'),(w,w')\big),
\end{split}\]
for every \(v,v',w,w'\in\mathscr M\). This proves that \(+\) is \(1\)-Lipschitz from
\((\mathscr M\times\mathscr M,\sfd_{\mathscr M}\times\sfd_{\mathscr M})\) to \((\mathscr M,\sfd_{\mathscr M})\).\\
{\color{blue}\(\rm iii)\)} Fix \((u_n)_{n\in\N}\subset U\) and \(u\in U\) with \(\lim_{n\to\infty}\sfd_U(u_n,u)=0\).
Fix \((v_n)_{n\in\N}\subset\mathscr M\) and \(v\in\mathscr M\) with \(\lim_{n\to\infty}\sfd_{\mathscr M}(v_n,v)=0\).
The continuity of \(|\cdot|\colon\mathscr M\to V^+\) from i) ensures that \(|u_n|\to|u|\) in \((U,\sfd_U)\) and
\(|v_n-v|\to 0\) in \((V,\sfd_V)\). Hence, by letting \(n\to\infty\) in \(|u_n\cdot v_n-u\cdot v|\leq|u_n||v_n-v|+|u_n-u||v|\) we obtain
\(\lim_{n\to\infty}\sfd_{\mathscr M}(u_n\cdot v_n,u\cdot v)=0\), which shows that \(\cdot\colon U\times\mathscr M\to\mathscr M\) is continuous.
\end{proof}
Let \((\mathcal U,U,V)\) be a metric \(f\)-structure and let \(\mathscr M\) be a \(V\)-normed \(U\)-module. Then a given \(U\)-submodule \(\mathscr N\)
of \(\mathscr M\) is said to be a \textbf{\(V\)-normed \(U\)-submodule} of \(\mathscr M\) provided it satisfies
\[
\mathscr G_{\mathscr M}(\mathscr N)=\mathscr N.
\]
In the case where \(\mathscr M\) is a \(V\)-Banach \(U\)-module and \(\mathscr N\) is \(\sfd_{\mathscr M}\)-closed
in \(\mathscr M\), we say that \(\mathscr N\) is a \textbf{\(V\)-Banach \(U\)-submodule} of \(\mathscr M\).
These are some useful examples of \(V\)-Banach \(U\)-submodule:
\begin{itemize}
\item The `localised' module \(u\cdot\mathscr M\coloneqq\{u\cdot v\,:\,v\in\mathscr M\}\) for every \(u\in\Idem(U)\).
\item The `one-dimensional' module \(U\cdot v\coloneqq\{u\cdot v\,:\,u\in U\}\) for every \(v\in\mathscr M\).
\item The sum \(\mathscr N_1+\mathscr N_2\coloneqq\big\{v+w\,:\,v\in\mathscr N_1,\,w\in\mathscr N_2\big\}\)
where \(\mathscr N_1\), \(\mathscr N_2\) are \(V\)-Banach \(U\)-submodules of \(\mathscr M\).
We say that \(\mathscr M\) is the \textbf{direct sum} of \(\mathscr N_1\) and \(\mathscr N_2\), and we write
\[
\mathscr M=\mathscr N_1\oplus\mathscr N_2,
\]
if \(\mathscr M=\mathscr N_1+\mathscr N_2\) and \(\mathscr N_1\cap\mathscr N_2=\{0\}\).
The map \(\mathscr N_1\times\mathscr N_2\ni(v,w)\mapsto v+w\in\mathscr M\) is bijective.
\end{itemize}
If \((\mathcal U,U,V)\) is a metric \(f\)-structure such that \(V\) is Dedekind complete, \(\mathscr M\)
is a \(V\)-Banach \(U\)-module,  and \(\mathscr N\) is a \(V\)-Banach \(U\)-submodule of \(\mathscr M\),
then the \textbf{quotient module} \(\mathscr M/\mathscr N\) is a \(V\)-Banach \(U\)-module if endowed with
the following \(V\)-pointwise norm operator:
\[
|v+\mathscr N|\coloneqq\inf\big\{|v+w|\;\big|\;w\in\mathscr N\big\}\in V^+,\quad\text{ for every }v+\mathscr N\in\mathscr M/\mathscr N.
\]
\begin{definition}[Hilbert module]\label{def:Hilbert_mod}
Let \((\mathcal U,U,V,V,Z)\) be a dual system of metric \(f\)-structures. Then by a \textbf{\(V\)-Hilbert \(U\)-module}
we mean a \(V\)-Banach \(U\)-module \(\mathscr H\) such that
\begin{equation}\label{eq:ptwse_parall_rule}
|v+w|^2+|v-w|^2=2|v|^2+2|w|^2,\quad\text{ for every }v,w\in\mathscr H.
\end{equation}
We refer to \eqref{eq:ptwse_parall_rule} as the \textbf{pointwise parallelogram rule} of \(\mathscr H\).
\end{definition}

The \textbf{pointwise scalar product} on \(\mathscr H\) is defined as follows:
\[
\mathscr H\times\mathscr H\ni(v,w)\mapsto v\cdot w\coloneqq\frac{1}{2}\big(|v+w|^2-|v|^2-|w|^2\big)\in Z,
\quad\text{ for every }v,w\in\mathscr H.
\]
One can readily check that the pointwise scalar product is \textbf{\(U\)-bilinear}, which means that
\[\begin{split}
\mathscr H\ni v&\mapsto v\cdot z\in Z,\quad\text{ is }U\text{-linear,}\\
\mathscr H\ni w&\mapsto z\cdot w\in Z,\quad\text{ is }U\text{-linear,}
\end{split}\]
for any given \(z\in\mathscr H\). We will study Hilbert modules more in detail in Section \ref{ss:Hilbert_mod}. 
\subsubsection{Local invertibility}
Let \((\mathcal U,U,V)\) be a countably representable metric \(f\)-structure and
\(\mathscr M\) a \(V\)-normed \(U\)-module. Then each \(v\in\mathscr M\) is associated
with the element \(\nchi_{\{v=0\}}\in\Idem(U)\), which we define as
\[
\nchi_{\{v=0\}}\coloneqq\sup\big\{u\in\Idem(U)\;\big|\;u\cdot v=0\big\}\in\Idem(U).
\]
The idempotency of \(\nchi_{\{v=0\}}\) follows from Lemma \ref{lem:order_idem} and the fact that \(U\)
is CR. The glueing property of \(\mathscr M\) ensures that \(\nchi_{\{v=0\}}\cdot v=0\).
We also define \(\nchi_{\{v\neq 0\}}\coloneqq 1-\nchi_{\{v=0\}}\in\Idem(U)\), so that \(v=\nchi_{\{v\neq 0\}}\cdot v\).
Similarly, we define \(\nchi_{\{v=w\}}\coloneqq\nchi_{\{v-w=0\}}\), and so on.
\begin{proposition}[Local inverses]\label{prop:local_inverses}
Let \(U\) be a CR metric \(f\)-algebra. Let \(u\in U^+\) be given. Then there exists a partition
\((u_n)_{n\in\N}\) of \(\nchi_{\{u>0\}}\) and a sequence \((w_n)_{n\in\N}\subset U^+\) such that
\[
u_n(uw_n-1)=0,\quad\text{ for every }n\in\N.
\]
\end{proposition}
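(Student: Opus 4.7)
The plan is to cover $\nchi_{\{u>0\}}$ by a countable family of idempotents on each of which $u$ is order-bounded above and bounded below away from zero, and then to build local inverses on each piece via a Neumann-type series whose order-boundedness is guaranteed by Dedekind $\sigma$-completeness.

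First, I would introduce for every $n,m\geq 1$ the idempotents $e_n:=1-\nchi_{\{(u-n^{-1})^+=0\}}$ and $e'_m:=1-\nchi_{\{(u-m)^+=0\}}$, where $\nchi_{\{\cdot=0\}}$ is computed in the $U$-Banach $U$-module structure of $U$ provided by Example \ref{ex:U=V} and Proposition \ref{prop:Riesz_is_Banach_module}. Using the defining property $\nchi_{\{v=0\}}\cdot v=0$, together with \eqref{eq:basic_prop_f-alg_5} and Remark \ref{rmk:char_idem_ineq}, I would derive the order bounds $e_n u\geq n^{-1}e_n$ and $(1-e'_m)u\leq m(1-e'_m)$. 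The sequences $(e_n)_{n\geq 1}$ and $(e'_m)_{m\geq 1}$ are non-decreasing and non-increasing, respectively; combining $\sigma$-order-continuity of multiplication on $U^+\times U^+$, the elementary identity $u=(u-\varepsilon)^++u\wedge\varepsilon$, and the Archimedean property (Proposition \ref{prop:Dedekind_is_Archimedean}), one checks $\sup_n e_n=\nchi_{\{u>0\}}$ and $\inf_m e'_m=0$.

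Next, I would set $p_n:=e_n-e_{n-1}$ (with $e_0:=0$) and $q_m:=e'_{m-1}-e'_m$ (with $e'_0:={\bf 1}_U$): each $p_n,q_m$ is idempotent because $ef=e$ whenever $e\leq f$ are both idempotent (Remark \ref{rmk:char_idem_ineq}). Thus $(p_n)_{n\geq 1}\in\mathcal P(\nchi_{\{u>0\}})$ and $(q_m)_{m\geq 1}\in\mathcal P({\bf 1}_U)$, and by Proposition \ref{prop:inters_partitions} the products $\pi_{n,m}:=p_n q_m$ form a doubly-indexed countable partition of $\nchi_{\{u>0\}}$, which any bijection $\N\to\N\times\N$ re-indexes as the sought $(u_k)_{k\in\N}$. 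From $\pi_{n,m}\leq e_n$ and $\pi_{n,m}\leq 1-e'_m$, combined with the bounds above and \eqref{eq:basic_prop_f-alg_9}, I would extract the two-sided estimate $n^{-1}\pi_{n,m}\leq\pi_{n,m}u\leq m\pi_{n,m}$.

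Finally, to construct the local inverse on each $\pi_{n,m}$, set $a:=\pi_{n,m}u$ and $b:=\pi_{n,m}-m^{-1}a$. The two-sided bound translates into $0\leq b\leq\rho\pi_{n,m}$ with $\rho:=1-(mn)^{-1}<1$, while $\pi_{n,m}b=b$ is automatic. By induction one sees $\pi_{n,m}b^k=b^k$ and $b^k\leq\rho^k\pi_{n,m}$, so the partial sums $s_K:=\sum_{k=0}^K b^k$ form a non-decreasing, order-bounded (by $(1-\rho)^{-1}\pi_{n,m}$) sequence whose supremum $c\in U^+$ exists by Dedekind $\sigma$-completeness. Two applications of $\sigma$-order-continuity of multiplication yield $\pi_{n,m}c=c$ and $cb=c-\pi_{n,m}$, hence $c(\pi_{n,m}-b)=\pi_{n,m}$, i.e.\ $ca=m\pi_{n,m}$; taking $w_{n,m}:=m^{-1}c\in U^+$ gives $\pi_{n,m}(uw_{n,m}-1)=ca/m-\pi_{n,m}=0$, as required. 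The main technical obstacle is the careful bookkeeping of $\sigma$-order-continuity when interchanging suprema and multiplication, both in identifying $\sup_n e_n$ and $\inf_m e'_m$ in the first step and in summing the Neumann series in the last.
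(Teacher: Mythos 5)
Your route is genuinely different from the paper's. The paper approximates \(u\) from below by simple elements (super-order-density of \(\mathcal S(U)\)), reads off the supports \(b_n=\nchi_{\{s_n>0\}}\) and the lower bounds \(\lambda_n b_n\leq u\) directly from the simple representations, and produces the local inverses as infima \(w_n=\inf_{j\geq n}u_n t_j\) of the explicit inverses of the simple elements -- so it never needs an upper bound for \(u\) on the pieces. You instead build the partition from abstract level-set idempotents \(e_n=\nchi_{\{(u-n^{-1})^+\neq 0\}}\), \(e'_m=\nchi_{\{(u-m)^+\neq 0\}}\), trap \(u\) between \(n^{-1}\) and \(m\) on each piece \(\pi_{n,m}\), and invert via a geometric series. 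This is a viable alternative: the identification \(\sup_n e_n=\nchi_{\{u>0\}}\), the Neumann-series step, and the interchanges of suprema with multiplication all check out (two trivia: Proposition \ref{prop:inters_partitions} is stated for two partitions of the \emph{same} idempotent, so either append \(\nchi_{\{u=0\}}\) to \((p_n)\) or compute \(\sup_{n,m}p_nq_m\) directly; and the series should start at \(b^0\coloneqq\pi_{n,m}\), not \({\bf 1}_U\), to get your bound \((1-\rho)^{-1}\pi_{n,m}\)).

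There is, however, one step that does not follow from what you cite. Writing \(u=(u-n^{-1})^++u\wedge n^{-1}\), the defining property only gives \(e_n(u-n^{-1})^+=(u-n^{-1})^+\); to obtain \(e_nu\geq n^{-1}e_n\) you also need \(e_n(u-n^{-1})^-=0\), i.e.\ that the support idempotent \(\nchi_{\{a\neq 0\}}\) of \(a\coloneqq(u-n^{-1})^+\) annihilates every \(b\in U^+\) with \(a\wedge b=0\). This is not contained in \eqref{eq:basic_prop_f-alg_5}, Remark \ref{rmk:char_idem_ineq}, or the identity \(\nchi_{\{v=0\}}\cdot v=0\), and the same fact (in the form \(e'_m u\geq m e'_m\)) is exactly what your Archimedean argument for \(\inf_m e'_m=0\) requires; without it \((q_m)_{m}\) is not known to be a partition of \({\bf 1}_U\) and the two-sided bound on \(\pi_{n,m}u\) collapses. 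The missing lemma is true in a localisable CR \(f\)-algebra and provable with the paper's tools: by super-order-density choose simple elements \(0\leq t_k\uparrow b\); every idempotent \(r\) appearing in \(t_k\) with a strictly positive coefficient satisfies \(a\wedge r=0\) (after rescaling), hence \(ar=0\) by two applications of \eqref{eq:def_f-alg_f}, hence \(r\leq\nchi_{\{a=0\}}\); therefore \(\nchi_{\{a\neq 0\}}t_k=0\) for every \(k\), and \(\nchi_{\{a\neq 0\}}b=\sup_k\nchi_{\{a\neq 0\}}t_k=0\) by \(\sigma\)-order-continuity of the multiplication. Once this lemma is stated and proved, your argument goes through; note that inserting it brings back the simple-element approximation that the paper's proof uses from the start.
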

\begin{proof}
First, recall that \((U,U,U)\) is a metric \(f\)-structure (Example \ref{ex:U=V}) and that \(U\) is a \(U\)-Banach \(U\)-module
(Proposition \ref{prop:Riesz_is_Banach_module}). Since \(\mathcal S(U)\) is super-order-dense in \(U\) (as guaranteed by the
very definition of a localisable \(f\)-algebra), we can find a non-decreasing sequence \((s_n)_{n\in\N}\subset\mathcal S^+(U)\)
such that \(u=\sup_{n\in\N}s_n\). Note that, setting \(b_n\coloneqq\nchi_{\{s_n>0\}}\) for every \(n\in\N\),
we have \(\nchi_{\{u>0\}}=\sup_{n\in\N}b_n\). Indeed, on the one hand \(\nchi_{\{s_n>0\}}\leq\nchi_{\{u>0\}}\) for every \(n\in\N\)
and thus \(\sup_n b_n\leq\nchi_{\{u>0\}}\). On the other hand, if we denote \(s\coloneqq\nchi_{\{u>0\}}-\sup_n b_n\in\Idem(U)\), then
\(s=0\) in view of the fact that
\[
su=s\sup_{n\in\N}s_n=\sup_{n\in\N}ss_n=0.
\]
Moreover, for any \(n\in\N\) there exists a real number \(\lambda_n>0\) with \(\lambda_n b_n\leq u\). More precisely, if \(s_n\) is written
as \(\sum_{i=1}^{k_n}\lambda_n^i u_n^i\) for some \(k_n\in\N\), \((\lambda_n^i)_{i=1}^{k_n}\subset\R\cap(0,+\infty)\), and
\((u_n^i)_{i=0}^{k_n}\in\mathcal P_f({\bf 1}_U)\), then we have that \(b_n=u_n^1\vee\dots\vee u_n^{k_n}\) and that \(\lambda_n^1\wedge\dots\wedge\lambda_n^{k_n}\)
can be chosen as \(\lambda_n\). Now we define \(u_1\coloneqq b_1\) and \(u_{n+1}\coloneqq b_{n+1}(1-b_n)\) for every \(n\in\N\).
Observe that \((u_n)_{n\in\N}\) is a partition of \(\nchi_{\{u>0\}}\). Next we consider the simple elements
\(t_j\coloneqq\sum_{i=1}^{k_j}\frac{1}{\lambda_j^i}u_j^i\) for every \(j\in\N\). Given any \(n\in\N\), we have that the sequence
\((u_n t_j)_{j=n}^\infty\) is non-increasing and satisfies \(0\leq u_n t_j\leq\frac{1}{\lambda_n}u_n\) for every \(j\geq n\).
Then the infimum \(w_n\coloneqq\inf_{j\geq n}u_n t_j\in U^+\) exists and, since \(s_j t_j=b_j\geq u_n\) for every \(j\geq n\), it holds that
\[\begin{split}
u_n u w_n&=\lambda_n^{-1}u_n u-u_n u(\lambda_n^{-1}u_n-w_n)
=\lambda_n^{-1}u_n u-u_n\Big(\sup_{j\geq n}s_j\Big)\Big(\lambda_n^{-1}u_n-\inf_{j\geq n}u_n t_j\Big)\\
&=\lambda_n^{-1}u_n u-\Big(\sup_{j\geq n}s_j\Big)\sup_{j\geq n}(\lambda_n^{-1}u_n-u_n t_j)
=\lambda_n^{-1}u_n u-\sup_{j\geq n}(\lambda_n^{-1}u_n s_j-u_n s_j t_j)\\
&=\lambda_n^{-1}u_n u-\sup_{j\geq n}(\lambda_n^{-1}u_n s_j-u_n)=\lambda_n^{-1}u_n u-u_n(\lambda_n^{-1}u-1)=u_n.
\end{split}\]
Consequently, the statement is finally achieved.
\end{proof}
\subsubsection{Support of a metric \texorpdfstring{\(f\)}{f}-structure}
Given a metric \(f\)-structure \((\mathcal U,U,V)\) that is countably representable, we can define its \emph{support}, which is the `largest idempotent element
where at least an element of \(V\) does not vanish'. Namely:
\begin{definition}[Support]\label{def:spt}
Let \((\mathcal U,U,V)\) be a CR metric \(f\)-structure. Then we define \({\rm S}(V)\) as
\[
{\rm S}(V)\coloneqq\sup\big\{\nchi_{\{v\neq 0\}}\;\big|\;v\in V^+\big\}\in\Idem(U).
\]
We say that \({\rm S}(V)\) is the \textbf{support} of \(V\), or of the metric \(f\)-structure \((\mathcal U,U,V)\).
\end{definition}

Let us check that the previous definition is well-posed. Since \(\nchi_{\{v\neq 0\}}\leq 1\) for every \(v\in V^+\), the Dedekind
completeness of \(V\) ensures that \({\rm S}(V)\) exists. Moreover, the countable representability assumption ensures
the existence of a sequence \((v_n)_{n\in\N}\subset V^+\) such that \({\rm S}(V)=\sup_n\nchi_{\{v_n\neq 0\}}\).
Taking into account Lemma \ref{lem:order_idem}, it also follows that \({\rm S}(V)\in\Idem(U)\).
\begin{remark}\label{rmk:relation_H(V)}{\rm
If \((\mathcal U,U,V,W,Z)\) is a CR dual system of metric \(f\)-structures, then it holds that
\[
{\rm S}(V)\wedge{\rm S}(W)\leq{\rm S}(Z).
\]
In order to prove it, fix two sequences \((v_i)_{i\in\N}\subset V^+\) and \((w_j)_{j\in\N}\subset W^+\)
with \({\rm S}(V)=\sup_i\nchi_{\{v_i\neq 0\}}\) and \({\rm S}(W)=\sup_j\nchi_{\{w_j\neq 0\}}\).
Notice that \(\nchi_{\{v_i\neq 0\}}\wedge\nchi_{\{w_j\neq 0\}}\leq\nchi_{\{v_i w_j\neq 0\}}\leq{\rm S}(Z)\)
for every \(i,j\in\N\). Taking the supremum over \(i,j\in\N\), we conclude that \({\rm S}(V)\wedge{\rm S}(W)\leq{\rm S}(Z)\),
as we claimed.
\fr}\end{remark}

Next we prove two technical results concerning the support of a metric \(f\)-structure.
\begin{lemma}\label{lem:supporting_element}
Let \((\mathcal U,U,V)\) be a CR metric \(f\)-structure. Then there exists an element \(h\in V^+\cap U^+\)
such that \(h\leq 1\) and \(\nchi_{\{h>0\}}={\rm S}(V)\).
\end{lemma}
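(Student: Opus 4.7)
The plan is to exhibit $h$ as a controlled glueing in $V$ of a disjointified, truncated, rescaled family drawn from the set defining ${\rm S}(V)$. By the CR assumption on $U$, the set $\{\nchi_{\{v\neq 0\}}\,:\,v\in V^+\}\subset\Idem(U)$ admits a countable subset with the same supremum, so I fix $(v_n)_{n\in\N}\subset V^+$ with $\sup_n\nchi_{\{v_n\neq 0\}}={\rm S}(V)$. To force the bound $h\leq 1$ later, I replace each $v_n$ by $\tilde v_n\coloneqq v_n\wedge 1$ computed in $\mathcal U$; since $0\leq\tilde v_n\leq v_n$ and $0\leq\tilde v_n\leq 1$, solidity of $V$ and $U$ in $\mathcal U$ places $\tilde v_n\in V^+\cap U^+$. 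Using the $f$-algebra identity $u(a\wedge b)=(ua)\wedge(ub)$ for $u\in U^+$ (which follows from \eqref{eq:prod_preserv} via $a\wedge b=\frac{1}{2}(a+b-|a-b|)$) together with Proposition~\ref{prop:char_disjoint}, I verify $\nchi_{\{\tilde v_n\neq 0\}}=\nchi_{\{v_n\neq 0\}}$, so the truncated sequence still witnesses ${\rm S}(V)$.

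I then disjointify: setting $e_n\coloneqq\nchi_{\{v_n\neq 0\}}$ and $u_n\coloneqq e_n\prod_{k<n}(1-e_k)\in\Idem(U)$, a routine induction using Lemma~\ref{lem:prop_idem} shows that $(u_n)_{n\geq 1}$ is pairwise disjoint, $u_n\leq e_n$, and $\sup_n u_n=\sup_n e_n={\rm S}(V)$. Adjoining $u_0\coloneqq 1-{\rm S}(V)$ with $\tilde v_0\coloneqq 0$ yields a partition in $\mathcal P(\boldsymbol{1}_U)$. A key local observation, used later to identify the support of $h$, is that $\nchi_{\{u_n\tilde v_n\neq 0\}}=u_n$ for every $n\geq 1$: any idempotent $w\leq u_n$ annihilating $u_n\tilde v_n$ would lie simultaneously below $\nchi_{\{\tilde v_n=0\}}$ and below $u_n\leq 1-\nchi_{\{\tilde v_n=0\}}$, hence must be zero.

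For the glueing, fix the $\delta>0$ supplied by axiom~v) of Definition~\ref{def:metric_f-structure} for the partition $(u_n)_n$ and $\varepsilon=1$. By property~(iv) of the metric $f$-algebra $U$ together with continuity of the action $\cdot\colon U\times V\to V$, we have $\sfd_V(\lambda\tilde v_n,0)\to 0$ as $\lambda\searrow 0$, so I may choose $0<\lambda_n\leq 1$ with $\sfd_V(\lambda_n\tilde v_n,0)\leq 2^{-n}\delta$. Since $\sfd_V(\cdot,0)$ is order-preserving on $V^+$ and $u_n(\lambda_n\tilde v_n)\leq\lambda_n\tilde v_n$, it follows that $\sum_n\sfd_V(u_n\cdot(\lambda_n\tilde v_n),0)\leq\delta$; axiom~v) then gives that $(\lambda_n u_n\tilde v_n)_n$ is order-bounded in $V$, and the glueing axiom produces
\[
h\coloneqq\sum_{n\in\N}u_n\cdot(\lambda_n\tilde v_n)\in V^+,
\]
which by \eqref{eq:glue_V_as_NMod} equals $\sup_n\lambda_n u_n\tilde v_n$ in $V$.

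Finally I verify the three conclusions. The defining relation $u_n\cdot h=\lambda_n u_n\tilde v_n$ combined with $\nchi_{\{\lambda_n u_n\tilde v_n\neq 0\}}=u_n$ for $n\geq 1$ forces $u_n\leq\nchi_{\{h\neq 0\}}$; passing to the supremum gives ${\rm S}(V)\leq\nchi_{\{h\neq 0\}}$, while the reverse inequality is immediate from $h\in V^+$ and the definition of ${\rm S}(V)$, yielding $\nchi_{\{h>0\}}={\rm S}(V)$. For the bound $h\leq 1$, each $\lambda_n u_n\tilde v_n\leq u_n\leq{\rm S}(V)\leq 1$ in $\mathcal U$, so $y\coloneqq\sup_{\mathcal U}\lambda_n u_n\tilde v_n$ satisfies $y\leq{\rm S}(V)$; a short sandwich based on solidity of $V\subset\mathcal U$ identifies $y$ with $h$, after which solidity of $U\subset\mathcal U$ gives $h\in U^+$. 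I expect the most delicate points to be precisely this coherence between suprema taken in $V$ and in $\mathcal U$ (needed to promote the $\mathcal U$-bound $\leq 1$ from the individual summands to $h$) together with the verification that truncation $v\mapsto v\wedge 1$ preserves the support idempotent---both reducing ultimately to the lattice-preservation of multiplication by positive elements in an $f$-algebra.
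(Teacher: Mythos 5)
Your proof is correct and follows essentially the same route as the paper's: extract a countable witnessing family via CR, disjointify the supports into a partition, rescale by small $\lambda_n$ so that axiom v) of Definition \ref{def:metric_f-structure} yields order-boundedness, glue in $V$, and identify $\nchi_{\{h\neq 0\}}$ with ${\rm S}(V)$. The only difference is that you spell out details the paper leaves implicit (the truncation $v_n\wedge 1$, completing $(u_n)$ to a partition of ${\bf 1}_U$, and the solidity argument reconciling the suprema in $V$ and $\mathcal U$ to get $h\leq 1$ and $h\in U^+$), all of which are handled correctly.
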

\begin{proof}
Pick a sequence \((v_n)_{n\in\N}\subset V^+\) such that \(v_n\leq 1\) for every \(n\in\N\) and \({\rm S}(V)=\sup_n\nchi_{\{v_n\neq 0\}}\).
Define \(u_1\coloneqq\nchi_{\{v_1\neq 0\}}\in\Idem(U)\) and, recursively, \(u_{n+1}\coloneqq\nchi_{\{v_{n+1}\neq 0\}}(1-u_1)\dots(1-u_n)\in\Idem(U)\)
for every \(n\in\N\). Notice that \((u_n)_{n\in\N}\) is a partition of \({\rm S}(V)\). Recalling item v) of Definition \ref{def:metric_f-structure}, we can
find \(\delta>0\) such that \((u_n w_n)_{n\in\N}\) is order-bounded in \(V\) whenever \((w_n)_{n\in\N}\subset V^+\) is chosen so that
\(\sum_{n\in\N}\sfd_V(u_n w_n,0)\leq\delta\). Now pick \((\lambda_n)_{n\in\N}\subset(0,1)\) with \(\sfd_V(\lambda_n u_n v_n,0)\leq\frac{\delta}{2^n}\)
for all \(n\in\N\). Therefore, the element \(h\coloneqq\sum_{n\in\N}\lambda_n u_n v_n\in V^+\) exists. Notice that \(h\leq 1\) and
\(\nchi_{\{h\neq 0\}}={\rm S}(V)\). In particular, \(\nchi_{\{h=0\}}|v|=(1-{\rm S}(V))\nchi_{\{v=0\}}|v|=0\) for every \(v\in V\),
whence the statement follows.
\end{proof}
\begin{lemma}\label{lem:tech_localisation}
Let \((\mathcal U,U,V)\) be a CR metric \(f\)-structure. Let \(W\subset\mathcal U\) be a CR metric Riesz space
such that \((\mathcal U,U,W)\) is a metric \(f\)-structure and \({\rm S}(V)\leq{\rm S}(W)\). Let \(v\in V^+\)
be given. Then there exists a partition \((u_n)_{n\in\N}\subset W\cap\Idem(U)\) of \(\nchi_{\{v\neq 0\}}\)
such that \(u_n v\in W\cap U\) holds for every \(n\in\N\).
\end{lemma}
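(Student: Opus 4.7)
The plan is to construct the desired partition by refining two auxiliary partitions of $\nchi_{\{v \neq 0\}}$: one whose idempotents lie in $W$, coming from the support of $W$; and one on which $v$ is dominated by an integer, obtained by stratifying according to the size of $v$. The pointwise product of these two partitions will then yield the required idempotents.

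For the first ingredient, I would apply Lemma \ref{lem:supporting_element} to the metric $f$-structure $(\mathcal U, U, W)$ to produce $h \in W^+ \cap U^+$ with $h \leq 1$ and $\nchi_{\{h > 0\}} = {\rm S}(W)$, and then apply Proposition \ref{prop:local_inverses} to $h$ inside the metric $f$-algebra $U$ to obtain a partition $(c_n)_{n \in \N} \subset \Idem(U)$ of ${\rm S}(W)$ together with $(w_n)_{n \in \N} \subset U^+$ satisfying $c_n h w_n = c_n$. Inspecting the proof of Proposition \ref{prop:local_inverses}, one extracts the bound $c_n \leq \lambda_n^{-1} h$ for suitable $\lambda_n > 0$, so that solidity of $W$ in $\mathcal U$ yields $c_n \in W \cap \Idem(U)$.

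For the second ingredient, I would define $b_k \coloneqq \nchi_{\{(v - k)^+ \neq 0\}} \in \Idem(U)$ for $k \in \N$ (meaningful because $(v - k)^+ = v - v \wedge k \in V$ by solidity, and $V$ is a $V$-Banach $U$-module by Proposition \ref{prop:Riesz_is_Banach_module}), with the convention $b_0 \coloneqq \nchi_{\{v \neq 0\}}$. This sequence is non-increasing, and the key claim is $\inf_k b_k = 0$. Granting the auxiliary fact that $a b = 0$ in $\mathcal U^+$ forces $\nchi_{\{a \neq 0\}} \cdot b = 0$ (provable by approximating $a$ from below by elements of $\mathcal S^+(\mathcal U)$ and using the $\sigma$-order-continuity of multiplication in $\mathcal U$), one obtains $b_k (v - k)^- = 0$, hence $b_k v = (v - k)^+ + k b_k \geq k b_k$. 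Setting $b \coloneqq \inf_k b_k$ and using $b b_k = b$, this gives $k b \leq b v \leq v$ for every $k \in \N$, and the Archimedean property of $\mathcal U$ (Proposition \ref{prop:Dedekind_is_Archimedean}) forces $b = 0$. Then $e_k \coloneqq b_{k-1} - b_k \in \Idem(U)$ for $k \geq 1$ forms a countable partition of $\nchi_{\{v \neq 0\}}$ satisfying $e_k v \leq k e_k$.

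Finally, I would enumerate the doubly-indexed family $(c_n e_k)_{n, k \in \N}$ as $(u_m)_{m \in \N}$ via a bijection $\N \leftrightarrow \N^2$. Each $u_m$ is idempotent and lies in $W$, since $c_n \in W$, $e_k \in U$, and $UW = W$. Disjointness is immediate, and the identity $\sup_{n, k} c_n e_k = {\rm S}(W) \cdot \nchi_{\{v \neq 0\}} = \nchi_{\{v \neq 0\}}$ (using $\nchi_{\{v \neq 0\}} \leq {\rm S}(V) \leq {\rm S}(W)$ together with the $\sigma$-order-continuity of multiplication on $\mathcal U^+ \times \mathcal U^+$) shows that $(u_m)$ is a partition of $\nchi_{\{v \neq 0\}}$. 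For the membership $u_m v \in W \cap U$, the inequalities $c_n e_k v \leq k c_n e_k \leq k c_n \in W$ combined with solidity of $W$ give $c_n e_k v \in W$, while $c_n e_k v \leq k \cdot {\bf 1} \in U$ combined with solidity of $U$ give $c_n e_k v \in U$. The main difficulty lies in the auxiliary support fact used to establish $\inf_k b_k = 0$; once that is settled, the remaining verifications are routine algebra with idempotents.
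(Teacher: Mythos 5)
Your argument is correct and follows the same overall skeleton as the paper's proof -- take a supporting element \(h\in W^+\cap U^+\) with \(\nchi_{\{h>0\}}={\rm S}(W)\) from Lemma \ref{lem:supporting_element}, intersect two countable partitions (one forcing the idempotents into \(W\), one controlling the size of \(v\)), and finish by solidity -- but you realise the two halves differently. The paper stratifies \(v\) against \(h\) itself via \(\tilde s_i=\nchi_{\{0<v\leq ih\}}\) and gets the \(W\)-valued partition directly from the level sets \(\nchi_{\{h\geq j^{-1}{\bf 1}_U\}}\), proving \eqref{eq:tech_localisation} by a single Archimedean contradiction; you stratify \(v\) against the constants \(k{\bf 1}_U\) (so \(e_k v\leq k e_k\) gives \(U\)-membership at once) and obtain the \(W\)-valued partition \((c_n)\) of \({\rm S}(W)\) from Proposition \ref{prop:local_inverses} applied to \(h\), recovering \(W\)-membership of \(u_m v\) from \(u_m v\leq k c_n\); your claim \(\inf_k b_k=0\) plays exactly the role of the paper's claim \eqref{eq:tech_localisation}. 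Two points to tidy up in a full write-up: the bound \(c_n\leq\lambda_n^{-1}h\) comes from the \emph{proof} of Proposition \ref{prop:local_inverses} rather than its statement (alternatively, use the idempotents \(\nchi_{\{v\neq 0\}}\nchi_{\{h\geq j^{-1}{\bf 1}_U\}}\) directly, as the paper does, which also makes the local inverses \(w_n\) unnecessary); and your auxiliary fact (\(ab=0\) with \(a,b\in\mathcal U^+\) implies \(\nchi_{\{a\neq 0\}}b=0\)) does go through by the simple-element approximation you indicate, but the approximants are built from \(\Idem(\mathcal U)\), so one should first note that \(\Idem(\mathcal U)\subset\Idem(U)\) (any idempotent satisfies \(0\leq e\leq{\bf 1}_U\) and \(U\) is solid in \(\mathcal U\)) before comparing with \(\nchi_{\{a=0\}}=\sup\{u\in\Idem(U)\,:\,ua=0\}\). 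Neither point is a gap, just a place where the details should be spelled out.
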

\begin{proof}
Thanks to Lemma \ref{lem:supporting_element}, we can find an element \(h\in W^+\cap U^+\) such that \(\nchi_{\{h\neq 0\}}={\rm S}(W)\).
We then define \(\tilde s_i\coloneqq\nchi_{\{0<v\leq ih\}}\) and \(\tilde t_j\coloneqq\nchi_{\{v\neq 0\}}\nchi_{\{h\geq j^{-1}{\bf 1}_U\}}\)
for every \(i,j\in\N\). We claim that
\begin{equation}\label{eq:tech_localisation}
\sup_{i\in\N}\tilde s_i=\nchi_{\{v\neq 0\}}=\sup_{j\in\N}\tilde t_j.
\end{equation}
We prove the first equality, since the proof of the second one is similar. Clearly, \(\sup_i\tilde s_i\leq\nchi_{\{v\neq 0\}}\).
For the converse inequality, we argue by contradiction: suppose that \(s\coloneqq\nchi_{\{v\neq 0\}}-\sup_i\tilde s_i\neq 0\).
Then \(ish\leq v\) for every \(i\in\N\), whence it follows (since \(\mathcal U\) is Archimedean) that \(sh=0\), which leads to
a contradiction. Therefore, the claim \eqref{eq:tech_localisation} is proved. Now let us define \(s_1\coloneqq\tilde s_1\), \(t_1\coloneqq\tilde t_1\)
and, recursively, \(s_{i+1}\coloneqq\tilde s_{i+1}-s_1\ldots s_i\tilde s_{i+1}\) and \(t_{j+1}\coloneqq\tilde t_{j+1}-t_1\ldots t_j\tilde t_{j+1}\)
for every \(i,j\in\N\). Notice that \eqref{eq:tech_localisation} implies that \((s_i)_{i\in\N}\) and \((t_j)_{j\in\N}\) are partitions
of \(\nchi_{\{v\neq 0\}}\). Moreover, \(s_i v\leq ih\in W^+\cap U^+\) and \(t_j\leq jh\in W^+\) for every \(i,j\in\N\), thus accordingly
\(s_i v\in W^+\cap U^+\) and \(t_j\in W^+\). Relabeling the family \(\{s_i t_j\,:\,i,j\in\N\}\) as \(\{u_n\}_{n\in\N}\), we finally obtain
a partition \((u_n)_{n\in\N}\subset W\cap\Idem(U)\) of the element \(\nchi_{\{v\neq 0\}}\) satisfying \(u_n v\in W\cap U\) for every \(n\in\N\), as desired.
\end{proof}
\subsection{Homomorphisms of normed modules}\label{ss:hom_norm_mod}
To begin with, we introduce the notion of a homomorphism between normed modules.
\begin{definition}[Homomorphism of normed modules]\label{def:hom}
Let \((\mathcal U,U,V,W,Z)\) be a dual system of metric \(f\)-structures, \(\mathscr M\)
a \(V\)-normed \(U\)-module, and \(\mathscr N\) a \(Z\)-normed \(U\)-module. Then we define
\[
\textsc{Hom}(\mathscr M,\mathscr N)\coloneqq\Big\{T\colon\mathscr M\to\mathscr N\;\,U\text{-linear}\;\Big|
\;\exists\,w\in W^+:\,|Tv|\leq w|v|,\text{ for every }v\in\mathscr M\Big\}.
\]
\end{definition}

Next, we endow \(\textsc{Hom}(\mathscr M,\mathscr N)\) with a \(U\)-module structure. Given any \(T,S\in\textsc{Hom}(\mathscr M,\mathscr N)\)
and \(u\in U\), we define the elements \(T+S\in\textsc{Hom}(\mathscr M,\mathscr N)\) and \(u\cdot T\in\textsc{Hom}(\mathscr M,\mathscr N)\) as
\[\begin{split}
(T+S)v\coloneqq Tv+Sv,&\quad\text{ for every }v\in\mathscr M,\\
(u\cdot T)v\coloneqq u\cdot Tv,&\quad\text{ for every }v\in\mathscr M,
\end{split}\]
respectively. One can readily check that the triple \(\big(\textsc{Hom}(\mathscr M,\mathscr N),+,\cdot\big)\) is a module over \(U\).
\medskip

In the case where \(W\) is Dedekind complete, for any given \(T\in\textsc{Hom}(\mathscr M,\mathscr N)\) it holds that
\[
\exists|T|\coloneqq\inf\Big\{w\in W^+\;\Big|\;|Tv|\leq w|v|,\text{ for every }v\in\mathscr M\Big\}\in W^+.
\]
The space of homomorphisms between two normed modules inherits a normed module structure:
\begin{theorem}\label{thm:Hom_normed_module}
Let \((\mathcal U,U,V,W,Z)\) be a complete dual system of metric \(f\)-structures. Let \(\mathscr M\) be a \(V\)-normed
\(U\)-module and \(\mathscr N\) a \(Z\)-normed \(U\)-module. Let \(T\in\textsc{Hom}(\mathscr M,\mathscr N)\) be given. Then
\[
|Tv|\leq|T||v|,\quad\text{ for every }v\in\mathscr M.
\]
Moreover, the space \(\big(\textsc{Hom}(\mathscr M,\mathscr N),|\cdot|\big)\) is a \(W\)-normed \(U\)-module and
\begin{equation}\label{eq:norm_glueing}
\bigg|\sum_{n\in\N}u_n\cdot T_n\bigg|=\sum_{n\in\N}u_n|T_n|,\quad\text{ for every }
(u_n,T_n)_{n\in\N}\in\Adm\big(\textsc{Hom}(\mathscr M,\mathscr N)\big).
\end{equation}
If in addition \(\mathscr N\) is a \(Z\)-Banach \(U\)-module, then \(\textsc{Hom}(\mathscr M,\mathscr N)\) is a \(W\)-Banach \(U\)-module.
\end{theorem}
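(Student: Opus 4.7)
The plan is to verify the four claims in order; the compatibility $|u\cdot T|=|u||T|$ will be the main obstacle. For the inequality $|Tv|\leq|T||v|$, note that the set $A(T)\coloneqq\{w\in W^+\,:\,|Tv|\leq w|v|\text{ for all }v\in\mathscr M\}$ is closed under finite meets (hence directed downward), so by Dedekind completeness of $W$ the element $|T|=\inf A(T)$ exists. The order-continuity of the multiplication $V^+\times W^+\to Z^+$ guaranteed by the completeness of the dual system then gives $|T||v|=\inf_{w\in A(T)}(w|v|)\geq|Tv|$ for every $v\in\mathscr M$.

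Concerning the pointwise-norm axioms, $|T|=0\Leftrightarrow T=0$ and $|T+S|\leq|T|+|S|$ are immediate. For the compatibility $|u\cdot T|=|u||T|$ I would proceed in four stages. The scalar case $|\lambda T|=|\lambda||T|$ for $\lambda\in\R$ follows from $|(\lambda T)v|=|\lambda||Tv|$ and the invertibility of $\lambda\neq 0$. For $u\in\Idem(U)$ the inequality $|u\cdot T|\leq u|T|$ is direct, and for the converse one checks that $w\coloneqq|u\cdot T|+(1-u)|T|$ belongs to $A(T)$, so multiplying $|T|\leq w$ by $u$ and using $u(1-u)=0$ yields $u|T|\leq u|u\cdot T|\leq|u\cdot T|$. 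For $u=\sum_{i=1}^n\lambda_i u_i\in\mathcal S(U)$, the identity $u_j\cdot(u\cdot T)=\lambda_j u_j\cdot T$ together with the idempotent and scalar cases gives $u_j|u\cdot T|=|\lambda_j|u_j|T|$; summing over $j$ and using $|u|=\sum_j|\lambda_j|u_j$ (by the disjoint-sum formula \eqref{eq:prod_preserv_2}) delivers the equality. Finally, for general $u\in U$, approximate by $s_n\to u$ in $\sfd_U$: the bound $|(s_n-u)\cdot T|\leq|s_n-u||T|$ combined with the continuity of the multiplication $U\times W\to W$ and the $1$-Lipschitzness of $|\cdot|$ on $U$ makes $|s_n\cdot T|\to|u\cdot T|$ and $|s_n||T|\to|u||T|$ in $\sfd_W$, so the equality passes to the limit.

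For the glueing property and the formula \eqref{eq:norm_glueing}, given $(u_n,T_n)\in\Adm(\textsc{Hom}(\mathscr M,\mathscr N))$, set $b\coloneqq\sup_n|u_n\cdot T_n|\in W^+$ (which exists by Dedekind completeness). For each $v\in\mathscr M$, the computation $|u_n\cdot T_nv|=u_n|T_nv|\leq u_n|T_n||v|=|u_n\cdot T_n||v|\leq b|v|$ (using the homogeneity just proved) shows $(u_n,T_nv)\in\Adm(\mathscr N)$, so the glueing in $\mathscr N$ defines $Tv\coloneqq\sum_n u_n\cdot T_nv$. The $U$-linearity of $T$, the relation $u_n\cdot T=u_n\cdot T_n$, and the bound $|T|\leq b$ then follow from Lemma~\ref{lem:locality_prop} applied to $\mathscr N$. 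For the norm formula, from $u_n\cdot T=u_n\cdot T_n$ and the homogeneity we get $u_n|T|=u_n|T_n|$ for every $n$; since $(u_n,|T_n|)\in\Adm(W)$ (viewing $W$ as a $W$-Banach $U$-module via Proposition~\ref{prop:Riesz_is_Banach_module}), the locality property of $W$ forces $|T|=\sum_n u_n|T_n|$.

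Finally, for completeness when $\mathscr N$ is a $Z$-Banach $U$-module, the estimate $|(T_n-T_m)v|\leq|T_n-T_m||v|$ together with the continuity of the multiplication $W\times V\to Z$ shows that any $\sfd_{\textsc{Hom}}$-Cauchy sequence $(T_n)$ gives rise to a $\sfd_{\mathscr N}$-Cauchy sequence $(T_nv)$ for each $v\in\mathscr M$, which converges to some $Tv\in\mathscr N$. The $U$-linearity of $T$ follows from the continuity of the operations on $\mathscr N$. Since $(|T_n|)$ is $\sfd_W$-Cauchy (by the $1$-Lipschitzness of $|\cdot|$), it converges to some $w\in W^+$; passing to the limit in $|T_nv|\leq|T_n||v|$ using the closedness of the order under metric convergence (which holds in any metric Riesz space since $(\cdot)^-$ is $1$-Lipschitz) yields $T\in\textsc{Hom}(\mathscr M,\mathscr N)$, and a parallel limit argument applied to $|T_n-T_m|$ finally shows $T_n\to T$ in $\sfd_{\textsc{Hom}}$.
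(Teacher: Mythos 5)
Your proof is correct and follows the same skeleton as the paper's: the inequality \(|Tv|\le|T||v|\) via the downward-directed set of admissible bounds and the order-continuity of the multiplication \(V^+\times W^+\to Z^+\); the homogeneity \(|u\cdot T|=|u||T|\) by bootstrapping from idempotents through simple elements and then density of \(\mathcal S(U)\); the glueing property by glueing \((u_n\cdot T_nv)_n\) in \(\mathscr N\) and invoking locality; and completeness through the pointwise limits \(Tv=\lim_n T_nv\). Your two local deviations are harmless and, if anything, slightly cleaner: for general \(u\) you approximate by \emph{signed} simple elements and pass a chain of equalities to the metric limit, whereas the paper treats \(u\in U^+\) by density and then splits \(u=u^+-u^-\), which needs the extra disjointness identity \(|u\cdot T|=|u^+\cdot T|+|u^-\cdot T|\); and in the completeness step you avoid the paper's fast subsequence and telescoping dominating sums by working directly with the \(\sfd_W\)-limits of \(|T_n|\) and of \(|T_m-T_n|\) (for fixed \(n\)), at the price of the metric closedness of the order, which you correctly justify via the \(1\)-Lipschitzness of \((\cdot)^-\). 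One small imprecision: the bound \(|T|\le b\) in the glueing step does not literally follow from Lemma \ref{lem:locality_prop} applied to \(\mathscr N\); what is needed is \(|Tv|=\sup_n u_n|Tv|\le b|v|\), i.e.\ the \(\sigma\)-order-continuity of the multiplication (the same computation as in the proof of that lemma, or alternatively locality in \(Z\) applied to \((|Tv|-b|v|)^+\) after observing \(u_n(|Tv|-b|v|)\le 0\) and using \eqref{eq:basic_prop_f-alg_8}) — a one-line fix.
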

\begin{proof}\ \\
{\color{blue}\textsc{Verification of the \(W\)-pointwise norm axioms.}} Given any \(T\in\textsc{Hom}(\mathscr M,\mathscr N)\), we define
\begin{equation}\label{eq:def_F_T}
\mathcal F_T\coloneqq\Big\{w\in W^+\;\Big|\;|Tv|\leq w|v|,\text{ for every }v\in\mathscr M\Big\}\neq\varnothing.
\end{equation}
Since \(\mathcal F_T\) is a sublattice of \(W^+\) -- thus in particular it is directed downwards -- we deduce that
\[
|Tv|\leq\inf_{w\in\mathcal F_T}w|v|=|v|\inf_{w\in\mathcal F_T}w=|T||v|,\quad\text{ for every }v\in\mathscr M,
\]
as a consequence of the order-continuity of the multiplication from \(V^+\times W^+\) to \(Z^+\). It readily follows that
\(|\cdot|\colon\textsc{Hom}(\mathscr M,\mathscr N)\to W^+\) satisfies \eqref{eq:ptwse_norm_1} and \eqref{eq:ptwse_norm_2}.
It is also easy to check that the identity \(|\lambda T|=|\lambda||T|\) holds for every \(\lambda\in\R\) and \(T\in\textsc{Hom}(\mathscr M,\mathscr N)\).
We now pass to the verification of \eqref{eq:ptwse_norm_3}. For any \(u\in U\) and \(T\in\textsc{Hom}(\mathscr M,\mathscr N)\),
one has \(|(u\cdot T)v|=|u||Tv|\leq|u||T||v|\) for every \(v\in\mathscr M\), whence it follows that \(|u\cdot T|\leq|u||T|\).
On the other hand, we claim that also
\begin{equation}\label{eq:check_ptwse_norm_Hom}
|u||T|\leq|u\cdot T|,\quad\text{ for every }u\in U\text{ and }T\in\textsc{Hom}(\mathscr M,\mathscr N).
\end{equation}
In the case where \(u\in\Idem(U)\), the inequality stated in \eqref{eq:check_ptwse_norm_Hom} follows from the observation that
\[
u|T|=u|(1-u)\cdot T+u\cdot T|\leq u|(1-u)\cdot T|+u|u\cdot T|\leq u(1-u)|T|+|u\cdot T|=|u\cdot T|.
\]
Moreover, if \(u=\sum_{i=1}^k\lambda_i u_i\in\mathcal S^+(U)\) is given, then for any \(j=1,\ldots,k\) it holds that
\[
u_j\sum_{i=1}^k\lambda_i|u_i\cdot T|\leq\sum_{i=1}^k\lambda_i u_i u_j|T|=\lambda_j u_j^2|T|=|u_j u\cdot T|\leq u_j|u\cdot T|,
\]
which implies that \(u|T|=\sum_{i=1}^k\lambda_i u_i|T|=\sum_{i=1}^k\lambda_i|u_i\cdot T|\leq|u\cdot T|\), proving \eqref{eq:check_ptwse_norm_Hom}
for all \(u\in\mathcal S^+(U)\). Given any \(u\in U^+\), we can pick \((u_n)_{n\in\N}\subset\mathcal S^+(U)\) such that
\(\lim_{n\to\infty}\sfd_U(u_n,u)=0\) and thus
\[
|u||T|=\lim_{n\to\infty}|u_n||T|\leq\lim_{n\to\infty}|u_n\cdot T|=|u\cdot T|,
\]
which proves \eqref{eq:check_ptwse_norm_Hom} for all \(u\in U^+\). Finally, given an arbitrary element \(u\in U\) we have that
\[
|u^+\cdot T|\wedge|u^-\cdot T|\leq(u^+|T|)\wedge(u^-|T|)=(u^+\wedge u^-)|T|\overset{\eqref{eq:basic_prop_f-alg_7}}=0,
\]
so that \(|u\cdot T|=|u^+\cdot T|+|u^-\cdot T|\) by \eqref{eq:prod_preserv_2} and thus \(|u||T|=u^+|T|+u^-|T|\leq|u^+\cdot T|+|u^-\cdot T|=|u\cdot T|\).
This proves \eqref{eq:check_ptwse_norm_Hom} for general \(u\in U\). Therefore, the proof of the validity of \eqref{eq:ptwse_norm_3} is complete.\\
{\color{blue}\textsc{Verification of the glueing property.}} To prove that \(\big(\textsc{Hom}(\mathscr M,\mathscr N),|\cdot|\big)\)
is a \(W\)-normed \(U\)-module, it only remains to show the validity of the glueing property. Fix any \((u_n)_{n\in\N}\in\mathcal P({\bf 1}_U)\)
and \((T_n)_{n\in\N}\subset\textsc{Hom}(\mathscr M,\mathscr N)\) with \((|u_n\cdot T_n|)_{n\in\N}\) order-bounded in \(W\).
Since \(|u_n\cdot T_n|=|u_n||T_n|\), this means that \((u_n,|T_n|)_{n\in\N}\in{\rm Adm}(W)\) and thus \(w\coloneqq\sum_{n\in\N}u_n|T_n|\in W^+\)
exists; cf.\ with Proposition \ref{prop:Riesz_is_Banach_module}. Given any \(v\in\mathscr M\), we have that \(|u_n\cdot T_n v|\leq|u_n\cdot T_n||v|\)
for every \(n\in\N\), which ensures that \((u_n,T_n v)_{n\in\N}\in\Adm(\mathscr N)\), so it makes sense to consider
\(Tv\coloneqq\sum_{n\in\N}u_n\cdot T_n v\in\mathscr N\). The \(U\)-linearity of the resulting map \(T\colon\mathscr M\to\mathscr N\) can be easily checked.
Given that for any \(n\in\N\) and \(v\in\mathscr M\) one has that \(u_n|Tv|=|u_n\cdot Tv|=|u_n\cdot T_n v|\leq u_n|T_n||v|=u_n w|v|\), we deduce that
\[
|Tv|=\sup_{n\in\N}u_n|Tv|\leq\sup_{n\in\N}u_n w|v|=w|v|,\quad\text{ for every }v\in\mathscr M.
\]
This yields \(T\in\textsc{Hom}(\mathscr M,\mathscr N)\) and \(|T|\leq w\). Note that \((u_n\cdot T)v=u_n\cdot Tv=u_n\cdot T_n v=(u_n\cdot T_n)v\)
for every \(v\in\mathscr M\), so that \(T=\sum_{n\in\N}u_n\cdot T_n\). Finally, for any \(n\in\N\) we have \(u_n\cdot T_n=u_n\cdot T\) and thus
\(u_n|T_n|=u_n|T|\), which gives \(w=\sum_{n\in\N}u_n|T_n|=\sup_{n\in\N}u_n|T|=|T|\). This proves \eqref{eq:norm_glueing}.\\
{\color{blue}\textsc{Completeness.}} Suppose \((\mathscr N,\sfd_{\mathscr N})\) is complete. Let
\((T_n)_{n\in\N}\subset\textsc{Hom}(\mathscr M,\mathscr N)\) be a Cauchy sequence. Up to taking a not relabeled subsequence,
we may assume that \(\sfd_W(|T_{n+1}-T_n|,0)\leq 2^{-n}\) for every \(n\in\N\). Define \(S_n\coloneqq|T_1|+\sum_{k<n}|T_{k+1}-T_k|\in W^+\)
for every \(n\in\N\). If \(n,m\in\N\) are such that \(n<m\), then we have that \(S_m-S_n=\sum_{k=n}^{m-1}|T_{k+1}-T_k|\) and thus accordingly
\[
\sfd_W(|S_m-S_n|,0)\leq\sum_{k=n}^{m-1}\sfd_W(|T_{k+1}-T_k|,0)\leq\sum_{k=n}^{m-1}\frac{1}{2^k}\leq\frac{1}{2^{n-1}}.
\]
This shows that \((S_n)_{n\in\N}\) is a Cauchy sequence in \(W^+\), so that it \(\sfd_W\)-converges to some \(S\in W^+\).
Notice that \((S_n)_{n\in\N}\) is a non-decreasing sequence by construction, thus in particular it holds
\begin{equation}\label{eq:unif_bound_Hom}
|T_n|=\big|(T_n-T_{n-1})+(T_{n-1}-T_{n-2})+\dots+(T_2-T_1)+T_1\big|\leq S_n\leq S,\quad\forall n\in\N.
\end{equation}
Since the multiplication is continuous from \(V\times W\) to \(Z\) and \(\sfd_W(|T_m-T_n|,0)\to 0\) as \(n,m\to\infty\),
for any fixed element \(v\in\mathscr M\) we have that \(\sfd_{\mathscr N}(T_m v,T_n v)\leq\sfd_Z(|T_m-T_n||v|,0)\to 0\)
as \(n,m\to\infty\), which shows that \((T_n v)_{n\in\N}\) is a Cauchy sequence in \(\mathscr N\). We then define
\(Tv\coloneqq\lim_{n\to\infty}T_n v\in\mathscr N\). The resulting mapping \(T\colon\mathscr M\to\mathscr N\) is \(U\)-linear,
as it is a pointwise limit of \(U\)-linear maps. Also,
\[
|Tv|=\lim_{n\to\infty}|T_n v|\overset{\eqref{eq:unif_bound_Hom}}\leq S|v|,\quad\text{ for every }v\in\mathscr M,
\]
whence it follows that \(T\in\textsc{Hom}(\mathscr M,\mathscr N)\) and \(|T|\leq S\). Now set
\(P^n_m\coloneqq\sum_{k=n}^{m-1}|T_{k+1}-T_k|\in W^+\) for every \(n,m\in\N\) with \(n<m\). Arguing as we did before,
we see that \((P^n_m)_{m\in\N}\) is \(\sfd_W\)-Cauchy, thus \(\lim_{m\to\infty}P^n_m=P^n\) for some \(P^n\in W^+\).
Note that \(P^n_m\leq P^n\) and \(\sfd_W(P^n,0)\leq 2^{-n+1}\). Hence,
\[
|(T-T_n)v|=\lim_{m\to\infty}|(T_m-T_n)v|\leq\lim_{m\to\infty}P^n_m|v|=P^n|v|,\quad\text{ for every }v\in\mathscr M,
\]
which implies \(|T-T_n|\leq P^n\to 0\) as \(n\to\infty\). The completeness of \(\textsc{Hom}(\mathscr M,\mathscr N)\) follows.
\end{proof}

In the case where the complete dual system under consideration is countably representable, the \(W\)-pointwise norm \(|T|\)
of any given \(T\in\textsc{Hom}(\mathscr M,\mathscr N)\) can be also characterised as follows:
\begin{lemma}\label{lem:equiv_Hom_pn}
Let \((\mathcal U,U,V,W,Z)\) be a CR dual system of metric \(f\)-structures, \(\mathscr M\) a \(V\)-normed \(U\)-module,
and \(\mathscr N\) a \(Z\)-normed \(U\)-module. Then it holds that
\begin{equation}\label{eq:equiv_Hom_pn}
|T|=\sup\big\{|Tv|\;\big|\;v\in\mathscr M,\,|v|\leq 1\big\},\quad\text{ for every }T\in\textsc{Hom}(\mathscr M,\mathscr N).
\end{equation}
\end{lemma}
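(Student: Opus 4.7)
The plan is to establish both inequalities. Since Theorem \ref{thm:Hom_normed_module} gives $|Tv|\leq|T||v|\leq|T|$ whenever $|v|\leq 1$, the set on the right-hand side of \eqref{eq:equiv_Hom_pn} is order-bounded by $|T|$ and, by the Dedekind completeness built into the CR hypothesis on $W$, its supremum $w\in W^+$ exists and automatically satisfies $w\leq|T|$.

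For the reverse inequality $|T|\leq w$, by the characterisation of $|T|$ as the infimum of the set $\mathcal F_T$ appearing in \eqref{eq:def_F_T}, it suffices to show that $|Tv|\leq w|v|$ holds for every $v\in\mathscr M$. The strategy is to reduce an arbitrary $v$ to the unit-norm case by splitting it along a countable partition on which $|v|$ admits a multiplicative inverse in $U$. Concretely, I would first apply Lemma \ref{lem:tech_localisation} with the auxiliary Riesz space taken to be $U$ itself (noting that $\rm S(V)\leq\rm S(U)=\boldsymbol 1_U$) to obtain a partition $(u_n)\subset\Idem(U)$ of $\nchi_{\{|v|\neq 0\}}$ with $u_n|v|\in U^+$ for every $n$. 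Then Proposition \ref{prop:local_inverses} applied to each $u_n|v|\in U^+$ produces a refining partition $(u_{n,k})$ of $u_n$ and elements $r_{n,k}\in U^+$ with $u_{n,k}r_{n,k}|v|=u_{n,k}$. Relabelling this doubly-indexed family as a single disjoint countable family $(e_m)\subset\Idem(U)$ with corresponding multipliers $(r_m)\subset U^+$, one obtains $e_m r_m|v|=e_m$ and $\sup_m e_m=\nchi_{\{|v|\neq 0\}}$.

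For each $m$, set $v_m\coloneqq e_m r_m\cdot v\in\mathscr M$. Then $|v_m|=e_m r_m|v|=e_m\leq 1$, so $|Tv_m|\leq w$ by definition of $w$. On the other hand, $U$-linearity of $T$ and the pointwise norm axioms of $\mathscr N$ give $|Tv_m|=e_m r_m|Tv|$; multiplying the inequality $e_m r_m|Tv|\leq w$ on both sides by $|v|$ and exploiting the identity $e_m r_m|v|=e_m$ yields $e_m|Tv|\leq w|v|$. Finally, $v=\nchi_{\{v\neq 0\}}\cdot v=\nchi_{\{|v|\neq 0\}}\cdot v$ (the two idempotents coincide via \eqref{eq:ptwse_norm_3}), so $|Tv|=\nchi_{\{|v|\neq 0\}}|Tv|$. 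Taking the supremum over $m$ of the inequalities $e_m|Tv|\leq w|v|$ --- justified by $\sigma$-order-continuity of the multiplication in $\mathcal U$ applied to the non-decreasing sequence of finite partial sums of the disjoint family $(e_m)$ --- produces $|Tv|\leq w|v|$, so that $w\in\mathcal F_T$ and hence $|T|\leq w$.

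The main obstacle will be executing the two-stage localisation cleanly: first bringing $|v|\in V$ into the $f$-algebra $U$ so that Proposition \ref{prop:local_inverses} applies, then consolidating the resulting doubly-indexed partition into a single countable partition carrying the cancellation $e_m r_m|v|=e_m$. A minor but important subtlety is that the hypothesis here is only CR (not complete) dual system, so the order-continuity of multiplication that underpins the final supremum interchange must be extracted from $\sigma$-order-continuity together with the CR representability of suprema, rather than quoted directly from the definition of a complete dual system.
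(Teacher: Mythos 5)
Your proposal is correct and follows essentially the same route as the paper's proof: after noting that the supremum exists and is dominated by \(|T|\), both arguments combine Lemma \ref{lem:tech_localisation} with Proposition \ref{prop:local_inverses} to produce a partition \((u_n)\) of \(\nchi_{\{v\neq 0\}}\) with local inverses of \(|v|\), test \(T\) on the normalised elements \((u_n w_n)\cdot v\) of pointwise norm \(u_n\leq 1\), and recover \(|Tv|\leq w|v|\) by glueing/taking suprema over the partition, so that \(w\in\mathcal F_T\) and hence \(w=|T|\). Your closing remarks on choosing \(U\) itself as the auxiliary space in Lemma \ref{lem:tech_localisation} and on using only \(\sigma\)-order-continuity of the multiplication in \(\mathcal U\) are exactly the right way to fill in the details the paper leaves implicit.
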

\begin{proof}
On the one hand, \(|Tv|\leq|T||v|\leq|T|\) for all \(v\in\mathscr M\) with \(|v|\leq 1\), thus the right-hand side in
\eqref{eq:equiv_Hom_pn} exists and defines an element \(b\in W^+\) with \(b\leq|T|\). On the other hand, we claim that
\begin{equation}\label{eq:equiv_Hom_pn_aux}
|Tv|\leq b|v|,\quad\text{ for every }v\in\mathscr M.
\end{equation}
In order to prove it, fix any \(v\in\mathscr M\). By using Lemma \ref{lem:tech_localisation} and Proposition \ref{prop:local_inverses},
we can find a partition \((u_n)_{n\in\N}\) of \(\nchi_{\{v\neq 0\}}\) and a sequence \((w_n)_{n\in\N}\subset U^+\) such that
\(u_n|v|\in U\) and \(u_n w_n|v|=u_n\) for every \(n\in\N\). Letting \(v_n\coloneqq(u_n w_n)\cdot v\in\mathscr M\), we have
\(|v_n|=u_n\leq 1\) and thus \(|Tv_n|\leq b\). Then
\[
|Tv|=\sum_{n\in\N}(u_n w_n|v|)|Tv|=\sum_{n\in\N}|v||Tv_n|\leq\sum_{n\in\N}u_n b|v|=b|v|.
\]
This proves \eqref{eq:equiv_Hom_pn_aux} and accordingly that \(b=|T|\), thus concluding the proof of the statement.
\end{proof}
Given a dual system of metric \(f\)-structures \((\mathcal U,U,V,W,Z)\), a \(V\)-Banach \(U\)-module \(\mathscr M\), and a
\(Z\)-Banach \(U\)-module \(\mathscr N\), we define the \textbf{kernel} of a homomorphism \(T\in\textsc{Hom}(\mathscr M,\mathscr N)\) as
\[
{\rm ker}(T)\coloneqq T^{-1}(\{0\})=\{v\in\mathscr M\;|\;Tv=0\big\}.
\]
It can be readily checked that \({\rm ker}(T)\) is a \(V\)-Banach \(U\)-submodule of \(\mathscr M\).
\medskip

It is natural to introduce the categories of normed modules and of Banach modules:
\begin{definition}[Category of normed modules]\label{def:category_nm}
Let \((\mathcal U,U,V)\) be a metric \(f\)-structure. Then we define the category \({\bf NormMod}_{(\mathcal U,U,V)}\) of normed modules over \((\mathcal U,U,V)\) as follows:
\begin{itemize}
\item[\(\rm i)\)] The objects of \({\bf NormMod}_{(\mathcal U,U,V)}\) are given by the \(V\)-normed \(U\)-modules.
\item[\(\rm ii)\)] For any two objects \(\mathscr M\) and \(\mathscr N\) of \({\bf NormMod}_{(\mathcal U,U,V)}\), the morphisms between \(\mathscr M\) and
\(\mathscr N\) are given by those homomorphisms \(T\in\textsc{Hom}(\mathscr M,\mathscr N)\) satisfying \(|Tv|\leq|v|\) for every \(v\in\mathscr M\).
\end{itemize}
Moreover, we denote by \({\bf BanMod}_{(\mathcal U,U,V)}\) the full subcategory of \({\bf NormMod}_{(\mathcal U,U,V)}\) whose objects are the \(V\)-Banach \(U\)-modules.
\end{definition}

It would be interesting -- but outside the scopes of this work -- to investigate which results of \cite{Pas22} are valid for Banach modules over a more general class of metric \(f\)-structures.
\medskip

Let us also define the dual of a Banach module:
\begin{definition}[Dual Banach module]\label{def:dual_mod}
Let \((\mathcal U,U,V,W,Z)\) be a complete dual system of metric \(f\)-structures and \(\mathscr M\)
a \(V\)-normed \(U\)-module. Then the \textbf{dual} of \(\mathscr M\) is the \(W\)-Banach \(U\)-module
\[
\mathscr M^*\coloneqq\textsc{Hom}(\mathscr M,Z).
\]
\end{definition}

The duality pairing between \(\mathscr M\) and \(\mathscr M^*\) is given by
\[
\mathscr M^*\times\mathscr M\ni(\omega,v)\mapsto\langle\omega,v\rangle\coloneqq\omega(v)\in Z.
\]
\subsubsection{On the existence of homomorphisms}
As we will see later (in Section \ref{ss:example_norm_mod}), when working with normed modules over a function space,
one often has to prove the existence of homomorphisms of normed modules verifying suitable properties.
All these existence statements can be deduced from the following general result.
\begin{theorem}\label{thm:main_ext_hom}
Let \((\mathcal U_1,U_1,V_1)\) be a metric \(f\)-structure and \((\mathcal U_2,U_2,V_2,W_2,Z_2)\) a dual system of metric
\(f\)-structures. Let \(\phi\colon(\mathcal U_1,U_1,V_1)\to(\mathcal U_2,U_2,V_2)\) be a homomorphism of metric
\(f\)-structures. Let \(\mathscr M\) be a \(V_1\)-normed \(U_1\)-module
and \(\mathscr N\) a \(Z_2\)-Banach \(U_2\)-module. Let \(\mathscr V\) be a vector subspace of \(\mathscr M\) such that
\(\mathscr G(\mathscr V)\) is dense in \(\mathscr M\). Fix \(T\colon\mathscr V\to\mathscr N\) linear such that for some \(b\in W_2^+\) it holds
\begin{equation}\label{eq:main_ext_hom_hp}
|Tv|\leq b\,\phi(|v|),\quad\text{ for every }v\in\mathscr V.
\end{equation}
Then there exists a unique linear operator \(\bar T\colon\mathscr M\to\mathscr N\) such that \(\bar T|_{\mathscr V}=T\) and
\begin{equation}\label{eq:main_ext_hom_th1}
|\bar T v|\leq b\,\phi(|v|),\quad\text{ for every }v\in\mathscr M.
\end{equation}
In particular, the map \(\bar T\colon\mathscr M\to\mathscr N\) is continuous. Moreover, it holds that
\begin{equation}\label{eq:main_ext_hom_th2}
\bar T(u\cdot v)=\phi(u)\cdot\bar T(v),\quad\text{ for every }u\in U_1\text{ and }v\in\mathscr M.
\end{equation}
\end{theorem}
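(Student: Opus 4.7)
My plan is to construct $\bar T$ in two stages---first on the glueing closure $\mathscr G(\mathscr V)$ via the natural formula, then by uniform continuity to all of $\mathscr M$---and to derive both the $\phi$-linearity \eqref{eq:main_ext_hom_th2} and uniqueness afterwards from a locality-type argument that applies to any candidate extension.

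For the construction, I would define $\bar Tv\coloneqq\sum_{n\in\N}\phi(u_n)\cdot Tv_n$ whenever $v=\sum_{n\in\N}u_n\cdot v_n\in\mathscr G(\mathscr V)$. The glueing on the right is admissible in $\mathscr N$ because $\bigl(\phi(u_n)\bigr)_{n\in\N}\in\mathcal P({\bf 1}_{U_2})$ by \eqref{eq:img_partition}, and exploiting $u_n\cdot v=u_n\cdot v_n$ together with \eqref{eq:main_ext_hom_hp} yields
\[
|\phi(u_n)\cdot Tv_n|=\phi(u_n)|Tv_n|\leq b\,\phi(u_n|v_n|)=b\,\phi(u_n|v|)\leq b\,\phi(|v|),
\]
so the family is order-bounded in $Z_2$. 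Independence of the chosen representation is handled in two steps: for a fixed partition, two choices $v_n,w_n\in\mathscr V$ with $u_n\cdot v_n=u_n\cdot w_n$ give $\phi(u_n)\cdot T(v_n-w_n)=0$ by applying \eqref{eq:main_ext_hom_hp} to $v_n-w_n$, so the two glueings coincide by the uniqueness clause of the glueing property; for different partitions one passes through their common refinement, granted by Proposition~\ref{prop:inters_partitions}. The bound $|\bar Tv|\leq b\,\phi(|v|)$ on $\mathscr G(\mathscr V)$ then comes from setting $w\coloneqq b\,\phi(|v|)-|\bar Tv|\in Z_2$, observing $\phi(u_n)w\geq 0$ for every $n$, and using \eqref{eq:basic_prop_f-alg_8} to deduce $\phi(u_n)w^-=0$, after which Lemma~\ref{lem:locality_prop} applied to the $Z_2$-Banach $U_2$-module $Z_2$ (Proposition~\ref{prop:Riesz_is_Banach_module}) forces $w^-=0$. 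This bound makes $\bar T$ uniformly continuous on $\mathscr G(\mathscr V)$---using the Lipschitz property of $\phi|_{V_1}$ and the continuity of the multiplication $W_2\times V_2\to Z_2$---and density of $\mathscr G(\mathscr V)$ in $\mathscr M$ combined with completeness of $\mathscr N$ produces a unique continuous extension to all of $\mathscr M$ satisfying \eqref{eq:main_ext_hom_th1}.

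Finally, to establish \eqref{eq:main_ext_hom_th2} and uniqueness, I would show that the $\phi$-linearity is automatic for every linear $\bar T$ satisfying \eqref{eq:main_ext_hom_th1}, not just the constructed one. For $u\in\Idem(U_1)$ and $v\in\mathscr M$, the bound gives
\[
|\phi(1-u)\cdot\bar T(u\cdot v)|\leq b\,\phi\bigl((1-u)u|v|\bigr)=0,\qquad|\phi(u)\cdot\bar T((1-u)\cdot v)|\leq b\,\phi\bigl(u(1-u)|v|\bigr)=0,
\]
so applying $\phi(u)\cdot$ to the $\R$-linear identity $\bar Tv=\bar T(u\cdot v)+\bar T\bigl((1-u)\cdot v\bigr)$ yields $\phi(u)\cdot\bar Tv=\bar T(u\cdot v)$; extending by $\R$-linearity to $u\in\mathcal S(U_1)$ and by density plus continuity to $u\in U_1$ proves \eqref{eq:main_ext_hom_th2}. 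Uniqueness now follows: if $\bar T_1,\bar T_2$ both satisfy the hypotheses, then $S\coloneqq\bar T_1-\bar T_2$ is $\phi$-linear, vanishes on $\mathscr V$, and for any $v=\sum_n u_n\cdot v_n\in\mathscr G(\mathscr V)$ the identity $\phi(u_n)\cdot Sv=S(u_n\cdot v_n)=\phi(u_n)\cdot Sv_n=0$ combined with Lemma~\ref{lem:locality_prop} gives $Sv=0$, and continuity propagates to all of $\mathscr M$. The hard part will be the well-definedness of $\bar T$ on $\mathscr G(\mathscr V)$: checking that different admissible representations of the same element produce the same value in $\mathscr N$ is where the interplay between the glueing structure of $\mathscr M$, the homomorphism properties of $\phi$, and the pointwise inequality \eqref{eq:main_ext_hom_hp} has to be carefully orchestrated.
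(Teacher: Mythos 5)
Your proposal is correct and follows essentially the same route as the paper: define the extension on $\mathscr G(\mathscr V)$ by the glueing formula $\sum_n\phi(u_n)\cdot Tv_n$, verify admissibility and representation-independence via \eqref{eq:main_ext_hom_hp} and the partition machinery, deduce the pointwise bound, extend by Cauchy-continuity using density of $\mathscr G(\mathscr V)$ and completeness of $\mathscr N$, and obtain \eqref{eq:main_ext_hom_th2} and uniqueness through the idempotent/simple/density argument and Lemma \ref{lem:locality_prop}. The small variations (two-step well-definedness via a common refinement, the $w^-$ locality trick for the bound, and stating $\phi$-linearity for every candidate extension before uniqueness) are only cosmetic repackagings of the paper's own steps.
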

\begin{proof}
First of all, we define the operator \(S\colon\mathscr G(\mathscr V)\to\mathscr N\) as
\[
S\bigg(\sum_{n\in\N}u_n\cdot v_n\bigg)\coloneqq\sum_{n\in\N}\phi(u_n)\cdot Tv_n,
\quad\text{ for every }\sum_{n\in\N}u_n\cdot v_n\in\mathscr G(\mathscr V).
\]
Let us check that \(S\) is well-defined. Letting \(z\in V_1^+\) be an upper bound for \((u_n|v_n|)_{n\in\N}\), we infer from
\eqref{eq:main_ext_hom_hp} that \(|\phi(u_n)\cdot Tv_n|=\phi(u_n)|Tv_n|\leq b\,\phi(u_n|v_n|)\leq b\,\phi(z)\in Z_2^+\)
for every \(n\in\N\). Given that \(\big(\phi(u_n)\big)_{n\in\N}\in\mathcal P(U_2)\) by \eqref{eq:img_partition}, we deduce
that \(\big(\phi(u_n),Tv_n\big)_{n\in\N}\in{\rm Adm}(\mathscr N)\) and thus accordingly \(\sum_{n\in\N}\phi(u_n)\cdot Tv_n\in\mathscr N\)
is well-defined. Also, if \(\sum_{n\in\N}u_n\cdot v_n=\sum_{m\in\N}\tilde u_m\cdot\tilde v_m\), then
\[\begin{split}
\big|\phi(u_n\tilde u_m)\cdot Tv_n-\phi(u_n\tilde u_m)\cdot T\tilde v_m\big|&=\phi(u_n\tilde u_m)\big|T(v_n-\tilde v_m)\big|
\overset{\eqref{eq:main_ext_hom_hp}}\leq b\,\phi\big(u_n\tilde u_m|v_n-\tilde v_m|\big)\\
&=b\,\phi\big(\big|(u_n\tilde u_m)\cdot v_n-(u_n\tilde u_m)\cdot\tilde v_m\big|\big)=0
\end{split}\]
holds for every \(n,m\in\N\), which implies that \(\sum_{n\in\N}\phi(u_n)\cdot v_n=\sum_{m\in\N}\phi(\tilde u_m)\cdot\tilde v_m\).
All in all, the definition of \(S\) is well-posed. Observe that \(S\) is linear by construction. Moreover, one has that
\begin{equation}\label{eq:main_ext_hom_aux1}
|Sw|=\sum_{n\in\N}\phi(u_n)|Tv_n|\overset{\eqref{eq:main_ext_hom_hp}}\leq b\sum_{n\in\N}\phi(u_n)\phi(|v_n|)=b\,\phi(|w|),
\quad\forall w=\sum_{n\in\N}u_n\cdot v_n\in\mathscr G(\mathscr V).
\end{equation}
Notice that \(S\) is the unique linear map from \(\mathscr G(\mathscr V)\) to \(\mathscr N\) satisfying both \(S|_{\mathscr V}=T\) and \eqref{eq:main_ext_hom_aux1}.
It also follows from \eqref{eq:main_ext_hom_aux1} that the map \(S\) is Cauchy-continuous: if a given sequence \((w_i)_{i\in\N}\subset\mathscr G(\mathscr V)\) is
\(\sfd_{\mathscr M}\)-Cauchy, then \(\sfd_{V_2}\big(\phi(|w_i-w_j|),0\big)\to 0\) as \(i,j\to\infty\) and thus
\[
\lims_{i,j\to\infty}\sfd_{Z_2}\big(|Sw_i-Sw_j|,0\big)\overset{\eqref{eq:main_ext_hom_aux1}}\leq\lims_{i,j\to\infty}\sfd_{Z_2}\big(b\,\phi(|w_i-w_j|),0\big)=0,
\]
which shows that \((Sw_i)_{i\in\N}\) is \(\sfd_{\mathscr N}\)-Cauchy. Therefore, \(S\) can be uniquely extended to a linear, continuous map
\(\bar T\colon\mathscr M\to\mathscr N\). Thanks to an approximation argument, we can deduce from \eqref{eq:main_ext_hom_aux1} that \(\bar T\)
verifies \eqref{eq:main_ext_hom_th1}, whence the continuity of \(\bar T\) immediately follows. Finally, we can estimate
\[\begin{split}
\big|\bar T(u\cdot v)-\phi(u)\cdot\bar T(v)\big|&\leq\big|\bar T(u\cdot v)-\phi(u)\cdot\bar T(u\cdot v)\big|+\big|\phi(u)\cdot\bar T(u\cdot v)-\phi(u)\cdot\bar T(v)\big|\\
&=\big(1-\phi(u)\big)|\bar T(u\cdot v)|+\phi(u)|\bar T(u\cdot v-v)|\\
&\leq b\,\phi(1-u)\phi(|u\cdot v|)+b\,\phi(u)\phi(|u\cdot v-v|)\\
&=2b\,\phi\big(u(1-u)|v|\big)=0,
\end{split}\]
for every \(v\in\mathscr M\) and \(u\in\Idem(U_1)\), proving \eqref{eq:main_ext_hom_th2} when \(u\) is idempotent. By linearity, we deduce that \eqref{eq:main_ext_hom_th2}
holds whenever \(u\in\mathcal S(U_1)\). Thanks to the density of \(\mathcal S(U_1)\) in \(U_1\), as well as to the continuity of \(\bar T\), of \(\phi\), and of the scalar
multiplications, we conclude that \eqref{eq:main_ext_hom_th2} is verified.
\end{proof}

Let us isolate a useful byproduct of the last part of the proof of Theorem \ref{thm:main_ext_hom}:
\begin{lemma}\label{lem:criterion_Hom_phi}
Let \((\mathcal U_1,U_1,V_1)\) be a metric \(f\)-structure and \((\mathcal U_2,U_2,V_2,W_2,Z_2)\) a dual system of metric
\(f\)-structures. Let \(\phi\colon(\mathcal U_1,U_1,V_1)\to(\mathcal U_2,U_2,V_2)\) be a homomorphism of metric
\(f\)-structures. Let \(\mathscr M\) be a \(V_1\)-normed \(U_1\)-module and \(\mathscr N\) a \(Z_2\)-normed \(U_2\)-module.
Let \(T\colon\mathscr M\to\mathscr N\) be a linear operator having the following property: there exists an element \(b\in W_2^+\) such that
\begin{equation}\label{eq:criterion_Hom_hp}
|Tv|\leq b\,\phi(|v|),\quad\text{ for every }v\in\mathscr M.
\end{equation}
Then \(T\) is a continuous operator satisfying \(T(u\cdot v)=\phi(u)\cdot T(v)\) for every \(u\in U_1\) and \(v\in\mathscr M\).
\end{lemma}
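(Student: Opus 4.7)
The plan is to extract the argument from the last part of the proof of Theorem \ref{thm:main_ext_hom} and repackage it without relying on the extension construction. The hypothesis \eqref{eq:criterion_Hom_hp} directly gives us everything we need, since $T$ is already defined on all of $\mathscr{M}$.

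First I would establish continuity of $T$. For $v,w \in \mathscr{M}$, the linearity gives $|Tv - Tw| = |T(v-w)| \leq b\,\phi(|v-w|)$. Hence
\[
\sfd_{\mathscr{N}}(Tv, Tw) = \sfd_{Z_2}\bigl(|Tv-Tw|, 0\bigr) \leq \sfd_{Z_2}\bigl(b\,\phi(|v-w|), 0\bigr).
\]
Now $\phi|_{V_1} \colon V_1 \to V_2$ is a homomorphism of metric Riesz spaces, hence Lipschitz, so $\phi(|v-w|) \to 0$ in $V_2$ whenever $\sfd_{\mathscr{M}}(v,w) \to 0$. The dual system axiom \eqref{def:dual_system} ii) ensures the multiplication $V_2 \times W_2 \to Z_2$ is continuous, so $b\,\phi(|v-w|) \to 0$ in $Z_2$ and accordingly $T$ is continuous.

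Next I would prove the compatibility relation $T(u\cdot v) = \phi(u) \cdot T(v)$ in three stages. For the idempotent case $u \in \Idem(U_1)$, I compute as in the proof of Theorem \ref{thm:main_ext_hom}:
\[
|T(u\cdot v) - \phi(u)\cdot T(v)| \leq (1-\phi(u))|T(u\cdot v)| + \phi(u)|T(u\cdot v - v)|,
\]
and then bound the right-hand side using \eqref{eq:criterion_Hom_hp} together with $|u\cdot v| = u|v|$ and $|u\cdot v - v| = (1-u)|v|$, obtaining $2 b\,\phi(u(1-u)|v|) = 0$ since $u(1-u) = 0$ for idempotents. Hence $T(u\cdot v) = \phi(u)\cdot T(v)$ when $u \in \Idem(U_1)$. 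By linearity of both sides in $u$, this extends immediately to every $u \in \mathcal{S}(U_1)$.

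Finally, I would pass from simple elements to arbitrary $u \in U_1$ using density of $\mathcal{S}(U_1)$ in $(U_1, \sfd_{U_1})$ (Definition \ref{def:metric_f-algebra} iii)). For a sequence $(u_n) \subset \mathcal{S}(U_1)$ with $u_n \to u$, I use continuity of the $U_1$-scalar multiplication on $\mathscr{M}$ (Proposition \ref{prop:cont_norm_mod_ops} iii)) to get $u_n \cdot v \to u \cdot v$, then continuity of $T$ (just established) to obtain $T(u_n\cdot v) \to T(u\cdot v)$, continuity of $\phi|_{U_1}$ to get $\phi(u_n) \to \phi(u)$ in $U_2$, and continuity of the $U_2$-scalar multiplication on $\mathscr{N}$ to conclude $\phi(u_n)\cdot T(v) \to \phi(u)\cdot T(v)$. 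Passing to the limit in the identity $T(u_n\cdot v) = \phi(u_n)\cdot T(v)$ completes the argument. The only subtle point is really the idempotent calculation, but this is already done essentially in the previous theorem; once idempotents are handled, density of $\mathcal{S}(U_1)$ and continuity dispose of everything.
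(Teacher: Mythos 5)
Your proposal is correct and follows essentially the same route as the paper, which indeed presents this lemma as a byproduct of the final part of the proof of Theorem \ref{thm:main_ext_hom}: continuity from the pointwise bound via the order-preserving distance, the Lipschitz property of \(\phi\) and continuity of the multiplication \(V_2\times W_2\to Z_2\), then the idempotent computation, linearity on \(\mathcal S(U_1)\), and density plus continuity of the scalar multiplications. No gaps to report.
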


As an immediate consequence of Lemma \ref{lem:criterion_Hom_phi}, we obtain a criterion to detect homomorphisms:
\begin{corollary}\label{cor:criterion_Hom}
Let \((\mathcal U,U,V,W,Z)\) be a dual system of metric \(f\)-structures, \(\mathscr M\) a \(V\)-normed \(U\)-module, and
\(\mathscr N\) a \(Z\)-normed \(U\)-module. Let \(T\colon\mathscr M\to\mathscr N\) be a linear operator such that
\[
|Tv|\leq w|v|,\quad\text{ for every }v\in\mathscr M,
\]
for some \(w\in W^+\). Then \(T\) is \(U\)-linear and continuous, thus in particular \(T\in\textsc{Hom}(\mathscr M,\mathscr N)\).
\end{corollary}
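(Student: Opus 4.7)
The plan is to derive this corollary as a direct application of Lemma \ref{lem:criterion_Hom_phi}, in the special case where the two metric $f$-structures coincide and the homomorphism between them is the identity. Concretely, starting from the dual system $(\mathcal U,U,V,W,Z)$, I regard $(\mathcal U,U,V)$ simultaneously as the source and the $V$-component of the target in Lemma \ref{lem:criterion_Hom_phi}, and I take $\phi={\rm id}_{\mathcal U}$, which is trivially a homomorphism of metric $f$-structures. Under this choice the hypothesis \eqref{eq:criterion_Hom_hp} reads $|Tv|\le b\,|v|$ for some $b\in W^+$, which is exactly our assumption with $b\coloneqq w$.

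Applying Lemma \ref{lem:criterion_Hom_phi} then immediately delivers two pieces of information about $T$: first, $T$ is continuous from $(\mathscr M,\sfd_{\mathscr M})$ to $(\mathscr N,\sfd_{\mathscr N})$; second, $T(u\cdot v)=\phi(u)\cdot T(v)=u\cdot T(v)$ for every $u\in U$ and every $v\in\mathscr M$. Combined with the $\R$-linearity already assumed for $T$, the second property is precisely the $U$-linearity of $T$. Once $U$-linearity is established, the pointwise bound $|Tv|\le w|v|$ together with $w\in W^+$ matches Definition \ref{def:hom} verbatim, so $T\in\textsc{Hom}(\mathscr M,\mathscr N)$.

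There is essentially no obstacle here beyond checking that the identity of $\mathcal U$ qualifies as a homomorphism of metric $f$-structures in the sense of Definition \ref{def:metric_f-structure} (which is immediate, since it is the identity on all the relevant sublattices and $f$-subalgebras, and is trivially Lipschitz with constant $1$). I would therefore write the proof in one short paragraph: invoke Lemma \ref{lem:criterion_Hom_phi} with $(\mathcal U_1,U_1,V_1)=(\mathcal U,U,V)$, $(\mathcal U_2,U_2,V_2,W_2,Z_2)=(\mathcal U,U,V,W,Z)$, $\phi={\rm id}$, and $b=w$, and read off continuity and $U$-linearity, concluding $T\in\textsc{Hom}(\mathscr M,\mathscr N)$ from Definition \ref{def:hom}.
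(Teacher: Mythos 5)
Your proposal is correct and coincides with the paper's own proof: the corollary is obtained by invoking Lemma \ref{lem:criterion_Hom_phi} with \((\mathcal U_1,U_1,V_1)=(\mathcal U,U,V)\), \((\mathcal U_2,U_2,V_2,W_2,Z_2)=(\mathcal U,U,V,W,Z)\), \(\phi={\rm id}_{\mathcal U}\), and \(b=w\), from which continuity and \(U\)-linearity (hence membership in \(\textsc{Hom}(\mathscr M,\mathscr N)\)) follow at once.
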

\begin{proof}
Apply Lemma \ref{lem:criterion_Hom_phi} with \((\mathcal U_1,U_1,V_1)\coloneqq(\mathcal U,U,V)\), \((\mathcal U_2,U_2,V_2,W_2,Z_2)\coloneqq(\mathcal U,U,V,W,Z)\),
and \(\phi\coloneqq{\rm id}_{\mathcal U}\).
\end{proof}
\subsection{Constructions of normed modules}\label{ss:constr_norm_mod}
\subsubsection{Banach module induced by a symmetric sublinear map}
We fix some terminology. Consider a vector space \(\mathscr V\), a Riesz space \(U\), and a map \(\psi\colon\mathscr V\to U^+\).
Then we say that \(\psi\) is \textbf{positively homogeneous} if \(\psi(\lambda{\sf v})=\lambda\,\psi({\sf v})\) for every
\({\sf v}\in\mathscr V\) and \(\lambda\in\R^+\), while we say that \(\psi\) is \textbf{subadditive} if
\(\psi({\sf v}+{\sf w})\leq\psi({\sf v})+\psi({\sf w})\) for every \({\sf v},{\sf w}\in\mathscr V\).
The map \(\psi\) is said to be \textbf{sublinear} provided it is both positively homogeneous and subadditive.
Finally, we say that the map \(\psi\) is \textbf{symmetric} provided it satisfies \(\psi(-{\sf v})=\psi({\sf v})\) for every \({\sf v}\in\mathscr V\).
\begin{theorem}[Banach module generated by a symmetric sublinear map]\label{thm:module_generated}
Let \((\mathcal U,U,V)\) be a metric \(f\)-structure. Let \(\mathscr V\) be a vector space and \(\psi\colon\mathscr V\to V^+\)
a symmetric, sublinear mapping. Then there exists a unique couple \((\mathscr M_{\langle\psi\rangle},T_{\langle\psi\rangle})\)
-- where \(\mathscr M_{\langle\psi\rangle}\) is a \(V\)-Banach \(U\)-module and the operator
\(T_{\langle\psi\rangle}\colon\mathscr V\to\mathscr M_{\langle\psi\rangle}\) is linear -- such that the following properties are verified:
\begin{itemize}
\item[\(\rm i)\)] \(|T_{\langle\psi\rangle}{\sf v}|=\psi({\sf v})\) for every \({\sf v}\in\mathscr V\).
\item[\(\rm ii)\)] \(\mathscr G(T_{\langle\psi\rangle}(\mathscr V))\) is dense in \(\mathscr M_{\langle\psi\rangle}\).
\end{itemize}
Uniqueness is up to unique isomorphism: given another couple \((\mathscr M,T)\) with the same properties, there exists
a unique isomorphism of \(V\)-Banach \(U\)-modules \(\Phi\colon\mathscr M_{\langle\psi\rangle}\to\mathscr M\) such that
\[\begin{tikzcd}
\mathscr V \arrow[r,"T_{\langle\psi\rangle}"] \arrow[rd,swap,"T"] &
\mathscr M_{\langle\psi\rangle} \arrow[d,"\Phi"] \\ & \mathscr M
\end{tikzcd}\]
is a commutative diagram.
\end{theorem}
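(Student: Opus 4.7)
The plan is to build $\mathscr M_{\langle\psi\rangle}$ in three stages---a finite pre-module, its metric completion with a full $U$-action, and a final enlargement adjoining glueings---and to derive the uniqueness clause from Theorem~\ref{thm:main_ext_hom}. Concretely, I would start from $F:=\mathcal S(U)\otimes_\R\mathscr V$, viewed as an $\mathcal S(U)$-module. By Lemma~\ref{lem:finite_part} every $\sigma\in F$ admits a ``normal form'' $\sigma=\sum_{j=1}^m e_j\otimes\mathsf w_j$ with $(e_j)_{j=1}^m\in\mathcal P_f(\mathbf 1_U)$, so I can set
\[
p(\sigma):=\sum_{j=1}^m e_j\,\psi(\mathsf w_j)\in V^+.
\]
Symmetry and sublinearity of $\psi$---plus the fact that over $\R$ an elementary tensor with nonzero entries is itself nonzero, so the $e_j\otimes(\cdot)$ part of a normal form is determined by the $\mathsf w_j$'s modulo refinement---make $p$ independent of the chosen normal form and yield subadditivity together with $p(u\cdot\sigma)=|u|\,p(\sigma)$ for $u\in\mathcal S(U)$. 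The null set $N:=p^{-1}(0)$ is an $\mathcal S(U)$-submodule, and $\mathscr M_0:=F/N$ is a pre-$V$-normed $\mathcal S(U)$-module satisfying Definition~\ref{def:norm_U-mod}(i). The induced metric makes all operations continuous (as in Proposition~\ref{prop:cont_norm_mod_ops}), so I pass to the completion $\widehat{\mathscr M}$; continuity of multiplication together with density of $\mathcal S(U)$ in $U$ (Definition~\ref{def:metric_f-algebra}(iii)) then extends the scalar action to a genuine $U$-action on $\widehat{\mathscr M}$, yielding a complete normed $U$-module satisfying every axiom of Definition~\ref{def:norm_U-mod} except, possibly, the glueing property.

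To adjoin glueings, let $\mathscr M_{\langle\psi\rangle}$ be the set of equivalence classes of admissible families $[(u_n,v_n)_{n\in\N}]$ with $(u_n)\in\mathcal P(\mathbf 1_U)$, $v_n\in\widehat{\mathscr M}$, and $(u_n|v_n|)_{n\in\N}$ order-bounded in $V$---two families being identified when they coincide on their common refinement, which exists by Proposition~\ref{prop:inters_partitions}. Addition and the $U$-action descend to representatives after passing to common refinements, and the candidate pointwise norm $|[(u_n,v_n)]|:=\sup_{n\in\N}u_n|v_n|$ is well-defined by the Dedekind $\sigma$-completeness of $V$. The glueing property then holds by construction, and the inclusion $\widehat{\mathscr M}\hookrightarrow\mathscr M_{\langle\psi\rangle}$ via trivial partitions is isometric. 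The delicate task is proving $\mathscr M_{\langle\psi\rangle}$ metrically complete: axiom~(v) of Definition~\ref{def:metric_f-structure} is essential here, since it turns smallness of $\sum_n\sfd_V(u_n|v_n|,0)$ along a partition into $\sfd_V$-smallness of $\sup_{n\in\N}u_n|v_n|$, which lets one reassemble a Cauchy sequence piece-by-piece into a single admissible family. The naive ``partial-sum'' strategy fails in general, as the $L^\infty$ example in the introduction makes plain.

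Setting $T_{\langle\psi\rangle}(\mathsf v):=[\mathbf 1_U\otimes\mathsf v]$, property (i) is immediate and (ii) follows because every element of $\mathscr M_0$ is a finite partition sum $\sum_j e_j\cdot T_{\langle\psi\rangle}(\mathsf w_j)\in\mathscr G(T_{\langle\psi\rangle}(\mathscr V))$, and these are dense in $\widehat{\mathscr M}$ and---via the embedding---in $\mathscr M_{\langle\psi\rangle}$. For uniqueness, given another pair $(\mathscr M,T)$ satisfying (i) and (ii), the $\R$-linear map $T'\colon T_{\langle\psi\rangle}(\mathscr V)\to\mathscr M$ sending $T_{\langle\psi\rangle}\mathsf v\mapsto T\mathsf v$ is well-defined and satisfies $|T'w|=|w|$ on its domain. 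Applying Theorem~\ref{thm:main_ext_hom} with $\phi=\mathrm{id}_{\mathcal U}$ and the dual system $(\mathcal U,U,V,U,V)$ of Example~\ref{ex:UVUV} extends $T'$ to a morphism $\Phi\colon\mathscr M_{\langle\psi\rangle}\to\mathscr M$ with $|\Phi v|\leq|v|$; symmetrically one obtains $\Psi\colon\mathscr M\to\mathscr M_{\langle\psi\rangle}$. Both compositions extend the identity on the generating subspaces, so the uniqueness clause of Theorem~\ref{thm:main_ext_hom} forces $\Psi\circ\Phi=\mathrm{id}$ and $\Phi\circ\Psi=\mathrm{id}$; the two-sided norm inequality then upgrades $|\Phi v|\leq|v|$ to equality, identifying $\Phi$ as the desired isomorphism of $V$-Banach $U$-modules. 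The principal obstacle of the whole argument is Step~3---constructing the glueing enlargement in a well-posed manner and proving its metric completeness---where axiom~(v) of Definition~\ref{def:metric_f-structure} plays an essential and non-trivial role.
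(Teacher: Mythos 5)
Your uniqueness argument is sound and is essentially the paper's: well-definedness of the map $T_{\langle\psi\rangle}\mathsf v\mapsto T\mathsf v$ follows from property (i), and the two extensions produced by Theorem~\ref{thm:main_ext_hom} (with $\phi=\mathrm{id}_{\mathcal U}$ and the dual system of Example~\ref{ex:UVUV}) compose to the identity by the uniqueness clause of that theorem, exactly as in Corollary~\ref{cor:module_generated_cat}. The problem is the existence half, where you invert the paper's order of operations. The paper forms countable glueings of $\mathscr V$-generated elements first (its space $\mathcal F$) and takes the metric completion \emph{last}, so completeness is automatic and the only delicate point is that the completion retains the glueing property; this is proved by approximating each piece $w_n$ by elements of $\mathcal F$ with errors summable along the \emph{fixed} partition $(u_n)$ and invoking axiom (v) of Definition~\ref{def:metric_f-structure}. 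You instead complete first and adjoin glueings last, so glueing is free but you owe a proof that the enlargement $\mathscr G(\widehat{\mathscr M})$ is metrically complete --- and that is exactly where your argument stops. The one-line hint you give does not go through: axiom (v) is formulated for a single partition fixed in advance, whereas each term $w^k=[(u^k_n,v^k_n)_n]$ of a Cauchy sequence in your enlargement carries its own partition; there is no countable common refinement of infinitely many countable partitions, and smallness of $\sfd_{\mathscr M_{\langle\psi\rangle}}(w^k,w^{k'})$ does not control the quantity $\sum_n\sfd_V(u_n|w^k-w^{k'}|,0)$ along any prescribed partition (in the $L^\infty$ case every summand is small but the series diverges), which is precisely the input axiom (v) requires. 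So the key step is missing, and it is unclear it can be supplied with only the tools available at this level of generality (a bare metric $f$-structure, $V$ merely Dedekind $\sigma$-complete, no CR assumption); in concrete $L^\infty(\mu)$ settings one can argue via an Egorov-type selection, but no abstract substitute is at hand.

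There is a second, related gap: your justification of property (ii). Density of $\mathscr M_0$ in $\widehat{\mathscr M}$ together with the isometric embedding $\widehat{\mathscr M}\hookrightarrow\mathscr M_{\langle\psi\rangle}$ does \emph{not} give density of $\mathscr G(T_{\langle\psi\rangle}(\mathscr V))$ in the enlargement, because $\widehat{\mathscr M}$ itself is not dense there: as your own reference to the $L^\infty$ example shows, $\sfd_V\big(\sup_{n>N}u_n|v_n|,0\big)$ need not tend to $0$, so partial sums do not approximate a glueing. The correct repair is to approximate each $v_n$ by elements of $\mathscr M_0$ with errors summable along the fixed partition $(u_n)$ and then apply axiom (v) --- but that is exactly the paper's argument for the glueing property of the completion, transplanted. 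In other words, to close your proof you would have to import the paper's central approximation lemma anyway \emph{and} still prove the unproven completeness claim on top of it; the paper's order of operations (glue first, complete last) avoids the latter entirely and is the economical route.
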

\begin{proof}\ \\
{\color{blue}\textsc{Existence.}} Let us denote by \(\bar{\mathcal F}\) the family of all sequences \((u_n,{\sf v}_n)_{n\in\N}\) such that
\((u_n)_{n\in\N}\in\mathcal P({\bf 1}_U)\), \(({\sf v}_n)_{n\in\N}\subset\mathscr V\), and \(\big(u_n\psi({\sf v}_n)\big)_{n\in\N}\) is order-bounded
in \(V\). We introduce a relation \(\sim\) on the set \(\bar{\mathcal F}\): given any \((u_n,{\sf v}_n)_n,(\tilde u_m,\tilde{\sf v}_m)_m\in\bar{\mathcal F}\),
we declare that \((u_n,{\sf v}_n)_n\sim(\tilde u_m,\tilde{\sf v}_m)_m\) if and only if
\[
u_n\tilde u_m\psi({\sf v}_n-\tilde{\sf v}_m)=0,\quad\text{ for every }n,m\in\N.
\]
One can readily check that \(\sim\) is an equivalence relation on \(\bar{\mathcal F}\): reflexivity follows from \(\psi(0)=0\),
symmetry from the symmetry of \(\psi\), and transitivity from the subadditivity of \(\psi\). We then define
\[
\mathcal F\coloneqq\bar{\mathcal F}/\sim.
\]
For any \((u_n,{\sf v}_n)_n\in\bar{\mathcal F}\), we denote by \([u_n,{\sf v}_n]_n\in\bar{\mathcal F}\) its equivalence class with
respect to \(\sim\). We set
\[\begin{split}
[u_n,{\sf v}_n]_n+[\tilde u_m,\tilde{\sf v}_m]_m\coloneqq[u_n\tilde u_m,{\sf v}_n+\tilde{\sf v}_m]_{n,m}&,
\quad\forall[u_n,{\sf v}_n]_n,[\tilde u_m,\tilde{\sf v}_m]_m\in\mathcal F,\\
u\cdot[\tilde u_m,\tilde{\sf v}_m]_m\coloneqq[u_i\tilde u_m,\lambda_i\tilde{\sf v}_m]_{i,m}&,
\quad\forall u=\sum_{i=1}^n\lambda_i u_i\in\mathcal S(U),\,[\tilde u_m,\tilde{\sf v}_m]_m\in\mathcal F,\\
\big|[u_n,{\sf v}_n]_n\big|\coloneqq\sup_{n\in\N}u_n\psi({\sf v}_n)&,\quad\forall[u_n,{\sf v}_n]_n\in\mathcal F.
\end{split}\]
Routine verifications show the well-posedness of the resulting operations
\begin{equation}\label{eq:pre_operations}
+\colon\mathcal F\times\mathcal F\to\mathcal F,\qquad\cdot\colon\mathcal S(U)\times\mathcal F\to\mathcal F,\qquad|\cdot|\colon\mathcal F\to V^+.
\end{equation}
We also define the map \(\tilde T\colon\mathscr V\to\mathcal F\) as \(\tilde T({\sf v})\coloneqq[{\bf 1}_U,{\sf v}]\) for all \({\sf v}\in\mathscr V\),
and the distance \(\sfd_{\mathcal F}\) on \(\mathcal F\) as
\[
\sfd_{\mathcal F}(w,\tilde w)\coloneqq\sfd_V(|w-\tilde w|,0),\quad\text{ for every }w,\tilde w\in\mathcal F.
\]
Next we denote by \((\mathscr M_{\langle\psi\rangle},\sfd_{\mathscr M_{\langle\psi\rangle}})\) the metric completion of \((\mathcal F,\sfd_{\mathcal F})\)
and by \(\iota\colon\mathcal F\to{\mathscr M}_{\langle\psi\rangle}\) the canonical isometric embedding map. Define also
\(T_{\langle\psi\rangle}\colon\mathscr V\to\mathscr M_{\langle\psi\rangle}\) as \(T_{\langle\psi\rangle}\coloneqq\iota\circ\tilde T\).
With a slight abuse of notation, we regard \(\mathcal F\) as a subset of \(\mathscr M_{\langle\psi\rangle}\). Standard verifications show that
the operations in \eqref{eq:pre_operations} are Cauchy-continuous, so they can be uniquely extended to continuous maps
\[
+\colon\mathscr M_{\langle\psi\rangle}\times\mathscr M_{\langle\psi\rangle}\to\mathscr M_{\langle\psi\rangle},\qquad
\cdot\colon U\times\mathscr M_{\langle\psi\rangle}\to\mathscr M_{\langle\psi\rangle},\qquad|\cdot|\colon\mathscr M_{\langle\psi\rangle}\to V^+.
\]
By an approximation argument, one can show that \(\mathscr M_{\langle\psi\rangle}\) is a \(U\)-module, that
\(|\cdot|\colon\mathscr M_{\langle\psi\rangle}\to V^+\) verifies the \(V\)-pointwise norm axioms, and that
\[
\sfd_{\mathscr M_{\langle\psi\rangle}}(w,\tilde w)\coloneqq\sfd_V(|w-\tilde w|,0),\quad\text{ for every }w,\tilde w\in\mathscr M_{\langle\psi\rangle}.
\]
In order to prove that \(\mathscr M_{\langle\psi\rangle}\) is a \(V\)-Banach \(U\)-module, it only remains to check the validity of the
glueing property. Fix any \((u_n)_{n\in\N}\in\mathcal P({\bf 1}_U)\) and \((w_n)_{n\in\N}\subset\mathscr M_{\langle\psi\rangle}\) such that \((u_n|w_n|)_{n\in\N}\)
is order-bounded in \(V\). In view of Definition \ref{def:metric_f-structure} v), for any \(k\in\N\) we can find \((w_n^k)_{n\in\N}\subset\mathcal F\)
such that \((u_n|w_n-w_n^k|)_{n\in\N}\) is order-bounded in \(V\) -- thus \((u_n|w_n^k|)_{n\in\N}\) is order-bounded in \(V\) -- and
\[
\sum_{n\in\N}\sfd_V(u_n|w_n-w_n^k|,0)\leq\frac{1}{2^k},\qquad\sfd_V\Big(\sup_{n\in\N}u_n|w_n-w_n^k|,0\Big)\leq\frac{1}{2^k}.
\]
Writing \(w_n^k\) in the form \([\tilde u_{n,j}^k,{\sf v}_{n,j}^k]_j\), we define the element \(z^k\in\mathcal F\) as
\(z^k\coloneqq[u_n\tilde u_{n,j}^k,{\sf v}_{n,j}^k]_{n,j}\). Then
\[\begin{split}
\sfd_{\mathscr M_{\langle\psi\rangle}}(z^k,z^{k+1})&=\sfd_V(|z^k-z^{k+1}|,0)=\sfd_V\Big(\sup_{n\in\N}u_m|w_n^k-w_n^{k+1}|,0\Big)\\
&\leq\sfd_V\Big(\sup_{n\in\N}u_m|w_n^k-w_n|,0\Big)+\sfd_V\Big(\sup_{n\in\N}u_m|w_n-w_n^{k+1}|,0\Big)\leq\frac{3}{2^{k+1}},
\end{split}\]
for every \(k\in\N\). This implies that \((z^k)_{k\in\N}\) is a Cauchy sequence in \((\mathscr M_{\langle\psi\rangle},\sfd_{\mathscr M_{\langle\psi\rangle}})\),
so that it converges to some element \(z\in\mathscr M_{\langle\psi\rangle}\). For any \(n\in\N\), we deduce that
\[
\sfd_{\mathscr M_{\langle\psi\rangle}}(u_n\cdot z,u_n\cdot w_n)=\lim_{k\to\infty}\sfd_{\mathscr M_{\langle\psi\rangle}}(u_n\cdot z^k,u_n\cdot w_n)
=\lim_{k\to\infty}\sfd_V(u_n|w_n-w_n^k|,0)\leq\lim_{k\to\infty}\frac{1}{2^k}=0,
\]
so that \(u_n\cdot z=u_n\cdot w_n\). This shows that \(z=\sum_{n\in\N}u_n\cdot w_n\), thus the glueing property is proved.

To conclude, let us check that \((\mathscr M_{\langle\psi\rangle},T_{\langle\psi\rangle})\) verifies i) and ii). The mapping
\(T_{\langle\psi\rangle}\) is linear and satisfies \(|T_{\langle\psi\rangle}{\sf v}|=|[1,{\sf v}]|=\psi({\sf v})\) for every
\({\sf v}\in\mathscr V\), so that i) is proved. Finally, \(\mathscr G(T_{\langle\psi\rangle}(\mathscr V))=\mathcal F\) and
\(\sum_{n\in\N}u_n\cdot T_{\langle\psi\rangle}{\sf v}_n=[u_n,{\sf v}_n]_n\) for all \([u_n,{\sf v}_n]_n\in\mathcal F\).
Being \(\mathcal F\) dense in \(\mathscr M_{\langle\psi\rangle}\), also ii) is proved.\\
{\color{blue}\textsc{Uniqueness.}} It is a consequence of Corollary \ref{cor:module_generated_cat}. Indeed, initial objects are colimits (of empty diagrams),
thus in particular they are unique up to a unique isomorphism (cf.\ with \cite{MacLane98}).
\end{proof}
\begin{proposition}\label{prop:module_generated_hom}
Let \((\mathcal U_1,U_1,V_1)\) be a metric \(f\)-structure and let \((\mathcal U_2,U_2,V_2,W_2,Z_2)\) be a dual system of metric
\(f\)-structures. Let \(\phi\colon(\mathcal U_1,U_1,V_1)\to(\mathcal U_2,U_2,V_2)\) be a homomorphism of metric \(f\)-structures.
Let \(\mathscr V\) be a vector space and \(\psi\colon\mathscr V\to V_1^+\) a symmetric, sublinear mapping. Let \(\mathscr N\) be
a \(Z_2\)-Banach \(U_2\)-module. Let \(S\colon\mathscr V\to\mathscr N\) be a linear map such that for some \(b\in W_2^+\) it holds
\[
|S{\sf v}|\leq b\,(\phi\circ\psi)({\sf v}),\quad\text{ for every }{\sf v}\in\mathscr V.
\]
Then there exists a unique linear operator \(\Phi\colon\mathscr M_{\langle\psi\rangle}\to\mathscr N\) such that \(\Phi\circ T_{\langle\psi\rangle}=S\) and
\[
|\Phi v|\leq b\,\phi(|v|),\quad\text{ for every }v\in\mathscr M_{\langle\psi\rangle}.
\]
In particular, the map \(\Phi\colon\mathscr M_{\langle\psi\rangle}\to\mathscr N\) is continuous. Moreover, it holds that
\[
\Phi(u\cdot v)=\phi(u)\cdot\Phi(v),\quad\text{ for every }u\in U_1\text{ and }v\in\mathscr M_{\langle\psi\rangle}.
\]
\end{proposition}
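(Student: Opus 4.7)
The plan is to reduce the statement to a direct application of Theorem \ref{thm:main_ext_hom} applied to the vector subspace $T_{\langle\psi\rangle}(\mathscr V)$ of $\mathscr M_{\langle\psi\rangle}$. First I would define the linear map $T\colon T_{\langle\psi\rangle}(\mathscr V)\to\mathscr N$ by setting $T(T_{\langle\psi\rangle}{\sf v})\coloneqq S{\sf v}$ for every ${\sf v}\in\mathscr V$. The well-posedness of this definition is the only point that requires an argument: if $T_{\langle\psi\rangle}{\sf v}=T_{\langle\psi\rangle}{\sf w}$ for some ${\sf v},{\sf w}\in\mathscr V$, then by linearity and item i) of Theorem \ref{thm:module_generated} one has $\psi({\sf v}-{\sf w})=|T_{\langle\psi\rangle}({\sf v}-{\sf w})|=0$, so that the hypothesis on $S$ yields
\[
|S{\sf v}-S{\sf w}|=|S({\sf v}-{\sf w})|\leq b\,(\phi\circ\psi)({\sf v}-{\sf w})=0,
\]
whence $S{\sf v}=S{\sf w}$. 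Linearity of $T$ on $T_{\langle\psi\rangle}(\mathscr V)$ is then automatic from the linearity of both $T_{\langle\psi\rangle}$ and $S$.

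Next, for any $v=T_{\langle\psi\rangle}{\sf v}\in T_{\langle\psi\rangle}(\mathscr V)$ the identity $|v|=\psi({\sf v})$ (again from item i) of Theorem \ref{thm:module_generated}) together with the hypothesis on $S$ gives
\[
|Tv|=|S{\sf v}|\leq b\,(\phi\circ\psi)({\sf v})=b\,\phi(|v|),
\]
which is exactly the inequality \eqref{eq:main_ext_hom_hp} required by Theorem \ref{thm:main_ext_hom}. Moreover, by item ii) of Theorem \ref{thm:module_generated}, the set $\mathscr G\bigl(T_{\langle\psi\rangle}(\mathscr V)\bigr)$ is dense in $\mathscr M_{\langle\psi\rangle}$. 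Hence, all hypotheses of Theorem \ref{thm:main_ext_hom} are met, taking as source metric $f$-structure $(\mathcal U_1,U_1,V_1)$, as target dual system $(\mathcal U_2,U_2,V_2,W_2,Z_2)$, as module $\mathscr M\coloneqq\mathscr M_{\langle\psi\rangle}$, as distinguished subspace $T_{\langle\psi\rangle}(\mathscr V)$, and as linear operator the map $T$ constructed above.

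Applying Theorem \ref{thm:main_ext_hom}, I obtain a unique linear extension $\Phi\coloneqq\bar T\colon\mathscr M_{\langle\psi\rangle}\to\mathscr N$ satisfying $|\Phi v|\leq b\,\phi(|v|)$ for every $v\in\mathscr M_{\langle\psi\rangle}$, and such that $\Phi(u\cdot v)=\phi(u)\cdot\Phi(v)$ for every $u\in U_1$ and $v\in\mathscr M_{\langle\psi\rangle}$. By construction $\Phi\circ T_{\langle\psi\rangle}=T\circ T_{\langle\psi\rangle}=S$, and continuity of $\Phi$ is the final assertion of Theorem \ref{thm:main_ext_hom}. For uniqueness, any other linear $\Phi'$ with $\Phi'\circ T_{\langle\psi\rangle}=S$ and $|\Phi'v|\leq b\,\phi(|v|)$ agrees with $\Phi$ on $T_{\langle\psi\rangle}(\mathscr V)$ and satisfies the pointwise bound, so it agrees with $\Phi$ on all of $\mathscr M_{\langle\psi\rangle}$ by the uniqueness clause of Theorem \ref{thm:main_ext_hom}. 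There is essentially no hard step here: the only substantive point is the well-definedness of $T$, after which the proposition becomes a bookkeeping corollary of the general extension theorem.
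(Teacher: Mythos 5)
Your proposal is correct and follows essentially the same route as the paper: define \(T\) on \(\mathscr W\coloneqq T_{\langle\psi\rangle}(\mathscr V)\) by \(T(T_{\langle\psi\rangle}{\sf v})\coloneqq S{\sf v}\), verify well-posedness and the bound \(|Tv|\leq b\,\phi(|v|)\) via item i) of Theorem \ref{thm:module_generated}, note density of \(\mathscr G(\mathscr W)\) via item ii), and invoke Theorem \ref{thm:main_ext_hom}. Your explicit well-posedness argument is just a spelled-out version of the paper's one-line remark, so no substantive difference.
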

\begin{proof}
Since \(T_{\langle\psi\rangle}\) is linear, we have that \(\mathscr W\coloneqq T_{\langle\psi\rangle}(\mathscr V)\) is a vector subspace of \(\mathscr M_{\langle\psi\rangle}\).
Item ii) of Theorem \ref{thm:module_generated} ensures that \(\mathscr G(\mathscr W)\) is dense in \(\mathscr M_{\langle\psi\rangle}\). Moreover, let us define \(T\colon\mathscr W\to\mathscr N\) as
\[
T(T_{\langle\psi\rangle}{\sf v})\coloneqq S{\sf v},\quad\text{ for every }{\sf v}\in\mathscr V.
\]
Since \(|S{\sf v}|\leq b\,(\phi\circ\psi)({\sf v})=b\,\phi(|T_{\langle\psi\rangle}{\sf v}|)\) by item i) of Theorem \ref{thm:module_generated}, we deduce that
\(T\) is well-defined. Notice also that \(T\) is linear and satisfies \(|Tv|\leq b\,\phi(|v|)\) for every \(v\in\mathscr W\). Therefore, the statement follows
from Theorem \ref{thm:main_ext_hom} applied to the operator \(T\).
\end{proof}
\begin{corollary}\label{cor:module_generated_hom}
Let \((\mathcal U,U,V)\) be a metric \(f\)-structure. Let \(\mathscr V\) be a vector space and \(\psi\colon\mathscr V\to V^+\) a symmetric, sublinear mapping.
Let \(\mathscr N\) be a \(V\)-Banach \(U\)-module and \(S\colon\mathscr V\to\mathscr N\) a linear operator such that \(|S{\sf v}|\leq\psi({\sf v})\) for
every \({\sf v}\in\mathscr V\). Let \((\mathscr M_{\langle\psi\rangle},T_{\langle\psi\rangle})\) be as in Theorem \ref{thm:module_generated}. Then there exists
a unique homomorphism of \(V\)-Banach \(U\)-modules \(\Phi\colon\mathscr M_{\langle\psi\rangle}\to\mathscr N\) with \(\Phi\circ T_{\langle\psi\rangle}=S\).
Moreover, it holds that \(|\Phi v|\leq|v|\) for every \(v\in\mathscr M_{\langle\psi\rangle}\).
\end{corollary}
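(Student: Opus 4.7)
The plan is to obtain this as an immediate specialisation of Proposition \ref{prop:module_generated_hom}. The only point is to set up the right data so that the general statement collapses to the one at hand.

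First, I would invoke Example \ref{ex:UVUV}: from the metric \(f\)-structure \((\mathcal U,U,V)\) we obtain the dual system \((\mathcal U,U,V,U,V)\) of metric \(f\)-structures. With this in mind, I would apply Proposition \ref{prop:module_generated_hom} to the data \((\mathcal U_1,U_1,V_1)\coloneqq(\mathcal U,U,V)\), \((\mathcal U_2,U_2,V_2,W_2,Z_2)\coloneqq(\mathcal U,U,V,U,V)\), the homomorphism \(\phi\coloneqq{\rm id}_{\mathcal U}\), the symmetric sublinear map \(\psi\), the target \(Z_2\)-Banach \(U_2\)-module \(\mathscr N\) (which is exactly our \(V\)-Banach \(U\)-module), the linear operator \(S\), and the element \(b\coloneqq{\bf 1}_U\in U^+\subset W_2^+\). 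The hypothesis \(|S{\sf v}|\leq b\,(\phi\circ\psi)({\sf v})\) then reduces to the assumption \(|S{\sf v}|\leq\psi({\sf v})\) on \(S\).

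Proposition \ref{prop:module_generated_hom} then yields a unique linear map \(\Phi\colon\mathscr M_{\langle\psi\rangle}\to\mathscr N\) with \(\Phi\circ T_{\langle\psi\rangle}=S\) satisfying the pointwise bound \(|\Phi v|\leq b\,\phi(|v|)=|v|\) for every \(v\in\mathscr M_{\langle\psi\rangle}\), which is precisely the asserted inequality. The ``moreover'' part of that proposition gives \(\Phi(u\cdot v)=\phi(u)\cdot\Phi(v)=u\cdot\Phi(v)\) for every \(u\in U\) and \(v\in\mathscr M_{\langle\psi\rangle}\), so \(\Phi\) is \(U\)-linear; combined with the pointwise norm bound, this shows \(\Phi\in\textsc{Hom}(\mathscr M_{\langle\psi\rangle},\mathscr N)\) and that \(\Phi\) is a morphism in the category \({\bf BanMod}_{(\mathcal U,U,V)}\) (cf.\ Definition \ref{def:category_nm}).

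For uniqueness, I would argue that uniqueness already holds at the level of linear extensions satisfying the pointwise bound (as provided by Proposition \ref{prop:module_generated_hom}), so a fortiori it holds within the smaller class of homomorphisms of \(V\)-Banach \(U\)-modules extending \(S\). Alternatively, one can observe directly that any two such extensions agree on \(T_{\langle\psi\rangle}(\mathscr V)\), hence on the \(U\)-submodule \(\mathscr G\bigl(T_{\langle\psi\rangle}(\mathscr V)\bigr)\) by \(U\)-linearity and the glueing property, and therefore everywhere by continuity (Proposition \ref{prop:cont_norm_mod_ops}) together with the density statement in item ii) of Theorem \ref{thm:module_generated}. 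There is no real obstacle here: the entire content of the corollary is packaged in Proposition \ref{prop:module_generated_hom}, and the main verification is simply to match the hypotheses with \(b={\bf 1}_U\) and \(\phi={\rm id}_{\mathcal U}\).
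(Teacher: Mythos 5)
Your proposal is correct and coincides with the paper's own proof, which is precisely the one-line application of Proposition \ref{prop:module_generated_hom} with the dual system \((\mathcal U,U,V,U,V)\) from Example \ref{ex:UVUV}, \(\phi={\rm id}_{\mathcal U}\) and \(b={\bf 1}_U\). One small caveat on your uniqueness remark: the ``a fortiori'' goes the wrong way, since the proposition gives uniqueness among linear maps satisfying the specific bound \(|\Phi v|\leq|v|\), whereas the corollary asks for uniqueness in the a priori larger class of all homomorphisms of \(V\)-Banach \(U\)-modules (bounded by an arbitrary \(w\in U^+\)); your alternative argument -- agreement on \(T_{\langle\psi\rangle}(\mathscr V)\), hence on \(\mathscr G(T_{\langle\psi\rangle}(\mathscr V))\) by \(U\)-linearity, the glueing property and Lemma \ref{lem:locality_prop}, hence everywhere by continuity and density -- is the correct way to close this and is what the paper implicitly relies on.
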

\begin{proof}
Use Proposition \ref{prop:module_generated_hom} with \((\mathcal U_1,U_1,V_1)\coloneqq(\mathcal U,U,V)\), \((\mathcal U_2,U_2,V_2,W_2,Z_2)\coloneqq(\mathcal U,U,V,U,V)\),
\(\phi\coloneqq{\rm id}_{\mathcal U}\), and \(b\coloneqq{\bf 1}_U\).
\end{proof}

Next, we aim to interpret the couple \((\mathscr M_{\langle\psi\rangle},T_{\langle\psi\rangle})\) given by Theorem \ref{thm:module_generated} in categorical terms.
Given \((\mathcal U,U,V)\), \(\mathscr V\), and \(\psi\) as in Theorem \ref{thm:module_generated}, we denote by \({\bf C}_\psi\) the category defined as follows:
\begin{itemize}
\item[\(\rm i)\)] The objects of \({\bf C}_\psi\) are the couples \((\mathscr M,T)\), with \(\mathscr M\) a \(V\)-Banach \(U\)-module and \(T\colon\mathscr V\to\mathscr M\)
a linear map satisfying \(|T{\sf v}|\leq\psi({\sf v})\) for every \({\sf v}\in\mathscr V\).
\item[\(\rm ii)\)] The morphisms between two objects \((\mathscr M_1,T_1)\) and \((\mathscr M_2,T_2)\) of \({\bf C}_\psi\) are given by those morphisms \(\Phi\colon\mathscr M_1\to\mathscr M_2\)
in the category \({\bf BanMod}_{(\mathcal U,U,V)}\) for which
\[\begin{tikzcd}
\mathscr V  \arrow[r,"T_1"] \arrow[rd,swap,"T_2"] & \mathscr M_1 \arrow[d,"\Phi"] \\
& \mathscr M_2
\end{tikzcd}\]
is a commutative diagram.
\end{itemize}
\begin{corollary}\label{cor:module_generated_cat}
Let \((\mathcal U,U,V)\) be a metric \(f\)-structure. Let \(\mathscr V\) be a vector space and \(\psi\colon\mathscr V\to V^+\) a symmetric, sublinear mapping.
Then \((\mathscr M_{\langle\psi\rangle},T_{\langle\psi\rangle})\) is the initial object of the category \({\bf C}_\psi\), meaning that for any object
\((\mathscr M,T)\) of \({\bf C}_\psi\) there exists a unique morphism \(\Phi\colon(\mathscr M_{\langle\psi\rangle},T_{\langle\psi\rangle})\to(\mathscr M,T)\).
\end{corollary}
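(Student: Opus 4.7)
The plan is to observe that this corollary is essentially a categorical repackaging of Corollary \ref{cor:module_generated_hom}, so the real work has already been done and the proof reduces to matching definitions.

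First, I would verify that \((\mathscr M_{\langle\psi\rangle},T_{\langle\psi\rangle})\) is indeed an object of \({\bf C}_\psi\). By Theorem \ref{thm:module_generated} i), we have \(|T_{\langle\psi\rangle}{\sf v}|=\psi({\sf v})\) for every \({\sf v}\in\mathscr V\), which in particular gives the required inequality \(|T_{\langle\psi\rangle}{\sf v}|\leq\psi({\sf v})\), so the couple sits in \({\bf C}_\psi\).

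Next, for existence of the morphism: given any object \((\mathscr M,T)\) of \({\bf C}_\psi\), by definition \(\mathscr M\) is a \(V\)-Banach \(U\)-module and \(T\colon\mathscr V\to\mathscr M\) is a linear map with \(|T{\sf v}|\leq\psi({\sf v})\) for every \({\sf v}\in\mathscr V\). I would then apply Corollary \ref{cor:module_generated_hom} to the operator \(S\coloneqq T\), taking \(\mathscr N\coloneqq\mathscr M\). This yields a homomorphism of \(V\)-Banach \(U\)-modules \(\Phi\colon\mathscr M_{\langle\psi\rangle}\to\mathscr M\) with \(\Phi\circ T_{\langle\psi\rangle}=T\) and \(|\Phi v|\leq|v|\) for every \(v\in\mathscr M_{\langle\psi\rangle}\). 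The bound \(|\Phi v|\leq|v|\) together with \(U\)-linearity exactly says that \(\Phi\) is a morphism in \({\bf BanMod}_{(\mathcal U,U,V)}\), while the identity \(\Phi\circ T_{\langle\psi\rangle}=T\) is precisely the commutativity of the triangle defining morphisms in \({\bf C}_\psi\). Hence \(\Phi\) is a morphism in \({\bf C}_\psi\) from \((\mathscr M_{\langle\psi\rangle},T_{\langle\psi\rangle})\) to \((\mathscr M,T)\).

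For uniqueness, the key observation is that morphisms in \({\bf C}_\psi\) are by definition morphisms in \({\bf BanMod}_{(\mathcal U,U,V)}\), so in particular \(U\)-linear homomorphisms of \(V\)-Banach \(U\)-modules. Therefore any morphism \(\Psi\colon(\mathscr M_{\langle\psi\rangle},T_{\langle\psi\rangle})\to(\mathscr M,T)\) in \({\bf C}_\psi\) is a homomorphism of \(V\)-Banach \(U\)-modules satisfying \(\Psi\circ T_{\langle\psi\rangle}=T\). The uniqueness clause of Corollary \ref{cor:module_generated_hom} forces \(\Psi=\Phi\).

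The only conceptual step that is not entirely trivial is the realisation that morphisms in \({\bf C}_\psi\) coincide exactly with the class of maps handled by Corollary \ref{cor:module_generated_hom}, so no additional extension or continuity argument is required beyond what is already contained in the existence and uniqueness statements established earlier. I do not foresee a genuine obstacle; the whole content of this corollary is a categorical reformulation, and once the existence and uniqueness from Corollary \ref{cor:module_generated_hom} are invoked, nothing further remains to be checked.
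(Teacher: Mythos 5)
Your argument is correct and is essentially the paper's own proof: the paper likewise deduces the corollary directly from Corollary \ref{cor:module_generated_hom}, with the existence and uniqueness of \(\Phi\) coming from that result. Your additional check that \((\mathscr M_{\langle\psi\rangle},T_{\langle\psi\rangle})\) is an object of \({\bf C}_\psi\) and that uniqueness in the larger class of homomorphisms implies uniqueness among \({\bf C}_\psi\)-morphisms is a harmless elaboration of the same route.
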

\begin{proof}
Let \((\mathscr M,T)\) be an arbitrary object of \({\bf C}_\psi\). Then Corollary \ref{cor:module_generated_hom} ensures that there exists a unique morphism
\(\Phi\colon(\mathscr M_{\langle\psi\rangle},T_{\langle\psi\rangle})\to(\mathscr M,T)\) in \({\bf C}_\psi\) such that \(\Phi\circ T_{\langle\psi\rangle}=T\), as desired.
\end{proof}
\subsubsection{Pushforward of a normed module}\label{ss:pushfrwd_mod}
As a consequence of Theorem \ref{thm:module_generated}, we can prove that each homomorphism of metric \(f\)-structures induces a `pushforward functor'
between the respective categories of Banach modules:
\begin{theorem}[Pushforward of a normed module]\label{thm:pshfrwd_mod}
Let \((\mathcal U_1,U_1,V_1)\) and \((\mathcal U_2,U_2,V_2)\) be metric \(f\)-structures. Let \(\phi\colon\mathcal U_1\to\mathcal U_2\)
be a homomorphism of metric \(f\)-structures. Let \(\mathscr M\) be a \(V_1\)-normed \(U_1\)-module. Then there exists a unique
couple \((\phi_*\mathscr M,\phi_*)\) -- where \(\phi_*\mathscr M\) is a \(V_2\)-Banach \(U_2\)-module and the operator
\(\phi_*\colon\mathscr M\to\phi_*\mathscr M\) is linear -- such that the following properties are verified:
\begin{itemize}
\item[\(\rm i)\)] \(|\phi_*v|=\phi(|v|)\) for every \(v\in\mathscr M\).
\item[\(\rm ii)\)] \(\mathscr G(\phi_*(\mathscr M))\) is dense in \(\phi_*\mathscr M\).
\end{itemize}
Uniqueness is up to unique isomorphism: given another couple \((\mathscr N,T)\) with the same properties, there
exists a unique isomorphism of \(V_2\)-Banach \(U_2\)-modules \(\Phi\colon\phi_*\mathscr M\to\mathscr N\) such that
\[\begin{tikzcd}
\mathscr M \arrow[r,"\phi_*"] \arrow[rd,swap,"T"] &
\phi_*\mathscr M \arrow[d,"\Phi"] \\ & \mathscr N
\end{tikzcd}\]
is a commutative diagram.
\end{theorem}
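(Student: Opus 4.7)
The plan is to realise $(\phi_*\mathscr M,\phi_*)$ as an instance of the construction from Theorem \ref{thm:module_generated}. Concretely, I would define the map
\[
\psi\colon\mathscr M\to V_2^+,\qquad\psi(v)\coloneqq\phi(|v|),
\]
noting that \(\phi\) restricts to a homomorphism of metric Riesz spaces \(V_1\to V_2\) by Definition \ref{def:metric_f-structure}, and that \(\phi\) preserves positivity because, for any \(u\in\mathcal U_1^+\), the identity \(\phi(u)=\phi(|u|)=|\phi(u)|\geq 0\) follows from \eqref{eq:hom_Riesz_and_norm}. Hence \(\psi\) is well-defined.

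Next I would verify that \(\psi\) is symmetric and sublinear. Symmetry is immediate: \(\psi(-v)=\phi(|-v|)=\phi(|v|)=\psi(v)\). Positive homogeneity follows from the \(\R\)-linearity of \(\phi\) together with \eqref{eq:basic_prop_f-alg_1b}. For subadditivity, using \eqref{eq:basic_prop_f-alg_6d} and the order-preserving character of \(\phi\) (which is automatic for any Riesz space homomorphism), I compute
\[
\psi(v+w)=\phi(|v+w|)\leq\phi(|v|+|w|)=\phi(|v|)+\phi(|w|)=\psi(v)+\psi(w).
\]

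With \(\psi\) in hand, I would apply Theorem \ref{thm:module_generated} to the metric \(f\)-structure \((\mathcal U_2,U_2,V_2)\), the vector space \(\mathscr M\), and the symmetric sublinear map \(\psi\); this produces a \(V_2\)-Banach \(U_2\)-module \(\mathscr M_{\langle\psi\rangle}\) together with a linear map \(T_{\langle\psi\rangle}\colon\mathscr M\to\mathscr M_{\langle\psi\rangle}\) satisfying \(|T_{\langle\psi\rangle}v|=\psi(v)=\phi(|v|)\) for every \(v\in\mathscr M\), and such that \(\mathscr G(T_{\langle\psi\rangle}(\mathscr M))\) is dense in \(\mathscr M_{\langle\psi\rangle}\). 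Setting \(\phi_*\mathscr M\coloneqq\mathscr M_{\langle\psi\rangle}\) and \(\phi_*\coloneqq T_{\langle\psi\rangle}\) gives precisely the properties i) and ii) claimed.

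For uniqueness, suppose \((\mathscr N,T)\) is another couple satisfying i) and ii). Then \(T\colon\mathscr M\to\mathscr N\) is linear with \(|Tv|=\phi(|v|)=\psi(v)\), so in particular \(|Tv|\leq\psi(v)\); hence \((\mathscr N,T)\) is an object of the category \({\bf C}_\psi\), and Corollary \ref{cor:module_generated_cat} supplies a unique morphism \(\Phi\colon(\phi_*\mathscr M,\phi_*)\to(\mathscr N,T)\) in \({\bf C}_\psi\). Symmetrically, \((\mathscr N,T)\) itself satisfies properties i) and ii), so by the same argument it is an initial object of \({\bf C}_\psi\), and hence the morphism \(\Phi\) is an isomorphism of \(V_2\)-Banach \(U_2\)-modules by the general uniqueness of initial objects. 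The main (and essentially only) point to check carefully is that \(\psi\) really lands in \(V_2^+\) and is symmetric and sublinear; everything else is extracted directly from Theorem \ref{thm:module_generated} and Corollary \ref{cor:module_generated_cat}.
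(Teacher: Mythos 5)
Your proposal is correct and follows essentially the same route as the paper: define \(\psi(v)\coloneqq\phi(|v|)\), check it is symmetric and sublinear, and invoke Theorem \ref{thm:module_generated} to produce \((\phi_*\mathscr M,\phi_*)\coloneqq(\mathscr M_{\langle\psi\rangle},T_{\langle\psi\rangle})\). Your uniqueness argument via the initial object of \({\bf C}_\psi\) (Corollary \ref{cor:module_generated_cat}) is exactly how the paper itself justifies the uniqueness clause of Theorem \ref{thm:module_generated}, so the two proofs coincide in substance.
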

\begin{proof}
Define \(\psi\colon\mathscr M\to V_2^+\) as \(\psi(v)\coloneqq\phi(|v|)\) for every \(v\in\mathscr M\). Notice that \(\psi\) is symmetric and
sublinear, thus it makes sense to consider the \(V_2\)-Banach \(U_2\)-module \(\phi_*\mathscr M\coloneqq\mathscr M_{\langle\psi\rangle}\) and
the linear operator \(\phi_*\coloneqq T_{\langle\psi\rangle}\colon\mathscr M\to\phi_*\mathscr M\). Observe that \(|\phi_*v|=\psi(v)=\phi(|v|)\)
for every \(v\in\mathscr M\), which shows i), while ii) is only a rephrasing of Theorem \ref{thm:module_generated} ii). This proves the existence
part of the statement. Finally, the uniqueness part follows from the uniqueness stated in Theorem \ref{thm:module_generated}.
\end{proof}
\begin{proposition}\label{prop:univ_prop}
Let \((\mathcal U_1,U_1,V_1)\) be a metric \(f\)-structure and let \((\mathcal U_2,U_2,V_2,W_2,Z_2)\) be a dual system of metric
\(f\)-structures. Let \(\phi\colon(\mathcal U_1,U_1,V_1)\to(\mathcal U_2,U_2,V_2)\) be a homomorphism of metric \(f\)-structures.
Fix a \(V_1\)-normed \(U_1\)-module \(\mathscr M\) and a \(Z_2\)-Banach \(U_2\)-module \(\mathscr N\).
Let \(T\colon\mathscr M\to\mathscr N\) be a linear operator such that there exists an element \(b\in W_2^+\) satisfying
\begin{equation}\label{eq:univ_prop_hp}
|Tv|\leq b\,\phi(|v|),\quad\text{ for every }v\in\mathscr M.
\end{equation}
Then there exists a unique homomorphism \(\hat T\in\textsc{Hom}(\phi_*\mathscr M,\mathscr N)\) such that
\begin{equation}\label{eq:univ_prop_cl}
\hat T(\phi_*v)=Tv,\quad\text{ for every }v\in\mathscr M.
\end{equation}
Moreover, it holds that \(|\hat T w|\leq b|w|\) for every \(w\in\phi_*\mathscr M\).
\end{proposition}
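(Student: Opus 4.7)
The plan is to recognise that Proposition \ref{prop:univ_prop} is essentially the universal property corresponding to the construction of the pushforward, and hence can be reduced to Proposition \ref{prop:module_generated_hom} applied with the identity homomorphism.

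First, I would recall the explicit construction of \(\phi_*\mathscr M\) from the proof of Theorem \ref{thm:pshfrwd_mod}: setting \(\psi\colon\mathscr M\to V_2^+\) by \(\psi(v)\coloneqq\phi(|v|)\), one has \(\psi\) symmetric and sublinear (it is symmetric because \(|\cdot|\) is, and sublinear because \(|\cdot|\) is subadditive and positively homogeneous while \(\phi\) preserves \(+\), \(\leq\) and scaling by \(\R^+\)), and by definition \((\phi_*\mathscr M,\phi_*)=(\mathscr M_{\langle\psi\rangle},T_{\langle\psi\rangle})\), where the generation is performed inside the metric \(f\)-structure \((\mathcal U_2,U_2,V_2)\).

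Next I would invoke Proposition \ref{prop:module_generated_hom} with the following substitutions: in the role of the source metric \(f\)-structure take \((\mathcal U_2,U_2,V_2)\); keep \((\mathcal U_2,U_2,V_2,W_2,Z_2)\) as the target dual system; let the homomorphism \(\phi\) be the identity \({\rm id}_{\mathcal U_2}\); take \(\mathscr V\coloneqq\mathscr M\) (as an abstract vector space) and the map \(\psi\) just defined; and let \(S\coloneqq T\). The hypothesis \eqref{eq:univ_prop_hp} reads \(|Tv|\leq b\,\phi(|v|)=b\,\psi(v)=b\,({\rm id}\circ\psi)(v)\), which is precisely what Proposition \ref{prop:module_generated_hom} requires. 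The proposition then produces a unique linear \(\hat T\colon\mathscr M_{\langle\psi\rangle}\to\mathscr N\) with \(\hat T\circ T_{\langle\psi\rangle}=T\), that is, \(\hat T(\phi_*v)=Tv\) for every \(v\in\mathscr M\); moreover it yields the pointwise bound \(|\hat T w|\leq b|w|\) for all \(w\in\phi_*\mathscr M\) and \(U_2\)-linearity (since \({\rm id}(u)=u\)). Combining \(U_2\)-linearity with the pointwise bound and applying Corollary \ref{cor:criterion_Hom} shows \(\hat T\in\textsc{Hom}(\phi_*\mathscr M,\mathscr N)\).

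For uniqueness, suppose \(\tilde T\in\textsc{Hom}(\phi_*\mathscr M,\mathscr N)\) also satisfies \eqref{eq:univ_prop_cl}. Then \(\hat T\) and \(\tilde T\) agree on \(\phi_*(\mathscr M)\); being \(U_2\)-linear they also agree on \(\mathscr G(\phi_*(\mathscr M))\), and then by continuity (guaranteed by \(T\in\textsc{Hom}\) via Proposition \ref{prop:cont_norm_mod_ops}) and the density statement in Theorem \ref{thm:pshfrwd_mod} ii) they coincide on all of \(\phi_*\mathscr M\).

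I do not expect any genuine obstacle: the content is almost entirely bookkeeping about matching the hypotheses of Proposition \ref{prop:module_generated_hom} to the present setting. The only mildly subtle point is the correct identification of the auxiliary data, namely that the pushforward is \(\mathscr M_{\langle\psi\rangle}\) built inside \((\mathcal U_2,U_2,V_2)\) rather than \((\mathcal U_1,U_1,V_1)\), so that the homomorphism of metric \(f\)-structures appearing in Proposition \ref{prop:module_generated_hom} must be chosen to be \({\rm id}_{\mathcal U_2}\) (while the original \(\phi\) has already been absorbed into the definition of \(\psi\)).
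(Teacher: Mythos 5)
Your proposal is correct and takes essentially the same route as the paper: the paper's proof likewise identifies \((\phi_*\mathscr M,\phi_*)\cong(\mathscr M_{\langle\psi\rangle},T_{\langle\psi\rangle})\) with \(\psi(v)=\phi(|v|)\in V_2^+\) and invokes Proposition \ref{prop:module_generated_hom}, which (as you correctly make explicit) must be applied over \((\mathcal U_2,U_2,V_2)\) with the identity homomorphism. Two cosmetic points only: in your uniqueness step the continuity of a competing \(\tilde T\in\textsc{Hom}(\phi_*\mathscr M,\mathscr N)\) should be attributed to Corollary \ref{cor:criterion_Hom} (or to its defining bound together with the continuity of the \(V_2\times W_2\to Z_2\) multiplication) rather than Proposition \ref{prop:cont_norm_mod_ops}, and agreement on \(\mathscr G(\phi_*(\mathscr M))\) uses, besides \(U_2\)-linearity, the locality Lemma \ref{lem:locality_prop} applied in \(\mathscr N\).
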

\begin{proof}
Recall from (the proof of) Theorem \ref{thm:pshfrwd_mod} that, letting \(\psi\colon\mathscr M\to V_2^+\)
be \(\psi(v)\coloneqq\phi(|v|)\), it holds
\((\phi_*\mathscr M,\phi_*)\cong(\mathscr M_{\langle\psi\rangle},T_{\langle\psi\rangle})\).
The statement then follows from Proposition \ref{prop:module_generated_hom}.
\end{proof}
\begin{corollary}\label{cor:phfwd_functor}
Let \((\mathcal U_1,U_1,V_1),(\mathcal U_2,U_2,V_2)\) be metric \(f\)-structures and \(\phi\colon\mathcal U_1\to\mathcal U_2\)
a homomorphism of metric \(f\)-structures. Let \(\mathscr M,\mathscr N\) be \(V_1\)-normed \(U_1\)-modules.
Let \(T\in\textsc{Hom}(\mathscr M,\mathscr N)\) be given. Then there exists a unique homomorphism
\(\phi_*T\in\textsc{Hom}(\phi_*\mathscr M,\phi_*\mathscr N)\) such that
\[\begin{tikzcd}
\mathscr M \arrow[r,"T"] \arrow[d,swap,"\phi_*"] & \mathscr N \arrow[d,"\phi_*"] \\
\phi_*\mathscr M \arrow[r,swap,"\phi_*T"] & \phi_*\mathscr N
\end{tikzcd}\]
is a commutative diagram. Moreover, if both \(U_1\) and \(U_2\) are Dedekind complete, then it holds that
\[
|\phi_*T|\leq\phi(|T|).
\]
\end{corollary}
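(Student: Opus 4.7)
The plan is to construct $\phi_*T$ by applying Proposition \ref{prop:univ_prop} to the composition $\phi_*\circ T\colon\mathscr M\to\phi_*\mathscr N$. Note that, via Example \ref{ex:UVUV}, both $\textsc{Hom}(\mathscr M,\mathscr N)$ and $\textsc{Hom}(\phi_*\mathscr M,\phi_*\mathscr N)$ are understood with respect to the canonical dual systems $(\mathcal U_i,U_i,V_i,U_i,V_i)$ for $i=1,2$, so the `$W_i$' appearing in Definition \ref{def:hom} is just $U_i$.

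First I would fix $w\in U_1^+$ such that $|Tv|\leq w|v|$ for every $v\in\mathscr M$, which exists by definition of $\textsc{Hom}(\mathscr M,\mathscr N)$. The key computation is the chain
\[
|\phi_*(Tv)|=\phi(|Tv|)\leq\phi(w|v|)=\phi(w)\,\phi(|v|),\qquad\text{for every }v\in\mathscr M,
\]
which uses Theorem \ref{thm:pshfrwd_mod} i) for the first equality, the fact that $\phi$ is order-preserving (as a homomorphism of Riesz spaces) for the inequality, and the multiplicativity of $\phi$ as a homomorphism of $f$-algebras for the second equality. Setting $b\coloneqq\phi(w)\in U_2^+$, Proposition \ref{prop:univ_prop} -- applied with respect to the dual system $(\mathcal U_2,U_2,V_2,U_2,V_2)$ -- then produces a unique $\phi_*T\in\textsc{Hom}(\phi_*\mathscr M,\phi_*\mathscr N)$ satisfying $(\phi_*T)(\phi_*v)=\phi_*(Tv)$ for every $v\in\mathscr M$, which is precisely the commutativity of the stated diagram. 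The proposition moreover delivers the bound $|(\phi_*T)(z)|\leq\phi(w)\,|z|$ for every $z\in\phi_*\mathscr M$. Uniqueness of $\phi_*T$ is already part of the conclusion of Proposition \ref{prop:univ_prop}.

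For the final assertion, suppose that both $U_1$ and $U_2$ are Dedekind complete. Then Theorem \ref{thm:Hom_normed_module} ensures the existence of $|T|\in U_1^+$, which satisfies $|Tv|\leq|T|\,|v|$ for every $v\in\mathscr M$. Running the argument above with the specific choice $w=|T|$ yields $|(\phi_*T)(z)|\leq\phi(|T|)\,|z|$ for every $z\in\phi_*\mathscr M$. Since $\phi(|T|)\in U_2^+$ and $U_2$ is Dedekind complete, applying Theorem \ref{thm:Hom_normed_module} once more (or, equivalently, the very definition of $|\phi_*T|$ as an infimum of such bounds) gives $|\phi_*T|\leq\phi(|T|)$, as required.

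There is no real obstacle once the universal property in Proposition \ref{prop:univ_prop} is invoked; the only subtlety is keeping careful track of the ambient dual system of metric $f$-structures at each step, so that the bound $b$ produced at a given step lives in the correct cone $W_2^+$ of the next application.
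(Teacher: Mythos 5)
Your proposal is correct and follows essentially the same route as the paper: define $\tilde T\coloneqq\phi_*\circ T$, establish the bound $|\tilde Tv|=\phi(|Tv|)\leq\phi(w)\phi(|v|)$, invoke Proposition \ref{prop:univ_prop} (with the dual system $(\mathcal U_2,U_2,V_2,U_2,V_2)$ of Example \ref{ex:UVUV}) for existence and uniqueness, and then take $w=|T|$ in the same estimate to get $|\phi_*T|\leq\phi(|T|)$ under Dedekind completeness. The only cosmetic difference is that you cite Theorem \ref{thm:Hom_normed_module} for the existence of $|T|$ and the bound $|Tv|\leq|T||v|$, whereas the paper uses these facts implicitly after the remarks following Definition \ref{def:hom}; the substance of the argument is identical.
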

\begin{proof}
Define \(\tilde T\coloneqq\phi_*\circ T\colon\mathscr M\to\phi_*\mathscr N\).
Letting \(w\in U_1^+\) be so that \(|Tv|\leq w|v|\) for every \(v\in\mathscr M\),
\begin{equation}\label{eq:phfwrd_hom}
|\tilde T v|=|\phi_*(Tv)|=\phi(|Tv|)\leq\phi(w|v|)=\phi(w)\phi(|v|),\quad\text{ for every }v\in\mathscr M.
\end{equation}
Therefore, the existence and the uniqueness of \(\phi_*T\) follow from Proposition \ref{prop:univ_prop}
applied to \(\tilde T\). Finally, suppose \(U_1\) and \(U_2\) are Dedekind complete. Then we can choose
\(w\coloneqq|T|\) in \eqref{eq:phfwrd_hom}, so that we have \(|(\phi_*T)w|\leq\phi(|T|)|w|\) for
every \(w\in\phi_*\mathscr M\), whence it follows that \(|\phi_*T|\leq\phi(|T|)\).
\end{proof}

Combining Theorem \ref{thm:pshfrwd_mod} and Corollary \ref{cor:phfwd_functor}, any homomorphism \(\phi\colon(\mathcal U_1,U_1,V_1)\to(\mathcal U_2,U_2,V_2)\) induces a functor
\[
\phi_*\colon{\bf BanMod}_{(\mathcal U_1,U_1,V_1)}\to{\bf BanMod}_{(\mathcal U_2,U_2,V_2)}.
\]
We point out that, even though we chose the term `pushforward' by analogy with \cite[Section 1.6]{Gigli14}, from the categorical perspective the correct
term would be \emph{direct image functor}.
\begin{remark}\label{rmk:pshfrwd_Hilbert}{\rm
Let \((\mathcal U_1,U_1,V_1,V_1,Z_1)\), \((\mathcal U_2,U_2,V_2,V_2,Z_2)\) be dual systems of metric \(f\)-structures. Let
\(\phi\colon\mathcal U_1\to\mathcal U_2\) be a homomorphism of dual systems. Let \(\mathscr H\) be a \(V_1\)-Hilbert \(U_1\)-module. Then
\begin{subequations}\begin{align}
\label{eq:pshfrwd_Hilbert_1}
\phi_*\mathscr H,&\quad\text{ is a }V_2\text{-Hilbert }U_2\text{-module,}\\
\label{eq:pshfrwd_Hilbert_2}
\phi_*v\cdot\phi_*w=\phi(v\cdot w),&\quad\text{ for every }v,w\in\mathscr H.
\end{align}\end{subequations}
To prove \eqref{eq:pshfrwd_Hilbert_1}, notice that Theorem \ref{thm:pshfrwd_mod} i) implies that the elements of \(\mathscr G(\phi_*(\mathscr H))\) --
thus all the elements of \(\phi_*\mathscr H\), thanks to Theorem \ref{thm:pshfrwd_mod} ii) -- satisfy the pointwise parallelogram rule. Also,
\[
\phi_*v\cdot\phi_*w=\frac{1}{2}\big(|\phi_*(v+w)|^2-|\phi_*v|^2-|\phi_*w|^2\big)
=\phi\bigg(\frac{1}{2}\big(|v+w|^2-|v|^2-|w|^2\big)\bigg)=\phi(v\cdot w)
\]
hold for every \(v,w\in\mathscr H\), which shows that \eqref{eq:pshfrwd_Hilbert_2} is verified.
\fr}\end{remark}
\begin{theorem}\label{thm:embed_pshfrwd_dual}
Let \((\mathcal U_1,U_1,V_1,W_1,Z_1)\) and \((\mathcal U_2,U_2,V_2,W_2,Z_2)\) be CR dual systems of metric \(f\)-structures.
Let \(\phi\colon\mathcal U_1\to\mathcal U_2\) be a homomorphism of dual systems. Let \(\mathscr M\) be a \(V_1\)-Banach \(U_1\)-module.
Then there exists a unique homomorphism \({\rm I}_\phi\in\textsc{Hom}\big(\phi_*\mathscr M^*,(\phi_*\mathscr M)^*\big)\) such that
\begin{equation}\label{eq:embed_pshfrwd_dual}
\langle{\rm I}_\phi(\phi_*\omega),\phi_*v\rangle=\phi(\langle\omega,v\rangle),\quad\text{ for every }\omega\in\mathscr M^*\text{ and }v\in\mathscr M.
\end{equation}
Moreover, it holds that
\begin{equation}\label{eq:embed_pshfrwd_dual_2}
|{\rm I}_\phi(\eta)|=|\eta|,\quad\text{ for every }\eta\in\phi_*\mathscr M^*.
\end{equation}
\end{theorem}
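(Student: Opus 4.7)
My plan is to construct $\mathrm{I}_\phi$ by two successive applications of the universal property of the pushforward (Proposition \ref{prop:univ_prop}), and to deduce \eqref{eq:embed_pshfrwd_dual_2} by combining Lemma \ref{lem:equiv_Hom_pn} with the CR hypothesis and the $\sigma$-order-continuity of $\phi$.

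\textbf{Construction.} For each $\omega\in\mathscr M^*$ I will first produce $\hat T_\omega\in(\phi_*\mathscr M)^*$ extending the linear map $T_\omega\colon\mathscr M\to Z_2$, $T_\omega(v)\coloneqq\phi(\langle\omega,v\rangle)$. Since $\phi$ is a lattice homomorphism of $f$-algebras, \eqref{eq:hom_Riesz_and_norm} gives $|T_\omega(v)|=\phi(|\langle\omega,v\rangle|)$, and combining $|\langle\omega,v\rangle|\leq|\omega||v|$ (Theorem \ref{thm:Hom_normed_module}) with the multiplicativity of $\phi$ yields $|T_\omega(v)|\leq\phi(|\omega|)\,\phi(|v|)$, where $\phi(|\omega|)\in W_2^+$ because $\phi$ restricts to a homomorphism $W_1\to W_2$. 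Proposition \ref{prop:univ_prop} then delivers a unique $\hat T_\omega\in\textsc{Hom}(\phi_*\mathscr M,Z_2)=(\phi_*\mathscr M)^*$ with $\hat T_\omega(\phi_*v)=\phi(\langle\omega,v\rangle)$ and $|\hat T_\omega|\leq\phi(|\omega|)$. The assignment $S\colon\mathscr M^*\to(\phi_*\mathscr M)^*$, $\omega\mapsto\hat T_\omega$, is linear (by the uniqueness clause of Proposition \ref{prop:univ_prop}) and satisfies $|S\omega|\leq\phi(|\omega|)$. Regarding $\mathscr M^*$ as a $W_1$-Banach $U_1$-module and employing the dual system $(\mathcal U_2,U_2,W_2,U_2,W_2)$ from Example \ref{ex:UVUV}, a second application of Proposition \ref{prop:univ_prop} with bound $b=1_{U_2}$ will yield a unique $\mathrm{I}_\phi\in\textsc{Hom}\big(\phi_*\mathscr M^*,(\phi_*\mathscr M)^*\big)$ such that $\mathrm{I}_\phi(\phi_*\omega)=\hat T_\omega$ for every $\omega\in\mathscr M^*$ -- this is precisely \eqref{eq:embed_pshfrwd_dual} -- and such that $|\mathrm{I}_\phi(\eta)|\leq|\eta|$ for all $\eta\in\phi_*\mathscr M^*$.

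\textbf{Uniqueness of $\mathrm{I}_\phi$.} Equation \eqref{eq:embed_pshfrwd_dual} prescribes the values of each $\mathrm{I}_\phi(\phi_*\omega)\in(\phi_*\mathscr M)^*$ on the subset $\phi_*(\mathscr M)\subset\phi_*\mathscr M$; $U_2$-linearity, continuity, and the density of $\mathscr G(\phi_*(\mathscr M))$ in $\phi_*\mathscr M$ (Theorem \ref{thm:pshfrwd_mod}) pin down each such $\mathrm{I}_\phi(\phi_*\omega)$ on the whole of $\phi_*\mathscr M$. Since homomorphisms of normed modules commute with glueing (a straightforward consequence of Lemma \ref{lem:locality_prop}) and are continuous, and since $\mathscr G(\phi_*(\mathscr M^*))$ is dense in $\phi_*\mathscr M^*$, the homomorphism $\mathrm{I}_\phi$ is uniquely determined on the whole of $\phi_*\mathscr M^*$.

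\textbf{Norm equality.} The inequality $|\mathrm{I}_\phi(\eta)|\leq|\eta|$ is already secured. For the reverse, the main case is $\eta=\phi_*\omega$: by Lemma \ref{lem:equiv_Hom_pn} (applicable because the source dual system is CR), there exists a sequence $(v_n)_n\subset\mathscr M$ with $|v_n|\leq 1$ and $\sup_n|\langle\omega,v_n\rangle|=|\omega|$; passing to the non-decreasing sequence of running lattice suprema and invoking the $\sigma$-order-continuity of the lattice homomorphism $\phi$ delivers $\phi(|\omega|)=\sup_n\phi(|\langle\omega,v_n\rangle|)$. Since $|\phi_*v_n|=\phi(|v_n|)\leq\phi(1_{U_1})=1_{U_2}$ and $\langle\mathrm{I}_\phi(\phi_*\omega),\phi_*v_n\rangle=\phi(\langle\omega,v_n\rangle)$, applying Lemma \ref{lem:equiv_Hom_pn} to $\mathrm{I}_\phi(\phi_*\omega)\in(\phi_*\mathscr M)^*$ yields $|\mathrm{I}_\phi(\phi_*\omega)|\geq\sup_n\phi(|\langle\omega,v_n\rangle|)=\phi(|\omega|)=|\phi_*\omega|$. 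For a general glued element $\eta=\sum_n u_n\cdot\phi_*\omega_n\in\mathscr G(\phi_*(\mathscr M^*))$, the glueing formula \eqref{eq:norm_glueing} in both $\phi_*\mathscr M^*$ and $(\phi_*\mathscr M)^*$ combined with the already-proven equality on each $\phi_*\omega_n$ gives $|\mathrm{I}_\phi(\eta)|=\sum_n u_n\,\phi(|\omega_n|)=|\eta|$, and density together with continuity of $|\cdot|$ and $\mathrm{I}_\phi$ extends the equality to all $\eta\in\phi_*\mathscr M^*$. The main obstacle will be the interchange of $\phi$ with the supremum in Lemma \ref{lem:equiv_Hom_pn}, which is precisely where both the CR assumption and the $\sigma$-order-continuity of $\phi$ enter essentially.
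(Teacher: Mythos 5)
Your construction---two successive applications of Proposition \ref{prop:univ_prop} (first to $\omega\mapsto\phi(\langle\omega,\cdot\rangle)$ with bound $\phi(|\omega|)$, then to the resulting linear map $\mathscr M^*\to(\phi_*\mathscr M)^*$ with bound $1_{U_2}$), giving $|\mathrm I_\phi(\eta)|\le|\eta|$, followed by the reverse inequality via Lemma \ref{lem:equiv_Hom_pn}, the CR extraction of a countable maximising sequence, the interchange of $\phi$ with the countable supremum, and extension by glueing and density---is essentially the paper's own proof, the only real difference being that you prove the norm equality first for $\eta=\phi_*\omega$ and then glue, while the paper runs the inequality chain directly on a glued element $\sum_n u_n\cdot\phi_*\omega_n$. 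The one cosmetic slip is citing \eqref{eq:norm_glueing} for $\phi_*\mathscr M^*$, which is not literally a Hom module; the identity you need there, $\big|\sum_n u_n\cdot\phi_*\omega_n\big|=\sup_n u_n|\phi_*\omega_n|$, holds in any normed module by locality, so nothing is lost.
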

\begin{proof}
Given any \(\omega\in\mathscr M^*\), we define the operator \(\tilde i_\phi(\omega)\colon\mathscr M\to Z_2\) as
\[
\tilde i_\phi(\omega)v\coloneqq\phi(\langle\omega,v\rangle),\quad\text{ for every }v\in\mathscr M.
\]
Note that \(\tilde i_\phi(\omega)\) is linear and satisfies \(|\tilde i_\phi(\omega)v|\leq\phi(|\omega|)\phi(|v|)\) for all \(v\in\mathscr M\).
Hence, we know from Proposition \ref{prop:univ_prop} that there is a unique element \(i_\phi(\omega)\in(\phi_*\mathscr M)^*\) such that
\(|i_\phi(\omega)|\leq\phi(|\omega|)\) and
\[
\langle i_\phi(\omega),\phi_*v\rangle=\tilde i_\phi(\omega)v=\phi(\langle\omega,v\rangle),\quad\text{ for every }v\in\mathscr M.
\]
Since the resulting operator \(i_\phi\colon\mathscr M^*\to(\phi_*\mathscr M)^*\) is linear, by applying Proposition \ref{prop:univ_prop}
again we deduce that there exists a unique \({\rm I}_\phi\in\textsc{Hom}\big(\phi_*\mathscr M^*,(\phi_*\mathscr M)^*\big)\) with
\(|{\rm I}_\phi|\leq 1\) such that \eqref{eq:embed_pshfrwd_dual} holds.

It remains to check \eqref{eq:embed_pshfrwd_dual_2}. Thanks to Proposition \ref{prop:cont_norm_mod_ops} i) and
Theorem \ref{thm:pshfrwd_mod} ii), it suffices to prove \eqref{eq:embed_pshfrwd_dual_2} for \(\eta\in\mathscr G(\phi_*(\mathscr M^*))\),
say \(\eta=\sum_{n\in\N}u_n\cdot\phi_*\omega_n\) with \((u_n)_n\in\mathcal P({\bf 1}_{U_2})\) and \((\omega_n)_n\subset\mathscr M^*\). Given any \(n\in\N\),
we deduce from Lemma \ref{lem:equiv_Hom_pn} that \(|\omega_n|=\sup\big\{\langle\omega_n,v\rangle\,:\,v\in\mathscr M,\,|v|\leq 1\big\}\),
so we can pick a sequence \((v_n^i)_{i\in\N}\subset\mathscr M\) such that \(|v_n^i|\leq 1\) for all \(i\in\N\) and
\(|\omega_n|=\sup_i\langle\omega_n,v_n^i\rangle\). Then
\begin{equation}\label{eq:embed_pshfrwd_dual_aux}
\phi(|\omega_n|)=|\phi_*\omega_n|=\sup_{i\in\N}\phi(\langle\omega_n,v_n^i\rangle)\overset{\eqref{eq:equiv_Hom_pn}}=
\sup_{i\in\N}\,\big\langle{\rm I}_\phi(\phi_*\omega_n),\phi_*v_n^i\big\rangle.
\end{equation}
Multiplying by \(u_n\) and summing over \(n\), we deduce (using Lemma \ref{lem:equiv_Hom_pn} again and \(|\phi_*v_n^i|\leq 1\)) that
\[\begin{split}
|\eta|&=\sum_{n\in\N}u_n|\phi_*\omega_n|\overset{\eqref{eq:embed_pshfrwd_dual_aux}}=
\sum_{n\in\N}u_n\sup_{i\in\N}\,\langle{\rm I}_\phi(\phi_*\omega_n),\phi_*v_n^i\rangle
=\sum_{n\in\N}\sup_{i\in\N}\,\langle u_n\cdot{\rm I}_\phi(\phi_*\omega_n),\phi_*v_n^i\rangle\\
&=\sum_{n\in\N}\sup_{i\in\N}\,\langle{\rm I}_\phi(u_n\cdot\eta),\phi_*v_n^i\rangle\leq
\sum_{n\in\N}|{\rm I}_\phi(u_n\cdot\eta)|=\sum_{n\in\N}u_n|{\rm I}_\phi(\eta)|=|{\rm I}_\phi(\eta)|.
\end{split}\]
Since \(|{\rm I}_\phi|\leq 1\) yields the converse inequality \(|{\rm I}_\phi(\eta)|\leq|\eta|\),
the statement is finally achieved.
\end{proof}
\subsubsection{Completion of a normed module}
It follows from Theorem \ref{thm:pshfrwd_mod} that each \(V\)-normed \(U\)-module can be `completed' to a \(V\)-Banach \(U\)-module,
much like the metric completion of a normed space has a Banach space structure:
\begin{theorem}[Completion of a normed module]
Let \((\mathcal U,U,V)\) be a metric \(f\)-structure and let \(\mathscr M\) be a \(V\)-normed \(U\)-module. Then there exists a unique couple
\((\bar{\mathscr M},\iota)\) such that \(\bar{\mathscr M}\) is a \(V\)-Banach \(U\)-module and \(\iota\colon\mathscr M\to\bar{\mathscr M}\)
is a \(U\)-linear map with dense range satisfying \(|\iota v|=|v|\) for every \(v\in\mathscr M\).
Uniqueness is up to unique isomorphism: given another couple \((\tilde{\mathscr M},\tilde\iota)\) with the same properties,
there is a unique isomorphism \(\Phi\colon\bar{\mathscr M}\to\tilde{\mathscr M}\) of \(V\)-Banach \(U\)-modules such that
\[\begin{tikzcd}
\mathscr M \arrow[r,"\iota"] \arrow[rd,swap,"\tilde\iota"] &
\bar{\mathscr M} \arrow[d,"\Phi"] \\ & \tilde{\mathscr M}
\end{tikzcd}\]
is a commutative diagram. We say that the couple \((\bar{\mathscr M},\iota)\), or just \(\bar{\mathscr M}\), is the \textbf{completion} of \(\mathscr M\).

Moreover, if \(\mathscr M,\mathscr N\) are \(V\)-normed \(U\)-modules and \(T\in\textsc{Hom}(\mathscr M,\mathscr N)\) is given,
then there exists a unique homomorphism \(\bar T\in\textsc{Hom}(\bar{\mathscr M},\bar{\mathscr N})\) such that \(\bar T|_{\mathscr M}=T\),
where we regard \(\mathscr M\) and \(\mathscr N\) as subsets of \(\bar{\mathscr M}\) and \(\bar{\mathscr N}\), respectively.
If in addition \(U\) is Dedekind complete, then it holds \(|\bar T|=|T|\).
\end{theorem}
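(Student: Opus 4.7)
The plan is to obtain existence by specialising Theorem \ref{thm:pshfrwd_mod} (equivalently Theorem \ref{thm:module_generated} with $\psi\coloneqq|\cdot|\colon\mathscr M\to V^+$) to the identity homomorphism $\phi\coloneqq\mathrm{id}_{\mathcal U}$, and to deduce the remaining assertions from Proposition \ref{prop:univ_prop}. Concretely, I would set $\bar{\mathscr M}\coloneqq\phi_*\mathscr M$ and $\iota\coloneqq\phi_*$. By construction $\bar{\mathscr M}$ is a $V$-Banach $U$-module, $\iota$ is $\R$-linear, and $|\iota v|=\phi(|v|)=|v|$ for every $v\in\mathscr M$.

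The first nontrivial point, which I expect to be the main obstacle, is to upgrade $\iota$ from merely $\R$-linear to $U$-linear and to promote the density of $\mathscr G(\iota(\mathscr M))$ (given by Theorem \ref{thm:pshfrwd_mod}~ii)) to the actual density of $\iota(\mathscr M)$ itself. For $U$-linearity on simple elements $u=\sum_i\lambda_i u_i\in\mathcal S(U)$ I would argue directly from the construction of $\mathcal F$ in the proof of Theorem \ref{thm:module_generated}: the equivalence $[u_i,\lambda_i v]_i\sim[{\bf 1}_U,u\cdot v]$ reduces, thanks to $u_i u=\lambda_i u_i$ and $|u\cdot v|=|u||v|$, to $u_i|\lambda_i v-u\cdot v|=0$, which is immediate. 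The extension to arbitrary $u\in U$ then follows from the density of $\mathcal S(U)$ in $U$, the continuity of the scalar multiplication of $\mathscr M$ and $\bar{\mathscr M}$, and the fact that $\iota$ is a $1$-Lipschitz isometry. With $U$-linearity at hand, for any $(u_n,v_n)_n\in\Adm(\mathscr M)$ the glueing property of $\mathscr M$ produces $w\coloneqq\sum_n u_n\cdot v_n\in\mathscr M$, and the identities $u_n\cdot\iota(w)=\iota(u_n\cdot w)=\iota(u_n\cdot v_n)=u_n\cdot\iota(v_n)$ combined with Lemma \ref{lem:locality_prop} yield $\iota(w)=\sum_n u_n\cdot\iota(v_n)$. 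Hence $\mathscr G(\iota(\mathscr M))=\iota(\mathscr M)$, and density follows.

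For uniqueness, given another completion $(\tilde{\mathscr M},\tilde\iota)$ I would apply Proposition \ref{prop:univ_prop} with $\phi=\mathrm{id}_{\mathcal U}$ and $b={\bf 1}_U$ to the $\R$-linear operator $\tilde\iota\colon\mathscr M\to\tilde{\mathscr M}$, producing a unique $\Phi\in\textsc{Hom}(\bar{\mathscr M},\tilde{\mathscr M})$ satisfying $\Phi\circ\iota=\tilde\iota$ and $|\Phi w|\leq|w|$; the symmetric construction yields $\Psi\in\textsc{Hom}(\tilde{\mathscr M},\bar{\mathscr M})$ with $\Psi\circ\tilde\iota=\iota$. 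Applying the uniqueness clause of Proposition \ref{prop:univ_prop} to $\iota$ and to $\tilde\iota$ forces $\Psi\circ\Phi=\mathrm{id}_{\bar{\mathscr M}}$ and $\Phi\circ\Psi=\mathrm{id}_{\tilde{\mathscr M}}$, so $\Phi$ is the desired unique isomorphism (and it is an isometric isomorphism since $|\Phi w|\leq|w|$ and $|\Psi\tilde w|\leq|\tilde w|$ combine with $\Psi\circ\Phi=\mathrm{id}_{\bar{\mathscr M}}$).

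For the extension of homomorphisms, let $T\in\textsc{Hom}(\mathscr M,\mathscr N)$ and pick $c\in U^+$ with $|Tv|\leq c|v|$. The composition $\iota_{\mathscr N}\circ T\colon\mathscr M\to\bar{\mathscr N}$ satisfies $|\iota_{\mathscr N}(Tv)|=|Tv|\leq c|v|$, so Proposition \ref{prop:univ_prop} (with $\phi=\mathrm{id}_{\mathcal U}$ and $b=c$) produces a unique $\bar T\in\textsc{Hom}(\bar{\mathscr M},\bar{\mathscr N})$ with $\bar T\circ\iota_{\mathscr M}=\iota_{\mathscr N}\circ T$ and
\[
|\bar T w|\leq c|w|,\quad\text{ for every }w\in\bar{\mathscr M}.
\]
Identifying $\mathscr M\subset\bar{\mathscr M}$ and $\mathscr N\subset\bar{\mathscr N}$ via the isometric embeddings, this reads $\bar T|_{\mathscr M}=T$. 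If $U$ is Dedekind complete, choosing $c=|T|$ gives $|\bar T|\leq|T|$; conversely, $|Tv|=|\bar T\iota v|\leq|\bar T||\iota v|=|\bar T||v|$ exhibits $|\bar T|$ as an element of the set $\mathcal F_T$ from \eqref{eq:def_F_T}, so $|T|\leq|\bar T|$, whence $|T|=|\bar T|$.
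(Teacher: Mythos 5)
Your existence argument is essentially the paper's: take \(\bar{\mathscr M}\coloneqq({\rm id}_{\mathcal U})_*\mathscr M\), \(\iota\coloneqq({\rm id}_{\mathcal U})_*\), upgrade \(\iota\) to \(U\)-linearity, and show \(\mathscr G(\iota(\mathscr M))=\iota(\mathscr M)\) via the glueing property of \(\mathscr M\) together with Lemma \ref{lem:locality_prop}. Your hands-on check of \(U\)-linearity on \(\mathcal S(U)\) inside the construction of \(\mathscr M_{\langle\psi\rangle}\), followed by density of \(\mathcal S(U)\) and continuity, is correct, but note that the paper gets it in one line from Corollary \ref{cor:criterion_Hom}, since \(\iota\) is linear with \(|\iota v|\leq{\bf 1}_U|v|\). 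In the uniqueness step there is a small imprecision: Proposition \ref{prop:univ_prop} produces homomorphisms \emph{out of the pushforward} \(\bar{\mathscr M}\), so it does not literally apply in the ``symmetric'' direction, because \(\tilde{\mathscr M}\) is not known a priori to be a pushforward. To build \(\Psi\colon\tilde{\mathscr M}\to\bar{\mathscr M}\) you should invoke Theorem \ref{thm:main_ext_hom} for the dense subspace \(\tilde\iota(\mathscr M)\subset\tilde{\mathscr M}\) and the map \(\tilde\iota v\mapsto\iota v\) (well-defined since \(\tilde\iota\) preserves pointwise norms, hence is injective), and obtain \(\Phi\circ\Psi={\rm id}_{\tilde{\mathscr M}}\) from density of \(\tilde\iota(\mathscr M)\) plus continuity of homomorphisms, not from the uniqueness clause of Proposition \ref{prop:univ_prop}. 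Alternatively (the paper's shortcut), observe that \((\tilde{\mathscr M},\tilde\iota)\) verifies properties i) and ii) of Theorem \ref{thm:pshfrwd_mod}, since \(\mathscr G(\tilde\iota(\mathscr M))\supset\tilde\iota(\mathscr M)\) is dense, and quote its uniqueness clause directly.

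The genuine gap is in the final claim \(|\bar T|=|T|\). Under the stated hypothesis (only ``\(U\) Dedekind complete'') the infima \(|T|=\inf\mathcal F_T\) and \(|\bar T|=\inf\mathcal F_{\bar T}\) exist, but you are not entitled to the inequalities you use: choosing \(c=|T|\) presupposes \(|Tv|\leq|T||v|\), and the converse direction uses \(|\bar T\iota v|\leq|\bar T||\iota v|\). By Theorem \ref{thm:Hom_normed_module}, these inequalities are established only when the dual system \((\mathcal U,U,V,U,V)\) is \emph{complete}, i.e.\ when in addition the multiplication is order-continuous from \(U^+\times V^+\) to \(V^+\) (cf.\ Example \ref{ex:UVUV}); without that, the infimum \(|T|\) need not itself belong to \(\mathcal F_T\). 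The correct argument under the stated hypothesis is the one the paper gives: the extension \(\bar T\) is independent of the chosen bound (by the uniqueness of the homomorphism satisfying \(\bar T\circ\iota_{\mathscr M}=\iota_{\mathscr N}\circ T\)), so applying Proposition \ref{prop:univ_prop} with every \(c\in\mathcal F_T\) (or arguing by approximation) shows \(\mathcal F_T\subset\mathcal F_{\bar T}\), while restriction to \(\mathscr M\) gives \(\mathcal F_{\bar T}\subset\mathcal F_T\); hence \(\mathcal F_T=\mathcal F_{\bar T}\) and \(|T|=|\bar T|\) follows by taking infima, using Dedekind completeness alone. Repair your last paragraph along these lines and the proof is complete.
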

\begin{proof}
The identity mapping \({\rm id}_{\mathcal U}\colon\mathcal U\to\mathcal U\) is a homomorphism of metric \(f\)-structures from
\((\mathcal U,U,V)\) to itself, thus we can define \(\bar{\mathscr M}\coloneqq({\rm id}_{\mathcal U})_*\mathscr M\) and
\(\iota\coloneqq({\rm id}_{\mathcal U})_*\colon\mathscr M\to\bar{\mathscr M}\).
By Theorem \ref{thm:pshfrwd_mod}, to prove the first part of the statement amounts to showing that \(\iota\) is \(U\)-linear and that \(\iota(\mathscr M)\)
is dense in \(\bar{\mathscr M}\). The former property follows from Corollary \ref{cor:criterion_Hom}. About the latter, recall
that \(\mathscr G(\iota(\mathscr M))\) is dense in \(\bar{\mathscr M}\), thus it only remains to show that
\(\iota(\mathscr M)=\mathscr G(\iota(\mathscr M))\). The inclusion \(\iota(\mathscr M)\subset\mathscr G(\iota(\mathscr M))\) is trivial.
Conversely, fix \(w=\sum_{n\in\N}u_n\cdot\iota v_n\in\mathscr G(\iota(\mathscr M))\). Since \((u_n,\iota v_n)_{n\in\N}\in\Adm(\bar{\mathscr M})\),
we have that \((u_n,v_n)_{n\in\N}\in\Adm(\mathscr M)\) and thus it makes sense to consider \(v\coloneqq\sum_{n\in\N}u_n\cdot v_n\in\mathscr M\).
We claim that \(\iota v=w\), whence it follows that \(w\in\iota(\mathscr M)\) and thus \(\mathscr G(\iota(\mathscr M))\subset\iota(\mathscr M)\). Given that
\[
u_n\cdot\iota v=\iota(u_n\cdot v)=\iota(u_n\cdot v_n)=u_n\cdot\iota v_n=u_n\cdot w,\quad\text{ for every }n\in\N,
\]
we deduce from Lemma \ref{lem:locality_prop} that \(\iota v=w\). Therefore, the first part of the statement is proved.

About the second part of the statement, observe that \(\bar T\coloneqq({\rm id}_{\mathcal U})_*T\in\textsc{Hom}(\bar{\mathscr M},\bar{\mathscr N})\)
is the unique homomorphism extending \(T\). Note also that if \(\bar u\in U^+\) satisfies \(|Tv|\leq\bar u|v|\) for all \(v\in\mathscr M\), then
\(|\bar T v|\leq\bar u|v|\) for every \(v\in\bar{\mathscr M}\) by approximation. Finally, borrowing the notation from
the proof of Theorem \ref{thm:Hom_normed_module} (see \eqref{eq:def_F_T}), we get that \(\mathcal F_T=\mathcal F_{\bar T}\),
so that (assuming that \(U\) is Dedekind complete) we conclude that \(|\bar T|=|T|\).
\end{proof}
\subsection{Hahn--Banach extension theorem}\label{ss:Hahn-Banach}
The aim of this section is to obtain a normed module version of the Hahn--Banach extension theorem, as well as to investigate some of its basic consequences.
\medskip

Let \((\mathcal U,U,V)\) be a metric \(f\)-structure and \(\mathscr M\) a module over \(U\).
Then we say that a given map \(p\colon\mathscr M\to V^+\) is \textbf{\(U\)-sublinear} if it is subadditive
(i.e.\ \(p(v+w)\leq p(v)+p(w)\) for every \(v,w\in\mathscr M\)) and \textbf{positively \(U\)-homogeneous},
which means that \(p(u\cdot v)=u\,p(v)\) for all \(u\in U^+\) and \(v\in\mathscr M\).
\begin{lemma}[One-dimensional dominated extension]\label{lem:1D_ext}
Let \((\mathcal U,U,V)\) be a CR metric \(f\)-structure. Let \(\mathscr M\) be a \(V\)-normed \(U\)-module
and \(\mathscr N\subsetneq\mathscr M\) a \(V\)-normed \(U\)-submodule of \(\mathscr M\). Fix any \(z\in\mathscr M\)
such that \(u\cdot z\notin\mathscr N\) for every \(u\in\Idem(U)\setminus\{0\}\) with \(u\leq\nchi_{\{z\neq 0\}}\).
Let \(f\colon\mathscr N\to V\) be a \(U\)-linear map and \(p\colon\mathscr M\to V^+\) a \(U\)-sublinear map with \(f\leq p\) 
on \(\mathscr N\). Define \(\mathscr N_{+z}\coloneqq\mathscr N+U\cdot z\). Then there exists a \(U\)-linear map
\(\bar f\colon\mathscr N_{+z}\to V\) with \(\bar f|_{\mathscr N}=f\) such that \(\bar f\leq p\) on \(\mathscr N_{+z}\).
\end{lemma}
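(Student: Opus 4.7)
The strategy is the classical Hahn--Banach argument adapted to the module setting: we look for \(c\in V\) and set \(\bar f(v+u\cdot z)\coloneqq f(v)+u\cdot c\). Writing \(B\coloneqq\nchi_{\{z\neq 0\}}\), the plan is to (i) use the transversality hypothesis on \(z\) to identify the annihilator of \(z\) modulo \(\mathscr N\), forcing \(c\in BV\) and giving well-definedness; (ii) invoke the Dedekind completeness built into the CR assumption to choose \(c\) satisfying scalar comparison bounds; (iii) upgrade these bounds to \(f(v)+u\cdot c\leq p(v+u\cdot z)\) for every \(u\in U\) via local inverses.

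\medskip

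For step (i), I would prove that \(s\cdot z\in\mathscr N\) forces \(sB=0\) for every \(s\in U\). Arguing by contradiction: if \(sB\neq 0\), then \(a\coloneqq\nchi_{\{|s|B>0\}}\neq 0\), and applying Proposition~\ref{prop:local_inverses} to \(|s|B\) yields a partition \((u_n)_{n\in\N}\) of \(a\) and \((w_n)\subset U^+\) with \(u_n|s|Bw_n=u_n\). Using the signum \(\sigma\coloneqq\nchi_{\{s>0\}}-\nchi_{\{s<0\}}\) (available since \((\mathcal U,U,V)\) is CR) together with \(\sigma s=|s|\) and \(B\cdot z=z\), one obtains
\[
(u_nw_n\sigma)\cdot(s\cdot z)=(u_nw_n|s|B)\cdot z=u_n\cdot z\in\mathscr N,
\]
contradicting the hypothesis since \(0\neq u_n\leq B\). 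Consequently, for any \(c\in BV\) the formula \(\bar f(v+u\cdot z)\coloneqq f(v)+u\cdot c\) is well-posed and \(U\)-linear: if \(v+u\cdot z=v'+u'\cdot z\), then \((u-u')\cdot z=v'-v\in\mathscr N\) forces \((u-u')B=0\), whence \(v=v'\) (as \((u-u')\cdot z=0\)) and \((u-u')\cdot c=(u-u')B\cdot c=0\). For step (ii) I set
\[
c\coloneqq B\cdot\inf\bigl\{p(v+z)-f(v)\,:\,v\in\mathscr N\bigr\}\in BV,
\]
the infimum existing by the Dedekind completeness of \(V\) because the set is bounded below by any \(f(v')-p(v'-z)\): indeed \(f(v+v')\leq p(v+v')\leq p(v+z)+p(v'-z)\). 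The observations \((1-B)(p(v+z)-f(v))\geq 0\) and \((1-B)(f(v)-p(v-z))\leq 0\) (both immediate from \((1-B)\cdot z=0\) and \(f\leq p\) on \(\mathscr N\)) then give the scalar bounds \(f(v)-p(v-z)\leq c\leq p(v+z)-f(v)\) for every \(v\in\mathscr N\).

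\medskip

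The crux, which I expect to be the main obstacle, is step (iii): upgrading the scalar bounds to \(f(v)+u\cdot c\leq p(v+u\cdot z)\) for arbitrary \(u\in U\). By splitting \(u=u^+-u^-\) it suffices to treat \(u\in U^+\) (the negative case being symmetric via the lower bound on \(c\)). Apply Proposition~\ref{prop:local_inverses} to \(u\) to produce a partition \((u_n)\) of \(\nchi_{\{u>0\}}\) and \((w_n)\subset U^+\) with \(u_nuw_n=u_n\); specialising the upper bound on \(c\) to \(v'=w_n v\in\mathscr N\), multiplying by \(u_nu\in U^+\), and using the \(U\)-linearity of \(f\) together with the positive \(U\)-homogeneity of \(p\) yields
\[
u_nu\cdot c\leq p(u_nv+u_nu\cdot z)-f(u_nv)=u_n\bigl(p(v+u\cdot z)-f(v)\bigr),
\]
equivalently \(u_n\cdot R^-=0\) with \(R\coloneqq p(v+u\cdot z)-f(v)-u\cdot c\). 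The \(\sigma\)-order-continuity of multiplication in \(\mathcal U\) then gives \(\nchi_{\{u>0\}}\cdot R^-=\sup_n u_n\cdot R^-=0\). On the complement \(1-\nchi_{\{u>0\}}\) one has \(u\cdot c=0\) and \((1-\nchi_{\{u>0\}})\cdot u\cdot z=0\), so \((1-\nchi_{\{u>0\}})R=p((1-\nchi_{\{u>0\}})v)-f((1-\nchi_{\{u>0\}})v)\geq 0\) by \(f\leq p\) on \(\mathscr N\). Adding the two contributions, \(R\geq 0\), which is the desired inequality and completes the construction of \(\bar f\).
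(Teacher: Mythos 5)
Your proposal is correct and follows essentially the same route as the paper's proof: a one-step Hahn--Banach extension with the value at \(z\) defined as an infimum (using the Dedekind completeness contained in the CR assumption), well-posedness deduced from the transversality hypothesis on \(z\) together with local inverses, and the domination \(\bar f\leq p\) obtained by localising with partitions from Proposition \ref{prop:local_inverses} on \(\nchi_{\{u>0\}}\), \(\nchi_{\{u<0\}}\) and the complementary idempotent. The differences are cosmetic (truncating the infimum by \(B=\nchi_{\{z\neq 0\}}\) instead of noting \(\nchi_{\{z=0\}}\cdot b=0\); splitting \(u=u^+-u^-\) rather than handling both signs at once), and the facts you leave implicit, such as \(\nchi_{\{s>0\}}s^-=0\) behind \(\sigma s=|s|\), follow readily from the same local-inverse argument and are glossed at a comparable level in the paper itself.
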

\begin{proof}
Given any \(v,w\in\mathscr N\), we can estimate
\[
f(v)-f(w)=f(v-w)\leq p(v-w)=p\big(v+z-(w+z)\big)\leq p(v+z)+p(-w-z),
\]
whence it follows that \(-p(-w-z)-f(w)\leq p(v+z)-f(v)\) for every \(v,w\in\mathscr N\). Plugging \(w=0\), we obtain that \(p(z)\leq p(v+z)-f(v)\)
for every \(v\in\mathscr N\), thus the Dedekind completeness of \(V\) ensures that \(b\coloneqq\inf\big\{p(v+z)-f(v)\,:\,v\in\mathscr N\big\}\in V\)
exists. Then we have that
\begin{equation}\label{eq:1D_ext_aux1}
-p(-w-z)-f(w)\leq b\leq p(v+z)-f(v),\quad\text{ for every }v,w\in\mathscr N.
\end{equation}
Plugging \(v=w=0\) in \eqref{eq:1D_ext_aux1} and multiplying by \(\nchi_{\{z=0\}}\), we deduce that \(\nchi_{\{z=0\}}\cdot b=0\), so that
\begin{equation}\label{eq:1D_ext_aux2}
\nchi_{\{z=0\}}\leq\nchi_{\{b=0\}}.
\end{equation}
Next we claim that for any \(v,\tilde v\in\mathscr N\) and \(u,\tilde u\in U\) it holds that
\begin{equation}\label{eq:1D_ext_aux3}
v+u\cdot z=\tilde v+\tilde u\cdot z\qquad\Longrightarrow\qquad v=\tilde v\;\text{ and }\;u\cdot b=\tilde u\cdot b.
\end{equation}
To prove it, suppose that \((u-\tilde u)\cdot z=\tilde v-v\). Pick a partition \((u_n)_{n\in\N}\) of \(\nchi_{\{u\neq\tilde u\}}\)
and \((w_n)_{n\in\N}\subset U\) such that \(u_n(u-\tilde u)w_n=u_n\) for every \(n\in\N\). Multiplying \((u-\tilde u)\cdot z=\tilde v-v\) by \(u_n w_n\), we obtain
\[
u_n\cdot z=(u_n(u-\tilde u)w_n)\cdot z=(u_n w_n)\cdot(\tilde v-v)\in\mathscr N,\quad\text{ for every }n\in\N.
\]
Hence, the glueing property of \(\mathscr N\) ensures that \(\nchi_{\{u\neq\tilde u\}}\cdot z\in\mathscr N\), thus accordingly
\(\nchi_{\{u\neq\tilde u\}}\leq\nchi_{\{z=0\}}\). This implies that \(\tilde v-v=u\cdot z-\tilde u\cdot z=0\) and (recalling
\eqref{eq:1D_ext_aux2}) that \(u\cdot b=\tilde u\cdot b\), proving \eqref{eq:1D_ext_aux3}.
Therefore, the map \(\bar f\colon\mathscr N_{+z}\to V\) defined as follows is well-posed:
\[
\bar f(v+u\cdot z)\coloneqq f(v)+u\cdot b,\quad\text{ for every }v\in\mathscr N\text{ and }u\in U.
\]
It is immediate to check that \(\bar f\) is a \(U\)-linear extension of \(f\). It only remains to show that \(\bar f\leq p\) on \(\mathscr N_{+z}\).
To this aim, fix \(v\in\mathscr N\) and \(u\in U\setminus\{0\}\). Pick a partition \((u_n)_{n\in\N}\) of \(\{u>0\}\), a partition \((\tilde u_n)_{n\in\N}\)
of \(\{u<0\}\), and \((w_n)_{n\in\N},(\tilde w_n)_{n\in\N}\subset U^+\) such that \(u_n u^+ w_n=u_n\) and \(\tilde u_n u^- \tilde w_n=\tilde u_n\)
for all \(n\in\N\). It follows from \eqref{eq:1D_ext_aux1} that \(p(w_n\cdot v+z)-f(w_n\cdot v)\geq b\) and
\(-p(\tilde w_n\cdot v-z)+f(\tilde w_n\cdot v)\leq b\). Multiplying by \(u_n u^+\) and \(\tilde u_n u^-\),
respectively, we obtain \(u_n\cdot(f(v)+u\cdot b)\leq u_n\cdot p(v+u\cdot z)\) and
\[
\tilde u_n\cdot(f(v)+u\cdot b)=\tilde u_n\cdot(f(v)-u^-\cdot b)\leq\tilde u_n\cdot p(v-u^-\cdot z)=\tilde u_n\cdot p(v+u\cdot z),
\]
respectively. Using the glueing property, we conclude that \(\bar f(v+u\cdot z)=f(v)+u\cdot b\leq p(v+u\cdot z)\).
\end{proof}
\begin{theorem}[Hahn--Banach extension theorem for normed modules]\label{thm:Hahn-Banach}
Let \((\mathcal U,U,V)\) be a CR metric \(f\)-structure. Let \(\mathscr M\) be a \(V\)-normed \(U\)-module and let
\(\mathscr N\subsetneq\mathscr M\) be a \(V\)-normed \(U\)-submodule of \(\mathscr M\). Let \(f\colon\mathscr N\to V\)
be a \(U\)-linear map and \(p\colon\mathscr M\to V^+\) a \(U\)-sublinear map with \(f\leq p\) on \(\mathscr N\).
Then there exists a \(U\)-linear map \(\bar f\colon\mathscr M\to V\) such that \(\bar f|_{\mathscr N}=f\) such
that \(\bar f\leq p\) on \(\mathscr M\).
\end{theorem}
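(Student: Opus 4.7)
The plan is to follow the classical Hahn--Banach strategy via Zorn's lemma, iterating the one-dimensional extension provided by Lemma \ref{lem:1D_ext}. Concretely, I would consider the set \(\mathcal E\) of all couples \((\mathscr N',g)\), where \(\mathscr N'\) is a \(V\)-normed \(U\)-submodule of \(\mathscr M\) containing \(\mathscr N\) and \(g\colon\mathscr N'\to V\) is a \(U\)-linear extension of \(f\) satisfying \(g\leq p\) on \(\mathscr N'\), partially ordered by the natural extension relation. The aim is then to show that every chain has an upper bound (so Zorn's lemma applies) and that the maximal element so obtained must have domain all of \(\mathscr M\).

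For the chain step, given a chain \(\{(\mathscr N'_\alpha,g_\alpha)\}_\alpha\), I would first take the naive union \(\mathscr N^*_0\coloneqq\bigcup_\alpha\mathscr N'_\alpha\), which is a \(U\)-submodule but is typically not closed under glueing, together with the obvious \(U\)-linear extension \(g_0\). Then I would pass to \(\mathscr N^*\coloneqq\mathscr G_{\mathscr M}(\mathscr N^*_0)\) and define
\[
g^*\bigg(\sum_{n\in\N}u_n\cdot w_n\bigg)\coloneqq\sum_{n\in\N}u_n\cdot g_0(w_n)\in V.
\]
Admissibility of the right-hand side follows from \(|g_0(w_n)|\leq p(w_n)+p(-w_n)\) together with the identities \(u_n\cdot w_n=u_n\cdot w\) and the positive \(U\)-homogeneity of \(p\), yielding the bound \(\sup_n u_n|g_0(w_n)|\leq p(w)+p(-w)\in V^+\). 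Well-posedness, \(U\)-linearity, and the inequality \(g^*\leq p\) all follow from the locality principle (Lemma \ref{lem:locality_prop}) applied to the partition \((u_n)_{n\in\N}\), together with the \(U\)-sublinearity of \(p\).

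For the strict extension step, assume the maximal element \((\mathscr N',g)\) satisfies \(\mathscr N'\neq\mathscr M\) and pick \(z\in\mathscr M\setminus\mathscr N'\). The hypothesis of Lemma \ref{lem:1D_ext} is not automatic for \(z\), so I would replace it with \(z'\coloneqq(1-u_z)\cdot z\), where
\[
u_z\coloneqq\sup\big\{u\in\Idem(U)\;\big|\;u\cdot z\in\mathscr N'\big\}\in\Idem(U);
\]
this supremum exists and is idempotent thanks to the CR assumption and Lemma \ref{lem:order_idem}. A disjointification argument, combined with the glueing property of \(\mathscr N'\), shows that \(u_z\cdot z\in\mathscr N'\), whence \(z'\notin\mathscr N'\). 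Moreover, any idempotent \(u\leq\nchi_{\{z'\neq 0\}}\) with \(u\cdot z'\in\mathscr N'\) satisfies both \(u\leq u_z\) (by maximality of \(u_z\)) and \(u\leq 1-u_z\) (since \(u_z\cdot z'=u_z(1-u_z)\cdot z=0\) yields \(u_z\leq\nchi_{\{z'=0\}}\)), hence \(u=0\). Lemma \ref{lem:1D_ext} then produces a \(U\)-linear, \(p\)-dominated extension of \(g\) to \(\mathscr N'+U\cdot z'\), and the glueing procedure of the previous paragraph extends it further to the strictly larger \(V\)-normed \(U\)-submodule \(\mathscr G_{\mathscr M}(\mathscr N'+U\cdot z')\), contradicting maximality.

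The main obstacle, relative to the classical vector-space proof of Hahn--Banach, is the bookkeeping needed to remain within the class of \(V\)-normed \(U\)-submodules throughout the induction: every application of Lemma \ref{lem:1D_ext} only delivers the algebraic sum \(\mathscr N'+U\cdot z'\), so one must always close under the glueing operation while preserving \(U\)-linearity and domination by \(p\). This is precisely where the CR assumption, the locality property, and the positive \(U\)-homogeneity of \(p\) interact.
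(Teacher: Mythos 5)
Your proposal is correct and follows essentially the same route as the paper: Zorn's lemma over $p$-dominated $U$-linear extensions on $V$-normed $U$-submodules, the CR-based replacement of $z$ by $(1-u_z)\cdot z$ so that the hypothesis of Lemma \ref{lem:1D_ext} holds, and then the one-dimensional extension to contradict maximality. The only difference is that you spell out the glueing-closure bookkeeping (closing chain unions and the algebraic sum $\mathscr N'+U\cdot z'$ under $\mathscr G_{\mathscr M}$ and extending by glueing with the bound $u_n|g_0(w_n)|\leq p(w)+p(-w)$), which the paper treats as routine, so this is a refinement of detail rather than a different argument.
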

\begin{proof}
Let us denote by \(\mathcal F\) the family of all couples \((\mathscr Q,g)\), where \(\mathscr Q\) is a \(V\)-normed \(U\)-submodule of \(\mathscr M\)
containing \(\mathscr N\) and \(g\colon\mathscr Q\to V\) is a \(U\)-linear extension of \(f\) satisfying \(q\leq p\) on \(\mathscr Q\). Clearly
\(\mathcal F\) is non-empty, as it contains \((\mathscr N,f)\). We endow \(\mathcal F\) with the partial order \(\preceq\) defined as follows:
given \((\mathscr Q,g),(\tilde{\mathscr Q},\tilde g)\in\mathcal F\), we declare that \((\mathscr Q,g)\preceq(\tilde{\mathscr Q},\tilde g)\)
provided \(\mathscr Q\subset\tilde{\mathscr Q}\) and \(\tilde g|_{\mathscr Q}=g\). It is easy to check that any totally ordered subset \(\mathcal C\)
of \((\mathcal F,\preceq)\) has an upper bound, namely \((\mathscr Q,g_0)\), where
\(\mathscr Q_0\coloneqq\bigcup_{(\mathscr Q,g)\in\mathcal F}\mathscr Q\) and \(g_0\colon\mathscr Q_0\to V\)
is given by \(g_0(v)\coloneqq g(v)\) for every \((\mathscr Q,g)\in\mathcal F\) with \(v\in\mathscr Q\).
Hence, an application of Zorn's lemma yields the existence of a maximal element \((\mathscr N_0,f_0)\) of \((\mathcal F,\preceq)\).
In order to conclude, we aim to show that \(\mathscr N_0=\mathscr M\). We argue by contradiction: suppose that
\(\mathscr M\setminus\mathscr N_0\neq\varnothing\). Fix any \(\tilde z\in\mathscr M\setminus\mathscr N_0\).
The fact that \(U\) is CR ensures that
\[
\exists\,q\coloneqq\sup\big\{u\in\Idem(U)\;\big|\;u\cdot\tilde z\in\mathscr N_0\big\}\in\Idem(U).
\]
Moreover, the glueing property implies that \(q\cdot\tilde z\in\mathscr N_0\). Then we define \(z\coloneqq(1-q)\cdot\tilde z\in\mathscr M\).
Observe that \(u\cdot z\notin\mathscr N_0\) holds for every \(u\in\Idem(U)\setminus\{0\}\) with \(u\leq1-q=\nchi_{\{z\neq 0\}}\). Therefore,
Lemma \ref{lem:1D_ext} yields the existence of a map \(\bar f\colon\mathscr N_0+U\cdot z\to V\) such that
\((\mathscr N_0+U\cdot z,\bar f)\in\mathcal F\) and \((\mathscr N_0,f_0)\preceq(\mathscr N_0+U\cdot z,\bar f)\),
which leads to a contradiction with the maximality of \((\mathscr N_0,f_0)\).
\end{proof}
\begin{corollary}\label{cor:conseq_Hahn-Banach}
Let \((\mathcal U,U,V,W,Z)\) be a CR dual system of metric \(f\)-structures.
Let \(\mathscr M\) be a \(V\)-Banach \(U\)-module. Let \(v\in\mathscr M\) satisfy \(|v|\in Z\cap U\) and \(\nchi_{\{v\neq 0\}}\in W\).
Then there exists an element \(\omega\in\mathscr M^*\) such that \(\langle\omega,v\rangle=|v|\) and \(|\omega|=\nchi_{\{v\neq 0\}}\).
\end{corollary}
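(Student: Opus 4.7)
The plan is to build $\omega$ by the Hahn--Banach extension theorem (Theorem \ref{thm:Hahn-Banach}) applied to a suitable sublinear bound. Define
\[
p \colon \mathscr M \to V^+, \qquad p(w) \coloneqq \nchi_{\{v \neq 0\}} \cdot |w|;
\]
this is $U$-sublinear and symmetric, and moreover $p(w) \in Z$ for every $w \in \mathscr M$ since $\nchi_{\{v \neq 0\}} \in W$, $|w| \in V$, and $WV = Z$ by the definition of a dual system. Set $\mathscr N \coloneqq \mathscr G_{\mathscr M}(U \cdot v)$, which is a $V$-normed $U$-submodule of $\mathscr M$, and define $f \colon \mathscr N \to V$ by
\[
f\biggl(\sum_{n \in \N} u_n \cdot (u_n' \cdot v)\biggr) \coloneqq \sum_{n \in \N} u_n \cdot (u_n' \cdot |v|),
\]
the right-hand side being the glueing in $V$ regarded as a $V$-Banach $U$-module (Proposition \ref{prop:Riesz_is_Banach_module}). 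Well-definedness reduces to the implication $(u - \tilde u) \cdot v = 0 \Rightarrow (u - \tilde u)|v| = 0$, which follows from $|(u - \tilde u) \cdot v| = |u - \tilde u||v|$. A locality argument, using $\nchi_{\{v \neq 0\}} |v| = |v|$, then gives $|f(w)| = \nchi_{\{v \neq 0\}} |w| = p(w)$ for every $w \in \mathscr N$; in particular $\pm f \leq p$ on $\mathscr N$.

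Theorem \ref{thm:Hahn-Banach} then produces a $U$-linear extension $\bar f \colon \mathscr M \to V$ with $\bar f \leq p$; applying this also to $-w$ yields $|\bar f(w)| \leq p(w) = \nchi_{\{v \neq 0\}}|w|$ for every $w \in \mathscr M$. Since $Z$ is a solid Riesz subspace of $\mathcal U$ and $p(w) \in Z$, this bound forces $\bar f(w) \in Z$ for every $w$, so $\omega \coloneqq \bar f$ takes values in $Z$. Together with $\nchi_{\{v \neq 0\}} \in W^+$, the pointwise estimate places $\omega$ in $\textsc{Hom}(\mathscr M, Z) = \mathscr M^*$ (cf.\ Definition \ref{def:hom}) with $|\omega| \leq \nchi_{\{v \neq 0\}}$; the identity $\langle \omega, v \rangle = f(\boldsymbol{1}_U \cdot v) = |v|$ is then immediate.

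It remains to prove $|\omega| \geq \nchi_{\{v \neq 0\}}$. From $\omega(v) = |v|$ and the bound $|\omega(v)| \leq |\omega||v|$ supplied by Theorem \ref{thm:Hom_normed_module}, we obtain $|v| \leq |\omega||v|$. Proposition \ref{prop:local_inverses} applied to $|v| \in U^+$ yields a partition $(u_n)_{n \in \N}$ of $\nchi_{\{|v|>0\}} = \nchi_{\{v \neq 0\}}$ together with $(\tilde w_n) \subset U^+$ satisfying $u_n(|v|\tilde w_n - \boldsymbol{1}_U) = 0$. Multiplying the previous inequality by $u_n \tilde w_n \in U^+$ gives $u_n \leq u_n|\omega|$ for every $n$; taking the supremum using the $\sigma$-order-continuity of the multiplication produces $\nchi_{\{v \neq 0\}} \leq \nchi_{\{v \neq 0\}}|\omega|$. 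Combined with $|\omega| \leq \nchi_{\{v \neq 0\}}$ and the idempotence of $\nchi_{\{v \neq 0\}}$, this forces $|\omega| = \nchi_{\{v \neq 0\}}$.

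The main technical obstacle I anticipate is the construction and analysis of $f$ on the glueing closure $\mathscr N$: both the well-posedness and the identity $|f(w)| = \nchi_{\{v \neq 0\}}|w|$ there require careful tracking of admissible glueings simultaneously in $\mathscr M$ and in $V$. Once that is in place, the rest is a routine combination of Hahn--Banach, the solidity of $Z$ in $\mathcal U$, and local invertibility in $U$.
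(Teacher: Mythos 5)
Your proposal is correct and follows essentially the paper's own route: define the natural functional \(u\cdot v\mapsto u|v|\) on the cyclic submodule generated by \(v\), dominate it by the \(U\)-sublinear map \(p(w)=\nchi_{\{v\neq 0\}}|w|\), extend via Theorem \ref{thm:Hahn-Banach} to get \(\omega\in\mathscr M^*\) with \(|\omega|\leq\nchi_{\{v\neq 0\}}\), and obtain the reverse inequality from Proposition \ref{prop:local_inverses} applied to \(|v|\in U^+\). The only slip is the appeal to Theorem \ref{thm:Hom_normed_module} for \(|\langle\omega,v\rangle|\leq|\omega||v|\), since that theorem assumes a \emph{complete} dual system while here only CR is available; under the CR hypothesis the same inequality is supplied by (the proof of) Lemma \ref{lem:equiv_Hom_pn}, or one can bypass \(|\omega|\) altogether and bound every competitor \(g\in W^+\) with \(|\langle\omega,w\rangle|\leq g|w|\) from below by \(\nchi_{\{v\neq 0\}}\), as the paper does.
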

\begin{proof}
Notice that \(U\cdot v\) is a \(V\)-Banach \(U\)-submodule of \(\mathscr M\). We define the map \(T\colon U\cdot v\to Z\) as
\[
T(u\cdot v)\coloneqq u|v|,\quad\text{ for every }u\in U.
\]
Clearly, \(T\) is well-posed and \(U\)-linear. Moreover, we have \(|T(u\cdot v)|=\nchi_{\{v\neq 0\}}|u\cdot v|\) for every
\(u\in U\), thus \(T\in\textsc{Hom}(U\cdot v,Z)\) and \(|T|\leq\nchi_{\{v\neq 0\}}\). Now let us define \(p\colon\mathscr M\to Z^+\)
as \(p(w)\coloneqq\nchi_{\{v\neq 0\}}|w|\) for every \(w\in\mathscr M\). It can be readily checked that \(p\) is \(U\)-sublinear.
Since \(T\leq p\) on \(U\cdot v\), an application of Theorem \ref{thm:Hahn-Banach} yields a \(U\)-linear map \(\omega\colon\mathscr M\to Z\)
satisfying \(\omega|_{U\cdot v}=T\) and \(\omega\leq p\) on \(\mathscr M\). The latter gives \(|\omega(w)|\leq\nchi_{\{v\neq 0\}}|w|\) for
every \(w\in\mathscr M\), which shows that \(\omega\in\mathscr M^*\) and \(|\omega|\leq\nchi_{\{v\neq 0\}}\). Notice also that
\(\langle\omega,v\rangle=|v|\) by construction. Therefore, in order to conclude it suffices to check that \(|\omega|\geq\nchi_{\{v\neq 0\}}\).
To this aim, pick a partition \((u_n)_{n\in\N}\) of \(\{v\neq 0\}\) and a sequence \((w_n)_{n\in\N}\subset U^+\) such that
\(u_n w_n|v|=u_n\) holds for every \(n\in\N\). Now fix any \(g\in W^+\) satisfying \(|\langle\omega,w\rangle|\leq g|w|\) for every \(w\in\mathscr M\).
Then for any \(n\in\N\) we can estimate
\[
u_n g=u_n w_n|v|g=g|(u_n w_n)\cdot v|\geq|\langle\omega,(u_n w_n)\cdot v\rangle|=u_n w_n|v|=u_n,
\]
which implies that \(g\geq\nchi_{\{v\neq 0\}}\) thanks to the arbitrariness of \(n\in\N\). The statement is achieved.
\end{proof}
\subsubsection{Reflexive Banach modules}\label{ss:refl_mod}
Let \((\mathcal U,U,V,W,Z)\) be a complete dual system of metric \(f\)-structures and let \(\mathscr M\) be a \(V\)-Banach \(U\)-module.
Then we denote the \textbf{bidual} of \(\mathscr M\) by \(\mathscr M^{**}\coloneqq(\mathscr M^*)^*\). Here, we are considering the dual
of \(\mathscr M^*\) with respect to the dual system \((\mathcal U,U,W,V,Z)\) (recall Remark \ref{rmk:invert_dual_syst}), so that \(\mathscr M^{**}\)
is a \(V\)-Banach \(U\)-module.
\begin{definition}[Embedding into the bidual]
Let \((\mathcal U,U,V,W,Z)\) be a complete dual system of metric \(f\)-structures. Let \(\mathscr M\) be a \(V\)-Banach \(U\)-module.
Then we define \({\rm J}_{\mathscr M}\colon\mathscr M\to\mathscr M^{**}\) as
\[
\langle{\rm J}_{\mathscr M}(v),\omega\rangle\coloneqq\langle\omega,v\rangle,\quad\text{ for every }v\in\mathscr M\text{ and }\omega\in\mathscr M^*.
\]
\end{definition}

Notice that the map \(\mathscr M\times\mathscr M^*\ni(v,\omega)\mapsto\langle{\rm J}_{\mathscr M}(v),\omega\rangle\in Z\) is \(U\)-bilinear.
Moreover, one has
\[
|\langle{\rm J}_{\mathscr M}(v),\omega\rangle|\leq|\omega||v|,\quad\text{ for every }v\in\mathscr M\text{ and }\omega\in\mathscr M^*.
\]
It follows that \({\rm J}_{\mathscr M}(v)\in\mathscr M^{**}\) for every \(v\in\mathscr M\), that
\({\rm J}_{\mathscr M}\in\textsc{Hom}(\mathscr M,\mathscr M^{**})\), and that \(|{\rm J}_{\mathscr M}|\leq 1\).
Under suitable assumptions, the homomorphism \({\rm J}_{\mathscr M}\) actually preserves the pointwise norm:
\begin{proposition}\label{prop:J_M_isom}
Let \((\mathcal U,U,V,W,Z)\) be a CR dual system of metric \(f\)-structures. Suppose that \({\rm S}(V)\leq{\rm S}(W)\).
Let \(\mathscr M\) be a \(V\)-Banach \(U\)-module. Then it holds that
\[
\big|{\rm J}_{\mathscr M}(v)\big|=|v|,\quad\text{ for every }v\in\mathscr M.
\]
\end{proposition}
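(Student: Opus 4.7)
Since $|{\rm J}_{\mathscr M}|\leq 1$ (as already observed immediately after the definition of the embedding), we have $|{\rm J}_{\mathscr M}(v)|\leq|v|$, and the content of the statement is the reverse inequality. My plan is to localise $v$ via Lemma~\ref{lem:tech_localisation}, apply the Hahn--Banach consequence Corollary~\ref{cor:conseq_Hahn-Banach} on each piece to produce dual elements attaining $|v|$, and then reassemble using the locality property of Lemma~\ref{lem:locality_prop}.

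First, I apply Lemma~\ref{lem:tech_localisation} to $|v|\in V^+$ (its hypotheses being met thanks to $\S(V)\leq\S(W)$ together with $W$ being CR and $(\mathcal U,U,W)$ a metric $f$-structure), obtaining a partition $(u_n)_{n\in\N}\subset W\cap\Idem(U)$ of $\nchi_{\{v\neq 0\}}$ with $u_n|v|\in W\cap U$ for every $n$. Setting $v_n\coloneqq u_n\cdot v$, I get $|v_n|=u_n|v|\in W\cap U$, and since $Z=VW$ also $|v_n|\in Z$. A direct computation using $u_n^2=u_n$ and $u_n\leq\nchi_{\{v\neq 0\}}$ shows $\nchi_{\{v_n\neq 0\}}=u_n\in W$, so the hypotheses of Corollary~\ref{cor:conseq_Hahn-Banach} are satisfied by $v_n$. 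This produces $\omega_n\in\mathscr M^*$ with $\langle\omega_n,v_n\rangle=|v_n|$ and $|\omega_n|=u_n\leq{\bf 1}_U$.

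The $U$-linearity of $\omega_n$ yields $u_n\langle\omega_n,v\rangle=\langle\omega_n,u_n\cdot v\rangle=|v_n|=u_n|v|$. Since $\mathscr M^{**}$ is the dual of $\mathscr M^*$ in the CR dual system $(\mathcal U,U,W,V,Z)$ (Remark~\ref{rmk:invert_dual_syst}), Lemma~\ref{lem:equiv_Hom_pn} applied to ${\rm J}_{\mathscr M}(v)\in\mathscr M^{**}$ tested against $\omega_n$ gives $|{\rm J}_{\mathscr M}(v)|\geq|\langle\omega_n,v\rangle|$, whence using \eqref{eq:prod_preserv} and $u_n|v|\geq 0$,
\[
u_n|{\rm J}_{\mathscr M}(v)|\geq u_n|\langle\omega_n,v\rangle|=|u_n\langle\omega_n,v\rangle|=u_n|v|.
\]
Combined with the converse inequality $u_n|{\rm J}_{\mathscr M}(v)|\leq u_n|v|$, this forces $u_n\cdot(|v|-|{\rm J}_{\mathscr M}(v)|)=0$ for every $n$. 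Separately, the $U$-linearity of ${\rm J}_{\mathscr M}$ gives $\nchi_{\{v=0\}}\cdot{\rm J}_{\mathscr M}(v)={\rm J}_{\mathscr M}(\nchi_{\{v=0\}}\cdot v)=0$, so $\nchi_{\{v=0\}}\cdot(|v|-|{\rm J}_{\mathscr M}(v)|)=0$ as well. Appending $\nchi_{\{v=0\}}$ to $(u_n)$ produces a partition of ${\bf 1}_U$ on which $|v|-|{\rm J}_{\mathscr M}(v)|\in V$ vanishes, and Lemma~\ref{lem:locality_prop} applied to $V$ viewed as a $V$-Banach $U$-module (Proposition~\ref{prop:Riesz_is_Banach_module}) concludes $|{\rm J}_{\mathscr M}(v)|=|v|$.

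The main obstacle I anticipate is verifying that the localised pieces $v_n$ simultaneously satisfy both integrability conditions $|v_n|\in Z\cap U$ and $\nchi_{\{v_n\neq 0\}}\in W$ required by Corollary~\ref{cor:conseq_Hahn-Banach}; this is precisely where the hypothesis $\S(V)\leq\S(W)$ is indispensable, as it enables Lemma~\ref{lem:tech_localisation} to produce a partition whose blocks live in $U$, $V$, and $W$ all at once.
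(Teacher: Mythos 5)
Your proof is correct and follows essentially the same route as the paper's: localise via Lemma \ref{lem:tech_localisation}, produce norming functionals \(\omega_n\) on each piece with Corollary \ref{cor:conseq_Hahn-Banach}, and recover \(|{\rm J}_{\mathscr M}(v)|\geq|v|\) from the duality pairing. The only cosmetic differences are that the paper passes through Remark \ref{rmk:relation_H(V)} to get \({\rm S}(V)\leq{\rm S}(Z)\) and concludes by taking suprema over the partition, whereas you apply the localisation lemma with \(W\) directly (using \(Z=VW\) for the integrability of \(|v_n|\)) and finish with Lemma \ref{lem:locality_prop}; both are sound.
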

\begin{proof}
Let \(v\in\mathscr M\) be fixed. We aim to show that \(|{\rm J}_{\mathscr M}(v)|\geq|v|\). In view of Remark \ref{rmk:relation_H(V)},
we know that \({\rm S}(V)\leq{\rm S}(Z)\). Hence, applying Lemma \ref{lem:tech_localisation} we obtain a partition \((u_n)_{n\in\N}\)
of \(\{v\neq 0\}\) such that \(u_n|v|\in Z\cap U\) and \(\nchi_{\{u_n\cdot v\neq 0\}}\in W\) for every \(n\in\N\). Using Corollary
\ref{cor:conseq_Hahn-Banach}, we can find a sequence \((\omega_n)_{n\in\N}\subset\mathscr M^*\) such that \(\langle\omega_n,u_n\cdot v\rangle=u_n|v|\)
and \(|\omega_n|=\nchi_{\{u_n\cdot v\neq 0\}}\) for all \(n\in\N\). Then
\[
u_n\langle{\rm J}_{\mathscr M}(v),\omega_n\rangle=\langle\omega_n,u_n\cdot v\rangle=u_n|v|=u_n|\omega_n||v|,\quad\text{ for every }n\in\N.
\]
This implies that \(u_n|{\rm J}_{\mathscr M}(v)|\geq u_n|v|\) for every \(n\in\N\), whence it follows that \(|{\rm J}_{\mathscr M}(v)|\geq|v|\).
\end{proof}

In view of Proposition \ref{prop:J_M_isom}, it is then natural to give the following definition of reflexivity:
\begin{definition}[Reflexive Banach module]
Let \((\mathcal U,U,V,W,Z)\) be a CR dual system of metric \(f\)-structures. Suppose that \({\rm S}(V)={\rm S}(W)\).
Then we say that a \(V\)-Banach \(U\)-module \(\mathscr M\) is \textbf{reflexive} provided the embedding operator
\({\rm J}_{\mathscr M}\colon\mathscr M\to\mathscr M^{**}\) is surjective. 
\end{definition}
\subsection{Hilbert modules}\label{ss:Hilbert_mod}
In this section we investigate the properties of Hilbert modules.
Among others, we will prove a Cauchy--Schwarz inequality (Lemma \ref{lem:Cauchy-Schwarz}) and a Riesz-representation-type result (Theorem \ref{thm:RRT}),
we will study orthogonal projections (Theorem \ref{thm:Hilbert_proj} and Proposition \ref{prop:orth_proj}), and we will show that Hilbert modules are reflexive (Proposition \ref{prop:Hilb_are_refl}).
\medskip

Let \((\mathcal U,U,V,V,Z)\) be a dual system of metric \(f\)-structures and \(\mathscr H\) a \(V\)-Hilbert \(U\)-module. Then
\begin{equation}\label{eq:conseq_parall}
v\cdot w=\frac{1}{4}|v+w|^2-\frac{1}{4}|v-w|^2,\quad\text{ for every }v,w\in\mathscr H.
\end{equation}
Indeed, recalling the definition of the pointwise parallelogram rule, we obtain that
\[\begin{split}
\frac{1}{4}|v+w|^2&=\frac{1}{4}|v|^2+\frac{1}{4}|w|^2+\frac{1}{2}v\cdot w,\\
\frac{1}{4}|v-w|^2&=\frac{1}{4}|v|^2+\frac{1}{4}|w|^2-\frac{1}{2}v\cdot w.
\end{split}\]
Subtracting the second identity from the first one, we get \eqref{eq:conseq_parall}.
\begin{lemma}[Cauchy--Schwarz inequality]\label{lem:Cauchy-Schwarz}
Let \((\mathcal U,U,V,V,Z)\) be a dual system of metric \(f\)-structures and \(\mathscr H\) a \(V\)-Hilbert \(U\)-module. Then
\begin{equation}\label{eq:CS}
|v\cdot w|\leq|v||w|,\quad\text{ for every }v,w\in\mathscr H.
\end{equation}
\end{lemma}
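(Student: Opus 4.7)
The plan is to mirror the classical Hilbert Cauchy--Schwarz proof inside the ambient Archimedean $f$-algebra $\mathcal U$, where substitutions involving $v\cdot w,|v|^2,|w|^2\in\mathcal U$ become legal even though they are not a priori in $U$. The starting point is $|u\cdot v+d\cdot w|^2\geq 0$ for $u,d\in U$, which by the pointwise parallelogram rule and the $U$-bilinearity of the scalar product expands to
\[
u^2|v|^2+2ud(v\cdot w)+d^2|w|^2\geq 0.\qquad(\star)
\]
Since every idempotent of $\mathcal U$ lies in $[0,1]$ (Lemma \ref{lem:prop_idem} iv)) and $U$ is a solid $f$-subalgebra of $\mathcal U$, one has $\mathcal S(\mathcal U)\subset U$; localisability of $\mathcal U$ thus makes $U$ super-order-dense in $\mathcal U$, and together with the $\sigma$-order-continuity of the multiplication on $\mathcal U^+\times\mathcal U^+$ (applied separately to positive and negative parts) the inequality $(\star)$ extends to all $u,d\in\mathcal U$.

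Substituting $u=v\cdot w$, $d=-|v|^2$ into the extended $(\star)$ and simplifying yields $|v|^2\cdot X\geq 0$, where $X\coloneqq|v|^2|w|^2-(v\cdot w)^2$; the mirror substitution $u=|w|^2$, $d=-(v\cdot w)$ gives $|w|^2\cdot X\geq 0$. These are the $f$-algebraic analogues of the classical discriminant inequality.

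The main obstacle is passing from these two multiplicative bounds to the bare inequality $X\geq 0$, since no CR hypothesis is available and neither $|v|^2$ nor $|w|^2$ need have local inverses. The key is that axiom \eqref{eq:def_f-alg_f} propagates disjointness through non-negative multiplication: applied twice to $X^+\wedge X^-=0$ with the factor $|v|^2$, it produces $|v|^2 X^+\wedge|v|^2 X^-=0$, which together with $|v|^2 X\geq 0$ forces $|v|^2 X^-=0$, and analogously $|w|^2 X^-=0$. Multiplying the extended $(\star)$ (with $d=1$) by $X^-$ collapses it to $u(v\cdot w)X^-\geq 0$ for every $u\in\mathcal U$, whence $(v\cdot w)X^-=0$. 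Computing $X\cdot X^-$ two ways -- first as $|v|^2|w|^2 X^--(v\cdot w)^2 X^-=0$, and second as $X^+X^--(X^-)^2=-(X^-)^2$ via Proposition \ref{prop:char_disjoint} -- gives $(X^-)^2=0$. The semiprimeness of the Archimedean $f$-algebra $\mathcal U$ (Archimedean by Proposition \ref{prop:Dedekind_is_Archimedean}), namely that $y\geq 0$ and $y^2=0$ force $y=0$ (truncate to $y\wedge 1\in[0,1]$, iterate $(1-y\wedge 1)^2\geq 0$ to reach $y\wedge 1\leq 1/2^n$, conclude $y\wedge 1=0$, and apply \eqref{eq:def_f-alg_f} once more with $u=1$, $v=w=y$), then yields $X^-=0$, so $(v\cdot w)^2\leq|v|^2|w|^2$.

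To upgrade this to $|v\cdot w|\leq|v||w|$, I would apply the same $f$-algebraic template to $a\coloneqq|v\cdot w|$ and $b\coloneqq|v||w|$ in $Z^+$: write $c\coloneqq(a-b)^+$, $e\coloneqq(a-b)^-$ (so $ce=0$), expand $b^2-a^2=(b+a)(e-c)\geq 0$ and multiply by $c$ to force $c^2(b+a)=0$ and hence $c^2 a=0$; the inequality $c\leq a$ then gives $c^3\leq c^2 a=0$, and two further applications of semiprimeness (first to $(c^2)^2=c\cdot c^3=0$, then to $c^2=0$) conclude $c=0$, i.e.\ $a\leq b$, which is the claimed inequality.
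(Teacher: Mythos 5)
Your proof is correct, but it takes a genuinely different and considerably heavier route than the paper. The paper argues in three lines from the polarization identity \eqref{eq:conseq_parall}: since \(0\le|v+w|\le|v|+|w|\) and \(\big||v|-|w|\big|\le|v-w|\), monotonicity of squaring on positive elements gives \(v\cdot w=\tfrac14|v+w|^2-\tfrac14|v-w|^2\le\tfrac14\big((|v|+|w|)^2-(|v|-|w|)^2\big)=|v||w|\), and applying the same bound to \(-v\) handles \(-(v\cdot w)\); only the pointwise triangle inequality and elementary \(f\)-algebra identities are used. You instead resurrect the classical discriminant argument, and every step does check out: the extension of \((\star)\) from \(U\) to \(\mathcal U\) works as you indicate (solidity gives \(\Idem(\mathcal U)\subset U\), monotone approximation of \(u^\pm,d^\pm\) by elements of \(\mathcal S^+(\mathcal U)\) together with the \(\sigma\)-order-continuity of the product yields order convergence of the quadratic expression, and inequalities pass to order limits); the two substitutions give \(|v|^2X\ge0\) and \(|w|^2X\ge0\); the disjointness propagation via \eqref{eq:def_f-alg_f}, the two computations of \(XX^-\) giving \((X^-)^2=0\), and the semiprimeness of the Archimedean \(f\)-algebra \(\mathcal U\) are all sound (one line worth adding: in the semiprimeness iteration you first need \((y\wedge1)^2\le y^2=0\) before \((1-n(y\wedge1))^2\ge0\) yields \(n(y\wedge1)\le1\), after which the Archimedean property applies); and the final step passing from \(a^2\le b^2\) to \(a\le b\) is fine. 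The trade-off is clear: your route leans on Dedekind \(\sigma\)-completeness, super-order-density of simple elements, \(\sigma\)-order-continuity of the multiplication and the Archimedean property of \(\mathcal U\) -- none of which the paper's proof touches -- and it establishes \((v\cdot w)^2\le|v|^2|w|^2\) first, forcing the extra square-root-monotonicity argument that polarization sidesteps. What it buys is that it only uses positive semidefiniteness and \(U\)-bilinearity of the pointwise scalar product, not the triangle inequality for \(|\cdot|\); it would therefore survive in a setting where the module is axiomatised through an inner product rather than a pointwise norm, whereas within the present framework the paper's argument is the short one.
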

\begin{proof}
Using \eqref{eq:conseq_parall} and the fact that \(|v|-|w|\leq|v+w|\leq|v|+|w|\), we obtain that
\[\begin{split}
v\cdot w&=\frac{1}{4}|v+w|^2-\frac{1}{4}|v-w|^2\leq\frac{1}{4}\big((|v|+|w|)^2-(|v|-|w|)^2\big)\\
&=\frac{1}{4}\big(|v|^2+|w|^2+2|v||w|-|v|^2-|w|^2+2|v||w|\big)=|v||w|.
\end{split}\]
We also have that \(-(v\cdot w)=(-v)\cdot w\leq|-v||w|=|v||w|\). Therefore, \eqref{eq:CS} is proved.
\end{proof}

Given a \(V\)-Hilbert \(U\)-module \(\mathscr H\) and a \(V\)-Hilbert \(U\)-submodule \(\mathscr N\) of \(\mathscr H\),
we define the \textbf{orthogonal complement} of \(\mathscr N\) in \(\mathscr H\) as
\[
\mathscr N^\perp\coloneqq\big\{v\in\mathscr H\;\big|\;v\cdot w=0,\text{ for every }w\in\mathscr N\big\}.
\]
One can readily check that \(\mathscr N^\perp\) is a \(V\)-Hilbert \(U\)-submodule of \(\mathscr H\).
\begin{theorem}[Hilbert projection theorem for Hilbert modules]\label{thm:Hilbert_proj}
Let \((\mathcal U,U,V,V,Z)\) be a CR dual system of metric \(f\)-structures. Let \(\mathscr H\) be a \(V\)-Hilbert \(U\)-module satisfying
\begin{equation}\label{eq:Hilb_proj_hp}
\sfd_{\mathscr H}(v,0)^2\leq\sfd_Z(|v|^2,0),\quad\text{ for every }v\in\mathscr H.
\end{equation}
Let \(C\neq\varnothing\) be a closed, convex subset of \(\mathscr H\) such that \(\mathscr G(C)=C\). Let \(v\in\mathscr H\) be fixed. We define
\[
|v-C|\coloneqq\inf\big\{|v-w|\;\big|\;w\in C\big\}\in V^+,\qquad
\sfd_{\mathscr H}(v,C)\coloneqq\inf\big\{\sfd_{\mathscr H}(v,w)\;\big|\;w\in C\big\}\in\R^+.
\]
Then it holds that
\begin{equation}\label{eq:Hilb_proj_cl1}
\sfd_V(|v-C|,0)=\sfd_{\mathscr H}(v,C).
\end{equation}
Moreover, there exists a unique \(P_C(v)\in C\), the \textbf{orthogonal projection} of \(v\) onto \(C\), such that
\[
|v-C|=|v-P_C(v)|.
\]
\end{theorem}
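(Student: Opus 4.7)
Plan. I will follow the classical Hilbert-space blueprint: prove uniqueness via the parallelogram rule, construct a Cauchy minimizing sequence, extract $P_C(v)$ as its limit, and then identify $\sfd_V(|v-C|,0)$ with $\sfd_{\mathscr H}(v,C)$. The main subtlety is that the pointwise norm is $V$-valued and the metrics $\sfd_V,\sfd_Z$ need not be order-continuous, so the passage between order-theoretic and metric behaviour has to be executed carefully, using the CR assumption and the closure hypothesis $\mathscr G(C)=C$.

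Uniqueness is immediate: if $P_1,P_2\in C$ both attain $|v-P_i|=|v-C|$, then $(P_1+P_2)/2\in C$ by convexity, whence $|v-(P_1+P_2)/2|\geq|v-C|$ pointwise in $V$; the parallelogram rule \eqref{eq:ptwse_parall_rule} applied to $v-P_1$ and $v-P_2$ gives $|P_1-P_2|^2 = 2|v-P_1|^2+2|v-P_2|^2-4|v-(P_1+P_2)/2|^2 \leq 0$, forcing $P_1=P_2$.

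For existence I would first use the CR property of $V$ to pick $(\tilde w_n)\subset C$ with $\inf_n|v-\tilde w_n|=|v-C|$, and then refine inductively to an order-decreasing sequence $(w_n)\subset C$ with $|v-w_n|\searrow|v-C|$. The refinement at step $n$ takes the pointwise minimum of $w_n$ and $\tilde w_{n+1}$: the positive parts $(|v-w_n|-|v-\tilde w_{n+1}|)^{\pm}$ are disjoint in $V$, so by CR their supports are complementary idempotents, producing $p_n\in\Idem(U)$ for which $w_{n+1}\coloneqq p_n\cdot w_n+(1-p_n)\cdot\tilde w_{n+1}$ satisfies $|v-w_{n+1}|=|v-w_n|\wedge|v-\tilde w_{n+1}|$. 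The crucial point is $w_{n+1}\in C$: the convexity of $C$ together with $\mathscr G(C)=C$ and the density of $\mathcal S(U)$ in $U$ render $C$ closed under combinations $u\cdot w+(1-u)\cdot w'$ with $u\in U$, $0\leq u\leq 1$ (approximate $u$ by simple elements, form ordinary convex combinations on each piece of the associated partition, and glue). Applying $\sigma$-order-continuity of the multiplication $V^+\times V^+\to Z^+$ then yields $|v-w_n|^2\searrow|v-C|^2$ in $Z^+$.

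The Cauchy step is the crux. The pointwise parallelogram combined with $(w_n+w_m)/2\in C$ gives, for $n\leq m$,
\[
|w_n-w_m|^2 = 2|v-w_n|^2+2|v-w_m|^2-4|v-(w_n+w_m)/2|^2 \leq 4\bigl(|v-w_n|^2-|v-C|^2\bigr),
\]
and combining this with the standing hypothesis $\sfd_{\mathscr H}(\cdot,0)^2\leq\sfd_Z(|\cdot|^2,0)$ (applied to $w_n-w_m$) and order-monotonicity of $\sfd_Z(\cdot,0)$ on $Z^+$ yields
\[
\sfd_{\mathscr H}(w_n,w_m)^2 \leq \sfd_Z\bigl(|w_n-w_m|^2,0\bigr) \leq 4\,\sfd_Z\bigl(|v-w_n|^2-|v-C|^2,0\bigr).
\]
The obstacle I expect to wrestle with is forcing this last quantity to vanish: we have $|v-w_n|^2-|v-C|^2\searrow 0$ in $Z^+$, but order-decrease to $0$ does not in general imply $\sfd_Z$-convergence to $0$. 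I would address this by strengthening the choice of $(\tilde w_n)$ so that $(w_n)$ is simultaneously metric-minimizing ($\sfd_{\mathscr H}(v,w_n)\to\sfd_{\mathscr H}(v,C)$), and then using the hypothesis in concert with the translation-invariance and subadditivity of $\sfd_Z$ to pin $\sfd_Z(|v-w_n|^2-|v-C|^2,0)\to 0$ from both the order and metric sides at once. Once $(w_n)$ is Cauchy, its limit $P_C(v)$ lies in $C$ by closedness, continuity of $|\cdot|$ delivers $|v-P_C(v)|=|v-C|$, and \eqref{eq:Hilb_proj_cl1} follows from the chain $\sfd_V(|v-C|,0)\leq\sfd_V(|v-w|,0)=\sfd_{\mathscr H}(v,w)$ for every $w\in C$, saturated at $w=P_C(v)$.
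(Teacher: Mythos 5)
Your uniqueness argument and the final derivation of \eqref{eq:Hilb_proj_cl1} from the existence of a minimizer are fine and match the paper, but the step you yourself flag as the "obstacle" is a genuine gap, and the repair you sketch does not work. Your lattice-refined sequence only gives order convergence \(|v-w_n|\downarrow|v-C|\), and adding the requirement that \((w_n)\) be metric-minimizing does not force \(\sfd_Z(|v-w_n|^2-|v-C|^2,0)\to 0\), nor Cauchyness. Concretely, take the CR dual system \((L^\infty,L^\infty,L^\infty,L^\infty,L^\infty)\) over \([0,1]\) with Lebesgue measure, \(\mathscr H=L^\infty\), \(v=0\), \(C=\{f:f\geq\phi\}\) with \(\phi=2\cdot\1_{[0,1/2]}+\1_{(1/2,1]}\) (closed, convex, glueing-closed), and \(\tilde w_k=\phi+\1_{(1/2,1/2+1/k]}\). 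Then \(\inf_k|v-\tilde w_k|=\phi=|v-C|\), your refinement returns \(w_n=\tilde w_n\), and \(\|w_n\|_\infty=2=\sfd_{\mathscr H}(0,C)\), so the sequence is both order- and metric-minimizing; yet \(\|w_n-w_m\|_\infty=1\) for \(n\neq m\), so it is not Cauchy and \(\sfd_Z(|v-w_n|^2-|v-C|^2,0)\not\to 0\). (A smaller issue: you invoke \(\sigma\)-order-continuity of the multiplication \(V^+\times V^+\to Z^+\), which is part of the definition of a \emph{complete} dual system, not of a CR one; this particular point is repairable by working with the \(\sigma\)-order-continuous multiplication of the ambient localisable \(f\)-algebra \(\mathcal U\) together with solidity of \(V\) and \(Z\).)

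The missing idea, which is how the paper proceeds, is to build the minimizing sequence with a \emph{uniform order regulator} rather than by lattice refinement. Using the supporting element \(h\in V^+\) of Lemma \ref{lem:supporting_element} and the CR-selected \((\tilde w_k)\) with \(\inf_k|v-\tilde w_k|=|v-C|\), one chooses for each \(n\) a partition \((u^n_k)_{k\in\N}\) of \(\nchi_{\{h\neq 0\}}\) on which \(u^n_k|v-\tilde w_k|\leq u^n_k(|v-C|+\tfrac1n h)\), and glues \(w_n\coloneqq\sum_k u^n_k\cdot\tilde w_k\in C\) (here is where \(\mathscr G(C)=C\) is really used, beyond idempotent convex combinations). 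This yields \(|v-C|\leq|v-w_n|\leq|v-C|+\tfrac1n h\), and since \(\sfd_V(\tfrac1n h,0)\to 0\) (continuity of \(U\times V\to V\) and \(\sfd_U(\eps\boldsymbol 1_U,0)\to 0\)), one gets at once the metric identity \eqref{eq:Hilb_proj_cl1} and, via the parallelogram estimate, \(|w_n-w_m|^2\leq 2(\tfrac1{n^2}+\tfrac1{m^2})h^2+4(\tfrac1n+\tfrac1m)|v-C|h\), whose \(\sfd_Z\)-smallness follows from continuity of the relevant multiplications; hypothesis \eqref{eq:Hilb_proj_hp} then makes \((w_n)\) Cauchy. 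It is exactly this quantitative bound with the fixed, metrically small regulator \(h\) that bridges the order/metric divide your proposal leaves open.
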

\begin{proof}
Applying Lemma \ref{lem:supporting_element}, we can find an element \(h\in V^+\) such that \(h\leq 1\) and \(\nchi_{\{h=0\}}V=\{0\}\).
By the CR assumption, there exists a sequence \((\tilde w_k)_{k\in\N}\subset C\) such that \(|v-C|=\inf_{k\in\N}|v-\tilde w_k|\).
One can readily check that for any \(n\in\N\) there exists a partition \((u^n_k)_{k\in\N}\) of \(\nchi_{\{h\neq 0\}}\) such that
\[
u^n_k|v-C|\leq u^n_k|v-\tilde w_k|\leq u^n_k\bigg(|v-C|+\frac{1}{n}h\bigg),\quad\text{ for every }k\in\N.
\]
In particular, it holds \((u^n_k,\tilde w_k)_{k\in\N}\in\Adm(\mathscr H)\), thus it makes sense to set
\(\omega_n\coloneqq\sum_{k\in\N}u^n_k\cdot\tilde w_k\in\mathscr H\). Given that \(C\) is closed under the glueing operation,
we have that \((w_n)_{n\in\N}\subset C\). Notice also that
\begin{equation}\label{eq:Hilb_proj_aux1}
|v-C|\leq|v-w_n|\leq|v-C|+\frac{1}{n}h,\quad\text{ for every }n\in\N.
\end{equation}
Since \(\lim_n\sfd_V(n^{-1}h,0)=0\), we deduce from \eqref{eq:Hilb_proj_aux1} that \(\lim_n\sfd_V\big(|v-C|,|v-w_n|\big)=0\) and thus
\[
\sfd_{\mathscr H}(v,C)\leq\lim_{n\to\infty}\sfd_{\mathscr H}(v,w_n)=\lim_{n\to\infty}\sfd_V(|v-w_n|,0)=\sfd_V(|v-C|,0).
\]
On the other hand, we have \(|v-C|\leq|v-w|\), thus \(\sfd_V(|v-C|,0)\leq\sfd_{\mathscr H}(v,w)\), for every \(w\in C\).
By taking the infimum over \(w\in C\), we get \(\sfd_V(|v-C|,0)\leq\sfd_{\mathscr H}(v,C)\). All in all, \eqref{eq:Hilb_proj_cl1} is proved.

The Hilbertianity of \(\mathscr H\) and the convexity of \(C\) ensure that for any \(n,m\in\N\) it holds that
\begin{equation}\label{eq:Hilb_proj_aux2}\begin{split}
|w_n-w_m|^2&\overset{\phantom{\eqref{eq:Hilb_proj_aux1}}}=2|v-w_n|^2+2|v-w_m|^2-4\bigg|v-\frac{w_n+w_m}{2}\bigg|^2\\
&\overset{\phantom{\eqref{eq:Hilb_proj_aux1}}}\leq 2|v-w_n|^2+2|v-w_m|^2-4|v-C|^2\\
&\overset{\eqref{eq:Hilb_proj_aux1}}\leq 2\bigg(\frac{1}{n^2}+\frac{1}{m^2}\bigg)h^2+4\bigg(\frac{1}{n}+\frac{1}{m}\bigg)|v-C|h,
\end{split}\end{equation}
whence it follows that \(\sfd_V(|w_n-w_m|,0)\leq\sqrt{\sfd_Z(|w_n-w_m|^2,0)}\to 0\) as \(n,m\to\infty\). Hence,
\((w_n)_{n\in\N}\) is a Cauchy sequence in \(\mathscr H\), thus \(\lim_n\sfd_{\mathscr H}(w_n,\bar w)=0\) holds for some \(\bar w\in C\).
In particular, we have that \(\sfd_V(|v-C|,|v-\bar w|)=\lim_n\sfd_V(|v-C|,|v-w_n|)=0\), so that \(|v-C|=|v-\bar w|\). To prove that
\(\bar w\) is the unique element of \(C\) with this property, fix any \(\tilde w\in C\) with \(|v-\tilde w|=|v-C|\). Then
\[
0\leq|\tilde w-\bar w|^2=2|v-\tilde w|^2+2|v-\bar w|^2-4\bigg|v-\frac{\tilde w+\bar w}{2}\bigg|^2\leq 2|v-C|^2+2|v-C|^2-4|v-C|^2=0,
\]
which forces the identity \(\tilde w=\bar w\). All in all, the statement is finally achieved.
\end{proof}
The choice of the terminology `orthogonal projection' is justified by the following result:
\begin{proposition}\label{prop:orth_proj}
Let \((\mathcal U,U,V,V,Z)\) be a CR dual system of metric \(f\)-structures.
Let \(\mathscr H\) be a \(V\)-Hilbert \(U\)-module satisfying \eqref{eq:Hilb_proj_hp}.
Let \(\mathscr N\) be a \(V\)-Hilbert \(U\)-submodule of \(\mathscr H\). Then:
\begin{itemize}
\item[\(\rm i)\)] \(v-P_{\mathscr N}(v)\in\mathscr N^\perp\) for every \(v\in\mathscr H\).
\item[\(\rm ii)\)] It holds that \(\mathscr N\oplus\mathscr N^\perp\).
\item[\(\rm iii)\)] The map \(P_{\mathscr N}\colon\mathscr H\to\mathscr N\) belongs to \(\textsc{Hom}(\mathscr H,\mathscr N)\).
\item[\(\rm iv)\)] \(v=P_{\mathscr N}(v)+P_{\mathscr N^\perp}(v)\) and \(|v|^2=|P_{\mathscr N}(v)|^2+|P_{\mathscr N^\perp}(v)|^2\)
for every \(v\in\mathscr H\).
\end{itemize}
\end{proposition}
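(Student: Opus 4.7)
The strategy is to mirror the classical proof of the Hilbert projection theorem, replacing real-scalar perturbations with $U$-scalar perturbations and exploiting the $U$-bilinearity of the pointwise scalar product introduced after Definition \ref{def:Hilbert_mod}. I start with (i). For fixed $v\in\mathscr H$ and $w\in\mathscr N$, the fact that $\mathscr N$ is a $U$-submodule forces $P_{\mathscr N}(v)-u\cdot w\in\mathscr N$ for every $u\in U$, so the minimality of $P_{\mathscr N}(v)$ from Theorem \ref{thm:Hilbert_proj} gives
\[
|v-P_{\mathscr N}(v)|^2\leq\bigl|v-P_{\mathscr N}(v)+u\cdot w\bigr|^2=|v-P_{\mathscr N}(v)|^2+2\,u\,a+u^2\,b,
\]
where $a\coloneqq(v-P_{\mathscr N}(v))\cdot w\in Z$ and $b\coloneqq|w|^2\in Z^+$. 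Thus $2ua+u^2b\geq 0$ for every $u\in U$. Specialising to $u=\pm t\,\boldsymbol{1}_U$ with $t>0$ yields $\pm 2a\leq tb$ for all such $t$; taking $t=1/n$ and invoking the Archimedean property of the Dedekind $\sigma$-complete Riesz space $Z$ (Proposition \ref{prop:Dedekind_is_Archimedean}) forces $a=0$.

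Part (ii) then follows at once: every $v\in\mathscr H$ decomposes as $v=P_{\mathscr N}(v)+(v-P_{\mathscr N}(v))$ with the second summand in $\mathscr N^\perp$ by (i), giving $\mathscr H=\mathscr N+\mathscr N^\perp$; if $z\in\mathscr N\cap\mathscr N^\perp$, then $|z|^2=z\cdot z=0$, hence $z=0$ by \eqref{eq:ptwse_norm_1}. In preparation for (iv) I will also verify that $\mathscr N^\perp$ is itself a $V$-Hilbert $U$-submodule of $\mathscr H$ satisfying \eqref{eq:Hilb_proj_hp}: it is a $U$-submodule by $U$-bilinearity of the scalar product; it is $\sfd_{\mathscr H}$-closed because the map $z\mapsto z\cdot w$ is $|w|$-Lipschitz by Cauchy--Schwarz (Lemma \ref{lem:Cauchy-Schwarz}); and it is glueing-closed because if $z=\sum_n u_n\cdot v_n$ with $(v_n)_n\subset\mathscr N^\perp$, then for every $m$ and every $w\in\mathscr N$ one has $u_m(z\cdot w)=(u_m\cdot z)\cdot w=(u_m\cdot v_m)\cdot w=0$, which gives $z\cdot w=0$ via Lemma \ref{lem:locality_prop} applied to $Z$ as a $U$-module.

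For (iii), uniqueness in the orthogonal decomposition of (ii) makes $U$-linearity automatic: writing
\[
u_1\cdot v_1+u_2\cdot v_2=\bigl[u_1\cdot P_{\mathscr N}(v_1)+u_2\cdot P_{\mathscr N}(v_2)\bigr]+\bigl[u_1\cdot(v_1-P_{\mathscr N}(v_1))+u_2\cdot(v_2-P_{\mathscr N}(v_2))\bigr]
\]
with the first bracket in $\mathscr N$ and the second in $\mathscr N^\perp$, uniqueness yields $P_{\mathscr N}(u_1\cdot v_1+u_2\cdot v_2)=u_1\cdot P_{\mathscr N}(v_1)+u_2\cdot P_{\mathscr N}(v_2)$. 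The pointwise bound $|P_{\mathscr N}(v)|\leq|v|$ needed to conclude $P_{\mathscr N}\in\textsc{Hom}(\mathscr H,\mathscr N)$ then comes from the Pythagorean identity
\[
|v|^2=|P_{\mathscr N}(v)|^2+2\,P_{\mathscr N}(v)\cdot(v-P_{\mathscr N}(v))+|v-P_{\mathscr N}(v)|^2=|P_{\mathscr N}(v)|^2+|v-P_{\mathscr N}(v)|^2,
\]
the cross term vanishing by (i).

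Finally, (iv) is obtained by applying (i) and (ii) with $\mathscr N^\perp$ in place of $\mathscr N$ (legitimate by the checks in the second paragraph), so that $v-P_{\mathscr N^\perp}(v)\in(\mathscr N^\perp)^\perp$. Since $\mathscr N\subseteq(\mathscr N^\perp)^\perp$ trivially, both $v=P_{\mathscr N}(v)+(v-P_{\mathscr N}(v))$ and $v=(v-P_{\mathscr N^\perp}(v))+P_{\mathscr N^\perp}(v)$ express $v$ as a sum of an element of $(\mathscr N^\perp)^\perp$ and an element of $\mathscr N^\perp$; the direct-sum statement forces $P_{\mathscr N^\perp}(v)=v-P_{\mathscr N}(v)$, i.e.\ $v=P_{\mathscr N}(v)+P_{\mathscr N^\perp}(v)$, and the Pythagorean identity $|v|^2=|P_{\mathscr N}(v)|^2+|P_{\mathscr N^\perp}(v)|^2$ follows from $P_{\mathscr N}(v)\cdot P_{\mathscr N^\perp}(v)=0$. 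The main obstacle in the whole argument is (i): turning the variational inequality into a genuine pointwise orthogonality requires both the use of $U$-linear (rather than merely real-linear) perturbations $u\cdot w$ and a careful invocation of the Archimedean property of $Z$ to let the perturbation parameter tend to zero.
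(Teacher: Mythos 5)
Your proposal is correct, and its overall architecture is the paper's: parts (ii) and (iii) coincide with the paper's argument (sum plus trivial intersection via $|z|^2=z\cdot z=0$, $U$-linearity from uniqueness of the decomposition, and the Pythagorean identity giving $|P_{\mathscr N}(v)|\leq|v|$). Where you genuinely diverge is in (i) and (iv). For (i) the paper localises: using the CR hypothesis it picks a partition $(u_n)_{n\in\N}$ of ${\bf 1}_U$ with $u_n\,(v-P_{\mathscr N}(v))\cdot w\in U$, perturbs by $u=-\varepsilon\,u_n(v-P_{\mathscr N}(v))\cdot w$, and lets $\varepsilon\searrow 0$. You instead perturb only by the real multiples $\pm t\,{\bf 1}_U$ of the unit, obtaining $\pm 2a\leq t\,b$ for every $t>0$ with $a\coloneqq(v-P_{\mathscr N}(v))\cdot w$ and $b\coloneqq|w|^2$, hence $2n|a|\leq b$ for all $n\in\N$, and conclude $a=0$ from the Archimedean property (Proposition \ref{prop:Dedekind_is_Archimedean}, applicable since $Z$ is Dedekind $\sigma$-complete). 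This is sound and in fact more elementary: the uniform-in-$t$ order inequality makes the pointwise choice of perturbation size, and hence the localisation step, unnecessary (CR is of course still needed for Theorem \ref{thm:Hilbert_proj} itself, on which both proofs rest). For (iv) the paper shows directly that $v-P_{\mathscr N}(v)$ realises $|v-\mathscr N^\perp|$ and invokes the uniqueness part of Theorem \ref{thm:Hilbert_proj}, whereas you apply (i)--(ii) with $\mathscr N^\perp$ in place of $\mathscr N$ and conclude from uniqueness of the components in $\mathscr H=\mathscr N^\perp\oplus(\mathscr N^\perp)^\perp$ together with $\mathscr N\subset(\mathscr N^\perp)^\perp$; this is equally valid, and your explicit verification that $\mathscr N^\perp$ is a $V$-Hilbert $U$-submodule (closed, glueing-closed, $U$-stable) is a welcome addition, since the paper only asserts it. The single cosmetic imprecision is calling $z\mapsto z\cdot w$ ``$|w|$-Lipschitz'': what you actually have is the pointwise bound $|z_1\cdot w-z_2\cdot w|\leq|z_1-z_2||w|$, which yields continuity via the continuity of the multiplication from $V\times V$ to $Z$ and the monotonicity of $\sfd_Z(\cdot,0)$, and continuity is all your closedness argument needs.
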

\begin{proof}
Fix \(w\in\mathscr N\). Denote \(\bar w\coloneqq P_{\mathscr N}(v)\). Pick a partition \((u_n)_{n\in\N}\) of \({\bf 1}_U\) such that
\(u_n(v-\bar w)\cdot w\in U\) for every \(n\in\N\). Since \(\bar w-u\cdot w\in\mathscr N\) for every \(u\in U\), we have that
\(|v-\bar w|\leq|v-\bar w+u\cdot w|\), thus
\begin{equation}\label{eq:orth_proj_aux1}
|v-\bar w|^2\leq|v-\bar w+u\cdot w|^2=|v-\bar w|^2+u|w|^2-2u(v-\bar w)\cdot w.
\end{equation}
Fix any \(n\in\N\) and \(\varepsilon\in\R\) with \(\varepsilon>0\). Plugging \(u=-\varepsilon u_n(v-\bar w)\cdot w\) into
\eqref{eq:orth_proj_aux1}, multiplying both sides by \(\varepsilon^{-1}u_n\), and rearranging the various terms, we obtain that
\[
2u_n|(v-\bar w)\cdot w|^2\leq\varepsilon u_n|(v-\bar w)\cdot w|^2|w|^2.
\]
Letting \(\varepsilon\searrow 0\), we get \(u_n|(v-\bar w)\cdot w|=0\) for every \(n\in\N\),
thus \(v-\bar w\in\mathscr N^\perp\). Then i) is proved.

To prove ii), we aim to show that \(\mathscr N+\mathscr N^\perp=\mathscr H\) and \(\mathscr N\cap\mathscr N^\perp=\{0\}\).
For the former, just observe that any \(v\in\mathscr H\) can be written as \(\big(v-P_{\mathscr N}(v)\big)+P_{\mathscr N}(v)\),
where \(v-P_{\mathscr N}(v)\in\mathscr N^\perp\) by i) and \(P_{\mathscr N}(v)\in\mathscr N\). For the latter, note that
if \(v\in\mathscr N\cap\mathscr N^\perp\), then \(|v|^2=v\cdot v=0\) and thus \(v=0\).

Let us now pass to the verification of iii). Given any \(v,w\in\mathscr H\), we deduce from i) that
\[
\underbrace{\big(v+w-P_{\mathscr N}(v+w)\big)}_{\in\mathscr N^\perp}+\underbrace{P_{\mathscr N}(v+w)}_{\in\mathscr N}=v+w=
\underbrace{\big(v-P_{\mathscr N}(v)+w-P_{\mathscr N}(w)\big)}_{\in\mathscr N^\perp}+\underbrace{P_{\mathscr N}(v)+P_{\mathscr N}(w)}_{\in\mathscr N},
\]
thus accordingly ii) implies that \(P_{\mathscr N}(v+w)=P_{\mathscr N}(v)+P_{\mathscr N}(w)\).
Similarly, for any element \(u\in U\) we have that \((u\cdot v-P_{\mathscr N}(u\cdot v))+P_{\mathscr N}(u\cdot v)=u\cdot v
=u\cdot(v-P_{\mathscr N}(v))+u\cdot P_{\mathscr N}(v)\), which forces the identity \(P_{\mathscr N}(u\cdot v)=u\cdot P_{\mathscr N}(v)\).
All in all, we showed that \(P_{\mathscr N}\) is a \(U\)-linear map. We also have that
\(|v|^2=|v-P_{\mathscr N}(v)|^2+|P_{\mathscr N}(v)|^2\geq|P_{\mathscr N}(v)|^2\) for every \(v\in\mathscr H\),
thus \(P_{\mathscr N}\in\textsc{Hom}(\mathscr H,\mathscr N)\).

Finally, we prove iv). Given any \(w\in\mathscr N^\perp\), we have that \(|w-v|^2=|w|^2+|v|^2\) for every \(v\in\mathscr N\)
and thus \(|w-\mathscr N|^2=|w|^2\), which implies that \(P_{\mathscr N}(w)=0\). Using also iii), we deduce that
\[
\big|v-\big(v-P_{\mathscr N}(v)\big)\big|^2=\big|P_{\mathscr N}(v)\big|^2=\big|P_{\mathscr N}(v)-P_{\mathscr N}(w)\big|^2
=\big|P_{\mathscr N}(v-w)\big|^2\leq|v-w|^2
\]
for every \(v\in\mathscr H\). Hence, \(\big|v-\big(v-P_{\mathscr N}(v)\big)\big|=|v-\mathscr N^\perp|\),
which implies \(P_{\mathscr N^\perp}(v)=v-P_{\mathscr N}(v)\). In particular, one has
\(|v|^2=\big|P_{\mathscr N}(v)+P_{\mathscr N^\perp}(v)\big|^2=|P_{\mathscr N}(v)|^2+|P_{\mathscr N^\perp}(v)|^2\), so also iv) is proved.
\end{proof}
\begin{theorem}[Riesz representation theorem for Hilbert modules]\label{thm:RRT}
Let \((\mathcal U,U,V,V,Z)\) be a CR dual system of metric \(f\)-structures. Let \(\mathscr H\) be a \(V\)-Hilbert \(U\)-module
satisfying \eqref{eq:Hilb_proj_hp}. We define the operator \({\rm R}_{\mathscr H}\colon\mathscr H\to\mathscr H^*\) as
\[
\langle{\rm R}_{\mathscr H}(w),v\rangle\coloneqq v\cdot w,\quad\text{ for every }v,w\in\mathscr H.
\]
Then \({\rm R}_{\mathscr H}\) is an isomorphism of \(V\)-Banach \(U\)-modules, \(\mathscr H^*\) is a \(V\)-Hilbert \(U\)-module, and
\begin{equation}\label{eq:Riesz_preserv_ptwse_norm}
{\rm R}_{\mathscr H}(v)\cdot{\rm R}_{\mathscr H}(w)=v\cdot w,\quad\text{ for every }v,w\in\mathscr H.
\end{equation}
\end{theorem}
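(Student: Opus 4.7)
The plan will unfold in four stages: (I) verify that $\mathrm{R}_{\mathscr{H}}$ is a pointwise-norm-preserving homomorphism of $V$-Banach $U$-modules (and therefore an isometric injection); (II) establish surjectivity via the Hilbert projection theorem combined with a maximal-idempotent argument; (III) transfer the parallelogram rule to $\mathscr{H}^*$ along the resulting bijection; and (IV) derive \eqref{eq:Riesz_preserv_ptwse_norm} by polarization.

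For stage (I), I will apply the Cauchy--Schwarz inequality (Lemma \ref{lem:Cauchy-Schwarz}) to obtain $|\langle \mathrm{R}_{\mathscr{H}}(w), v\rangle| = |v \cdot w| \leq |v||w|$; Corollary \ref{cor:criterion_Hom} then places $\mathrm{R}_{\mathscr{H}}(w)$ in $\mathscr{H}^*$ with $|\mathrm{R}_{\mathscr{H}}(w)| \leq |w|$, while $U$-bilinearity of the pointwise scalar product yields the $U$-linearity of $\mathrm{R}_{\mathscr{H}}$ itself. For the reverse inequality, I plan a two-step localization---Lemma \ref{lem:tech_localisation} to bring $|w|$ into $U$ piecewise, followed by Proposition \ref{prop:local_inverses} to invert it---producing a partition $(u_n)$ of $\nchi_{\{w\neq 0\}}$ and $(t_n) \subset U^+$ with $u_n t_n |w| = u_n$. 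The test vectors $v_n \coloneqq (u_n t_n) \cdot w$ will satisfy $|v_n| \leq 1$ and $\langle \mathrm{R}_{\mathscr{H}}(w), v_n\rangle = u_n |w|$, so Lemma \ref{lem:equiv_Hom_pn} will give $|\mathrm{R}_{\mathscr{H}}(w)| \geq \sup_n u_n |w| = |w|$.

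For stage (II), I will fix $\omega \in \mathscr{H}^*$ and decompose $\mathscr{H} = \ker\omega \oplus (\ker\omega)^\perp$ using Theorem \ref{thm:Hilbert_proj} and Proposition \ref{prop:orth_proj}. The key observation is that, on a partition-piece where $\omega$ takes values in $U$ (obtained via Lemma \ref{lem:tech_localisation}), any two elements $z_1, z_2 \in (\ker\omega)^\perp$ satisfy $\omega(z_2) \cdot z_1 - \omega(z_1) \cdot z_2 \in (\ker\omega) \cap (\ker\omega)^\perp = \{0\}$---the kernel containment follows from the $U$-linearity of $\omega$ and the commutativity of multiplication in $U$---so pairing with $z_2$ yields
\[
\omega(z_2)\,(z_1 \cdot z_2) = \omega(z_1)\,|z_2|^2.
\]
I will then consider the family of compatible partial Riesz representatives $(e, w_e)$ with $e \in \mathrm{Idem}(U)$, $w_e \in e \cdot (\ker\omega)^\perp$, and $v \cdot w_e = \omega(v)$ for all $v \in e \cdot \mathscr{H}$; by (I), any two such representatives coincide where both are defined, and each satisfies $|w_e| = e|\omega|$, so the CR property combined with the glueing property (whose admissibility is secured by the uniform bound $|\omega|$) produces a maximal pair $(e^*, w^*)$. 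If $e^* < 1$, I will pick $z \in (1-e^*)(\ker\omega)^\perp$ with $|z| \neq 0$, localize so that $|z|^2, \omega(z) \in U$, and invoke Proposition \ref{prop:local_inverses} to invert $|z|^2$ on a nonzero subidempotent $e' \leq 1 - e^*$ (producing $t \in U^+$ with $t|z|^2 = e'$); setting $w' \coloneqq t\omega(z) \cdot z$ and combining the displayed identity with a further localization of $v$ so that $\omega(v) \in U$, then glueing, I will verify $v \cdot w' = e'\omega(v)$ for every $v \in \mathscr{H}$, contradicting the maximality of $(e^*,w^*)$. Hence $e^* = 1$ and $\omega = \mathrm{R}_{\mathscr{H}}(w^*)$.

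Stages (III) and (IV) will be immediate: once $\mathrm{R}_{\mathscr{H}}$ is known to be a bijective pointwise-norm-preserving isomorphism of $V$-Banach $U$-modules, the parallelogram rule on $\mathscr{H}$ transfers verbatim to $\mathscr{H}^*$ (making the latter a $V$-Hilbert $U$-module), and \eqref{eq:Riesz_preserv_ptwse_norm} follows from applying the polarization formula \eqref{eq:conseq_parall} on both sides of $\mathrm{R}_{\mathscr{H}}$. The main obstacle I expect is stage (II), and more specifically the orchestration of the cascade of localizations (via Lemma \ref{lem:tech_localisation} and Proposition \ref{prop:local_inverses}) needed to simultaneously place $|z|^2$, $\omega(z)$, and $\omega(v)$ into $U$ and to invert $|z|^2$, while verifying the admissibility of each glueing and confirming that the produced partitions truly exhaust the intended supporting idempotents.
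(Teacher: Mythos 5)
Your plan is correct, and its overall skeleton (Cauchy--Schwarz for membership in \(\mathscr H^*\), the decomposition \(\mathscr H=\ker\omega\oplus(\ker\omega)^\perp\) from Theorem \ref{thm:Hilbert_proj}/Proposition \ref{prop:orth_proj}, and the localisation machinery of Lemma \ref{lem:tech_localisation} plus Proposition \ref{prop:local_inverses}) coincides with the paper's. Where you genuinely diverge is in the organisation of the surjectivity/isometry part. The paper does not prove \(|{\rm R}_{\mathscr H}(w)|\geq|w|\) separately: given \(\eta\neq 0\) it picks one nonzero \(\tilde w\in\ker(\eta)^\perp\), rescales it piecewise by local inverses of \(|\tilde w|\) and of \(|w|^2\), glues, and verifies \(\langle\eta,v\rangle=v\cdot w\) directly via the membership \(\tilde u_k\cdot v-(\tilde u_k\langle\eta,v\rangle b_k)\cdot w\in\ker(\eta)\); the lower bound \(|{\rm R}_{\mathscr H}(w)|\geq|w|\) then falls out from \(|w|\leq|\eta|\). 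You instead establish the isometry first (via Lemma \ref{lem:equiv_Hom_pn} and normalised test vectors \((u_nt_n)\cdot w\)) and then run an exhaustion over idempotents with partial representatives, using the identity \(\omega(z_2)(z_1\cdot z_2)=\omega(z_1)|z_2|^2\) on \((\ker\omega)^\perp\) to extend past a putative maximal \(e^*\). The paper's route is shorter; yours is more modular, and it has a concrete advantage: the paper's one-shot construction only yields \(\langle\eta,v\rangle=v\cdot w\) on \(\nchi_{\{w\neq 0\}}\), so its arbitrary choice of \(\tilde w\) implicitly requires an extra support argument (e.g.\ choosing \(\tilde w\) with maximal support in \((\ker\eta)^\perp\) and checking that \(\eta\) vanishes off it), whereas your maximal-idempotent scheme absorbs exactly this bookkeeping. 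Also, your prior isometry step makes \(|w_e|=e|\omega|\) available, which is what secures admissibility of the glueing of the partial representatives.

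One step you should make explicit: when \(e^*<1\), the element \(z\in(1-e^*)\cdot(\ker\omega)^\perp\setminus\{0\}\) need not exist a priori. If \((1-e^*)\cdot(\ker\omega)^\perp=\{0\}\), then writing any \(v\) as \(k+p\) with \(k\in\ker\omega\), \(p\in(\ker\omega)^\perp\) gives \((1-e^*)\langle\omega,v\rangle=\langle\omega,(1-e^*)\cdot p\rangle=0\), so \((1-e^*)\cdot\omega=0\) and \(({\bf 1}_U,w^*)\) is already a compatible pair, contradicting \(e^*<{\bf 1}_U\); only in the remaining case does your chosen \(z\) exist (take \(z=(1-e^*)\cdot p\) for a suitable \(p\)). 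With this case distinction added, and with the admissible idempotents noted to be closed under \(\vee\) so that CR produces a non-decreasing exhausting sequence, your argument goes through; stages (III) and (IV) are indeed immediate from the norm-preserving bijection, exactly as in the paper.
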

\begin{proof}
The properties of the pointwise scalar product and the Cauchy--Schwartz inequality ensure that \({\rm R}_{\mathscr H}(w)\in\mathscr H^*\)
and \(|{\rm R}_{\mathscr H}(w)|\leq|w|\) for all \(w\in\mathscr H\). Then \({\rm R}_{\mathscr H}\in\textsc{Hom}(\mathscr H,\mathscr H^*)\)
and \(|{\rm R}_{\mathscr H}|\leq 1\) (recall Example \ref{ex:UVUV}). To conclude, it remains to prove that \({\rm R}_{\mathscr H}\) is
surjective and that it holds that \(|{\rm R}_{\mathscr H}(w)|\geq|w|\) for every \(v\in\mathscr H\). To this aim, fix any
\(\eta\in\mathscr H^*\setminus\{0\}\). We know that \({\rm ker}(\eta)\) is a \(V\)-Banach \(U\)-submodule of \(\mathscr H\) with
\({\rm ker}(\eta)\neq\mathscr H\), so that there exists \(\tilde w\in{\rm ker}(\eta)^\perp\setminus\{0\}\). We can find a partition
\((u_n)_{n\in\N}\) of \(\nchi_{\{\tilde w\neq 0\}}\) and a sequence \((a_n)_{n\in\N}\subset U^+\) such that
\(u_n|\tilde w|,u_n\langle\eta,\tilde w\rangle\in U\) and \(u_n a_n|\tilde w|=u_n\) for every \(n\in\N\).
Then we define \(w_n\coloneqq(u_n\langle\eta,\tilde w\rangle a_n^2)\cdot\tilde w\in\mathscr H\) for every \(n\in\N\).
Notice that \(|w_n|=u_n a_n|\langle\eta,\tilde w\rangle|\) and \(\langle\eta,w_n\rangle=|w_n|^2\).
In particular, \(|w_n|\leq u_n|\eta|\) for all \(n\in\N\), thus it makes sense to define \(w\coloneqq\sum_{n\in\N}u_n\cdot w_n\in\mathscr H\).
It holds that \(|w|\leq|\eta|\) and \(\langle\eta,w\rangle=|w|^2\). Pick a partition \((\tilde u_k)_{k\in\N}\) of \(\nchi_{\{w\neq 0\}}\)
and \((b_k)_{k\in\N}\subset U^+\) such that \(\tilde u_k|w|^2\in U\) and \(\tilde u_k b_k|w|^2=\tilde u_k\) for every \(k\in\N\).
Given any element \(v\in\mathscr H\), we have that for every \(k\in\N\) it holds that
\begin{equation}\label{eq:Riesz_Hilb_aux}
\tilde u_k\cdot(v\cdot w)=\big(\tilde u_k\cdot v-(\tilde u_k\langle\eta,v\rangle b_k)\cdot w\big)\cdot w+\tilde u_k\langle\eta,v\rangle b_k|w|^2.
\end{equation}
Observe that \(\tilde u_k\cdot v-(\tilde u_k\langle\eta,v\rangle b_k)\cdot w\in{\rm ker}(\eta)\), as a consequence of the following computation:
\[
\big\langle\eta,\tilde u_k\cdot v-(\tilde u_k\langle\eta,v\rangle b_k)\cdot w\big\rangle
=\tilde u_k\langle\eta,v\rangle-\tilde u_k\langle\eta,v\rangle b_k\langle\eta,w\rangle
=\tilde u_k\langle\eta,v\rangle-\tilde u_k\langle\eta,v\rangle b_k|w|^2=0.
\]
It holds \(w\in{\rm ker}(\eta)^\perp\), whence it follows that \(\big(\tilde u_k\cdot v-(\tilde u_k\langle\eta,v\rangle b_k)\cdot w\big)\cdot w=0\),
thus \eqref{eq:Riesz_Hilb_aux} yields
\[
\tilde u_k\cdot(v\cdot w)=\tilde u_k\langle\eta,v\rangle b_k|w|^2=\tilde u_k\langle\eta,v\rangle.
\]
Thanks to the arbitrariness of \(k\in\N\), we finally conclude that \(\langle\eta,v\rangle=v\cdot w\) for every \(v\in\mathscr H\),
which means that \(\eta={\rm R}_{\mathscr H}(w)\) and \(|{\rm R}_{\mathscr H}(w)|=|\eta|\geq|w|\). This completes the proof.
\end{proof}
\begin{proposition}\label{prop:Hilb_are_refl}
Let \((\mathcal U,U,V,V,Z)\) be a CR dual system of metric \(f\)-structures. Let \(\mathscr H\) be
a \(V\)-Hilbert \(U\)-module satisfying \eqref{eq:Hilb_proj_hp}. Then it holds that \(\mathscr H\) is reflexive.
\end{proposition}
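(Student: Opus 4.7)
The plan is to apply the Riesz representation theorem (Theorem \ref{thm:RRT}) twice. Since $(\mathcal U,U,V,V,Z)$ is a CR dual system with $W=V$, the swap in Remark \ref{rmk:invert_dual_syst} is trivial, so $\mathscr H^{**}$ is formed within the same dual system; moreover, ${\rm S}(V)={\rm S}(W)$ holds automatically, so ${\rm J}_{\mathscr H}$ is a well-defined pointwise-norm-preserving homomorphism by Proposition \ref{prop:J_M_isom}. Thus reflexivity reduces to showing that ${\rm J}_{\mathscr H}$ is surjective.

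First, Theorem \ref{thm:RRT} provides an isomorphism of $V$-Banach $U$-modules ${\rm R}_{\mathscr H}\colon\mathscr H\to\mathscr H^*$, and endows $\mathscr H^*$ with a $V$-Hilbert $U$-module structure satisfying \eqref{eq:Riesz_preserv_ptwse_norm}. To apply the theorem a second time to $\mathscr H^*$, I would verify that $\mathscr H^*$ itself satisfies the hypothesis \eqref{eq:Hilb_proj_hp}. This is immediate from $|{\rm R}_{\mathscr H}(v)|=|v|$: for $\omega={\rm R}_{\mathscr H}(v)\in\mathscr H^*$ one has $\sfd_{\mathscr H^*}(\omega,0)=\sfd_V(|v|,0)=\sfd_{\mathscr H}(v,0)$ and $|\omega|^2=|v|^2$, so
\[
\sfd_{\mathscr H^*}(\omega,0)^2=\sfd_{\mathscr H}(v,0)^2\leq\sfd_Z(|v|^2,0)=\sfd_Z(|\omega|^2,0).
\]

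A second application of Theorem \ref{thm:RRT} then yields an isomorphism of $V$-Banach $U$-modules ${\rm R}_{\mathscr H^*}\colon\mathscr H^*\to\mathscr H^{**}$. To conclude, I would fix any $\xi\in\mathscr H^{**}$, pick $\tilde\omega\in\mathscr H^*$ with ${\rm R}_{\mathscr H^*}(\tilde\omega)=\xi$ (surjectivity of the second Riesz map), and then $v\in\mathscr H$ with ${\rm R}_{\mathscr H}(v)=\tilde\omega$ (surjectivity of the first). For every $\omega\in\mathscr H^*$, writing $\omega={\rm R}_{\mathscr H}(w)$, the definitions of the two Riesz maps combined with \eqref{eq:Riesz_preserv_ptwse_norm} and the symmetry of the pointwise scalar product yield
\[
\langle\xi,\omega\rangle=\omega\cdot\tilde\omega={\rm R}_{\mathscr H}(w)\cdot{\rm R}_{\mathscr H}(v)=w\cdot v=v\cdot w=\langle{\rm R}_{\mathscr H}(w),v\rangle=\langle\omega,v\rangle=\langle{\rm J}_{\mathscr H}(v),\omega\rangle,
\]
so ${\rm J}_{\mathscr H}(v)=\xi$, as desired.

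There is no substantial obstacle; the only point requiring an explicit check is that the hypothesis \eqref{eq:Hilb_proj_hp} propagates from $\mathscr H$ to $\mathscr H^*$, which is a direct consequence of the fact that ${\rm R}_{\mathscr H}$ is an isometric isomorphism that preserves the pointwise norm.
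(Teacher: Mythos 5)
Your proof is correct and takes essentially the same route as the paper: the paper likewise applies Theorem \ref{thm:RRT} to both $\mathscr H$ and $\mathscr H^*$ and checks, by the same computation with \eqref{eq:Riesz_preserv_ptwse_norm}, that ${\rm J}_{\mathscr H}={\rm R}_{\mathscr H^*}\circ{\rm R}_{\mathscr H}$, so surjectivity of ${\rm J}_{\mathscr H}$ follows from that of the two Riesz maps. Your explicit verification that the hypothesis \eqref{eq:Hilb_proj_hp} transfers to $\mathscr H^*$ via $|{\rm R}_{\mathscr H}(v)|=|v|$ is a detail the paper leaves implicit, and it is handled correctly.
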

\begin{proof}
Given any \(v\in\mathscr H\) and \(\omega\in\mathscr H^*\), we have that
\[\begin{split}
\big\langle{\rm R}_{\mathscr H^*}\big({\rm R}_{\mathscr H}(v)\big),\omega\big\rangle
&=\omega\cdot{\rm R}_{\mathscr H}(v)={\rm R}_{\mathscr H}\big({\rm R}_{\mathscr H}^{-1}(\omega)\big)\cdot{\rm R}_{\mathscr H}(v)
\overset{\eqref{eq:Riesz_preserv_ptwse_norm}}={\rm R}_{\mathscr H}^{-1}(\omega)\cdot v\\
&=\big\langle{\rm R}_{\mathscr H}\big({\rm R}_{\mathscr H}^{-1}(\omega)\big),v\big\rangle
=\langle\omega,v\rangle=\langle{\rm J}_{\mathscr H}(v),\omega\rangle.
\end{split}\]
This shows that \({\rm J}_{\mathscr H}={\rm R}_{\mathscr H^*}\circ{\rm R}_{\mathscr H}\). Since both \({\rm R}_{\mathscr H}\)
and \({\rm R}_{\mathscr H^*}\) are surjective by Theorem \ref{thm:RRT}, we conclude that \({\rm J}_{\mathscr H}\) is surjective
and thus \(\mathscr H\) is reflexive, yielding the sought conclusion.
\end{proof}
\begin{proposition}
Let \((\mathcal U_1,U_1,V_1,V_1,Z_1)\), \((\mathcal U_2,U_2,V_2,V_2,Z_2)\) be CR dual systems of metric \(f\)-structures.
Let \(\phi\colon\mathcal U_1\to\mathcal U_2\) be a homomorphism of dual systems. Let \(\mathscr H\) be a \(V_1\)-Hilbert \(U_1\)-module satisfying
\eqref{eq:Hilb_proj_hp}. Then \({\rm I}_\phi\colon\phi_*\mathscr H^*\to(\phi_*\mathscr H)^*\) is an isomorphism of \(V_2\)-Banach \(U_2\)-modules.
\end{proposition}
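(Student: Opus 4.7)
The plan is to reduce the claim to a Riesz-representation-type surjectivity result for $\phi_*\mathscr H$, which we then establish by pulling back through $\phi_*$ to $\mathscr H$ and exploiting the Hilbert structure there.

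First, by Theorem \ref{thm:embed_pshfrwd_dual} we have $|{\rm I}_\phi(\eta)|=|\eta|$ for every $\eta\in\phi_*\mathscr H^*$, so ${\rm I}_\phi$ is automatically an isometric (and hence injective) embedding with closed image, and only surjectivity remains. Since $\mathscr H$ satisfies \eqref{eq:Hilb_proj_hp}, Theorem \ref{thm:RRT} provides the Riesz isomorphism ${\rm R}_{\mathscr H}\colon\mathscr H\to\mathscr H^*$, and functoriality of the pushforward (Corollary \ref{cor:phfwd_functor}) sends it to an isomorphism of $V_2$-Banach $U_2$-modules $\phi_*{\rm R}_{\mathscr H}\colon\phi_*\mathscr H\to\phi_*\mathscr H^*$. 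The key identity is
\[
{\rm I}_\phi\circ\phi_*{\rm R}_{\mathscr H}={\rm R}_{\phi_*\mathscr H}
\]
as homomorphisms $\phi_*\mathscr H\to(\phi_*\mathscr H)^*$: for $v,w\in\mathscr H$, combining \eqref{eq:embed_pshfrwd_dual} and \eqref{eq:pshfrwd_Hilbert_2} gives
\[
\langle{\rm I}_\phi(\phi_*{\rm R}_{\mathscr H}(w)),\phi_*v\rangle=\phi(v\cdot w)=\phi_*v\cdot\phi_*w=\langle{\rm R}_{\phi_*\mathscr H}(\phi_*w),\phi_*v\rangle,
\]
and this extends to all of $\phi_*\mathscr H$ by $U_2$-linearity, continuity, and the density of $\mathscr G(\phi_*(\mathscr H))$ in $\phi_*\mathscr H$ (Theorem \ref{thm:pshfrwd_mod}\,ii)). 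Hence $\mathrm{Im}({\rm I}_\phi)=\mathrm{Im}({\rm R}_{\phi_*\mathscr H})$, and since $\phi_*{\rm R}_{\mathscr H}$ is bijective, ${\rm I}_\phi$ is an isomorphism if and only if ${\rm R}_{\phi_*\mathscr H}$ is surjective.

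The main obstacle is therefore proving a Riesz representation theorem for $\phi_*\mathscr H$, since $\phi_*\mathscr H$ is not assumed to satisfy \eqref{eq:Hilb_proj_hp} directly. The strategy is to leverage the Hilbert structure of $\mathscr H$ itself: given a non-zero $\omega\in(\phi_*\mathscr H)^*$, set $K\coloneqq\phi_*^{-1}(\ker\omega)$, which is a closed $V_1$-Banach $U_1$-submodule of $\mathscr H$ strictly contained in $\mathscr H$ (otherwise $\omega$ would vanish on the dense subset $\phi_*(\mathscr H)$ of $\phi_*\mathscr H$, contradicting $\omega\neq 0$). Apply Proposition \ref{prop:orth_proj} to obtain the orthogonal decomposition $\mathscr H=K\oplus K^\perp$ with $K^\perp\neq\{0\}$, and select $\tilde w_0\in K^\perp$ with $\omega(\phi_*\tilde w_0)\neq 0$. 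Mimicking the scalar-rescaling-and-glueing construction from the proof of Theorem \ref{thm:RRT} but now working with $U_2$-scalars acting on $\phi_*\mathscr H$, use Lemma \ref{lem:tech_localisation} to produce a partition of $\nchi_{\{\phi_*\tilde w_0\neq 0\}}$ in $U_2$ on which $\omega(\phi_*\tilde w_0)\in Z_2$ localises into $U_2$, and apply Proposition \ref{prop:local_inverses} to obtain local inverses of $\phi(|\tilde w_0|^2)$; gluing the resulting $U_2$-rescaled copies of $\phi_*\tilde w_0$ yields an element $z^\star\in\phi_*\mathscr H$ satisfying $\omega(z^\star)=|z^\star|^2$. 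Verifying $\omega(\phi_*v)=\phi_*v\cdot z^\star$ for every $v\in\mathscr H$ via the splitting $v=P_Kv+P_{K^\perp}v$ (the $K$-part annihilates both sides using $\phi_*w_0\cdot\phi_*v_K=\phi(w_0\cdot v_K)=0$ for $w_0\in K^\perp$, $v_K\in K$, while the $K^\perp$-part matches by construction of $z^\star$), and extending by continuity and density of $\mathscr G(\phi_*(\mathscr H))$, gives $\omega={\rm R}_{\phi_*\mathscr H}(z^\star)$ on the whole of $\phi_*\mathscr H$. The delicate bookkeeping lies in the localisation step, transferring the scalar multiplication from $U_1$ acting on $\mathscr H$ to $U_2$ acting on $\phi_*\mathscr H$, which is where the CR hypothesis and the fact that $\phi$ is a homomorphism of dual systems are crucially used.
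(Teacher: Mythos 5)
Your reduction step is correct and is in fact the same as the paper's: \({\rm I}_\phi\) preserves pointwise norms by Theorem \ref{thm:embed_pshfrwd_dual}, \(\phi_*{\rm R}_{\mathscr H}\) is an isomorphism, and the identity \({\rm I}_\phi\circ\phi_*{\rm R}_{\mathscr H}={\rm R}_{\phi_*\mathscr H}\) (the paper encodes it in a commutative diagram produced by Proposition \ref{prop:univ_prop}) reduces the claim to the surjectivity of \({\rm R}_{\phi_*\mathscr H}\). At that point the paper simply applies Theorem \ref{thm:RRT} to \(\phi_*\mathscr H\), which is a \(V_2\)-Hilbert \(U_2\)-module by Remark \ref{rmk:pshfrwd_Hilbert}. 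Your worry that \eqref{eq:Hilb_proj_hp} is not verified for \(\phi_*\mathscr H\) is a fair observation (the paper does not address it), but the argument you propose in its place does not work.

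The gap is in the construction of \(z^\star\). You take \(K\coloneqq\phi_*^{-1}(\ker\omega)\subset\mathscr H\), pick a \emph{single} \(\tilde w_0\in K^\perp\), and build \(z^\star\) inside the closed \(U_2\)-submodule generated by \(\phi_*\tilde w_0\). Such an element can never represent a general \(\omega\in(\phi_*\mathscr H)^*\): the Riesz representative typically has to be a glueing of pushforwards of \emph{different} elements of \(\mathscr H\) along a \(U_2\)-partition, and it need not lie in \(\overline{U_2\cdot\phi_*\tilde w_0}\). Equivalently, the claim that ``the \(K^\perp\)-part matches by construction of \(z^\star\)'' fails, because \(K^\perp\) (computed in \(\mathscr H\)) is in general not locally generated by one element, and the only orthogonality at your disposal is \(z^\star\perp\overline{\langle\phi_*(K)\rangle_{U_2}}\), which may be strictly smaller than \(\ker\omega\subset\phi_*\mathscr H\). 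Concretely: take \((\mathcal U_1,U_1,V_1,V_1,Z_1)=(\R,\R,\R,\R,\R)\), \(\mathscr H=\R^2\) (which satisfies \eqref{eq:Hilb_proj_hp} with equality), \(\X=\{x_1,x_2\}\) with counting measure \(\mu\), \(U_2=V_2=Z_2=L^0(\mu)\cong\R^2\), and \(\phi(\lambda)\coloneqq\lambda\1_\X\). Then \(\phi_*\mathscr H\) is the module of maps \(w\colon\X\to\R^2\) with \(|w|=(\|w(x_1)\|,\|w(x_2)\|)\), and \(\omega(w)\coloneqq\big(\langle e_1,w(x_1)\rangle,\langle e_2,w(x_2)\rangle\big)\) belongs to \((\phi_*\mathscr H)^*\) with \(|\omega(w)|\leq|w|\). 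Here \(K=\{0\}\) and \(K^\perp=\R^2\); the representative of \(\omega\) is the field \(x_1\mapsto e_1\), \(x_2\mapsto e_2\), which is not a \(U_2\)-multiple of \(\phi_*\tilde w_0\) for any single \(\tilde w_0\in\R^2\), and whichever \(\tilde w_0\) you choose there is \(v_\perp\in K^\perp\) with \(\omega(\phi_*v_\perp)\neq\phi_*v_\perp\cdot z^\star\) (e.g.\ \(\tilde w_0=e_1\), \(v_\perp=e_2\) gives \(\omega(\phi_*e_2)=(0,1)\) while \(\phi_*e_2\cdot z^\star=0\)). The correct route is the one in the paper's proof of Theorem \ref{thm:RRT}: run the kernel/orthocomplement argument \emph{inside} \(\phi_*\mathscr H\), taking \(\ker\omega\subset\phi_*\mathscr H\) and its orthogonal complement there, and glue locally rescaled elements of that complement; this needs the projection machinery of Theorem \ref{thm:Hilbert_proj} for \(\phi_*\mathscr H\), hence \eqref{eq:Hilb_proj_hp} (or a substitute for it) for \(\phi_*\mathscr H\) — precisely the hypothesis you were trying to avoid, and which should be verified or assumed rather than bypassed by pulling the kernel back to \(\mathscr H\).
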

\begin{proof}
By Theorem \ref{thm:embed_pshfrwd_dual}, it suffices to check that \({\rm I}_\phi\colon\phi_*\mathscr H^*\to(\phi_*\mathscr H)^*\) is invertible.
Recall from Remark \ref{rmk:pshfrwd_Hilbert} that \(\phi_*\mathscr H\) is a \(V_2\)-Hilbert \(U_2\)-module and \(\phi_*v\cdot\phi_*w=\phi(v\cdot w)\)
for all \(v,w\in\mathscr H\). Then
\[
\big\langle({\rm I}_\phi\circ\phi_*\circ{\rm R}_{\mathscr H})(v),\phi_*w\big\rangle
=\phi(\langle{\rm R}_{\mathscr H}(v),w\rangle)=\phi(v\cdot w)=\phi_*v\cdot\phi_*w
=\big\langle({\rm R}_{\phi_*\mathscr H}\circ\phi_*)(v),\phi_*w\big\rangle
\]
for every \(v,w\in\mathscr H\). Moreover, \(\phi_*\circ{\rm R}_{\mathscr H}\colon\mathscr H\to\phi_*\mathscr H^*\) is linear and satisfies
\(\big|\phi_*({\rm R}_{\mathscr H}(v))\big|=|v|\) for every \(v\in\mathscr H\), thus Proposition \ref{prop:univ_prop} gives an element
\(T\in\textsc{Hom}(\phi_*\mathscr H,\phi_*\mathscr H^*)\) such that
\[\begin{tikzcd}
\mathscr H \arrow[r,"{\rm R}_{\mathscr H}"] \arrow[rd,swap,"\phi_*"] &
\mathscr H^* \arrow[r,"\phi_*"] & \phi_*\mathscr H^* \arrow[d,"{\rm I}_\phi"] \\
& \phi_*\mathscr H \arrow[ru,"T"] \arrow[r,swap,"{\rm R}_{\phi_*\mathscr H}"] & (\phi_*\mathscr H)^*
\end{tikzcd}\]
is a commutative diagram. Hence, \({\rm I}_\phi\) is invertible and \({\rm I}_\phi^{-1}=T\circ{\rm R}_{\phi_*\mathscr H}^{-1}\),
concluding the proof.
\end{proof}
\subsection{Dimensional decomposition}\label{ss:dim_decomp}
Given a commutative ring \(R\) and a non-empty subset \(S\) of an \(R\)-module \(M\), we denote by
\(\langle S\rangle_R\) the \(R\)-submodule of \(M\) generated (in the algebraic sense) by \(S\). Namely, we define
\[
\langle S\rangle_R=\bigg\{\sum_{i=1}^n r_i\cdot v_i\;\bigg|\;n\in\N,\,(r_i)_{i=1}^n\subset R,\,(v_i)_{i=1}^n\subset S\bigg\}.
\]
\begin{definition}[Independence, generators, local basis]
Let \((\mathcal U,U,V)\) be a metric \(f\)-structure and \(\mathscr M\) a \(V\)-Banach \(U\)-module.
Let \(v_1,\ldots,v_n\in\mathscr M\) and \(u\in\Idem(U)\) be given. Then we say that:
\begin{itemize}
\item[\(\rm i)\)] \(v_1,\ldots,v_n\) are \textbf{independent on \(u\)} if for any \(u_1,\ldots,u_n\in U\) it holds that
\[
\sum_{i=1}^n(uu_i)\cdot v_i=0\quad\Longleftrightarrow\quad uu_i=0,\text{ for every }i=1,\ldots,n.
\]
\item[\(\rm ii)\)] \(v_1,\ldots,v_n\) \textbf{generate \(\mathscr M\) on \(u\)} provided \(\mathscr G(\langle u\cdot S\rangle_U)=u\cdot\mathscr M\), where \(S\coloneqq\{v_1,\ldots,v_n\}\).
\item[\(\rm iii)\)] \(v_1,\ldots,v_n\) form a \textbf{local basis of \(\mathscr M\) on \(u\)}
provided they are independent on \(u\) and they generate \(\mathscr M\) on \(u\).
\end{itemize}
For brevity, in the case where \(u=\boldsymbol{1}_U\) we do not specify `on \(\boldsymbol{1}_U\)' in the above terminology.
\end{definition}

In order to provide a well-defined notion of local dimension, we first need to show that two local bases on the same idempotent element must have the same cardinality:
\begin{lemma}\label{lem:dd_well-posed}
Let \((\mathcal U,U,V)\) be a CR metric \(f\)-structure and let \(\mathscr M\) be a \(V\)-Banach \(U\)-module. Let \(v_1,\ldots,v_n\in\mathscr M\)
and \(w_1,\ldots,w_m\in\mathscr M\) be local bases of \(\mathscr M\) on \(u\in\Idem(U)\). Then \(n=m\).
\end{lemma}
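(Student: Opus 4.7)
The plan is to localise to a non-zero sub-idempotent of \(u\) on which the two change-of-basis relations become finite, and then invoke the invariant basis number property of commutative rings. The case \(u=0\) is vacuous, so assume \(u\neq 0\) throughout.

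Using that \(v_1,\ldots,v_n\) generate \(\mathscr M\) on \(u\), for each \(j=1,\ldots,m\) one may write \(u\cdot w_j\) as a glueing \(\sum_{k\in\N}u^j_k\cdot\xi^j_k\) with \((u^j_k)_k\in\mathcal P(u)\) and \(\xi^j_k\in\langle u\cdot\{v_1,\ldots,v_n\}\rangle_U\); unpacking, \(u^j_k\cdot w_j=\sum_{i=1}^n(u^j_k c^{j,k}_i)\cdot v_i\) for suitable \(c^{j,k}_i\in U\). Symmetrically, there exist partitions \((\tilde u^i_l)_l\in\mathcal P(u)\) and coefficients \(d^{i,l}_j\in U\) with \(\tilde u^i_l\cdot v_i=\sum_{j=1}^m(\tilde u^i_l d^{i,l}_j)\cdot w_j\). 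Iterating Proposition \ref{prop:inters_partitions} on these \(n+m\) countable partitions yields a common refinement \((u_\alpha)_{\alpha\in\N}\in\mathcal P(u)\) such that, for every \(\alpha\in\N\), there are elements \(c_i^{\alpha,j},d_j^{\alpha,i}\in u_\alpha U\) with
\[
u_\alpha\cdot w_j=\sum_{i=1}^n c_i^{\alpha,j}\cdot(u_\alpha\cdot v_i),\qquad u_\alpha\cdot v_i=\sum_{j=1}^m d_j^{\alpha,i}\cdot(u_\alpha\cdot w_j).
\]

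Since \(u=\sup_\alpha u_\alpha\neq 0\) by Lemma \ref{lem:order_idem}, some \(u_\alpha\) is non-zero; fix one and set \(R\coloneqq u_\alpha U\), a commutative ring with identity \(u_\alpha\neq 0\). Substituting each display into the other and collecting coefficients produces relations \(\sum_{i'}s_{i'}\cdot v_{i'}=0\) in \(\mathscr M\) with \(s_{i'}=\sum_j d_j^{\alpha,i}c_{i'}^{\alpha,j}-\delta_{ii'}u_\alpha\in R\) (and an analogous relation for the \(w_j\)'s). Because each \(s_{i'}\in R\) satisfies \(s_{i'}=u\,s_{i'}\), the independence of \(v_1,\ldots,v_n\) on \(u\) forces \(s_{i'}=0\) in \(R\); the symmetric argument treats the \(w_j\)'s. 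Assembling this data into the matrices \(C=(c_i^{\alpha,j})\in R^{m\times n}\) and \(D=(d_j^{\alpha,i})\in R^{n\times m}\), we conclude that \(DC=I_n\) and \(CD=I_m\) in \(R\), so that \(C\) and \(D\) are mutually inverse \(R\)-linear maps between \(R^m\) and \(R^n\).

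The conclusion \(n=m\) now follows from the invariant basis number property of any commutative ring with non-zero identity: choosing a maximal ideal \(\mathfrak m\subset R\), the induced \(R/\mathfrak m\)-vector space isomorphism \((R/\mathfrak m)^n\cong(R/\mathfrak m)^m\) forces equality of dimensions. The main technical hurdle is the construction of the common refinement: we need a single idempotent on which every \(w_j\) is a \emph{finite} \(R\)-linear combination of the \(v_i\)'s (and vice versa), so that a classical matrix-algebra argument is available. This is exactly what the iterated intersection of countable partitions furnishes, using the localisability of \(U\) and Proposition \ref{prop:inters_partitions}.
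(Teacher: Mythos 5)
Your proof is correct, but it takes a genuinely different route from the paper's. The paper argues by a Steinitz-type exchange: it first reduces the claim to showing that a generating family of size \(n\) together with an independent family of size \(m\) on \(u\) forces \(n\geq m\), and then replaces the \(v_i\)'s by the \(w_j\)'s one at a time on a shrinking chain of non-zero idempotents, using \(\nchi_{\{\cdot\neq 0\}}\) and the local inverses of Proposition \ref{prop:local_inverses} to invert the relevant coefficient on a suitable piece -- which is precisely where the CR hypothesis enters. You instead use Proposition \ref{prop:inters_partitions} to pass to a single non-zero idempotent \(u_\alpha\leq u\) on which both change-of-basis relations become finite, translate the two independence assumptions into the matrix identities \(DC=I_n\) and \(CD=I_m\) over the commutative ring \(R=u_\alpha U\), and conclude from the invariant-basis-number property of commutative rings via a maximal ideal. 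Your argument buys generality: it never uses the CR assumption (only the localisability of \(U\), through Proposition \ref{prop:inters_partitions}), at the price of invoking Zorn's lemma for the maximal ideal; the paper's exchange argument stays entirely within the normed-module toolkit and yields the finer one-sided statement (a generating family of size \(n\) and an independent family of size \(m\) give \(n\geq m\)), which your matrix argument does not directly provide, since it needs both families to be bases simultaneously. Two minor remarks: the case \(u=0\) is degenerate rather than vacuous (every family is a local basis on \(0\)), so the statement must be read for \(u\neq 0\), exactly as in the paper's own proof; and the identity \(\sup_\alpha u_\alpha=u\) comes from Proposition \ref{prop:inters_partitions} rather than Lemma \ref{lem:order_idem}, though nothing in your argument is affected.
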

\begin{proof}
Observe that it suffices to check that if \(v_1,\ldots,v_n\) generate \(\mathscr M\) on \(u\) and \(w_1,\ldots,w_m\) are independent on \(u\), then \(n\geq m\). Moreover,
thanks to a finite induction argument, it is enough to show that if \(k\leq m\) and \(w_1,\ldots,w_{k-1},v_k,\ldots,v_n\) generate \(\mathscr M\) on some \(u_0\in\Idem(U)\setminus\{0\}\)
with \(u_0\leq u\), then there exists \(u_1\in\Idem(U)\setminus\{0\}\) with \(u_1\leq u_0\) such that \(w_1,\ldots,w_k,v_{k+1},\ldots,v_n\) generate \(\mathscr M\) on \(u_1\)
(up to reordering \(v_k,\ldots,v_n\)). First of all, we can find elements \(\tilde u\in\Idem(U)\setminus\{0\}\) with \(\tilde u\leq u_0\) and \(\tilde u_1,\ldots,\tilde u_n\in U\) such that
\begin{equation}\label{eq:dd_well-posed}
\tilde u\cdot w_k=\sum_{i=1}^{k-1}\tilde u_i\cdot w_i+\sum_{i=k}^n\tilde u_i\cdot v_i.
\end{equation}
Since \(w_1,\ldots,w_k\) are independent on \(\tilde u\), it cannot happen that \(\tilde u_k=\ldots=\tilde u_n=0\). Hence, up to reordering \(v_k,\ldots,v_n\), we can assume
that \(z\coloneqq\nchi_{\{\tilde u_k\neq 0\}}\tilde u\neq 0\). Now take a partition \((z_j)_{j\in\N}\) of \(z\) and elements \((\tilde z_j)_{j\in\N}\subset U\) such that
\(z_j(\tilde u_k\tilde z_j-1)=0\) for every \(j\in\N\). There exists \(j_0\in\N\) such that \(z_{j_0}\neq 0\), so that multiplying both sides of \eqref{eq:dd_well-posed}
by \(z_{j_0}\tilde z_{j_0}\) we obtain that
\[
z_{j_0}\cdot v_k=(z_{j_0}\tilde z_{j_0}\tilde u_k)\cdot v_k=(z_{j_0}\tilde z_{j_0})\cdot w_k-\sum_{i=1}^{k-1}(z_{j_0}\tilde z_{j_0}\tilde u_i)\cdot w_i-\sum_{i=k+1}^n(z_{j_0}\tilde z_{j_0}\tilde u_i)\cdot v_i.
\]
This implies that, letting \(u_1\coloneqq z_{j_0}\), it holds that \(w_1,\ldots,w_k,v_{k+1},\ldots,v_n\) generate \(\mathscr M\) on \(u_1\).
\end{proof}

In view of Lemma \ref{lem:dd_well-posed}, the following definition is thus well-posed:
\begin{definition}[Local dimension]
Let \((\mathcal U,U,V)\) be a CR metric \(f\)-structure and let \(\mathscr M\) be a \(V\)-Banach \(U\)-module. Then we say that \(\mathscr M\)
\textbf{has local dimension \(n\in\N\) on \(u\in\Idem(U)\)} if there exists a local basis \(v_1,\ldots,v_n\) of \(\mathscr M\) on \(u\).
\end{definition}

Finally, we can show that each Banach module admits a (unique) \emph{dimensional decomposition}:
\begin{theorem}[Dimensional decomposition]
Let \((\mathcal U,U,V)\) be a CR metric \(f\)-structure and let \(\mathscr M\) be a \(V\)-Banach \(U\)-module. Then there exists a unique partition
\((D_n)_{n\in\N\cup\{\infty\}}\) of \({\bf 1}_U\) such that:
\begin{itemize}
\item[\(\rm i)\)] \(\mathscr M\) has local dimension \(n\) on \(D_n\), for every \(n\in\N\).
\item[\(\rm ii)\)] Given any \(n\in\N\) and \(u\in\Idem(U)\setminus\{0\}\) such that \(u\leq D_\infty\), it holds that \(\mathscr M\) does not have local dimension \(n\) on \(u\).
\end{itemize}
We say that \((D_n)_{n\in\N\cup\{\infty\}}\subset\Idem(U)\) is the \textbf{dimensional decomposition} of \(\mathscr M\).
\end{theorem}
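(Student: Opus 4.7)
The plan is to define, for each $n \in \N$, the class $\mathcal L_n \coloneqq \{u \in \Idem(U) \mid \mathscr M \text{ has local dimension } n \text{ on } u\}$, to produce $D_n$ as the supremum of $\mathcal L_n$ (which I will show lies in $\mathcal L_n$), and to take $D_\infty \coloneqq \mathbf{1}_U - \sup_n D_n$. Uniqueness will then follow from Lemma \ref{lem:dd_well-posed}.

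First I would establish two stability properties of $\mathcal L_n$. Downward closure under $\leq$ (if $u \leq u'$ and $u' \in \mathcal L_n$, the same basis witnesses local dimension $n$ on $u$) is routine. Closure under countable disjoint joins is the analytic heart: given disjoint $(u_k) \subset \mathcal L_n$ with local bases $v_{k,1}, \ldots, v_{k,n}$ on each $u_k$ and $D \coloneqq \sup_k u_k \in \Idem(U)$ (by Lemma \ref{lem:order_idem}), I extend $(u_k)$ to the partition $(u_k) \cup \{\mathbf{1}_U - D\}$ of $\mathbf{1}_U$, apply axiom v) of Definition \ref{def:metric_f-structure} to obtain some $\delta > 0$, and, using that $\sfd_V(\lambda y, 0) \to 0$ as $\lambda \to 0$ for each $y \in V$ (a consequence of the continuity of the $U$-action on $V$ and axiom iv) of Definition \ref{def:metric_f-algebra}), choose positive reals $\lambda_k$ with $\sfd_V(\lambda_k u_k |v_{k,i}|, 0) \leq \delta / (n 2^k)$ for $i = 1, \ldots, n$. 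This makes each family $(u_k, \lambda_k v_{k,i})_k$ admissible, so $\tilde v_i \coloneqq \sum_k \lambda_k u_k \cdot v_{k,i} \in \mathscr M$ exists and satisfies $u_k \cdot \tilde v_i = \lambda_k u_k \cdot v_{k,i}$. Independence of $\tilde v_1, \ldots, \tilde v_n$ on $D$ follows by multiplying any relation $\sum_i (D a_i) \cdot \tilde v_i = 0$ by $u_k$, reducing to independence on each $u_k$, and then applying the $\sigma$-order-continuity of multiplication on $\mathcal U$ to conclude $D a_i = 0$. Generation on $D$ follows by writing $D \cdot v = \sum_k u_k \cdot v$ and expressing each $u_k \cdot v \in \mathscr G(\langle u_k v_{k,i}\rangle_U) \subset \mathscr G(\langle D \tilde v_i\rangle_U)$ via $u_k v_{k,i} = \lambda_k^{-1} u_k \cdot \tilde v_i$, then collapsing the nested glueing into a single one along the refined partition produced by Proposition \ref{prop:inters_partitions}.

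With this closure in hand, I set $E_n \coloneqq \sup \mathcal L_n \in \Idem(U)$ for each $n \in \N$; the CR hypothesis on $U$ provides a sequence in $\mathcal L_n$ with supremum $E_n$, which after the disjointification $u_{n,k} \prod_{j < k}(\mathbf{1}_U - u_{n,j})$ yields $E_n \in \mathcal L_n$. For $n \neq m$, any $u \leq E_n \wedge E_m$ lies in $\mathcal L_n \cap \mathcal L_m$ by downward closure and is therefore $0$ by Lemma \ref{lem:dd_well-posed}, so $(E_n)_{n \in \N}$ is disjoint. Setting $D_n \coloneqq E_n$ and $D_\infty \coloneqq \mathbf{1}_U - \sup_n E_n$ gives the desired partition: property i) holds by construction; property ii) holds because any $u \leq D_\infty$ in $\mathcal L_n$ satisfies $u \leq D_\infty \wedge E_n = 0$. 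For uniqueness, if $(D_n')$ is another such partition, then $D_n' \in \mathcal L_n$ forces $D_n' \leq D_n$ for $n \in \N$; any nonzero $u \leq D_n - D_n'$ would either meet some $D_m'$ ($m \neq n$) nontrivially, contradicting Lemma \ref{lem:dd_well-posed}, or meet $D_\infty'$ nontrivially, contradicting property ii) for $(D_m')$. Therefore $D_n = D_n'$ for all $n \in \N$, whence $D_\infty = D_\infty'$.

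The main obstacle is the closure of $\mathcal L_n$ under countable disjoint joins. Axiom v) of Definition \ref{def:metric_f-structure} is exactly what permits the simultaneous rescaling of $n$ basis families by positive reals so that all $n$ infinite glueings become admissible, and the verification that the glued vectors remain independent and generating on $D$ is where the interplay of $\sigma$-order-continuity, scaling invariance of the basis property, and the iteration of the glueing operation must all come together. The remaining steps are essentially bookkeeping once this lemma is established.
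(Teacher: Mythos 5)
Your argument is correct, and its overall architecture coincides with the paper's: define $D_n$ as the supremum of the idempotents on which $\mathscr M$ has local dimension $n$ (your $E_n=\sup\mathcal L_n$), set $D_\infty\coloneqq{\bf 1}_U-\sup_n D_n$, get disjointness and uniqueness from Lemma \ref{lem:dd_well-posed}, and reduce everything to showing that local dimension $n$ on the pieces of a countable partition glues to local dimension $n$ on their supremum. Where you genuinely diverge is in how you make the $n$ glueings admissible. The paper first refines the partition via Lemma \ref{lem:tech_localisation} so that $|v^j_i|\in U$, fixes a supporting element $h$ from Lemma \ref{lem:supporting_element}, and uses the local inverses of Proposition \ref{prop:local_inverses} to renormalise the basis vectors so that their pointwise norms become exactly $u^j_{i,k}h$, which makes the families order-bounded by $h$; you instead keep the bases as they are and rescale by small positive reals $\lambda_k$, invoking axiom v) of Definition \ref{def:metric_f-structure} directly (together with $\sfd_V(\lambda y,0)\to0$ as $\lambda\searrow0$, which you correctly derive from the continuity of the multiplication $U\times V\to V$ and axiom iv) of Definition \ref{def:metric_f-algebra}) to obtain order-boundedness. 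Your route is lighter on machinery -- it avoids local inverses and the CR-dependent supporting-element normalisation in this step, using only scaling invariance of the local-basis property and the locality/collapse of nested glueings along the refined countable partition -- while the paper's normalisation buys a local basis with explicitly controlled pointwise norms ($u^j_{i,k}h\in U$), which is convenient but not needed for the statement. You are also more explicit than the paper on two points it leaves to the reader: the verification that the glued vectors are independent and generating on $D_n$, and the uniqueness of the partition, both of which you handle correctly via $\sigma$-order-continuity of the multiplication and Lemma \ref{lem:dd_well-posed} (noting, as the paper implicitly does, that the lemma is applied only to nonzero idempotents).
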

\begin{proof}
Thanks to the countable representability assumption, it makes sense to define
\[
D_n\coloneqq\sup\big\{u\in\Idem(U)\;\big|\;\mathscr M\text{ has local dimension }n\text{ on }u\big\}\in\Idem(U),\quad\text{ for every }n\in\N,
\]
as well as \(D_\infty\coloneqq{\bf 1}_U-\sup_{n\in\N}D_n\in\Idem(U)\). We know from Lemma \ref{lem:dd_well-posed} that \(D_n\) and \(D_m\) are disjoint whenever
\(n,m\in\N\) and \(n\neq m\), thus it follows that \((D_n)_{n\in\N\cup\{\infty\}}\) is a partition of \({\bf 1}_U\). In order to conclude, it suffices to check
that \(\mathscr M\) has local dimension \(n\) on \(D_n\) for every \(n\in\N\). Using again the CR assumption, we can construct a partition \((u^j)_{j\in\N}\) of
\(D_n\) such that \(\mathscr M\) has local dimension \(n\) on each \(u^j\). For any \(j\in\N\), take a local basis \(v^j_1,\ldots,v^j_n\in u_j\cdot\mathscr M\)
of \(\mathscr M\) on \(u^j\). Thanks to Lemma \ref{lem:tech_localisation} and up to further refining the partition \((u^j)_{j\in\N}\), it is not restrictive to
require also that \(|v^j_i|\in U\) for every \(i=1,\ldots,n\) and \(j\in\N\). Fix an element \(h\) as in Lemma \ref{lem:supporting_element}. The independence of \(v^j_1,\ldots,v^j_n\) on \(u^j\) ensures that
\(\nchi_{\{|v^j_i|>0\}}=u^j\) for every \(i=1,\ldots,n\), thus we can find a partition \((u^j_{i,k})_{k\in\N}\) of \(u^j\) and \((z^j_{i,k})_{k\in\N}\subset U^+\) such that
\[
u^j_{i,k}(z^j_{i,k}|v^j_i|-1)=0,\quad\text{ for every }k\in\N.
\]
Define \(v^j_{i,k}\coloneqq(u^j_{i,k}z^j_{i,k}h)\cdot v^j_i\in\mathscr M\) for every \(k\in\N\). Since \(|v^j_{i,k}|=u^j_{i,k}z^j_{i,k}h|v^j_i|=u^j_{i,k}h\) and
\((u^j_{i,k})_{j,k}\) is a partition of \(D_n\), for any \(i=1,\ldots,n\) it holds that \((u^j_{i,k},v^j_{i,k})_{j,k}\in{\rm Adm}(\mathscr M)\) and thus
\[
\exists\,v_i\coloneqq\sum_{j,k\in\N}u^j_{i,k}\cdot v^j_{i,k}\in\mathscr M.
\]
It is then easy to check that \(v_1,\ldots,v_n\) is local basis of \(\mathscr M\) on \(D_n\), whence the statement follows.
\end{proof}
\section{Applications to functional normed modules}\label{s:applic_fct}
In Section \ref{ss:function_spaces} we introduce the various spaces of functions that are typically used in metric measure geometry.
In Section \ref{ss:example_norm_mod} we explain the relation between the `functional' Banach modules and the axiomatic Banach modules we introduced in this paper.
In Section \ref{ss:reconstruction} we obtain a side result, which states that every localisable \(f\)-algebra can be in fact realised as a space of functions.
\subsection{Function spaces}\label{ss:function_spaces}
A \textbf{measurable space} \((\X,\Sigma)\) is a set \(\X\neq\varnothing\) together with a \(\sigma\)-algebra \(\Sigma\).
Whenever \(\X\) is a topological space, we implicitly assume that \(\Sigma\) is the Borel \(\sigma\)-algebra of \(\X\). We define
\[
\mathcal L^0(\Sigma)\coloneqq\big\{f\colon\X\to\R\;\big|\;f\text{ is measurable}\big\}.
\]
Then \(\mathcal L^0(\Sigma)\) is an \(f\)-algebra if endowed with the following operations: for any \(f,g\in\mathcal L^0(\Sigma)\), we set
\[\begin{split}
(f+g)(x)\coloneqq f(x)+g(x)&,\quad\text{ for every }x\in\X,\\
(\lambda f)(x)\coloneqq\lambda f(x)&,\quad\text{ for every }\lambda\in\R\text{ and }x\in\X,\\
f\leq g&,\quad\text{ if and only if }f(x)\leq g(x)\text{ for every }x\in\X,\\
(fg)(x)\coloneqq f(x)g(x)&,\quad\text{ for every }x\in\X.
\end{split}\]
The space \(\mathcal L^\infty(\Sigma)\) of all bounded measurable real-valued functions on \((\X,\Sigma)\), which is given by
\[
\mathcal L^\infty(\Sigma)\coloneqq\Big\{f\in\mathcal L^0(\Sigma)\;\Big|\;\|f\|_{\mathcal L^\infty(\Sigma)}\coloneqq\sup_{x\in\X}|f|(x)<+\infty\Big\},
\]
is an \(f\)-subalgebra of \(\mathcal L^0(\Sigma)\). Moreover, \((\mathcal L^\infty(\Sigma),\|\cdot\|_{\mathcal L^\infty(\Sigma)})\)
is a Banach space. Given any \(E\in\Sigma\), we denote by \(\1_E\in\mathcal L^\infty(\Sigma)\) the \textbf{characteristic function}
of \(E\), which is defined as
\[
\1_E(x)\coloneqq\left\{\begin{array}{ll}
1,\\
0,
\end{array}\quad\begin{array}{ll}
\text{ if }x\in E,\\
\text{ if }x\in\X\setminus E.
\end{array}\right.
\]
The space \({\sf Sf}(\Sigma)\subset\mathcal L^\infty(\Sigma)\) of all \textbf{simple functions}
on \((\X,\Sigma)\) is then defined in the following way:
\[
{\sf Sf}(\Sigma)\coloneqq\bigg\{\sum_{i=1}^n\lambda_i\1_{E_i}\;\bigg|\;n\in\N,
\,(\lambda_i)_{i=1}^n\subset\R,\,(E_i)_{i=1}^n\subset\Sigma\text{ partition of }\X\bigg\}.
\]
\begin{remark}[Density of simple functions]\label{rmk:density_simple_fcs}{\rm
Given any \(f\in\mathcal L^0(\Sigma)^+\), there exists \((f_n)_{n\in\N}\subset{\sf Sf}(\Sigma)\)
such that \(0\leq f_n\leq f_{n+1}\) for every \(n\in\N\) and \(f=\sup_{n\in\N}f_n\). If in addition \(f\in\mathcal L^\infty(\Sigma)\),
then the sequence \((f_n)_{n\in\N}\) can be also chosen so that \(\|f-f_n\|_{\mathcal L^\infty(\Sigma)}\to 0\),
so that \({\sf Sf}(\Sigma)\) is dense in \(\mathcal L^\infty(\Sigma)\). For example, the functions
\(f_n\coloneqq\sum_{i=0}^{n2^n-1}i2^{-n}\1_{\{i2^{-n}\leq f<(i+1)2^{-n}\}}\in\mathcal L^\infty(\Sigma)^+\) do the job.
\fr}\end{remark}
It is immediate to verify that
\begin{equation}\label{eq:Idem_fcts}\begin{split}
\Idem\big(\mathcal L^0(\Sigma)\big)&=\Idem\big(\mathcal L^\infty(\Sigma)\big)=\{\1_E\;|\;E\in\Sigma\},\\
\mathcal S\big(\mathcal L^0(\Sigma)\big)&=\mathcal S\big(\mathcal L^\infty(\Sigma)\big)={\sf Sf}(\Sigma).
\end{split}\end{equation}
It then follows from \eqref{eq:Idem_fcts} and Remark \ref{rmk:density_simple_fcs} that \(\mathcal L^0(\Sigma)\)
and \(\mathcal L^\infty(\Sigma)\) are localisable \(f\)-algebras.
\subsubsection{Enhanced measurable spaces}
By an \textbf{enhanced measurable space} we mean a triple \((\X,\Sigma,\mathcal N)\), where \((\X,\Sigma)\) is a measurable space and
\(\mathcal N\subset\Sigma\) is a \textbf{\(\sigma\)-ideal}, meaning that it verifies the following conditions:
\begin{itemize}
\item[\(\rm i)\)] \(\varnothing\in\mathcal N\).
\item[\(\rm ii)\)] If \(N\in\mathcal N\) and \(N'\in\Sigma\) satisfy \(N'\subset N\), then \(N'\in\mathcal N\).
\item[\(\rm iii)\)] \(\bigcup_{n\in\N}N_n\in\mathcal N\) for every countable family \((N_n)_{n\in\N}\subset\mathcal N\).
\end{itemize}
The \(\sigma\)-ideal \(\mathcal N\) induces an equivalence relation on \(\mathcal L^0(\Sigma)\): given any \(f,g\in\mathcal L^0(\Sigma)\),
we declare that
\[
f\sim_{\mathcal N}g\qquad\Longleftrightarrow\qquad\{f\neq g\}\coloneqq\big\{x\in\X\;\big|\;f(x)\neq g(x)\big\}\in\mathcal N.
\]
When \(f\sim_{\mathcal N}g\), we say that \(f=g\) holds \textbf{\(\mathcal N\)-almost everywhere}, or \textbf{\(\mathcal N\)-a.e.}\ for short. We define
\[
L^0(\mathcal N)\coloneqq\mathcal L^0(\Sigma)/\sim_{\mathcal N},\qquad L^\infty(\mathcal N)\coloneqq\mathcal L^\infty(\Sigma)/\sim_{\mathcal N}.
\]
We denote by \([f]_{\mathcal N}\in L^0(\mathcal N)\) the equivalence class of \(f\in\mathcal L^0(\Sigma)\).
For brevity, we set
\[
\1^{\mathcal N}_E\coloneqq[\1_E]_{\mathcal N},\quad\text{ for every }E\in\Sigma.
\]
Passing to the quotient, \(L^0(\mathcal N)\) and \(L^\infty(\mathcal N)\) inherit an \(f\)-algebra structure
from \(\mathcal L^0(\Sigma)\) and \(\mathcal L^\infty(\Sigma)\), respectively. Moreover, \(L^\infty(\mathcal N)\)
becomes a Banach space if endowed with the quotient norm
\[
\|f\|_{L^\infty(\mathcal N)}\coloneqq\inf_{\bar f\in[f]_{\mathcal N}}\|\bar f\|_{\mathcal L^\infty(\Sigma)},
\quad\text{ for every }f\in L^\infty(\mathcal N).
\]
The map \(\mathcal L^\infty(\Sigma)\ni f\mapsto[f]_{\mathcal N}\in L^\infty(\mathcal N)\) is linear \(1\)-Lipschitz.
Note that \(\{\varnothing\}\) is a \(\sigma\)-ideal of \(\Sigma\) and
\[
L^0(\{\varnothing\})=\mathcal L^0(\Sigma),\qquad
\big(L^\infty(\{\varnothing\}),\|\cdot\|_{L^\infty(\{\varnothing\})}\big)=\big(\mathcal L^\infty(\Sigma),\|\cdot\|_{\mathcal L^\infty(\Sigma)}\big).
\]
\begin{example}[of enhanced measurable space]{\rm
Let \((\X,\Sigma)\) be a measurable space. Consider the restriction \(\mu\colon\Sigma\to[0,+\infty]\)
of an outer measure on \(\X\). Then the set \(\mathcal N_\mu\) of \textbf{\(\mu\)-null sets}, given by
\[
\mathcal N_\mu\coloneqq\big\{N\in\Sigma\;\big|\;\mu(N)=0\big\},
\]
is a \(\sigma\)-ideal, thus \((\X,\Sigma,\mathcal N_\mu)\) is an enhanced measurable space. In this case, we abbreviate
\(L^0(\mathcal N_\mu)\) and \(L^\infty(\mathcal N_\mu)\) to \(L^0(\mu)\) and \(L^\infty(\mu)\), respectively. Similarly
for \(\sim_\mu\), \([\cdot]_\mu\), and \(\1^\mu_E\).
\fr}\end{example}
\subsubsection{\(\sigma\)-finite measure spaces}
A \textbf{measure space} \((\X,\Sigma,\mu)\) is a measurable space \((\X,\Sigma)\) together with a \(\sigma\)-additive
measure \(\mu\). We assume that the measure \(\mu\) is \(\sigma\)-finite. Given any exponent \(p\in[1,\infty)\), we define
\[
\mathcal L^p(\mu)\coloneqq\bigg\{f\in\mathcal L^0(\Sigma)\;\bigg|\;\|f\|_{\mathcal L^p(\mu)}\coloneqq\int|f|^p\,\d\mu<+\infty\bigg\}.
\]
It holds that \(\mathcal L^p(\mu)\) is a vector subspace of \(\mathcal L^0(\Sigma)\) and
\(\big(\mathcal L^p(\mu),\|\cdot\|_{\mathcal L^p(\mu)}\big)\) is a complete seminormed space.
The \textbf{\(p\)-Lebesgue space} \(\big(L^p(\mu),\|\cdot\|_{L^p(\mu)}\big)\) on \((\X,\Sigma,\mu)\)
is the Banach space defined as
\[
L^p(\mu)\coloneqq\big\{[f]_\mu\in L^0(\mu)\;\big|\;f\in\mathcal L^p(\mu)\big\}=\mathcal L^p(\mu)/\sim_\mu
\]
together with the quotient norm \(\|\cdot\|_{L^p(\mu)}\), which is given as follows: for any \(f\in L^p(\mu)\), one has
\[
\|f\|_{L^p(\mu)}\coloneqq\|\bar f\|_{\mathcal L^p(\mu)},\quad\text{ for some (thus, for any) representative }\bar f\in[f]_\mu.
\]
Observe that \(L^p(\mu)\) is also a Riesz subspace of \(L^0(\mu)\) for every exponent \(p\in[1,\infty)\).
\medskip

We endow the space \(L^0(\mu)\) with the following distance: fix a finite measure \(\tilde\mu\) on
\((\X,\Sigma)\) such that \(\mu\ll\tilde\mu\ll\mu\), whose existence is guaranteed by the \(\sigma\)-finiteness of \(\mu\);
then we define
\[
\sfd_{L^0(\mu)}(f,g)\coloneqq\int|f-g|\wedge 1\,\d\tilde\mu\quad\text{ for every }f,g\in L^0(\mu).
\]
Whenever \(\mu\) is finite already, we implicitly choose \(\tilde\mu\coloneqq\mu\). The distance \(\sfd_{L^0(\mu)}\) is complete and
the inclusion \(L^p(\mu)\hookrightarrow L^0(\mu)\) is continuous with dense image for every \(p\in[1,\infty]\). Notice also that the distance
\(\sfd_{L^0(\mu)}\) is not canonical (since it depends on the chosen auxiliary measure \(\tilde\mu\)), but the induced topology is independent
of the specific \(\tilde\mu\). Given any \((f_n)_{n\in\N}\subset L^0(\mu)\) and \(f\in L^0(\mu)\), it holds that \(\lim_{n\to\infty}\sfd_{L^0(\mu)}(f_n,f)=0\)
if and only if there exists a subsequence \((f_{n_i})_{i\in\N}\) of \((f_n)_{n\in\N}\) satisfying \(f(x)=\lim_{i\to\infty}f_{n_i}(x)\) for \(\mu\)-a.e.\ \(x\in\X\).
\medskip

Albeit well-known, we report a proof of the following result, for the reader's usefulness:
\begin{proposition}\label{prop:L0_CR}
Let \((\X,\Sigma,\mu)\) be a \(\sigma\)-finite measure space. Then the space \(L^0(\mu)\) is countably representable.
In particular, the space \(L^p(\mu)\) is countably representable for every \(p\in[1,\infty]\).
\end{proposition}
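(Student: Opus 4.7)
The plan is to establish countable representability for $L^0(\mu)$ first, then deduce the statement for $L^p(\mu)$ with $p\in[1,\infty]$ by a solidness argument. Thanks to $\sigma$-finiteness of $\mu$, I fix once and for all a finite measure $\tilde\mu$ on $(\X,\Sigma)$ with $\mu\ll\tilde\mu\ll\mu$; since $\mu$ and $\tilde\mu$ share null sets, $L^0(\mu)=L^0(\tilde\mu)$ as ordered vector spaces. Recall also that $L^0(\mu)$ is Dedekind $\sigma$-complete: for a non-decreasing sequence $(f_n)\subset L^0(\mu)$ bounded above by some $g$, the pointwise supremum of representatives is measurable and $\mu$-a.e.\ finite, hence provides $\sup_{n\in\N}f_n\in L^0(\mu)$.

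For the countable representability, I fix a non-empty $S\subset L^0(\mu)$ admitting an upper bound $\bar f$, and I let $S^\vee$ be the family of finite suprema of elements of $S$, which is directed upwards, bounded above by $\bar f$, and has the same upper bounds as $S$. The key device is the functional
\[
\Psi\colon S^\vee\to\R,\qquad\Psi(f)\coloneqq\int_\X\arctan(f)\,\d\tilde\mu,
\]
which is bounded above by $\int\arctan(\bar f)\,\d\tilde\mu\leq\frac{\pi}{2}\tilde\mu(\X)<\infty$. Setting $M\coloneqq\sup\Psi$, I choose $(h_n)\subset S^\vee$ with $\Psi(h_n)\to M$ and replace $h_n$ by $h_1\vee\dots\vee h_n\in S^\vee$ so that $(h_n)$ is non-decreasing. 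Dedekind $\sigma$-completeness then gives $\bar g\coloneqq\sup_{n\in\N}h_n\in L^0(\mu)$.

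The core claim is $\bar g=\sup S^\vee=\sup S$. For any $f\in S^\vee$, the sequence $f\vee h_n$ lies in $S^\vee$, so $\Psi(f\vee h_n)\leq M$; but $\Psi(f\vee h_n)\geq\Psi(h_n)\to M$, whence $\Psi(f\vee h_n)\to M$. Monotone convergence on the left yields $\int\arctan(f\vee\bar g)\,\d\tilde\mu=M=\int\arctan(\bar g)\,\d\tilde\mu$. Since $\arctan(f\vee\bar g)\geq\arctan(\bar g)$ pointwise and $\arctan$ is strictly increasing, equality of integrals forces $f\vee\bar g=\bar g$ $\tilde\mu$-a.e., i.e.\ $f\leq\bar g$. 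So $\bar g$ is an upper bound of $S^\vee$, and combined with $h_n\leq\bar g$ with $\bar g=\sup_n h_n$ we obtain $\bar g=\sup S^\vee=\sup S$. Writing each $h_n=s_{n,1}\vee\dots\vee s_{n,k_n}$ with $s_{n,i}\in S$, the countable family $\{s_{n,i}\}_{n,i}\subset S$ has supremum $\bar g=\sup S$; infima follow by replacing $S$ with $-S\coloneqq\{-s\,:\,s\in S\}$.

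For $p\in[1,\infty]$, the inclusion $L^p(\mu)\hookrightarrow L^0(\mu)$ is solid. Given non-empty $S\subset L^p(\mu)$ with upper bound $\bar f\in L^p(\mu)$, the $L^0$ case produces $(s_n)_{n\in\N}\subset S$ with $\sup_{n\in\N}s_n=\sup S$ computed in $L^0(\mu)$; fixing any $s_0\in S$ we have $0\leq\sup_{n\in\N}s_n-s_0\leq\bar f-s_0\in L^p(\mu)$, so solidness gives $\sup_{n\in\N}s_n\in L^p(\mu)$, and this element remains the supremum of $S$ (and of $(s_n)$) inside $L^p(\mu)$; the infimum case is analogous. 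The main technical step is the $\arctan$ saturation argument in the $L^0$ case; everything else is routine bookkeeping with Dedekind completeness and solidness.
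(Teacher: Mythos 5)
Your proof is correct and follows essentially the same route as the paper: fix a finite measure \(\tilde\mu\) equivalent to \(\mu\), compress via \(\arctan\), maximise the \(\tilde\mu\)-integral over the upward-directed family of finite suprema, and use a monotone-convergence/saturation argument (equal integrals of ordered functions force a.e.\ equality) to identify the countable supremum with \(\sup S\). Your explicit solidness argument for deducing the \(L^p(\mu)\) case, which the paper leaves implicit, is also correct.
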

\begin{proof}
Suppose that \(\{f_i\}_{i\in I}\subset L^0(\mu)\) and \(g\in L^0(\mu)\) satisfy \(f_i\leq g\) for every \(i\in I\). Fix a finite measure \(\tilde\mu\geq 0\)
on \((\X,\Sigma)\) such that \(\mu\ll\tilde\mu\ll\mu\) and let us define \(\bar f_i\coloneqq\arctan\circ f_i\colon\X\to(-\frac{\pi}{2},\frac{\pi}{2})\) for every \(i\in I\).
Notice that \(s\coloneqq\sup\big\{\int\tilde f\,\d\tilde\mu\,:\,\tilde f\in\mathcal F\big\}<+\infty\), where we define
\[
\mathcal F\coloneqq\big\{\bar f_{i_1}\vee\ldots\vee\bar f_{i_k}\;\big|\;k\in\N,\,(i_j)_{j=1}^k\subset I\big\}.
\]
Now pick a sequence \((\tilde f_n)_{n\in\N}\subset\mathcal F\) such that \(s=\sup_{n\in\N}\int\tilde f_n\,\d\tilde\mu\). We can assume without loss of generality that
\(\tilde f_n\leq\tilde f_{n+1}\) for every \(n\in\N\), so that \(h\coloneqq\sup_{n\in\N}\tilde f_n\leq\arctan\circ g\) verifies \(\int h\,\d\tilde\mu=s\) thanks to the monotone convergence
theorem. Recalling the very definition of \(\mathcal F\), we can find a countable subset \(C\) of \(I\) such that \(\sup_{i\in C}\bar f_i=\sup_{n\in\N}\tilde f_n=h\). Notice that
\(h=\sup_{i\in I}\bar f_i\) in \(L^0(\tilde\mu)\) by construction, whence it follows that \(\sup_{i\in I}(\tan\circ\bar f_i)=\tan\circ h=\sup_{i\in I}f_i\) in \(L^0(\mu)\).
\end{proof}
\subsubsection{Submodular outer measures}
Given a metric space \((\X,\sfd)\), we denote by \(\mathscr B(\X)\) its Borel \(\sigma\)-algebra. If \(\mu\) is an outer measure on the set \(\X\), then we say that:
\begin{itemize}
\item \(\mu\) is \textbf{boundedly finite} if \(\mu(B)<+\infty\) whenever \(B\in\mathscr B(\X)\) is bounded.
\item \(\mu\) is \textbf{submodular} if it verifies
\[
\mu(E\cup F)+\mu(E\cap F)\leq\mu(E)+\mu(F)\quad\text{ for every }E,F\in\mathscr B(\X).
\]
\end{itemize}
The integral of a Borel function \(f\colon\X\to[0,+\infty]\) with respect to an outer measure \(\mu\) on \(\X\) can be defined via Cavalieri's formula, in the following way:
\[
\int_E f\,\d\mu\coloneqq\int_0^{+\infty}\mu\big(\big\{x\in E\,:\,f(x)>t\big\}\big)\,\d t\quad\text{ for every }E\in\mathscr B(\X).
\]
Observe that the above integral is well-defined because \([0,+\infty)\ni t\mapsto\mu\big(\big\{x\in E\,:\,f(x)>t\big\}\big)\) is a non-increasing function.
As proved in \cite[Chapter 6]{denneberg2010non} (see also \cite[Theorem 1.5]{debin2019quasicontinuous}), a given outer measure \(\mu\) is submodular if and only if the associated integral
is subadditive, meaning that
\[
\int_\X f+g\,\d\mu\leq\int_\X f\,\d\mu+\int_\X g\,\d\mu\quad\text{ for every }f,g\colon\X\to[0,+\infty]\text{ Borel.}
\]
Two classes of boundedly finite, submodular outer measures are particularly relevant to us:
\begin{itemize}
\item The outer measure induced (via Carath\'{e}odory construction) by a boundedly-finite Borel measure on \(\X\).
\item The Sobolev \(p\)-capacity \({\rm Cap}_p\) on a metric measure space, see e.g.\ \cite{HKST15}.
\end{itemize}
Given any boundedly finite, submodular outer measure \(\mu\) on \((\X,\sfd)\), we introduce a distance \(\sfd_{L^0(\mu)}\) as follows: we fix an increasing sequence
\((B_n)_n\) of bounded open subsets of \(\X\) with the property that each bounded subset \(B\) of \(\X\) is contained in \(B_n\) for some \(n\in\N\), and we define
\[
\sfd_{L^0(\mu)}(f,g)\coloneqq\sum_{n\in\N}\frac{1}{2^n(\mu(B_n)\vee 1)}\int_{B_n}|f-g|\wedge 1\,\d\mu\quad\text{ for every }f,g\in L^0(\mu).
\]
The submodularity of \(\mu\) guarantees that \(\sfd_{L^0(\mu)}\) is a distance. Arguing as in \cite[Proposition 1.10]{denneberg2010non},
one can see that \((f_i)_{i\in\N}\subset L^0(\mu)\) satisfies \(\sfd_{L^0(\mu)}(f_i,f)\to 0\) for some \(f\in L^0(\mu)\) if and only if
\[
\lim_{i\to\infty}\mu\big(B\cap\big\{|f_i-f|>\varepsilon\big\}\big)=0\quad
\text{ for every }\varepsilon>0\text{ and }B\in\mathscr B(\X)\text{ bounded.}
\]
In particular, arguing as in \cite[Proposition 1.12]{debin2019quasicontinuous} one can see that if \(\sfd_{L^0(\mu)}(f_i,f)\to 0\),
then there exists a subsequence \((i_j)_{j\in\N}\subset\N\) such that \(f(x)=\lim_j f_{i_j}(x)\) holds for \(\mu\)-a.e.\ \(x\in\X\).
The converse implication -- differently from what happens with boundedly-finite Borel measures -- might fail (see e.g.\ \cite[Remark 1.13]{debin2019quasicontinuous}).

\begin{example}\label{ex:L0Cap_no_CR}{\rm
Consider the real line \(\R\) equipped with the Sobolev \(2\)-capacity \({\rm Cap}_2\). Since all singletons have positive capacity,
we have \(\mathcal N_{{\rm Cap}_2}=\{\varnothing\}\) and thus \(L^0({\rm Cap}_2)=\mathcal L^0(\mathscr B(\R))\). Then
\[
L^0({\rm Cap}_2)\quad\text{ is not countably representable.}
\]
Indeed, the set \(\{\1_{\{t\}}\,:\,t\in\R\}\subset L^0({\rm Cap}_2)\) has an upper bound and \(\sup_{t\in\R}\1_{\{t\}}=\1_\R\),
but whenever \(C\subset\R\) is a countable set we have \(\sup_{t\in C}\1_{\{t\}}=\1_C\neq\1_\R\). In fact, since
\(L^0({\rm Cap}_2)\) is the space of (\({\rm Cap}_2\)-a.e.\ equivalence classes of) Borel functions from \(\R\) to \(\R\), we have that
\(L^0({\rm Cap}_2)\) is not even Dedekind complete: given any \(Q\subset\R\) that is not Borel, we have that \(\{\1_{\{t\}}\,:\,t\in Q\}\)
has an upper bound, but does not admit a supremum (which ought to be \(\1_Q\)) in \(L^0({\rm Cap}_2)\).
\fr}\end{example}
\subsection{Normed modules over function spaces}\label{ss:example_norm_mod}
\subsubsection{Examples of metric \(f\)-algebras}
Some examples of metric \(f\)-algebras:
\begin{itemize}
\item Let \((\X,\Sigma,\mathcal N)\) be an enhanced measurable space. Then \(\big(L^\infty(\mathcal N),\|\cdot\|_{L^\infty(\mathcal N)}\big)\) is a metric \(f\)-algebra.
If \(\mathcal N=\mathcal N_\mu\) for some \(\sigma\)-finite measure \(\mu\) on \((\X,\Sigma)\), then \(L^\infty(\mu)\) is CR.
\item Let \((\X,\Sigma,\mu)\) be a \(\sigma\)-finite measure space. Then \(\big(L^0(\mu),\sfd_{L^0(\mu)}\big)\) is a CR metric \(f\)-algebra.
\item Let \(\mu\) be a boundedly finite, submodular outer measure on some metric space \((\X,\sfd)\). Then \(\big(L^0(\mu),\sfd_{L^0(\mu)}\big)\) is a metric \(f\)-algebra.
\end{itemize}
Notice also that if \((\X,\Sigma,\mu)\) is a \(\sigma\)-finite measure space and \(p\in[1,\infty)\), then \(\big(L^p(\mu),\|\cdot\|_{L^p(\mu)}\big)\) is a CR metric Riesz space.
\subsubsection{Examples of metric \(f\)-structures}
Below we list some important examples of metric \(f\)-structures:
\begin{itemize}
\item Let \((\X,\Sigma,\mathcal N)\) be an enhanced measurable space. Then
\[
\big(L^\infty(\mathcal N),L^\infty(\mathcal N),L^\infty(\mathcal N)\big)
\]
is a metric \(f\)-structure. It is CR if \(\mathcal N=\mathcal N_\mu\) for some \(\sigma\)-finite measure \(\mu\) on \((\X,\Sigma)\).
\item Let \((\X,\Sigma,\mu)\) be a \(\sigma\)-finite measure space and \(p\in[1,\infty)\). Then
\[
\big(L^0(\mu),L^0(\mu),L^0(\mu)\big),\quad\big(L^0(\mu),L^\infty(\mu),L^p(\mu)\big)
\]
are CR metric \(f\)-structures.
\item Let \(\mu\) be a boundedly finite, submodular outer measure on some metric space \((\X,\sfd)\). Then
\[
\big(L^0(\mu),L^0(\mu),L^0(\mu)\big)
\]
is a metric \(f\)-structure.
\end{itemize}
\subsubsection{Examples of dual systems}
Some examples of dual systems of metric \(f\)-structures:
\begin{itemize}
\item Let \((\X,\Sigma,\mathcal N)\) be an enhanced measurable space. Then
\[
\big(L^\infty(\mathcal N),L^\infty(\mathcal N),L^\infty(\mathcal N),L^\infty(\mathcal N),L^\infty(\mathcal N)\big)
\]
is a dual system of metric \(f\)-structures. It is CR if \(\mathcal N=\mathcal N_\mu\) for some \(\sigma\)-finite measure \(\mu\).
\item Let \((\X,\Sigma,\mu)\) be a \(\sigma\)-finite measure space. Fix \(p,q,r\in[1,\infty]\) such that \(\frac{1}{p}+\frac{1}{q}=\frac{1}{r}\). Then
\[
\big(L^0(\mu),L^0(\mu),L^0(\mu),L^0(\mu),L^0(\mu)\big),\quad\big(L^0(\mu),L^\infty(\mu),L^p(\mu),L^q(\mu),L^r(\mu)\big)
\]
are CR metric \(f\)-structures.
\item Let \(\mu\) be a boundedly finite, submodular outer measure on some metric space \((\X,\sfd)\). Then
\[
\big(L^0(\mu),L^0(\mu),L^0(\mu),L^0(\mu),L^0(\mu)\big)
\]
is a dual system of metric \(f\)-structures.
\end{itemize}
\subsubsection{Examples of functional normed modules}
Below we list some classes of Banach modules that have been studied in the literature and fall into the category of Banach modules over a metric \(f\)-structure:
\begin{itemize}
\item \(L^p(\mu)\)-Banach \(L^\infty(\mu)\)-modules, where \((\X,\Sigma,\mu)\) is a \(\sigma\)-finite measure space and \(p\in[1,\infty]\), which have been introduced
in \cite[Definition 1.2.10]{Gigli14}. We point out that in \cite{Gigli14} the terminology is different: every \(L^p(\mu)\)-normed \(L^\infty(\mu)\)-module is assumed
to be complete (thus, a Banach module with our definitions) and non-complete normed modules are not considered.
\item \(L^0(\mu)\)-Banach \(L^0(\mu)\)-modules, where \((\X,\Sigma,\mu)\) is a \(\sigma\)-finite measure space. The definition was given in \cite[Definition 2.6]{Gigli17},
but the concept appeared previously in \cite[Section 1.3]{Gigli14}.
\item \(L^0(\mu)\)-Banach \(L^0(\mu)\)-modules, where \(\mu\) is a boundedly-finite, submodular outer measure on a metric space \((\X,\sfd)\), see \cite[Definition 2.1]{BPS21}.
In the particular case where \(\mu\) is the Sobolev \(2\)-capacity on a metric measure space, it appeared previously in \cite[Definition 3.1]{debin2019quasicontinuous}.
\item \(\mathcal L^\infty(\Sigma)\)-Banach \(\mathcal L^\infty(\Sigma)\)-modules, where \((\X,\Sigma)\) is a measurable space. See \cite[Definition 3.1]{DMLP21}.
\item \(L^\infty(\mathcal N)\)-Banach \(L^\infty(\mathcal N)\)-modules, where \((\X,\Sigma,\mathcal N)\) is an enhanced measurable space, which were introduced in \cite[Definition 4.3]{GLP22}.
\end{itemize}
Notice that, given a \(\sigma\)-finite measure space \((\X,\Sigma,\mu)\) and an \(L^\infty(\mu)\)-Banach \(L^\infty(\mu)\)-module \(\mathscr M\), two different notions of duals
of \(\mathscr M\) have been considered (corresponding to two different underlying dual systems of metric \(f\)-structures):
\begin{itemize}
\item The dual of \(\mathscr M\) in the sense of \cite[Definition 1.2.6]{Gigli14} is an \(L^1(\mu)\)-Banach \(L^\infty(\mu)\)-module, since the dual system under consideration
is \(\big(L^0(\mu),L^\infty(\mu),L^\infty(\mu),L^1(\mu),L^1(\mu)\big)\).
\item The dual of \(\mathscr M\) in the sense of \cite[Definition 4.15]{GLP22} is an \(L^\infty(\mu)\)-Banach \(L^\infty(\mu)\)-module, since the dual system under consideration
is \(\big(L^\infty(\mu),L^\infty(\mu),L^\infty(\mu),L^\infty(\mu),L^\infty(\mu)\big)\).
\end{itemize}
\subsubsection{Some applications}\label{ss:applications_fct_nm}
We list some examples of known constructions that follow from Theorems \ref{thm:module_generated}, \ref{thm:main_ext_hom}, and \ref{thm:pshfrwd_mod}:
\begin{itemize}
\item \textsc{Cotangent module}. Let \((\X,\sfd,\mu)\) be a metric measure space and \(p\in(1,\infty)\). We denote by \(W^{1,p}(\X)\) the \(p\)-Sobolev space of \((\X,\sfd,\mu)\)
and by \(|Df|\in L^p(\mu)\) the minimal weak upper gradient of \(f\in W^{1,p}(\X)\) (e.g.\ in the sense of \cite{Cheeger00,Shanmugalingam00,AmbrosioGigliSavare11}; all these
approaches are equivalent by \cite{AmbrosioGigliSavare11-3}). Consider the metric \(f\)-structure \(\big(L^0(\mu),L^\infty(\mu),L^p(\mu)\big)\), as well as the map \(\psi_p\colon W^{1,p}(\X)\to L^p(\mm)^+\)
given by \(\psi_p(f)\coloneqq|Df|\) for every \(f\in W^{1,p}(\X)\). Then the \textbf{cotangent module} \(L^p(T^*\X)\) and the \textbf{differential} operator \(\d\colon W^{1,p}(\X)\to L^p(T^*\X)\) are
\[
\big(L^p(T^*\X),\d\big)\cong(\mathscr M_{\langle\psi_p\rangle},T_{\langle\psi_p\rangle}).
\]
The cotangent module (for \(p=2\)) has been introduced in \cite[Definition 2.2.1]{Gigli14} and refined in \cite[Theorem/Definition 2.8]{Gigli17}.
Many other generalisations appeared later: for example, one can drop the \(L^p\)-integrability assumption (see \cite[Proposition 4.18]{GP20}),
one can construct the capacitary tangent module on an \({\sf RCD}(K,\infty)\) space (see \cite[Theorem 3.6]{debin2019quasicontinuous}), or
one can consider the cotangent modules induced by axiomatic classes of Sobolev-type spaces (see \cite[Theorem 3.2]{GIGLI2019}). Concerning the latter
notion, we point out that -- thanks to Theorem \ref{thm:module_generated} -- the strong locality assumption on the \(D\)-structure in
\cite[Theorem 3.2]{GIGLI2019} can be removed.
\item \textsc{Pullback module.} Let \((\X,\Sigma_\X,\mu_\X)\), \((\Y,\Sigma_\Y,\mu_\Y)\) be \(\sigma\)-finite measure spaces and \(\varphi\colon\X\to\Y\)
a map of bounded compression, i.e.\ \(\varphi\) is measurable and there exists a constant \(C>0\) such that \(\varphi_\#\mu_\X\leq C\mu_\Y\).
Notice that \(\varphi\) induces a homomorphism of metric \(f\)-structures 
\[
\boldsymbol\varphi\colon\big(L^0(\mu_\Y),L^\infty(\mu_\Y),L^p(\mu_\Y)\big)\to\big(L^0(\mu_\X),L^\infty(\mu_\X),L^p(\mu_\X)\big)
\]
for every exponent \(p\in[1,\infty)\) via pre-composition. Namely, given any \(f\in L^0(\mu_\Y)\) we define
\[
\boldsymbol\varphi(f)\coloneqq[\bar f\circ\varphi]_{\mu_\X},\quad\text{ for any }\bar f\in\mathcal L^0(\Sigma_\Y)\text{ with }[\bar f]_{\mu_\Y}=f.
\]
Let \(\mathscr M\) be an \(L^p(\mu_\Y)\)-Banach \(L^\infty(\mu_\Y)\)-module. Then the \textbf{pullback module} is given by
\[
(\varphi^*\mathscr M,\varphi^*)\cong(\boldsymbol\varphi_*\mathscr M,\boldsymbol\varphi_*).
\]
The pullback module was introduced in \cite[Definition 1.6.2]{Gigli14}, \cite[Theorem/Definition 2.23]{Gigli17} and has many generalisations:
for example, for \(L^0\)-Banach \(L^0\)-modules and under the weaker assumption \(\varphi_\#\mu_\X\ll\mu_\Y\) (see \cite[Theorem/Definition 3.2]{GR17})
or for \(L^\infty\)-Banach \(L^\infty\)-modules (see \cite[Theorem/Definition 4.11]{GLP22}).
\item \textsc{\(L^0\)-completion.} Let \((\X,\Sigma,\mu)\) be a \(\sigma\)-finite measure space and let \(\mathscr M\) be an \(L^p(\mu)\)-Banach \(L^\infty(\mu)\)-module,
for some exponent \(p\in[1,\infty]\). Then the \textbf{\(L^0\)-completion} of \(\mathscr M\) (in the sense of \cite[Theorem/Definition 1.7]{Gigli17}) is given by
\[
(\bar{\mathscr M},\iota)\cong(\mathscr M_{\langle\psi_{\mathscr M}\rangle},T_{\langle\psi_{\mathscr M}\rangle}),
\]
where we define \(\psi_{\mathscr M}\colon\mathscr M\to L^0(\mu)^+\) as \(\psi_{\mathscr M}(v)\coloneqq|v|\) for every \(v\in\mathscr M\).
\item \textsc{von Neumann lifting.} Let \((\X,\Sigma,\mu)\) be a complete \(\sigma\)-finite measure space, \(\ell\) a von Neumann lifting of \(\mu\), and \(\mathscr M\)
an \(L^\infty(\mu)\)-Banach \(L^\infty(\mu)\)-module. Then the \textbf{von Neumann lifting} of \(\mathscr M\) (see \cite[Theorem 3.5]{DMLP21}) is given by
\[
(\ell\mathscr M,\ell)\cong(\mathscr M_{\langle\psi_\ell\rangle},T_{\langle\psi_\ell\rangle}),
\]
where we define \(\psi_\ell\colon\mathscr M\to\mathcal L^\infty(\Sigma)^+\) as \(\psi_\ell(v)\coloneqq\ell(|v|)\) for every \(v\in\mathscr M\).
\end{itemize}
We point out that, in addition to the objects we discussed above, also the associated existence results for homomorphisms (e.g.\ the universal property of pullback
modules \cite[Proposition 1.6.3]{Gigli14} or the lifting of a homomorphism \cite[Proposition 4.14]{GLP22}) can be deduced from Proposition \ref{prop:module_generated_hom}.
\subsection{The Reconstruction Theorem}\label{ss:reconstruction}
A \textbf{Boolean ring} is a ring \((R,+,\cdot)\) such that \(r^2=r\) for every \(r\in R\). In particular, \(r=-r\) and \(rs=sr\)
for every \(r,s\in R\). A \textbf{Boolean algebra} is a a Boolean ring \((A,+,\cdot)\) with a multiplicative identity \(1_A\).
A ring homomorphism \(\phi\colon A\to B\) between two Boolean algebras \(A\) and \(B\) is said to be a \textbf{Boolean homomorphism}
provided it is also \textbf{uniferent}, meaning that \(\phi(1_A)=1_B\).
\medskip

Given a set \(\X\neq\varnothing\) and an algebra \(\Sigma\) of subsets of \(\X\), the triple \((\Sigma,\Delta,\cap)\) is a Boolean algebra
with zero \(\varnothing\) and identity \(\X\). The following fundamental result -- which is known as the \textbf{Stone's Representation
Theorem for Boolean algebras} -- states that in fact any Boolean algebra can be expressed as an algebra of sets. We will employ it in the
proof of Proposition \ref{prop:Idem_as_sigma-algebra}.
\begin{theorem}[Stone's Theorem]\label{thm:Stone}
Let \(A\) be a Boolean algebra. Then there exist a set \(\X\) and an algebra \(\Sigma\) of subsets of \(\X\) such that
\((A,+,\cdot)\) and \((\Sigma,\Delta,\cap)\) are isomorphic as Boolean algebras.
\end{theorem}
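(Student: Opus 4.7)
The plan is to realise $A$ concretely as the algebra of clopen sets in its Stone space, i.e.\ as subsets of the set $\X$ of all ultrafilters on $A$. First I recall the Boolean structure: in $(A,+,\cdot)$ the join is $a\vee b\coloneqq a+b+ab$, the complement of $a$ is $1_A-a=1_A+a$, and the natural order is $a\leq b\iff ab=a$. An \emph{ultrafilter} on $A$ is then a non-empty subset $x\subset A$ that is closed upwards under $\leq$, closed under $\cdot$, does not contain $0_A$, and satisfies the dichotomy $a\in x$ or $1_A+a\in x$ for every $a\in A$; equivalently, the characteristic function $\chi_x\colon A\to\mathbb F_2$ is a (uniferent) Boolean homomorphism into the two-element Boolean algebra $\{0,1\}$.

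I set $\X\coloneqq\{x\subset A\mid x\text{ is an ultrafilter on }A\}$ and define, for every $a\in A$,
\[
\hat a\coloneqq\{x\in\X\mid a\in x\}\subset\X.
\]
The key step is the injectivity of the assignment $a\mapsto\hat a$, which reduces (by considering $a+b$) to showing that for every $a\in A\setminus\{0_A\}$ there exists some $x\in\X$ with $a\in x$. I would prove this by a standard Zorn's lemma argument: the collection of proper filters of $A$ containing the principal filter $\{b\in A\mid ab=a\}$ is non-empty (it contains that principal filter itself, which is proper because $a\neq 0_A$), is partially ordered by inclusion, and admits upper bounds for chains via unions; a maximal element is easily seen to be an ultrafilter, and it contains $a$. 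This is the only genuinely non-trivial step; the existence of ultrafilters is precisely the content of the Boolean Prime Ideal Theorem, and any proof of Stone's theorem must invoke a form of the Axiom of Choice here.

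Next I would verify that $a\mapsto\hat a$ preserves the Boolean operations, so that $\Sigma\coloneqq\{\hat a\mid a\in A\}$ is an algebra of subsets of $\X$ and the map is an isomorphism onto $\Sigma$. The identities
\[
\widehat{a\cdot b}=\hat a\cap\hat b,\qquad\widehat{1_A+a}=\X\setminus\hat a,\qquad\hat{0_A}=\varnothing,\qquad\hat{1_A}=\X
\]
all follow directly from the defining properties of ultrafilters: the first because $a,b\in x$ iff $ab\in x$ (one direction is closure under $\cdot$, the other is upward closure under $\leq$ since $ab\leq a$ and $ab\leq b$); the second from the ultrafilter dichotomy; the last two from $1_A\in x$ and $0_A\notin x$. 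Once complementation and intersection are preserved, symmetric difference is automatically preserved, because
\[
\widehat{a+b}=\widehat{(a\vee b)\cdot(1_A+ab)}=(\hat a\cup\hat b)\setminus(\hat a\cap\hat b)=\hat a\,\Delta\,\hat b,
\]
where I used $a+b=(a\vee b)(1_A+ab)$ in $A$. Since $\Sigma$ is closed under $\cap$, complementation in $\X$, and contains $\X$, it is an algebra of subsets of $\X$, and the bijection $A\to\Sigma$, $a\mapsto\hat a$ is the required Boolean isomorphism between $(A,+,\cdot)$ and $(\Sigma,\Delta,\cap)$.
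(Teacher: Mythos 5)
The paper does not actually prove this statement: Stone's Representation Theorem is quoted as a known classical result (and then used in Proposition \ref{prop:Idem_as_sigma-algebra}), so there is no internal proof to compare yours against. Your proposal is the standard ultrafilter (Stone space) construction, and it is correct. The reduction of injectivity to the non-emptiness of $\hat a$ for $a\neq 0_A$ is legitimate, since it only uses the separately established preservation of $+$ (if $\hat a=\hat b$ then $\widehat{a+b}=\hat a\,\Delta\,\hat b=\varnothing$, hence $a+b=0_A$, hence $a=b$ because the ring has characteristic $2$); the Zorn argument for producing an ultrafilter through a given $a\neq 0_A$ is sound; and the algebraic identity $a+b=(a\vee b)(1_A+ab)$, together with the implicit step $\widehat{a\vee b}=\hat a\cup\hat b$ (De Morgan, from the already preserved $\cap$ and complement), checks out. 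Two points you leave as ``easily seen'' deserve one line each if this were written in full: first, that a maximal proper filter $F$ satisfies the dichotomy --- if neither $a$ nor $1_A+a$ lies in $F$, maximality forces the filters generated by $F\cup\{a\}$ and by $F\cup\{1_A+a\}$ to be improper, so there exist $f,g\in F$ with $fa=0_A$ and $g(1_A+a)=0_A$, whence $fg=fg\,a+fg\,(1_A+a)=0_A\in F$, a contradiction; second, in $\widehat{1_A+a}=\X\setminus\hat a$ the inclusion $\subset$ uses not only the dichotomy but also that $a$ and $1_A+a$ cannot both lie in an ultrafilter $x$, which follows from closure under multiplication and $0_A\notin x$ since $a(1_A+a)=0_A$. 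With these routine insertions your argument is a complete proof of the quoted theorem, along the classical Boolean-prime-ideal route, which is exactly the proof the paper implicitly appeals to.
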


Let \(U\) be a given \(f\)-algebra. Then we define the operations \(\boxplus\colon\Idem(U)\times\Idem(U)\to\Idem(U)\)
and \(\boxtimes\colon\Idem(U)\times\Idem(U)\to\Idem(U)\) on \(\Idem(U)\) as
\[
u\boxplus v\coloneqq u+v-2uv,\qquad u\boxtimes v\coloneqq uv,\qquad\text{for every }u,v\in\Idem(U).
\]
Their well-posedness follows from items i) and ii) of Lemma \ref{lem:prop_idem}. It is easy to check that the triple
\(\big(\Idem(U),\boxplus,\boxtimes\big)\) is a Boolean algebra with zero element \(0\) and multiplicative identity \(1\).

\begin{proposition}\label{prop:Idem_as_sigma-algebra}
Let \(U\) be a Dedekind \(\sigma\)-complete \(f\)-algebra whose multiplication map is \(\sigma\)-order-continuous on \(U^+\times U^+\).
Then the space \(\big(\Idem(U),\boxplus,\boxtimes\big)\) is Boolean isomorphic to a \(\sigma\)-algebra.
\end{proposition}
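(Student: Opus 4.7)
My plan is to combine Stone's Representation Theorem (Theorem \ref{thm:Stone}) with a verification that $(\Idem(U),\boxplus,\boxtimes)$ is $\sigma$-complete as a Boolean algebra. The ring-theoretic structure is already settled just before the statement, so the substantive content is that countable operations on $\Idem(U)$ can be realized set-theoretically.

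First I would check that $(\Idem(U),\boxplus,\boxtimes)$ is a $\sigma$-complete Boolean algebra. Although Lemma \ref{lem:order_idem} is phrased for localisable $f$-algebras, its proof only invokes three ingredients: order-boundedness of $\Idem(U)$ (Lemma \ref{lem:prop_idem} iv)), Dedekind $\sigma$-completeness of $U$, and $\sigma$-order-continuity of the multiplication on $U^+\times U^+$. All three are available under the present hypotheses, so the same argument shows that every countable family $(u_n)_{n\in\N}\subset\Idem(U)$ admits both a supremum and an infimum inside $\Idem(U)$. Combined with Remark \ref{rmk:char_idem_ineq} (which identifies the Boolean order $u\leq v\Leftrightarrow u\boxtimes v=u$ with the intrinsic order of $U$), one verifies that these suprema and infima coincide with the Boolean ones.

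Next I would apply Theorem \ref{thm:Stone} to obtain a Boolean isomorphism $\Phi\colon\Idem(U)\to\Sigma_0$ where $\Sigma_0$ is an algebra of subsets of some set $\X$; concretely, take $\X$ to be the space of Boolean ultrafilters of $\Idem(U)$ and $\Phi(u)=\{\phi\in\X : \phi(u)=1\}$. The final step is to promote $\Sigma_0$ to a genuine $\sigma$-algebra: given $(E_n)_n\subset\Sigma_0$ with $E_n=\Phi(u_n)$, the $\sigma$-completeness from the first step produces $u\coloneqq\sup_n u_n\in\Idem(U)$, and the natural candidate for $\bigcup_n E_n$ in $\Sigma_0$ is $\Phi(u)$; countable intersections are handled symmetrically via $\inf_n u_n$.

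The main obstacle is precisely the identification $\Phi(\sup_n u_n)=\bigcup_n\Phi(u_n)$ at the level of subsets of $\X$: in the raw Stone construction these two sets agree only up to a meager Baire subset of $\X$, reflecting the Loomis--Sikorski-style representation $\Idem(U)\cong\Sigma/\mathcal N$ for the Baire $\sigma$-algebra $\Sigma$ on $\X$ and the $\sigma$-ideal $\mathcal N$ of meager Baire sets. To close the gap and get a literal isomorphism with a $\sigma$-algebra of subsets rather than a quotient, the cleanest route is to replace $\X$ by an appropriate quotient space on which the meager discrepancy collapses (equivalently, to identify ultrafilters that agree on all countable suprema and infima) and then verify that $\Phi$ remains injective on $\Idem(U)$ with image closed under countable unions. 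I expect the $\sigma$-order-continuity of the multiplication---which is stronger than plain Dedekind $\sigma$-completeness---to enter precisely at this point-separation step, guaranteeing that enough $\sigma$-preserving ultrafilters survive to represent $\Idem(U)$ faithfully.
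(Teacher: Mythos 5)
Your first two steps coincide with the paper's proof: the argument of Lemma \ref{lem:order_idem} indeed uses only the order-boundedness of \(\Idem(U)\) (Lemma \ref{lem:prop_idem} iv)), Dedekind \(\sigma\)-completeness, and the \(\sigma\)-order-continuity of the multiplication, so countable suprema and infima of idempotents exist, are idempotent, and agree with the Boolean ones under the present hypotheses; Theorem \ref{thm:Stone} is then invoked exactly as in the paper. The genuine gap is your third step, and it is also the point where you part ways with the paper: the paper does not modify the Stone space at all, but keeps the algebra \(\Sigma\) furnished by Stone's theorem and argues, using \(\sup_n(u_nv)=v\sup_n u_n\) (\(\sigma\)-order-continuity of the multiplication), that \({\rm I}(\sup_n u_n)\) is the least element of \(\Sigma\) containing every \({\rm I}(u_n)\), from which it concludes \eqref{eq:reconstr_thm_1}. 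Your route instead ends with an expectation ("I expect the \(\sigma\)-order-continuity \dots to enter precisely at this point-separation step") rather than a proof, so the decisive identification \(\Phi(\sup_n u_n)=\bigcup_n\Phi(u_n)\) is never established.

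Worse, the repair you sketch cannot work in the stated generality. Take \(U=L^0(\mu)\) with \(\mu\) the Lebesgue measure on \([0,1]\): this \(U\) satisfies the hypotheses (it is even Dedekind complete and CR, cf.\ Proposition \ref{prop:L0_CR}, with order-continuous multiplication), and \(\Idem(U)\) is the Lebesgue measure algebra. A ``\(\sigma\)-preserving ultrafilter'' in your sense, i.e.\ a \(\{0,1\}\)-valued homomorphism on \(\Idem(U)\) carrying countable suprema to suprema, would induce a \(\{0,1\}\)-valued countably additive Borel measure on \([0,1]\) vanishing on Lebesgue-null sets; such a measure is a Dirac mass at a point, which is null -- a contradiction. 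Hence there are \emph{no} such ultrafilters whatsoever, so no quotient of the Stone space obtained by selecting or identifying ultrafilters can carry a faithful copy of \(\Idem(U)\) in which countable suprema become unions; this absence is exactly why the Loomis--Sikorski theorem you cite delivers only a quotient \(\Sigma/\mathcal N\) and not a \(\sigma\)-field of sets. So your proposal halts at, and your suggested fix fails at, precisely the step that carries all the content. Note finally that the obstruction you raise is aimed at the same spot in the paper's argument: minimality of \({\rm I}(\sup_n u_n)\) within \(\Sigma\) is a priori weaker than the literal set identity \eqref{eq:reconstr_thm_1} unless one already knows that \(\bigcup_n{\rm I}(u_n)\in\Sigma\), so this point requires genuine additional care and cannot be settled by the optimistic appeal to point-separation with which your argument ends.
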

\begin{proof}
Thanks to Stone's Representation Theorem \ref{thm:Stone}, we can find a set \(\X\neq\varnothing\), an algebra \(\Sigma\) of subsets of \(\X\),
and a Boolean isomorphism \({\rm I}\colon\big(\Idem(U),\boxplus,\boxtimes\big)\to(\Sigma,\Delta,\cap)\). We claim that
\begin{equation}\label{eq:reconstr_thm_1}
{\rm I}\Big(\sup_{n\in\N}u_n\Big)=\bigcup_{n\in\N}{\rm I}(u_n),\quad\text{ for every }(u_n)_{n\in\N}\subset\Idem(U).
\end{equation}
Call \(u\coloneqq\sup_{n\in\N}u_n\). Recall that \(u\in\Idem(U)\) by Lemma \ref{lem:order_idem}. Given any \(n\in\N\),
it holds that \(u_n\leq u\) and thus Remark \ref{rmk:char_idem_ineq} gives \({\rm I}(u_n)\cap{\rm I}(u)={\rm I}(u_n u)={\rm I}(u_n)\),
which yields \(\bigcup_{n\in\N}{\rm I}(u_n)\subset{\rm I}(u)\).
Conversely, pick any set \(E\in\Sigma\) with \({\rm I}(u_n)\subset E\) for every \(n\in\N\). Calling \(v\coloneqq{\rm I}^{-1}(E)\),
we have that \({\rm I}(u_n v)={\rm I}(u_n)\cap{\rm I}(v)={\rm I}(u_n)\), so that \(u_n v=v_n\). Hence, it holds that
\[
u=\sup_{n\in\N}u_n=\sup_{n\in\N}u_n v=v\sup_{n\in\N}u_n=uv,
\]
thus \({\rm I}(v)\cap{\rm I}(u)={\rm I}(uv)={\rm I}(u)\). We obtain that \({\rm I}(u)\subset{\rm I}(v)=E\), whence
\eqref{eq:reconstr_thm_1} follows. We deduce that \(\Sigma\) is a \(\sigma\)-algebra, so that \((\X,\Sigma)\) is a
measurable space. This completes the proof.
\end{proof}
\begin{theorem}[Reconstruction Theorem]\label{thm:reconstruction}
Let \(U\) be a localisable \(f\)-algebra. Then there exists a measurable space \((\X,\Sigma)\) such
that \(U\) is isomorphic (as an \(f\)-algebra) to an \(f\)-subalgebra of \(\mathcal L^0(\Sigma)\).

More precisely, the measurable space \((\X,\Sigma)\) can be chosen so that \((\Sigma,\Delta,\cap)\)
is isomorphic (as a Boolean algebra) to \(\big(\Idem(U),\boxplus,\boxtimes\big)\).
\end{theorem}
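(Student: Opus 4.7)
The plan is to build a uniferent \(f\)-algebra embedding \(\Phi\colon U\hookrightarrow\mathcal L^0(\Sigma)\) where, by Proposition \ref{prop:Idem_as_sigma-algebra}, \((\X,\Sigma)\) is a measurable space equipped with a Boolean isomorphism \({\rm I}\colon(\Idem(U),\boxplus,\boxtimes)\to(\Sigma,\Delta,\cap)\). The proof of that proposition actually delivers the stronger statement \({\rm I}(\sup_n u_n)=\bigcup_n{\rm I}(u_n)\), and complementation upgrades this to \({\rm I}(\inf_n u_n)=\bigcap_n{\rm I}(u_n)\) for every sequence \((u_n)\subset\Idem(U)\).

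First I would set \(\Phi\bigl(\sum_{i=1}^n\lambda_iu_i\bigr)\coloneqq\sum_{i=1}^n\lambda_i\1_{{\rm I}(u_i)}\) for \((u_i)\in\mathcal P_f({\bf 1}_U)\). Passing to the common refinement \((u_iv_j)\) of Lemma \ref{lem:finite_part}, this recipe is immediately well-posed and defines a uniferent \(f\)-algebra isomorphism from \(\mathcal S(U)\) onto \({\sf Sf}(\Sigma)\): the formulas for sum, product, join, and meet in Lemma \ref{lem:finite_part} translate term by term into the pointwise operations on simple functions.

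Next I would extend to \(U^+\): by localisability, for each \(u\in U^+\) pick a non-decreasing \((s_n)\subset\mathcal S^+(U)\) with \(u=\sup_ns_n\), and declare \(\Phi(u)(x)\coloneqq\sup_n\Phi(s_n)(x)\in[0,+\infty]\). The main technical obstacle is pointwise finiteness of this supremum, which I would prove by contradiction. If \(\sup_n\Phi(s_n)(x_0)=+\infty\), extract a subsequence with \(\Phi(s_{n_k})(x_0)>k\) and let \(w_k\in\Idem(U)\) be the sum of the partition pieces of \(s_{n_k}\) whose coefficient exceeds \(k\); then \(x_0\in{\rm I}(w_k)\) and \(kw_k\leq w_ks_{n_k}\leq u\), so the Archimedean property of \(U\) (Proposition \ref{prop:Dedekind_is_Archimedean}) applied to \(\inf_kw_k\) forces \(\inf_kw_k=0\), whence \(\bigcap_k{\rm I}(w_k)=\varnothing\), contradicting \(x_0\in\bigcap_k{\rm I}(w_k)\). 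Independence of \((s_n)\) follows from an entirely parallel Archimedean argument: for any competing approximation \((t_m)\nearrow u\), any \(x_0\), and any \(\epsilon>0\), the non-increasing idempotents \(\{t_m<\Phi(s_n)(x_0)-\epsilon\}\) have infimum disjoint from the partition piece of \(s_n\) containing \(x_0\) (by the same \(\sigma\)-order-continuity argument for \(\times\) on \(U^+\times U^+\)), yielding \(\sup_m\Phi(t_m)(x_0)\geq\Phi(s_n)(x_0)-\epsilon\); letting \(\epsilon\to0\) and swapping the roles of the sequences gives equality.

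Finally I would extend to \(U\) by \(\Phi(u)\coloneqq\Phi(u^+)-\Phi(u^-)\). Additivity and multiplicativity on \(U^+\) follow from the \(\sigma\)-order-continuity of \(+\) and \(\times\) on \(U^+\times U^+\) together with the elementary facts \(\sup_n(a_n+b_n)=\sup_na_n+\sup_nb_n\) and \(\sup_n(a_nb_n)=(\sup_na_n)(\sup_nb_n)\) for non-decreasing non-negative real sequences; they then propagate to \(U\) via \(u=u^+-u^-\) and \(uv=u^+v^+-u^+v^--u^-v^++u^-v^-\). Lattice preservation drops out of \(\Phi(w^+)\Phi(w^-)=\Phi(w^+w^-)=0\), which forces the pointwise supports of \(\Phi(w^+)\) and \(\Phi(w^-)\) to be disjoint and hence \(\Phi(w)^+=\Phi(w^+)\); the identity \(u\vee v=u+(v-u)^+\) then yields \(\Phi(u\vee v)=\Phi(u)\vee\Phi(v)\). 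Injectivity is immediate on \(U^+\) (\(\Phi(u)=0\) forces every \(\Phi(s_n)=0\), hence every \(s_n=0\), hence \(u=0\)) and extends to \(U\) using \(u^+\wedge u^-=0\). The image \(\Phi(U)\) is the desired \(f\)-subalgebra of \(\mathcal L^0(\Sigma)\).
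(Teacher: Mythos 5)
Your proposal is correct and follows essentially the same route as the paper's proof: Stone's representation of \(\Idem(U)\) via Proposition \ref{prop:Idem_as_sigma-algebra} with \({\rm I}\) preserving countable suprema, the indicator-function definition on simple elements, monotone extension to \(U^+\) by super-order-density, an Archimedean argument for pointwise finiteness, \(\sigma\)-order-continuity for well-posedness and for additivity/multiplicativity, and finally the \(u^+-u^-\) splitting with the same injectivity argument. The only deviation is cosmetic: in the well-posedness step the paper multiplies the two pointwise suprema by a single nonzero idempotent obtained by intersecting the pieces containing the point, whereas you use sublevel-set idempotents and an \(\varepsilon\)-argument; both are valid, and your extra checks (isomorphism on the simple level, \(\Phi(w)^+=\Phi(w^+)\) for lattice preservation) just spell out details the paper leaves to the reader.
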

\begin{proof}
Proposition \ref{prop:Idem_as_sigma-algebra} yields a measurable space \((\X,\Sigma)\) such that \((\Sigma,\Delta,\cap)\)
and \(\big(\Idem(U),\boxplus,\boxtimes\big)\) are isomorphic as Boolean algebras. We introduce the mapping
\(\iota\colon\mathcal S^+(U)\to\mathcal L^0(\Sigma)\) as follows: given any simple element \(u=\sum_{i=1}^k\lambda_i u_i\in\mathcal S^+(U)\),
we define the function \(\iota(u)\colon\X\to[0,+\infty]\) as
\[
\iota(u)(x)\coloneqq\sum_{i=1}^k\lambda_i\1_{{\rm I}(u_i)}(x),\quad\text{ for every }x\in\X,
\]
where \({\rm I}\colon\Idem(U)\to\Sigma\) is some fixed Boolean isomorphism. Observe that \(\iota(u)\) belongs to \({\sf Sf}(\Sigma)\).
In order to extend the mapping \(\iota\) to \(U^+\), we first need to prove the following two auxiliary results:
\begin{itemize}
\item[\(\rm a)\)] If \(u\in U^+\) and \((u_n)_{n\in\N}\subset\mathcal S^+(U)\) is a non-decreasing sequence satisfying \(u=\sup_{n\in\N}u_n\),
then it holds that \(\sup_{n\in\N}\iota(u_n)(x)<+\infty\) for every \(x\in\X\).
\item[\(\rm b)\)] If \(u\in U^+\) and \((u_n)_{n\in\N},(v_n)_{n\in\N}\subset\mathcal S^+(U)\) are non-decreasing sequences with
\(u=\sup_{n\in\N}u_n\) and \(u=\sup_{n\in\N}v_n\), then it holds that \(\sup_{n\in\N}\iota(u_n)(x)=\sup_{n\in\N}\iota(v_n)(x)\)
for every \(x\in\X\).
\end{itemize}
To prove a), we argue by contradiction: suppose that \(\sup_{n\in\N}\iota(u_n)(x_0)=+\infty\) for some \(x_0\in\X\). For any \(n\in\N\),
we can find \(\lambda_n\in[0,+\infty)\) and \(\tilde u_n\in\Idem(U)\) with \(\lambda_n\tilde u_n=\tilde u_n u_n\) and \(x_0\in{\rm I}(\tilde u_n)\).
One has \(\lambda_n=\iota(u_n)(x)\to+\infty\) as \(n\to\infty\). Define \(E\coloneqq\bigcap_{n\in\N}{\rm I}(\tilde u_n)\in\Sigma\) and
\(w\coloneqq{\rm I}^{-1}(E)\in\Idem(U)\). Notice that \(x_0\in E\) and \(w\leq\tilde u_n\) for every \(n\in\N\).
Given \(k\in\N\), there is \(n_k\in\N\) with \(\lambda_{n_k}\geq k\), thus
\[
kw\leq\lambda_{n_k}\tilde u_{n_k}=\tilde u_{n_k}u_{n_k}\leq u.
\]
Since \(U\) is Archimedean by Proposition \ref{prop:Dedekind_is_Archimedean}, we deduce that \(w=0\) and thus \(E=\varnothing\).
This leads to a contradiction with the fact that \(x_0\in E\), so that a) is proved. We pass to the verification of b). Fix any point
\(x\in\X\). For any \(n\in\N\), we can pick \(\lambda_n,\mu_n\in[0,+\infty)\) and \(\tilde u_n,\tilde v_n\in\Idem(U)\) such that
\(\lambda_n\tilde u_n=\tilde u_n u_n\), \(\mu_n\tilde v_n=\tilde v_n v_n\), and \(x\in{\rm I}(\tilde u_n)\cap{\rm I}(\tilde v_n)\).
Setting \(E\coloneqq\bigcap_{n\in\N}{\rm I}(\tilde u_n)\cap{\rm I}(\tilde v_n)\in\Sigma\), we have \(x\in E\) and thus
\(w\coloneqq{\rm I}^{-1}(E)\neq 0\). By the \(\sigma\)-order-continuity of the multiplication, we obtain
\[\begin{split}
\Big(\sup_{n\in\N}\iota(u_n)(x)\Big)w&=\Big(\sup_{n\in\N}\lambda_n\Big)w=\sup_{n\in\N}\lambda_n w=\sup_{n\in\N}\lambda_n\tilde u_n w
=\sup_{n\in\N}u_n\tilde u_n w=\Big(\sup_{n\in\N}u_n\Big)w=uw\\
&=\Big(\sup_{n\in\N}v_n\Big)w=\sup_{n\in\N}\mu_n\tilde v_n w=\Big(\sup_{n\in\N}\mu_n\Big)w=\Big(\sup_{n\in\N}\iota(v_n)(x)\Big)w,
\end{split}\]
where we used that \(\tilde u_n w=\tilde v_n w=w\). This yields \(\sup_{n\in\N}\iota(u_n)(x)=\sup_{n\in\N}\iota(v_n)(x)\), proving b).

We now define the function \(\iota(u)\colon\X\to[0,+\infty)\) for any \(u\in U^+\) in the following way: given any non-decreasing sequence
\((u_n)_{n\in\N}\subset\mathcal S^+(U)\) such that \(u=\sup_{n\in\N}u_n\) -- whose existence is guaranteed by the assumption
that the \(f\)-algebra \(U\) is localisable -- we define
\[
\iota(u)(x)\coloneqq\sup_{n\in\N}\iota(u_n)(x),\quad\text{ for every }x\in\X.
\]
The properties a) and b) ensure that \(\iota(u)\) is well-posed. Notice that \(\iota(u)\in\mathcal L^0(\Sigma)\), as a countable
supremum of elements of \(\mathcal L^0(\Sigma)\). The \(\sigma\)-order-continuity of the sum and multiplication maps gives
\begin{equation}\label{eq:reconstr_aux}
\iota(u)+\iota(v)=\iota(u+v),\qquad\iota(uv)=\iota(u)\iota(v),\qquad\text{ for every }u,v\in U^+.
\end{equation}
Finally, we extend \(\iota\) to a mapping \(\iota\colon U\to\mathcal L^0(\Sigma)\) by setting
\[
\iota(u)\coloneqq\iota(u^+)-\iota(u^-),\quad\text{ for every }u\in U.
\]
Using \eqref{eq:reconstr_aux}, one can easily show that \(\iota\) is a homomorphism of \(f\)-algebras. In order to conclude, it only
remains to check that \(\iota\) is injective. To this aim, fix any \(u\in U\) such that \(\iota(u)=0\). We want to show that \(u=0\).
Since \(u^+ u^-=0\) by \eqref{eq:basic_prop_f-alg_7}, we deduce that \(\iota(u^+)\iota(u^-)=\iota(u^+ u^-)=0\), which yields
\(\iota(u^+)=\iota(u^-)=0\). Hence, it suffices to prove the implication \(\iota(u)=0\Longrightarrow u=0\) in the case where \(u\in U^+\).
Choose a non-decreasing sequence \((u_n)_{n\in\N}\subset\mathcal S^+(U)\) such that \(u=\sup_{n\in\N}u_n\). We have that
\(\sup_{n\in\N}\iota(u_n)=0\), whence it follows that \(u_n=0\) for every \(n\in\N\) and thus \(u=0\).
\end{proof}
\small
\def\cprime{$'$} \def\cprime{$'$}


\begin{thebibliography}{10}

\bibitem{AmbrosioGigliSavare11-3}
{\sc L.~Ambrosio, N.~Gigli, and G.~Savar{\'e}}, {\em Density of {L}ipschitz
  functions and equivalence of weak gradients in metric measure spaces}, Rev.
  Mat. Iberoam., 29 (2013), pp.~969--996.

\bibitem{AmbrosioGigliSavare11}
\leavevmode\vrule height 2pt depth -1.6pt width 23pt, {\em Calculus and heat
  flow in metric measure spaces and applications to spaces with {R}icci bounds
  from below}, Invent. Math., 195 (2014), pp.~289--391.

\bibitem{BrenaGigli22}
{\sc C.~Brena and N.~Gigli}, {\em Local vector measures}.
\newblock Preprint, arXiv:2206.14864, 2022.

\bibitem{BPS21}
{\sc E.~Bru\`{e}, E.~Pasqualetto, and D.~Semola}, {\em Constancy of the
  dimension in codimension one and locality of the unit normal on
  $\mathrm{RCD}({K},{N})$ spaces}.
\newblock Accepted in \textit{Annali della Scuola Normale Superiore, Classe di
  Scienze}, https://doi.org/10.2422/2036-2145.202110\_007, 2021.

\bibitem{Cheeger00}
{\sc J.~Cheeger}, {\em Differentiability of {L}ipschitz functions on metric
  measure spaces}, Geom. Funct. Anal., 9 (1999), pp.~428--517.

\bibitem{debin2019quasicontinuous}
{\sc C.~Debin, N.~Gigli, and E.~Pasqualetto}, {\em Quasi-{C}ontinuous {V}ector
  {F}ields on $\mathrm{RCD}$ spaces}, Potential Analysis, 54 (2021),
  pp.~183--211.

\bibitem{denneberg2010non}
{\sc D.~Denneberg}, {\em Non-Additive Measure and Integral}, Theory and
  Decision Library B, Springer Netherlands, 2010.

\bibitem{DMLP21}
{\sc S.~Di~Marino, D.~Lu\v{c}i\'{c}, and E.~Pasqualetto}, {\em Representation
  theorems for normed modules}.
\newblock Preprint, arXiv:2109.03509, 2021.

\bibitem{Fremlin3}
{\sc D.~Fremlin}, {\em Measure {T}heory: {M}easure algebras. Volume 3}, Measure
  Theory, Torres Fremlin, 2011.

\bibitem{Fremlin74}
{\sc D.~H. Fremlin}, {\em Topological {R}iesz {S}paces and {M}easure {T}heory},
  Cambridge: University Press, 1974.

\bibitem{Gigli17}
{\sc N.~Gigli}, {\em {L}ecture notes on differential calculus on $\sf
  {R}{C}{D}$ spaces}, Publ. RIMS Kyoto Univ., 54 (2018).

\bibitem{Gigli14}
\leavevmode\vrule height 2pt depth -1.6pt width 23pt, {\em Nonsmooth
  differential geometry - an approach tailored for spaces with {R}icci
  curvature bounded from below}, Mem. Amer. Math. Soc., 251 (2018), p.~pp.~161.

\bibitem{GLP22}
{\sc N.~Gigli, D.~Lu\v{c}i\'{c}, and E.~Pasqualetto}, {\em Duals and pullbacks
  of normed modules}.
\newblock Preprint, arXiv:2207.04972, 2022.

\bibitem{GIGLI2019}
{\sc N.~Gigli and E.~Pasqualetto}, {\em Differential structure associated to
  axiomatic {S}obolev spaces}, Expositiones Mathematicae,  (2019).

\bibitem{GP20}
{\sc N.~Gigli and E.~Pasqualetto}, {\em Lectures on Nonsmooth Differential
  Geometry}, SISSA Springer Series 2, 2020.

\bibitem{GR17}
{\sc N.~Gigli and C.~Rigoni}, {\em Recognizing the flat torus among
  ${R}{C}{D}^*(0,{N})$ spaces via the study of the first cohomology group},
  Calculus of Variations and Partial Differential Equations, 57(5) (2017).

\bibitem{Hajlasz96}
{\sc P.~Haj{\l}asz}, {\em Sobolev spaces on an arbitrary metric space},
  Potential Analysis, 5 (1996), pp.~403--415.

\bibitem{HKST15}
{\sc J.~Heinonen, P.~Koskela, N.~Shanmugalingam, and J.~T. Tyson}, {\em Sobolev
  spaces on metric measure spaces. An approach based on upper gradients},
  vol.~27 of New Mathematical Monographs, Cambridge University Press,
  Cambridge, 2015.

\bibitem{MacLane98}
{\sc S.~Mac~Lane}, {\em Categories for the working mathematician}, vol.~5 of
  Graduate Texts in Mathematics, Springer-Verlag, New York, second~ed., 1998.

\bibitem{Pas22}
{\sc E.~Pasqualetto}, {\em Limits and colimits in the category of {B}anach
  ${L}^0$-modules}.
\newblock Preprint, arXiv:2211.13197, 2022.

\bibitem{Shanmugalingam00}
{\sc N.~Shanmugalingam}, {\em Newtonian spaces: an extension of {S}obolev
  spaces to metric measure spaces}, Rev. Mat. Iberoamericana, 16 (2000),
  pp.~243--279.

\bibitem{Weaver99}
{\sc N.~Weaver}, {\em Lipschitz algebras}, World Scientific Publishing Co.,
  Inc., River Edge, NJ, 1999.

\end{thebibliography}
\end{document}